\pgfplotsset{width=10cm,compat=1.9}
\pgfplotsset{width=10cm,compat=1.9}
\newenvironment{customthm}[1]
  {\innercustomthm}
  {\endinnercustomthm}
\newenvironment{customcor}[1]
  {\innercustomcor}
  {\endinnercustomcor}
\newenvironment{customprop}[1]
  {\innercustomprop}
  {\endinnercustomprop}
\newtheorem{theorem}{Theorem}[section]
\newtheorem{corollary}[theorem]{Corollary}
\newtheorem{lemma}[theorem]{Lemma}
\newtheorem{proposition}[theorem]{Proposition}
\newtheorem{construction}[theorem]{Construction}
\newtheorem{definition}[theorem]{Definition}
\newtheorem{remark}[theorem]{Remark}
\theoremstyle{definition}
\newtheorem{example}[theorem]{Example}
\newcommand{\?}{{\color{red} ???}} 
\newcommand{\mb}{\mathbb}
\newcommand{\mc}{\mathcal}
\newcommand{\mf}{\mathfrak}
\newcommand{\msf}{\mathsf}
\newcommand{\Q}{\mb{Q}}
\newcommand{\R}{\mb{R}}
\newcommand{\Z}{\mb{Z}}
\newcommand{\C}{\mb{C}}
\newcommand{\A}{\mb{A}}
\newcommand{\F}{\mb{F}}
\newcommand{\N}{\mb{N}}
\renewcommand{\L}{\mc{L}}
\renewcommand{\O}{\mc{O}}
\renewcommand{\P}{\mb{P}}
\newcommand{\G}{\mb{G}}
\newcommand{\m}{\mf{m}}
\newcommand{\eps}{\epsilon}
\newcommand*{\sheafhom}{\mathcal{H}\kern -.5pt om}
\newcommand{\et}{\text{\'et}}
\newcommand{\kbar}{\overline{k}}
\newcommand{\mP}{\mc{P}}
\newcommand{\Fq}{\mb{F}_q}
\newcommand{\Fqb}{\overline{\mb{F}_q}}
\newcommand{\Qlb}{\overline{\mb{Q}_l}}
\newcommand{\Qpb}{\overline{\mb{Q}_p}}
\DeclareMathOperator{\Gal}{Gal}
\DeclareMathOperator{\Lie}{Lie}
\DeclareMathOperator{\Hom}{Hom}
\DeclareMathOperator{\Spec}{Spec}
\DeclareMathOperator{\im}{im}
\DeclareMathOperator{\Res}{Res}
\DeclareMathOperator{\Aut}{Aut}
\DeclareMathOperator{\coker}{coker}
\DeclareMathOperator{\End}{End}
\DeclareMathOperator{\GL}{GL}
\DeclareMathOperator{\ch}{char}
\DeclareMathOperator{\sing}{sing}
\DeclareMathOperator{\dR}{dR}
\DeclareMathOperator{\cris}{cris}
\DeclareMathOperator{\supp}{supp}
\DeclareMathOperator{\Vol}{Vol}
\DeclareMathOperator{\len}{len}
\DeclareMathOperator{\red}{red}
\DeclareMathOperator{\Isoc}{Isoc}
\DeclareMathOperator{\cyc}{cyc}
\DeclareMathOperator{\mon}{mon}
\DeclareMathOperator{\univ}{univ}
\DeclareMathOperator{\Sym}{Sym}
\DeclareMathOperator{\std}{std}
\DeclareMathOperator{\Fr}{Fr}
\DeclareMathOperator{\geo}{geo}
\DeclareMathOperator{\Hyp}{Hyp}
\DeclareMathOperator{\Fil}{Fil}
\DeclareMathOperator{\Perv}{Perv}
\DeclareMathOperator{\intt}{int}
\DeclareMathOperator{\sss}{ss}
\let\mi\relax
\DeclareMathOperator{\mi}{mid}
\newcommand{\cm}{*_{\mi}}
\newcommand{\catname}[1]{{\normalfont\textbf{#1}}}
\newcommand{\bP}{\catname{P}}
\newcommand{\bD}{\catname{D}}
\newcommand{\bN}{\catname{N}}
\newcommand{\ol}{\overline}
\newcommand{\oyd}{\Omega^{n-1-q}_{(\ol{Y_m}, D)}}
\DeclareMathOperator{\FI}{\catname{F-Isoc}}
\DeclareMathOperator{\oFI}{\catname{F-Isoc}^\dag}
\renewcommand{\bar}{\overline}
\title{Convolution monodromy groups and the Shafarevich conjecture for hypersurfaces in tori}
\author{Caleb Ji}
\date{\today}
\begin{document}
\maketitle 

\setcounter{tocdepth}{1}

\begin{abstract}
    The Shafarevich conjecture for a class of varieties over a number field posits the finitude of those with good reduction outside a finite set of primes.  In the case of hypersurfaces in the torus $\mathbb{G}_m^n$, a natural class to consider are those with a fixed Newton polyhedron that are nondegenerate with respect to it.  Using an approach similar to that of Lawrence-Sawin for abelian varieties (arXiv:2004.09046), we prove the Shafarevich conjecture for certain classes of Newton polyhedra.  In the course of our proof we construct a fiber functor for the Tannakian category of perverse sheaves on the torus in positive characteristic and compute the weights of the Frobenius on it, which leads to the big monodromy results which are key to this method.
\end{abstract}

\tableofcontents

\section{Introduction}
This paper has two main goals: to further develop the theory of convolution monodromy groups on the torus, and to provide an application to the Shafarevich conjecture.  We begin by recalling the statement of the latter.  The Shafarevich conjecture for curves over a fixed number field $K$ states that there are only finitely many of a fixed genus which have good reduction outside a finite set of primes.  Its proof by Faltings in \cite{Faltings1983} led directly to another important finiteness result, namely Mordell's conjecture.  More recently, Lawrence and Venkatesh \cite{LV} have produced a new proof of Mordell's conjecture using p-adic Hodge theory and monodromy calculations, which did not go through the Shafarevich conjecture.  Interestingly, their method is however applicable to other versions of the Shafarevich conjecture which have seen renewed interest.    
Let $K$ be a number field with ring of integers $\O_K$ and let $S$ be a finite set of primes of $\O_K$.  Broadly speaking, in a Shafarevich-type conjecture one considers a family of algebraic varieties over $K$ and posits that there are only finitely many varieties in the family with good reduction outside $S$.  While there is not a single statement that encapsulates all instances in which this statement ought to be true, it is believed to hold in fairly wide generality; see for instance \cite{MR3673652}.  In the cases of the Shafarevich conjecture proven so far, the varieties in question are usually taken to be projective.  In additional to Faltings's original result for curves and abelian varieties \cite{Faltings1983}, examples of this include K3 surfaces \cite{she}, hyper-K\"ahler varieties \cite{Andre}, flag varieties \cite{Jav}, del Pezzo surfaces \cite{Scholl}, and hypersurfaces in abelian varieties \cite{finiteness}.  

In the present work, we will instead consider non-projective varieties, namely hypersurfaces in tori, which we assume to be irreducible.  Aside from the interesting challenge to find replacements for arguments that require projectivity, the motivation in studying such varieties stems from multiple additional sources.  First, subvarieties of the torus carry highly interesting cohomological information, including weights and mixed Hodge structures, with number theoretic applications; see for instance \cite{Batyrev}.  Additionally, the fact that the torus is an algebraic group implies that it gives rise to a Tannakian category of perverse sheaves under convolution of which its hypersurfaces gives objects of, as explained in \cite{FFK}.  Finally, one can often explicitly compute these aspects of hypersurfaces in tori in terms of their defining equations.  We will make use of all of these aspects in this work.  

As in every other instance of the Shafarevich conjecture, one must pick out a natural moduli space for which finiteness is both plausible and interesting.  In the case of hypersurfaces in tori, the result cannot hold unless we group them by their Newton polyhedron, which is defined to be the convex hull of the set of points corresponding to the monomials with nonzero coefficients in the defining polynomial.  Furthermore, we require the hypersurfaces to be nondegenerate with respect to their Newton polyhedron, which is a natural smoothness condition.  More precisely, this condition means that if $f_{\tau}$ is the restriction of the defining polynomial $f$ to the monomials appearing in any given face $\tau$ of the Newton polyhedron, then the equations 
\[
\frac{\partial f_{\tau}}{\partial x_1} = \cdots = \frac{\partial f_{\tau}}{\partial x_n} = 0 
\]
have no solution in $\G_m^n$.
Fixing a Newton polyhedron, the Shafarevich conjecture in this setting asks if there are finitely many such hypersurfaces with good reduction outside a finite set of primes of the underlying number field.  

Broadly speaking, our approach to this question consists of two parts: a monodromy calculation and an application of the method of \cite{LV} using period maps and p-adic Hodge theory.  For the first we use one of the main innovations of \cite{finiteness}, namely the use of Tannakian categories of perverse sheaves to compute monodromy groups.  Given a hypersurface in an algebraic group, one can consider its corresponding perverse sheaf and define a \textit{convolution monodromy group} (or \textit{Tannakian monodromy group}), which can then be related to its geometric monodromy group.  This is an invariant of great independent interest, as we will now explain. 

Building off work of Gabber-Loeser \cite{Gabber}, Katz \cite{KatzCE}, and Kramer-Weissauer \cite{Kramer}, the work of Forey, Fres\'an, and Kowalski \cite{FFK} constructs the Tannakian category of perverse sheaves on an algebraic group defined either over a finite field or its algebraic closure.  Roughly speaking, after taking the quotient by an appropriate category of negligible objects, the category of perverse sheaves is given the structure of a Tannakian category under sheaf convolution.  In the case of tori, much of the theory was initially developed in \cite{Gabber}, and found deep number-theoretic applications in \cite{KatzCE}.  In addition to developing methods for computing specific monodromy groups, this work of Katz related exponential sums to the trace functions of perverse sheaves on $\G_m$, thus computing their distribution in terms of appropriate convolution monodromy groups.  In a different direction, the work of Kramer-Weissauer \cite{Kramer} focused on the case of abelian varieties, and used the convolution mondromy group to prove various vanishing theorems.  
This theory was applied in \cite{finiteness} to compute convolution monodromy groups on abelian varieties, which were then used to show that certain geometric monodromy groups are large.  In the present work, we begin by building off of the work of Katz, generalizing some results found in \cite{KatzCE} to higher dimensional tori $\G_m^n$ and computing several explicit examples of convolution monodromy groups. 
  We use these in a similar way to \cite{finiteness}, along with a comparison theorem allowing us to go from geometric monodromy groups in characteristic $p$ to those in characteristic 0, to prove large geometric monodromy results in characteristic 0.  

These kinds of large monodromy results concern the cohomology objects $R^if_*(\Q_p)$ associated to a morphism $f\colon Y\rightarrow X$ over some number field, and are eventually used to give bounds on the rational points $X(K)$ or the integral points of a model $\mc{X}(\O_{K, S})$.  This process was initiated in \cite{LV} in their proof of Faltings's theorem.  They studied the Kodaira-Parshin family, which can be constructed as a proper family over a curve of genus at least 2, to show that the number of rational points on the base is finite.  However, their method applies to smooth proper morphisms $f\colon Y\rightarrow X$ over $\Z[S^{-1}]$ more generally, though the results they obtained only give Zariski non-density of the integral points of the base.  By taking $f$ to be the universal family for some moduli problem, this gives a new approach to the Shafarevich conjecture.  Any point of the base $x\in X$ determines a Galois representation $H_{\et}^*(Y_x, \Q_p)$ of the Galois group $G_K$, which by a result of Faltings can take one of only finitely many isomorphism classes after semisimplification.  On the other hand, the variation of these representations can be measured by passing to the associated $p$-adic representations of $G_{K_v}$ for an appropriate place $v$.  By results of $p$-adic Hodge theory, these are crystalline representations which are endowed with a Frobenius action and a Hodge filtration.  The variation of the Hodge structure is measured by a $p$-adic period map, which can then finally be related to the monodromy group of $R^if_*(\Q_p)$.  To make this last step precise, one needs to wrestle with several technical difficulties, including transcendence results and numerical conditions which may be rather complicated.  We recommend the introduction to \cite{LV} for a more detailed overview. 

This strategy toward the Shafarevich conjecture was extended in \cite{finiteness} by the use of the Tannakian method to compute monodromy groups in the case of hypersurfaces in a fixed abelian variety $A$.  In this work, the authors were able to achieve finiteness, rather than simply Zariski non-density, of the integral points of the base.  To achieve this, they needed to prove a large monodromy statement for all subvarieties of the moduli space of hypersurfaces in $A$.  They did this by considering the local systems of the form $R^if_*(\L_{\chi})$ for various characters $\chi$ of the fundamental group of $A$, rather than just the constant one.  As a whole these give more chances to achieve big monodromy results, though the technical details become more complicated.  They dealt with them by packaging all the cohomological information of these local systems into ``Hodge-Deligne systems," which can be thought of motives with various realizations given by different cohomology theories and compatibility relations between them.  This is also the language we will work with in this paper.    

\subsection{Results} 
We will now highlight and discuss the results obtained in this paper.

Consider the Tannakian category of perverse sheaves modulo negligible sheaves on the torus $\G_m^n$ over the algebraic closure of a finite field.  Our first main result is the determination of an explicit fiber functor for this category.  Let $p_i\colon \G_m^i\times \A^1\rightarrow \G_m^i$ denote the projections and let $j_{0!}\colon \G_m^{i+1}\rightarrow \G_m^i\times \A^1$ denote the extension by 0 morphisms.

\begin{customthm}{\ref{thm: ff}}
The functor   
\[
\omega(N) = Rp_{1*}j_{10!}Rp_{2*}j_{20!} \cdots Rp_{n*}j_{n0!}N 
\] 
is a fiber functor on $\bar{\bP}((\G_m^n)_{\bar{k}})$.   
\end{customthm} 

This extends the result of Deligne in \cite[Chapter 3]{KatzCE}, which is the case of $n=1$.  Having an explicit description of the fiber functor associated to an algebraic group is useful for proving a number of results about the corresponding Tannakian categories.  In our case we will use the fact that the Frobenius element gives an automorphism of this fiber functor, and thus an element of the Tannakian group.  We can then compute its weights when the perverse sheaf is set to be the one corresponding to a hypersurface.  
We remark that if we worked with the extension by zero into copies of $\P^1$ instead of $\A^1$ in the definition of the functor, the functor would just be given by the compactly supported cohomology of the hypersurface.  In this case, the weights were computed in \cite{DL} (supposing nondegeneracy with respect to the Newton polyhedron, which we will also assume), extending the fundamental results of Deligne \cite{Weil2}.  

We compute the weights for curves in $\G_m^2$ in terms of the Newton polygon of the defining equation in two distinct ways, the results of which are given in Theorem \ref{thm: curve weights} and Proposition \ref{prop: curve weights 2}.  
The case of higher-dimensional varieties is more difficult.  In section \ref{sec: weight examples} we give some specific examples of surfaces which we compute the weights for.  Finally, in section \ref{sec: higher weights} we compute some of the weight multiplicities of certain higher-dimensional hypersurfaces. 

The purpose of computing these weights, other than their intrinsic interest, is to use them to show that the convolution monodromy groups of the associated hypersurfaces are large.  By combining our weight calculations with results from representation theory, we are able to conclude that in some scenarios the convolution monodromy group is large, which essentially means that it contains a classical group.  Specifically, these results can be found in \Cref{ex: curve}, \Cref{prop: surface}, and \Cref{cor: prism monodromy}.  This leads us to deduce the following relation between the convolution monodromy group, which $G^*$ refers to below, and the geometric monodromy group associated to various characters of the fundamental group.   

\begin{customcor}{\ref{cor: big}}
    Assume that $Y_{\overline{\eta}}$ is not translation-invariant by any nonzero element of $\G_m^n$, that $G^*$ is a simple algebraic group with irreducible distinguished representation, and that $Y$ is not equal to a constant family of hypersurfaces translated by a section of $\G_m^n$. 

    Then for any primes $l$ and $p$, there is an embedding $\iota\colon K\hookrightarrow \C$ and a torsion character $\chi$ of $\pi_1^{\et}((\G_m^n)_{\iota})$ of order a power of $l$ such that for all $\Gal(\bar{K}/K)$-conjugates $(\iota', \chi')$ of $(\iota, \chi)$, the geometric monodromy group of $R^{n-1}f_*(g^*\L_{(\iota', \chi')})$ contains a conjugate of $G^*$ as a normal subgroup. 
\end{customcor}

This result is akin to Corollary 4.10 of \cite{finiteness} which is stated for abelian varieties.  However, to apply it in our scenario, we not only have to use new arguments to deal with the fact that our hypersurfaces are not proper, but also make an appropriate comparison between monodromy groups in characteristic $p$ and characteristic 0, as we began in characterstic $p$.  These arguments comprise the rest of section \ref{sec: big mon}. 

Section \ref{sec: HD} follows, in which we construct Hodge-Deligne systems, which were first introduced in \cite{finiteness}.  Again extra care has to be taken to account for the non-properness of our hypersurfaces.  However, we manage by defining the cohomological realizations through appropriate eigenspaces of the cohomology of suitable compactifications of our hypersurfaces.  The result of this and the following section is Theorem \ref{thm: zariski non-density}, which gives the Zariski non-density of integral points on a base given certain big monodromy conditions and numerical conditions on the adjoint Hodge numbers of the constructed Hodge-Deligne systems.  This is an analogue of \cite[Theorem 8.17]{finiteness}, packaging together the technical analysis of \cite[Sections 9 -- 11]{LV} (which illustrates when Zariski non-density can be concluded) into these numerical conditions.  We remark that these numerical conditions take a slightly different form to those in \cite[Theorem 8.17]{finiteness}, which has the unfortunate result of making them difficult to satisfy unless the dimension of the hypersurface is fairly large. 

Section 9 is devoted to computing the adjoint Hodge numbers associated to hypersurfaces in tori.  This essentially boils down to computing the mixed Hodge numbers of character sheaves on these hypersurfaces, which we accomplish with the help of \cite{DK}.  Then we begin the process of checking the numerical conditions, in which we can use previous work of the author \cite{Ji} which dealt with a very similar scenario.  With the aim of finding a Newton polyhedron that not only satisfies these conditions but also satisfies a big monodromy result we can show, we prove the following result. 

\begin{customprop}{\ref{prop: asy weights}}
    Fix $n\ge 500000$ and let $\Delta$ be a rectangular prism with side lengths $a_1, a_2, \ldots, a_n$ with a corner with volume $o(\Vol(\Delta))$ cut off.  Then as $\min_i(a_i)\rightarrow \infty$, for sufficiently large $m$ and any choice of character, the Hodge-Deligne weights satisfy 
    \[
h^{p,q} \sim A(n, q)\Vol(\Delta). 
    \] 
    Furthermore, they satisfy the numerical conditions of Theorem \ref{thm: zariski non-density}. 
\end{customprop} 

This allows us to give the following application to the Shafarevich conjecture.  

\begin{customthm}{\ref{thm: finiteness}}
    Fix a Newton polyhedron $\Delta$ with big monodromy.  Let $S$ be a finite set of primes of a number field $K$.  Then assuming the numerical conditions are satisfied, up to translation there are only finitely many smooth primitive hypersurfaces in $(\G_m^n)_K$ nondegenerate with respect to their Newton polyhedron $\Delta$ with good reduction outside $S$. 
\end{customthm} 

In particular, the theorem holds for the hypersurfaces with Newton polyhedra described in \Cref{prop: asy weights}.  

In fact, we expect that this conclusion holds for a much wider class of Newton polyhedra than the ones described in \Cref{prop: asy weights}.  
For one thing, the numerical conditions can reasonably be expected to hold in fairly high generality when the dimension is sufficiently large, but as the adjoint Hodge numbers can be difficult to get a precise handle on, such statements may be tricky to prove.  
It is also possible that, with a more involved argument, one can show that weaker numerical conditions suffice as in \cite{finiteness}, which would allow the dimension to be taken to be smaller.  
Finally, one could hope to compute more monodromy groups of hypersurfaces, either by developing more general ways of computing weights of the Frobenius or otherwise, which would also lead to more instances of the Shafarevich conjecture.  On the other hand, aside from this application it is encouraging to note that many of the relevant geometric results proven in the case of abelian varieties also have some analogue for tori as shown in this paper.  
This raises the possibility of proving similar results for semiabelian varieties as a potential question for future work. 

\subsection{Relation to previous work} 
As stated earlier, most of the basic results we prove about the Tannakian category of perverse sheaves on $\G_m^n$ are motivated by analogous statements for the case $n=1$ found in \cite{KatzCE}.  Moreover, the idea of computing weights of the fiber functor and using them to determine the monodromy group can also be found in \cite{KatzCE}.  On the other hand, the tools (such as the generic vanishing theorem) provided by the general theory of \cite{FFK} proved to be important at various points in this paper.  In light of the relation of the fiber functor we construct to compactly supported cohomology, our weight calculations can also be seen as a continuation of \cite{DL}, which computed the weights of compactly supported cohomology of hypersurfaces in the torus.  

Regarding the application to the Shafarevich conjecture, this paper owes the most to the works \cite{LV} and \cite{finiteness}.  The work \cite{LV} provided the basic framework of using period mappings to approach versions of the Shafarevich conjecture.  Aside from \cite{finiteness}, this strategy has also been taken up in several other papers to prove sparsity of special points in moduli spaces, such as \cite{ELV} and \cite{chiu}.  The work \cite{chiu} is particularly relevant because it also considers hypersurfaces with a given Newton polyhedron that may not be projective.  However the methods and results of that paper are largely disjoint from that of the present work, as \cite{chiu} applies to a much more general class of Newton polyhedra but prove a subpolynomial rather than a constant bound.

On the other hand, \cite{finiteness} generalized the approach of \cite{LV} and incorporated Tannakian methods to compute monodromy groups which we use in this paper.  In particular, many of the results in the latter half of this paper are analogues for the torus of statements for abelian varieties in \cite{finiteness}, and the statement of the general Theorem \ref{thm: zariski non-density} from which the applications are deduced is inspired by the form of \cite[Theorem 10.1]{LV} and \cite[Theorem 8.17]{finiteness}.

\subsection*{Acknowledgments} 
The author would like to thank Will Sawin for suggesting this problem and teaching him much of the necessary background to understand and work on it.  His expertise also proved very useful on several technical issues the author encountered throughout this paper.  The author also thanks Nick Katz for interesting comments on this paper.  The author was partially supported by National Science Foundation Grant Number DGE-2036197.

\section{The Tannakian category of perverse sheaves on the torus}
Let $k = \F_q$ be a finite field with finite extensions $k_n = \F_{q^n}$ and algebraic closure $\overline{k}$.  In \cite{Gabber} and \cite{KatzCE}, the authors define and study a certain Tannakian category of perverse sheaves on $\G_m/\bar{k}$.  To be more precise, the Tannakian category is constructed as the quotient of the category of perverse sheaves by a certain subcategory of negligible sheaves, with sheaf convolution as the tensor functor.  This theory is extended to the case of all commutative algebraic groups in \cite{FFK}.  In this paper, we are primarily interested in the case of higher dimensional tori $G = \G_m^n$. 

We will first review the basic theory from the foundational papers referred to above. 
Our main new contribution in this section is the construction of an explicit fiber functor on this Tannakian category, which we will study further in the following sections.    

\subsection{Sheaf convolution} 
Let $G$ be a connected commutative algebraic group over $k$ with multiplication map $m$.  For a prime $l\neq \ch k$, let $D^b_c(G_k)\coloneqq D^b_c(G_k, \bar{\Q_l})$ denote the bounded derived category of constructible $\bar{\Q_l}$-sheaves on $G_k$, and similarly for $D^b_c(G_{\bar{k}})\coloneqq D^b_c(G_{\bar{k}}, \bar{\Q_l})$.  In both cases, for $\mc{F}, \mc{G}\in D^b_c(G)\coloneqq D^b_c(G, \bar{\Q_l})$ we define the convolution products 
\[
\mc{F}*_* \mc{G}\coloneqq Rm_*(\mc{F}\boxtimes \mc{G}), \mc{F}*_! \mc{G}\coloneqq Rm_!(\mc{F}\boxtimes \mc{G})
\]
where $m$ is the multiplication on $G$.  Furthermore, we define middle convolution as follows: 
\[
\mc{F}*_{\mi} \mc{G}\coloneqq \im[\mc{F}*_!\mc{G}\rightarrow \mc{F}*_* \mc{G}].
\]

The category of perverse sheaves on $G$ is not stable under middle convolution.  However, one can define a subcategory of negligible sheaves so that the quotient category is stable.  In the case of $\G_m/\bar{k}$, in \cite{KatzCE} they are defined as the subcategory of perverse sheaves $N$ such that $\chi((\G_m)_{\bar{k}}, N)=0$. 
 To give a uniform definition for all $G$ we will follow \cite{FFK} and introduce the concept of a generic character. 

 \begin{definition}
      [generic character]
     \cite[Definition 1.21]{FFK}
     Let $G$ be a connected commutative algebraic group
of dimension $d$ over a finite field $\F_q$. Let $S$ be a subset of $\widehat{G}$, where $\widehat{G}\coloneqq \bigsqcup_n \widehat{G}(\F_{q^n})$ and $\widehat{G}(\F_{q^n})$ denotes the group of characters $\chi\colon G(\F_{q^n})\rightarrow \Qlb^*$.  Then $S$ is a  \textit{generic} set of characters if 
\[
|\widehat{G}(\F_{q^n})-S(\F_{q^n})| \ll q^{n(d-1)}
\]
for all integers $n\ge 1$. 
 \end{definition}

Recall that a character $\chi\in \widehat{G}(k)$ gives rise to a character sheaf $\L_\chi$ on $G$ as follows.  The Lang isogeny $L\colon G\rightarrow G$ defined by $L(x)=x^{-1}\Fr_k(x)$ is a Galois \'etale covering corresponding to a map $\pi_1^{\et}(G)\rightarrow G(k)$, and composing with $\chi^{-1}$ gives a character of $\pi_1^{\et}(G)$ which defines a character sheaf $\L_\chi$.  Note that the geometric Frobenius acts on the stalk of $\L_{\chi}$ at $x$ by $\chi(x)$, and thus $\L_{\chi}$ is a pure lisse sheaf of weight 0.  

Given any complex $M\in D^b_c(G_k, \bar{\Q_l})$, let $M_{\chi}=M\otimes \L_{\chi}$. 

We will largely be concerned with the case in which $k$ is a finite field, which we will assume for the rest of this section.  In that case we can consider both perverse sheaves on $G_k$ and on $G_{\bar{k}}$. 

\begin{definition}
    \cite[Section 3.2]{FFK} Let $k$ be a finite field.  Let $\bP(G)$ denote the full subcategory of sheaves on $G_{\bar{k}}$ whose objects are defined over some finite extension of the base field $k$.
\end{definition} 

\begin{remark}
    In \cite{KatzCE}, one considers the entire category of perverse sheaves on $(\G_m)_{\bar{k}}$.  For the purposes of this paper, the difference between this and $\bP(G)$ is not significant, because the explicit perverse sheaves we consider all arise algebraically, and the results we use from \cite{KatzCE} in reference to the bigger category hold for the smaller category as well.  
\end{remark}

 \begin{definition}
     [negligible sheaves] \cite[Definition 3.4]{FFK} 
     An object $M$ of $\bP(G)$ is \textit{negligible} if the set of characters $\chi\in \widehat{G}$ such that $H^0(G, M_\chi)=0$ is generic.  An object $N$ of $D^b_c(G)$ is \textit{negligible} if all its perverse cohomology sheaves are negligible.
 \end{definition} 

 \begin{remark}
 \label{remark: neg}
     If $G$ is a semiabelian variety, \cite[Proposition 3.21]{FFK} allows us to give an alternative characterization of negligible sheaves which agrees with that used by Katz \cite{KatzCE} in the case of $\G_m$.  The proposition states that if $M$ is a perverse sheaf on any $G$, then $M$ is negligible if and only if $\chi(G_{\kbar}, M_{\chi})=0$ for a generic set of $\chi$.  Furthermore, if $G$ is semiabelian, then $\chi(G_{\kbar}, M_{\chi})$ is independent of $\chi$.  Thus for such $G$, $M$ is negligible if and only if its Euler characteristic vanishes. 
 \end{remark}

 We let $\bN(G)\subset \bP(G)$ and $\bN_D(G)\subset D_b^c(G)$ be the subcategories of negligible sheaves.  We can now use middle convolution to define a tensor structure on the quotient categories $\bar{\bD}(G)\coloneqq D_b^c(G)/\bN_D(G)$ and on $\bar\bP(G)\coloneqq \bP(G)/\bN(G)$.  In fact the situation is even nicer, as by \cite[Lemma 3.8]{FFK}, middle convolution coincides with both operations $-*_!-$ and $-*_*-$ on the quotient.  The upshot is the following theorem.

 \begin{theorem}
     \cite[Theorem 3.10, 3.15]{FFK} The canonical forget support morphisms $M*_! N \rightarrow M *_* N$ induce isomorphisms in $\bar{\bD}(G)$,
and define by passing to the quotient a convolution bifunctor denoted 
\[
*\colon \bar{\bD}(G)\times \bar{\bD}(G)\rightarrow \bar{\bD}(G).
\]
Furthermore, this descends to a bifunctor on the subcategory $\bar{\bP}(G)$ that makes it into a neutral Tannakian category.  
 \end{theorem}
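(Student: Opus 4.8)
The plan is to follow the proof of \cite[Theorem 3.10, 3.15]{FFK} (which extends \cite{Gabber} for $G=\G_m$ and \cite{Kramer} for abelian varieties); strictly speaking one simply invokes that result, but the argument has three clearly separable parts: (A) the negligible objects form a thick subcategory stable under convolution, and the forget--supports maps become isomorphisms after passing to the quotient; (B) $\bar{\bP}(G)$ is a rigid symmetric monoidal $\Qlb$-linear abelian category with $\End(\mathbf{1})=\Qlb$; and (C) it admits a fibre functor to $\Qlb$-vector spaces, so that the Tannakian recognition theorem applies.

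For (A) I would first verify that $\bN_D(G)$ is thick in $D^b_c(G)$ and that convolving a negligible complex with an arbitrary one stays negligible; the latter reduces, via the Künneth formula and the multiplicativity $m^{*}\L_\chi\cong\L_\chi\boxtimes\L_\chi$, to the elementary observation that the class of subsets of $\widehat{G}$ whose complement has $\ll q^{n(d-1)}$ points at each level $\F_{q^n}$ is closed under the operations in play. The quotient $\bar{\bD}(G)$ then inherits both $\ast_!$ and $\ast_*$, and the key point is that the cone of $M\ast_! N\to M\ast_* N$ is negligible: here the generic vanishing theorem enters, since for a generic character $\chi$ the twisted complexes have $R\Gamma_c(G,M_\chi)\xrightarrow{\ \sim\ }R\Gamma(G,M_\chi)$ concentrated in degree $0$, and Künneth then forces the forget--supports map $M_\chi\ast_! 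N_\chi\to M_\chi\ast_* N_\chi$ to be a cohomology isomorphism for all generic $\chi$, i.e. its cone is $R\Gamma$-acyclic after a generic twist, hence negligible. This produces $\ast$ on $\bar{\bD}(G)$, and $\ast_{\mi}$ agrees with it by \cite[Lemma 3.8]{FFK}.

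For (B): since $m\colon G\times G\to G$ is affine, $Rm_!$ is right perverse $t$-exact and $Rm_*$ is left perverse $t$-exact, so for $M,N$ perverse one gets $M\ast_! N\in{}^{p}D^{\le 0}$ and $M\ast_* N\in{}^{p}D^{\ge 0}$; combined with (A) this shows all ${}^{p}H^{i}(M\ast_! N)$ with $i\neq 0$ are negligible, so $\bar{\bP}(G)$ (a Serre quotient of $\bP(G)$, hence abelian) is stable under $\ast$, and the bifunctor is exact in each variable. The monoidal structure with unit the skyscraper $\delta_e$, and its symmetry, descend from $\ast_*$ on $D^b_c(G)$; rigidity holds with dual $M^{\vee}=(\mathrm{inv})^{*}\mathbb{D}M$, with evaluation and coevaluation coming from the standard adjunctions as in \cite{Gabber}; and $\End_{\bar{\bP}}(\delta_e)=\Qlb$ since $\delta_e$ is a non-negligible simple object admitting no negligible sub- or quotient object.

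Part (C) is the step I expect to be the real obstacle: constructing an exact, faithful, $\otimes$-compatible functor $\bar{\bP}(G)\to\mathrm{Vec}_{\Qlb}$. The natural candidates are the generic fibre functors $\omega_\chi(M)=H^0(G,M_\chi)$, which by generic vanishing are exact and monoidal (Künneth) and faithful on non-negligible objects as soon as $\chi$ is generic relative to the finitely many objects at hand; the difficulty is that no single character is generic for every object simultaneously, and one gets around this either by working over the generic point of the group of characters — a pro-system of the $\omega_\chi$, as in \cite{Gabber} — or, in the case $G=\G_m$ (and, as the present paper shows, $G=\G_m^n$), by Deligne's explicit construction $M\mapsto H^0(\A^1,j_{0!}M)$, which needs no genericity and whose exactness and $\otimes$-compatibility must instead be checked directly. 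Given any such fibre functor the category is neutral Tannakian. The one indispensable external input throughout is the generic vanishing theorem, and the most delicate single task is securing the fibre functor in (C).
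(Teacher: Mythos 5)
This theorem is not proved in the paper at all: it is quoted from Forey--Fres\'an--Kowalski, so your decision to ``simply invoke'' \cite[Theorems 3.10, 3.15]{FFK} is exactly what the paper does, and your three-part sketch (negligibles form a thick tensor ideal, the cone of the forget-supports map is negligible via generic vanishing, fibre functors $\omega_\chi$ neutralize the category) is a fair summary of how the cited proof goes.

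Two slips in the sketch are worth flagging, though neither is fatal since you ultimately defer to \cite{FFK}. First, you have Artin's theorem backwards: for an affine morphism $m$ it is $Rm_*$ that is right perverse $t$-exact and $Rm_!$ that is left $t$-exact (this is the form \cite[Theorem 4.1.1]{BBD} is used in the paper's own proof of Lemma \ref{lemma: vanishing}), so for perverse $M,N$ one gets $M\ast_! N\in{}^pD^{\ge 0}$ and $M\ast_* N\in{}^pD^{\le 0}$ --- the opposite of what you wrote. The conclusion you want, that ${}^pH^i(M\ast N)$ is negligible for $i\neq 0$, still follows by the symmetric argument (vanishing of ${}^pH^{<0}$ of $\ast_!$ and of ${}^pH^{>0}$ of $\ast_*$, plus negligibility of the cone), but the exactness claims as stated are false. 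Second, $m\colon G\times G\to G$ is affine only when $G$ itself is affine, so this part of your argument covers tori but not the general connected commutative $G$ for which the theorem is stated; for abelian or semiabelian $G$ the stability of perversity modulo negligibles in \cite{FFK} is obtained from the generic vanishing theorem rather than from Artin vanishing. Your identification of the fibre functor as the delicate point, and of generic vanishing as the indispensable input, matches both \cite{FFK} and the use the paper makes of this theorem.
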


Finally, given an element $M\in \bar{\bP}(G)$, we let $\langle M\rangle$ denote the Tannakian subcategory of $\bar{\bP}(G)$ consisting of all subquotients of convolution powers of $M\oplus M^\vee$.  Then $\langle M\rangle$ is a Tannakian subcategory which in various cases will be useful to study. 

 \subsection{Generic vanishing theorems} 
The generic vanishing theorem is a key result about generic characters that we will use several times in this paper.  We recall its general statement, as well as a more specific version for tori. 

\begin{theorem}
\label{thm: generic}
    [Generic vanishing] \cite[Theorem 2.1]{FFK} 
     Let $G$ be a connected commutative algebraic group over a finite field $k$
and let $M$ be a perverse sheaf on $G$. The set of characters $\chi\in\widehat{G}$ satisfying 
\begin{align*}
    & H^i(G_{\bar{k}}, M_{\chi}) = H^i_c(G_{\bar{k}}, M_{\chi}) = 0 \quad \text{for all } i\neq 0 \\ 
    & H^0(G_{\bar{k}}, M_{\chi}) = H^0_c(G_{\bar{k}}, M_{\chi}) 
\end{align*}
is generic.
\end{theorem}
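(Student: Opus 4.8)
The statement to prove is Theorem~\ref{thm: ff}, asserting that the iterated functor $\omega(N) = Rp_{1*}j_{10!}Rp_{2*}j_{20!}\cdots Rp_{n*}j_{n0!}N$ is a fiber functor on $\bar{\bP}((\G_m^n)_{\bar k})$. My plan is to proceed by induction on $n$, using Deligne's $n=1$ case (the Tannakian fiber functor on $(\G_m)_{\bar k}$ constructed in \cite[Chapter 3]{KatzCE}) as the base. To be a fiber functor, $\omega$ must be an exact, faithful, $\bar{\Q}_l$-linear tensor functor $\bar{\bP}((\G_m^n)_{\bar k}) \to \mathrm{Vec}_{\bar{\Q}_l}$. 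The key structural idea is to factor $\omega$ as $\omega_1 \circ \Phi$, where $\Phi(N) = Rp_{n*}j_{n0!}N$ pushes the last $\G_m$-coordinate out to a point after extending by zero across $\A^1$, landing (after suitable bookkeeping) in the derived category on $\G_m^{n-1}$, and $\omega_1$ is the $(n-1)$-variable version of the functor. One first checks that $\Phi$ carries perverse sheaves on $\G_m^n$, modulo negligibles, to perverse sheaves on $\G_m^{n-1}$ modulo negligibles --- concretely, that $Rp_{n*}j_{n0!}N$ is concentrated in perverse degree $0$ after killing negligible constituents --- and is compatible with convolution in the first $n-1$ coordinates; then the inductive hypothesis applies to $\omega_1$.

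The heart of the argument is exactness and monoidality. For exactness, the crucial input is the generic vanishing theorem (Theorem~\ref{thm: generic}): for a perverse sheaf $M$ on $\G_m$ over $\bar k$, cohomology $H^i$ vanishes for $i \neq 0$ except on a non-generic (hence, for the purposes of the quotient, negligible) set of twists. Deligne's trick in the $n=1$ case is that $R p_{1*} j_{10!}N = R\Gamma_c(\G_m, N)$-type expression computes, up to a shift, just $H^0$ on the nose for non-negligible $N$, because extension by zero into $\A^1$ followed by pushforward to a point has the effect of the compactly-supported cohomology of $\G_m$, and the perversity of $N$ together with the absence of the ``bad'' cohomological degrees (which are precisely the negligible phenomena: constant sheaves, Artin--Schreier-type contributions) forces concentration in a single degree. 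I would reproduce this degree-counting coordinate by coordinate: applying $j_{n0!}$ then $Rp_{n*}$ to a perverse sheaf on $\G_m^{i+1}$ yields a complex on $\G_m^i$ whose perverse cohomology outside degree $0$ is negligible, using the relative generic vanishing / Artin vanishing for the affine morphism $p_n$ and the fact that $j_{n0!}$ is exact for the perverse $t$-structure on an open immersion of the affine line into itself (here $\G_m^i \times \G_m \hookrightarrow \G_m^i \times \A^1$). Faithfulness then follows from exactness plus the observation that $\omega(N) = 0$ forces $N$ negligible, i.e. $N = 0$ in $\bar{\bP}$, which one gets by running the same degree analysis in reverse: a nonzero object has nonzero $H^0$ on a generic set of twists, and the iterated construction detects this.

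For the tensor structure, I would exhibit a natural isomorphism $\omega(M * N) \cong \omega(M) \otimes \omega(N)$ compatible with associativity and unit constraints. The mechanism is the projection formula together with proper/smooth base change around the multiplication map $m\colon \G_m^n \times \G_m^n \to \G_m^n$: convolution $M * N = Rm_!(M \boxtimes N)$ (which equals $Rm_*$ and middle convolution in the quotient, by \cite[Lemma 3.8]{FFK}), and pushing forward along $\omega$ commutes with $Rm_!$ after the extension-by-zero steps are threaded through --- essentially because $\omega$ is, up to the negligible ambiguity, ``integration over $\G_m^n$'', and Fubini for iterated pushforward along the coordinate projections gives the Künneth isomorphism. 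The $n=1$ monoidality is exactly Deligne's computation; the inductive step is monoidality of $\Phi$ in the last coordinate combined with $\omega_1$'s monoidality in the first $n-1$. I expect the main obstacle to be the first point above: verifying rigorously that each intermediate complex $Rp_{i+1\,*}j_{(i+1)0!}(-)$ lands, modulo negligibles, in perverse degree $0$ on $\G_m^i$ --- this requires a careful relative version of generic vanishing (vanishing of the higher perverse direct images after a generic twist in the remaining $i$ variables, so that the "bad" perverse cohomology is negligible rather than merely non-generic in one variable), and reconciling the partial twisting with the fact that negligibility is defined via characters of the full group $\G_m^n$. Handling this compatibility of "negligible" under the inductive slicing is where the real work lies; once it is in place, exactness, faithfulness, and monoidality assemble from the $n=1$ case and standard six-functor formalism.
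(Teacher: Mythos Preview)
You have proved the wrong theorem. The statement you were given is Theorem~\ref{thm: generic}, the generic vanishing theorem, which the paper does not prove at all: it is simply quoted from \cite[Theorem~2.1]{FFK} as an external input. Your proposal instead addresses Theorem~\ref{thm: ff}, the construction of the fiber functor $\omega$ on $\bar{\bP}((\G_m^n)_{\bar k})$. Ironically, you even invoke Theorem~\ref{thm: generic} as a tool inside your argument, so there is no sense in which your write-up could be a proof of it.

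Since you did put real thought into Theorem~\ref{thm: ff}, a brief comparison with the paper's actual proof of \emph{that} result may still be useful. Your plan is inductive: factor $\omega = \omega_1 \circ \Phi$ with $\Phi = Rp_{n*}j_{n0!}$, check that $\Phi$ descends to a tensor functor $\bar{\bP}((\G_m^n)_{\bar k}) \to \bar{\bP}((\G_m^{n-1})_{\bar k})$, and apply the inductive hypothesis. The paper does \emph{not} do this. Instead, it proves directly (Lemma~\ref{lemma: vanishing}) that the full composite $\omega(N)$ is concentrated in degree $0$ for every perverse $N$, with no passage to the quotient by negligibles at intermediate stages. The argument is a one-shot $t$-exactness computation: each of $Rp_{k*}$, $Rj_{k\infty*}$, $j_{k0!}$ is right $t$-exact for the perverse $t$-structure by \cite[4.1.1, 4.1.3]{BBD}, giving vanishing in positive degrees; vanishing in negative degrees then follows by Verdier duality, which swaps the roles of $0$ and $\infty$ and reproduces the same shape of functor. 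Generic vanishing is not used here at all. Faithfulness and exactness are then immediate from Lemma~\ref{lemma: vanishing} and the Euler-characteristic formula, and the tensor structure is obtained by literally rerunning Katz's $n=1$ argument from \cite[Chapter~30]{KatzCE} with $K,N$ replaced by $Rp_{2*}j_{20!}\cdots Rp_{n*}j_{n0!}K$ and the analogous object for $N$.

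Your inductive route is not wrong in spirit --- the paper itself remarks that $Rp_{n*}j_{n0!}$ is likely a tensor functor between the successive $\bar{\bP}$'s --- but the obstacle you correctly flag (controlling the perverse amplitude of $\Phi(N)$ modulo negligibles, and reconciling ``negligible on $\G_m^{n-1}$'' with ``negligible on $\G_m^n$'') is precisely what the paper sidesteps by never working at an intermediate stage. The direct $t$-exactness plus duality trick is both shorter and avoids any appeal to generic vanishing for this part of the argument.
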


 We abbreviate translates of algebraic cotorus by \textit{tac}.  

\begin{theorem}
\label{thm: generic vanishing}
    [Corollary 2.13 of \cite{FFK}] Let $T$ be a torus of dimension $d$ over $k$.  Let $M\in\mP(T)$.  For $-d\le i\le d$, the set
    \[
\{\chi\in \widehat{T}| H^i(T_{\kbar}, M_\chi)\neq 0\}
    \] 
    is contained in a finite union of tacs of $T$ of dimension $\le d-|i|$. 
\end{theorem}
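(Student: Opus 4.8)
The plan is to induct on $d=\dim T$. First note that the cases $i>0$ are vacuous: since $T=\G_m^n$ is affine of dimension $d$, Artin vanishing gives $H^i(T_{\kbar},M_\chi)=0$ for all $i>0$ and all $\chi$, so the jumping locus is empty; and $i=0$ is also vacuous, since the whole of $\widehat T$ has dimension $d$. So the content is to bound $\{\chi : H^i(T_{\kbar},M_\chi)\ne 0\}$ for $-d\le i\le -1$. For $d=1$ this will be a direct computation on the affine curve $\G_m$; for $d>1$ I will propagate the statement along a $\G_m$-fibration $\pi\colon T\to T''$ with $\dim T''=d-1$, using the projection formula and the inductive hypothesis applied to the perverse cohomology sheaves of $R\pi_*$.

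For the base case $d=1$, let $M$ be perverse on $\G_m$, so $\mathcal H^j(M)=0$ outside $j\in\{-1,0\}$ and $\mathcal H^{-1}(M)$ has no subsheaf supported in dimension zero. Only $i=-1$ carries content, where the hypercohomology spectral sequence gives $H^{-1}(\G_{m,\kbar},M_\chi)=H^0(\G_{m,\kbar},\mathcal H^{-1}(M)\otimes\L_\chi)$. Writing $\mathcal H^{-1}(M)$ as the extension (with no punctual subsheaf) of a lisse sheaf $\mathcal F$ of rank $r$ on a dense open $j\colon U\hookrightarrow\G_m$, this group injects into $H^0(U,\mathcal F\otimes\L_\chi|_U)=(\mathcal F_{\bar\eta}\otimes(\L_\chi)_{\bar\eta})^{\pi_1^{\et}(U)}$, which is nonzero only for the at most $r$ characters $\chi$ whose inverse occurs as a constituent of the monodromy of $\mathcal F$. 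Hence $\{\chi : H^{-1}\ne 0\}$ is finite, i.e. a finite union of tacs of dimension $0$, which is the required bound $d-|i|$.

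For the inductive step, choose a surjection $\pi\colon T\to T''$ of tori with one-dimensional kernel and a splitting $T\cong T''\times\G_m$; let $q\colon T\to\G_m$ be the second projection, so that $\widehat T\cong\widehat{T''}\times\widehat{\G_m}$, a character is a pair $\chi=(\chi'',\psi)$, and $\L_\chi\cong\pi^*\L_{\chi''}\otimes q^*\L_\psi$. The projection formula gives
\[
H^i(T_{\kbar},M_\chi)\;\cong\;H^i\!\big(T''_{\kbar},\ R\pi_*(M\otimes q^*\L_\psi)\otimes\L_{\chi''}\big).
\]
Since $\pi$ is affine with one-dimensional fibres, $N_\psi\coloneqq R\pi_*(M\otimes q^*\L_\psi)$ has perverse amplitude $[0,1]$, and the perverse spectral sequence contributes $H^a(T''_{\kbar},{}^{p}\mathcal H^{b}(N_\psi)\otimes\L_{\chi''})$, with $b\in\{0,1\}$ and $a+b=i$, to the left-hand side. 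Applying the inductive hypothesis on $T''$ (dimension $d-1$) to the perverse sheaves ${}^{p}\mathcal H^{0}(N_\psi)$ and ${}^{p}\mathcal H^{1}(N_\psi)$, for each fixed $\psi$ the set of $\chi''$ with $H^i(T_{\kbar},M_\chi)\ne 0$ lies in a finite union of tacs of $\widehat{T''}$ of dimension $\le(d-1)-|i|$ (for $i\le -1$ the ${}^{p}\mathcal H^{1}$-contribution lands in strictly smaller tacs, so the bound $(d-1)-|i|$ dominates). Letting $\psi$ range over the one-dimensional $\widehat{\G_m}$, the union of these slices has dimension $\le 1+(d-1)-|i|=d-|i|$, exactly the asserted bound.

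The gap in the argument just sketched, and the main obstacle, is \emph{uniformity in the fibre character $\psi$}: the union over all $\psi\in\widehat{\G_m}$ of the per-$\psi$ slices is a priori only an infinite union of tacs, whereas we need a finite one — one must show these jumping loci fit together into a genuine finite union of tacs of $\widehat T$ of the correct dimension. This is the Gabber--Loeser linearity-of-support theorem for tori, and the clean way to obtain it is through the Mellin transform: up to a base-change spectral sequence that does not affect the conclusion, $H^i(T_{\kbar},M_\chi)$ is the fibre at $\chi$ of the $i$-th cohomology sheaf of the Mellin transform $\mathfrak M(M)\in D^b_c(\widehat T)$, so the jumping locus is the support of the coherent sheaf $\mathcal H^i(\mathfrak M(M))$; the generic vanishing Theorem~\ref{thm: generic} says exactly that this sheaf is torsion for $i\ne 0$, and the Gabber--Loeser structure theorem \cite{Gabber} identifies the support of such a torsion sheaf as a finite union of tacs, with the codimension bound $\ge|i|$ coming from the $\G_m$-fibration dévissage above (rephrased as a bound on the amplitude of $\mathfrak M$ relative to each quotient torus) together with the dimension count. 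Granting this structural input, available from \cite{Gabber,FFK}, the remaining fibration and dimension bookkeeping is routine; the substance of the result is precisely the passage from ``generic vanishing'' to ``linear jumping loci of controlled codimension.''
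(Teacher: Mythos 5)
There is a genuine gap, and it sits exactly where you flag it. The paper does not prove this statement at all: it imports it verbatim as Corollary 2.13 of \cite{FFK} (whose proof rests on the Mellin-transform/linearity-of-support machinery of Gabber--Loeser). Your argument defers precisely that input -- ``the jumping locus is the support of a torsion cohomology sheaf of the Mellin transform, and such supports are finite unions of tacs of the right codimension'' -- back to \cite{Gabber,FFK}. That is not a supporting lemma; it \emph{is} the theorem being proved, so as a blind proof the argument is circular: everything you prove unconditionally (the $i>0$ and $i=0$ cases, the $d=1$ base case) is the easy part, and the per-$\psi$ slicing does not upgrade to a finite union of tacs without the structural result you are assuming.

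There is also a concrete error in the inductive bookkeeping. For the affine projection $\pi\colon T\to T''$ with one-dimensional fibres, $R\pi_*$ of a perverse sheaf has perverse amplitude $[-1,0]$, not $[0,1]$: right t-exactness comes from affineness (BBD 4.1.1, i.e.\ relative Artin vanishing), and the lower bound ${}^pD^{\ge 0}\to{}^pD^{\ge -1}$ comes from the fibre dimension. With the correct amplitude, the ${}^p\mathcal H^{-1}(N_\psi)$-contribution to $H^i$ is controlled, via the inductive hypothesis on $T''$, only by tacs of dimension $(d-1)-|i+1| = d-|i|$ for $i\le -1$; after sweeping out the one-dimensional $\psi$-direction the naive count gives $d-|i|+1$, which overshoots the asserted bound. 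So even granting per-$\psi$ finiteness, the induction as written does not close: you would need the additional fact that ${}^p\mathcal H^{-1}(R\pi_*(M\otimes q^*\L_\psi))$ is nonzero for only finitely many $\psi$ (itself a generic-vanishing statement along the fibres), or you must argue directly through the Mellin transform as Gabber--Loeser and \cite{FFK} do -- which again is the content of the cited corollary rather than something recoverable by the fibration dévissage alone.
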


\subsection{A fiber functor} 
Our goal now is to construct an explicit fiber functor on $\bar{\bP}((\G_m^n)_{\bar{k}})$, which for convenience we will abbreviate by $\bar{\bP}$.  In the case of $n=1$, there already is one provided by a result of Deligne as follows.  Let $j_0\colon \G_m\rightarrow \A^1$ be the inclusion.  Then the functor 
\[
M\mapsto H^0(\A^1, j_{0!}M)
\]
is a fiber functor on $\bar{\bP}((\G_m)_{\bar{k}})$ \cite[Theorem 3.1]{KatzCE}.  

For the higher dimensional case, we begin by defining the inclusion maps 

\[
j_{k0}, j_{k\infty}: \G_m^n \rightarrow \G_m^{k-1} \times \A^1 \times \G_m^{n-k}
\]

where for $j_{k0}$, the factor of $\A^1$ is viewed as $\P^1\backslash\infty$ while for $j_{k\infty}$, the factor of $\A^1$ is viewed as $\P^1\backslash 0$.  By a slight abuse of notation we will use the notation for the same inclusion where the domain is slightly altered, specifically to denote the maps 
\[
j_{k0}, j_{k\infty}: \G_m^k \rightarrow \G_m^{k-1}\times \A^1 \quad \text{and} \quad \G_m^{k-1} \times \A^1 \rightarrow \G_m^{k-1} \times \P^1. 
\]

Again by a slight abuse of notation, denote the following projections 
\[
p_k: \G_m^{k-1}\times \A^1 \rightarrow \G_m^{k-1}, \quad \G_m^{k-1}\times \P^1 \rightarrow \G_m^{k-1}. 
\]

\begin{theorem}
\label{thm: ff}
The functor   
\[
\omega(N) = Rp_{1*}j_{10!}Rp_{2*}j_{20!} \cdots Rp_{n*}j_{n0!}N 
\] 
is a fiber functor on $\bar{\bP}((\G_m^n)_{\bar{k}})$.    
\end{theorem}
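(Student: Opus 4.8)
The plan is to prove this by induction on $n$, bootstrapping from Deligne's case $n=1$ recalled above. The key structural observation is that $\omega$ is a composite of $n$ functors of the same shape: writing $\omega_k(M) \coloneqq Rp_{k*}j_{k0!}M$ for the ``extend by zero into $\A^1$ along the last $\G_m$-factor, then pushforward'' operation $D^b_c((\G_m^k)_{\bar k}) \to D^b_c((\G_m^{k-1})_{\bar k})$, we have $\omega = \omega_1 \circ \omega_2 \circ \cdots \circ \omega_n$. So the heart of the matter is to understand a single step $\omega_n$ and then compose. For this, I would first establish that $\omega_n$ carries perverse sheaves on $\G_m^n$ to perverse sheaves on $\G_m^{n-1}$ modulo negligibles — more precisely, that for $N \in \bP(\G_m^n)$ the complex $\omega_n(N)$ is concentrated in perverse degree $0$ after killing negligible objects. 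By a relative version of Deligne's argument (applying the generic vanishing theorem, Theorem~\ref{thm: generic}, fiberwise over $\G_m^{n-1}$, or equivalently after twisting by a generic character of the last factor), one sees that $j_{n0!}N$ has no perverse cohomology in the ``bad'' directions coming from the puncture, and that $Rp_{n*}$ of it is then perverse up to negligibles. The crucial input here is exactly the localization of generic vanishing provided by Theorem~\ref{thm: generic vanishing}: the characters of $\G_m^n$ for which higher cohomology along fibers of $p_n$ survives are constrained to lie on lower-dimensional tacs, which is what makes the construction work generically.

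\textbf{Second step: tensor-compatibility.} Having shown $\omega$ lands in a (super-)vector space of the right kind, I would verify that $\omega$ is a tensor functor, i.e. $\omega(M * N) \cong \omega(M) \otimes \omega(N)$ compatibly with associativity and commutativity constraints. This is where the precise shape of the functor matters. The standard trick (used by Katz--Deligne for $n=1$) is a Künneth-plus-base-change computation: convolution $M * N = Rm_!(M \boxtimes N)$ with $m\colon \G_m^n \times \G_m^n \to \G_m^n$ the group law, combined with the fact that the ``extend by $0$ into $\A^1$ then push forward'' operation computes, roughly, a fiber of $M$ at a formal point, turns the convolution into a tensor product of fibers. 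Concretely, one shows that $Rp_{n*}j_{n0!}$ intertwines the convolution on $\G_m^n$ (over the last coordinate) with the external product, using the compatibility of $j_{n0!}$ and $Rm_!$ along the $\A^1$-compactification and proper base change; then one iterates. I would organize this as: (a) a single-variable identity $Rp_*j_{0!}(M *_! N) \cong (Rp_*j_{0!}M) \otimes (Rp_*j_{0!}N)$ on $\G_m$ (essentially Deligne's lemma), (b) a relative/parametrized version of (a) over a base $\G_m^{k-1}$, and (c) assembling (b) for $k = n, n-1, \dots, 1$ via the inductive hypothesis that $\omega_1\cdots\omega_{k}$ is already a tensor functor on the relevant category.

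\textbf{Third step: exactness, faithfulness, and neutrality.} A fiber functor must be exact and faithful (and $\bar\Q_l$-linear). Exactness follows from the perverse-degree-concentration in step one (applied to short exact sequences of perverse sheaves modulo negligibles, using that the connecting maps vanish by the generic-vanishing normalization). Faithfulness follows from exactness together with the fact that $\omega$ does not kill nonzero objects: if $\omega(N) = 0$ then, unwinding the composite and using that each $\omega_k$ only kills negligibles, $N$ must itself be negligible, hence zero in $\bar\bP$. Finally one checks $\omega(\mathbf{1}) = \bar\Q_l$ where $\mathbf{1} = \delta_e$ is the skyscraper at the identity (a direct computation: extend by zero, push forward, repeat — at each stage one gets a skyscraper at the identity of the smaller torus). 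Together with step two this gives a neutral fiber functor, hence the Tannakian structure promised by \cite[Theorem 3.10, 3.15]{FFK} is represented by an algebraic group over $\bar\Q_l$.

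\textbf{Main obstacle.} I expect the genuine difficulty to be step two, the tensor-compatibility, and within it the careful bookkeeping of \emph{which} convolution ($*_!$ versus $*_*$ versus $*_{\mi}$) one uses at each stage and why the forget-supports maps are isomorphisms after each $\omega_k$. The subtlety is that $j_{k0!}$ uses lower-shriek while $Rp_{k*}$ uses derived pushforward, so the natural identities mix $!$ and $*$; one must check that the discrepancy is always negligible, invoking \cite[Lemma 3.8]{FFK} (that $*_!$, $*_*$, $*_{\mi}$ all agree modulo negligibles) at the right moment, and that the base-change maps needed to commute $Rp_{k*}$ past $Rm_!$ are isomorphisms on the nose (which holds because $p_k$ and $m$ are ``independent'' in the appropriate Cartesian sense, or else only up to a controlled error). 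Getting the constraints (associativity, symmetry) to match rather than merely the underlying objects is the part that requires real care; everything else is a relative upgrade of Deligne's original $n=1$ argument.
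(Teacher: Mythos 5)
There is a genuine gap, and it sits exactly where you place the weight of your first step. You propose to get the degree-zero concentration (hence exactness and perversity modulo negligibles) from the generic vanishing theorems, applied ``fiberwise'' or ``after twisting by a generic character of the last factor.'' But generic vanishing only says something about $N\otimes\L_\chi$ for $\chi$ outside a small set of characters; the functor $\omega$ involves no character twist and must be exact on \emph{every} object of $\bar\bP$, so a statement valid only for generic $\chi$ cannot control $H^*(\A^1, j_{10!}Rp_{2*}j_{20!}\cdots N)$. The mechanism that actually makes this work (Lemma \ref{lemma: vanishing} in the paper) is different: one first commutes the extension by zero at $0$ past the pushforward at $\infty$, $Rj_{i\infty*}j_{i0!}=j_{i0!}Rj_{i\infty*}$, then uses that $j_{k0!}$, $Rj_{k\infty*}$ and $Rp_{k*}$ are right $t$-exact for the perverse $t$-structure (the maps being affine, by \cite[Theorem 4.1.1, Corollary 4.1.3]{BBD}) to kill positive degrees, and finally applies Verdier duality — available because the projections from $\P^1$ are proper — to reduce negative degrees to the same shape; the dimension statement $\dim\omega(N)=\chi(\G_m^n,N)$ then comes from Laumon's $\chi=\chi_c$. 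That Euler-characteristic identity is also what your faithfulness argument is silently using: ``each $\omega_k$ only kills negligibles'' is precisely the assertion that $\omega(N)=0$ forces $\chi(\G_m^n,N)=0$, i.e.\ $N$ negligible (Remark \ref{remark: neg}), and it does not follow from exactness alone.

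A second, structural caveat: your induction factors $\omega=\omega_1\circ\cdots\circ\omega_n$ and assumes each single step $Rp_{k*}j_{k0!}$ is a faithful exact tensor functor $\bar\bP((\G_m^k)_{\bar k})\to\bar\bP((\G_m^{k-1})_{\bar k})$. The paper deliberately avoids proving this (it is only remarked as ``likely'' provable) and instead argues directly on the full composite, since only the global cohomology on the last $\A^1$ needs to be controlled. If you want to keep the inductive architecture you must actually establish the intermediate perversity-modulo-negligibles claim, and again the tool for that would be affineness/$t$-exactness plus duality, not generic vanishing. Your tensor step, by contrast, is essentially the right strategy: after the dimension count via K\"unneth and the fact that the cones of the forget-supports maps are negligible (\cite[Lemma 3.8]{FFK}), the construction of the two maps composing to the identity is Katz's Chapter 30 argument, which goes through verbatim once one records that the relevant complexes vanish at $0$ and their duals vanish at $\infty$.
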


\begin{remark}
This definition makes sense a posteriori by Lemma \ref{lemma: vanishing}, which implies that the complex $\omega(N)$ vanishes in all nonzero degrees.  This also shows that in the case of $n=1$, $\omega(-)$ coincides with the fiber functor of Deligne referred to above.
\end{remark} 

The proof of Theorem \ref{thm: ff} occupies the remainder of the section.  The three things that need to be proved is that $\omega(-)$ is faithful (Proposition \ref{prop: faithful}), exact (Proposition \ref{prop: exact}), and a tensor functor (Proposition \ref{prop: tensor}). 

\begin{remark}
    The same methods can likely be used to show that $Rp_{n*}j_{n0!}$ is a faithful exact tensor functor from $\bar{\bP}((\G_m^n)_{\bar{k}})$ to $\bar{\bP}((\G_m^{n-1})_{\bar{k}})$. This suggests the question of whether one can use this method as an intermediate step towards constructing fiber functors for other algebraic groups, such as semiabelian varieties.  (This remark arose from a comment by Nick Katz.)
\end{remark}

\subsubsection{Exactness and faithfulness} 
We recall the following lemma from \cite{KatzCE}.

\begin{lemma}
\label{lemma: katz dim}
[Lemma 3.4 of \cite{KatzCE}] For any perverse sheaf $N$ on $\G_m$, the groups $H^i(\A^1, j_{0!}N)$ vanish for $i\neq 0$, and 
\[
\dim H^0(\A^1, j_{0!}N) = \chi(\G_m, N)= \chi_c(\G_m, N). 
\]
\end{lemma}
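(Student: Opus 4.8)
The plan is to show that the complex $R\Gamma(\A^1, j_{0!}N)$ is concentrated in cohomological degree $0$, and then to extract the dimension from the Euler characteristic. Since $j_0\colon \G_m\hookrightarrow\A^1$ is an affine open immersion, $j_{0!}$ is perverse $t$-exact, so $j_{0!}N$ is a perverse sheaf on the affine curve $\A^1$. Artin's affine vanishing theorem gives $H^i(\A^1, j_{0!}N)=0$ for $i>0$, while the hypercohomology spectral sequence for a perverse sheaf on a curve forces $H^i(\A^1, j_{0!}N)=0$ for $i<-1$. Thus the entire content is to rule out $H^{-1}(\A^1, j_{0!}N)$.

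To do this I would apply $R\Gamma(\A^1,-)$ to the excision triangle $j_{0!}N\to Rj_{0*}N\to i_{0*}i_0^*Rj_{0*}N$ attached to the complementary point $i_0\colon\{0\}\hookrightarrow\A^1$, which (using $j_0^*Rj_{0*}N = N$ and $R\Gamma(\A^1,Rj_{0*}N)=R\Gamma(\G_m,N)$) yields a distinguished triangle
\[
R\Gamma(\A^1, j_{0!}N)\to R\Gamma(\G_m, N)\to i_0^*Rj_{0*}N\xrightarrow{+1}.
\]
Write $\mathcal{G}:=\mathcal{H}^{-1}(N)$. Since the punctual sheaf $\mathcal{H}^0(N)$ is supported inside $\G_m$, it has vanishing stalk at $0$ after $Rj_{0*}$, so $i_0^*Rj_{0*}N\cong(i_0^*Rj_{0*}\mathcal{G})[1]$ lives in cohomological degrees $[-1,0]$, with $\mathcal{H}^{-1}$ equal to $\mathcal{G}_{\bar\eta}^{I_0}$, the inertia invariants of the generic geometric stalk; similarly $H^{-1}(\G_m,N)\cong H^0(\G_m,\mathcal{G})$. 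The long exact sequence then identifies $H^{-1}(\A^1, j_{0!}N)$ with the kernel of the natural map $r\colon H^0(\G_m,\mathcal{G})\to\mathcal{G}_{\bar\eta}^{I_0}$, which is restriction of a global section of $\mathcal{G}$ to the punctured strict henselization of $\A^1$ at $0$.

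It then remains to prove $r$ injective. The map $r$ factors through restriction $H^0(\G_m,\mathcal{G})\to\mathcal{G}_{\bar\eta}$ to the generic geometric stalk, so it suffices that a global section of $\mathcal{G}$ vanishing at the generic point is zero; this holds because $\mathcal{H}^{-1}$ of a perverse sheaf on a smooth curve has no nonzero punctual subsheaf, so the subsheaf generated by such a section, being punctual, vanishes. I expect this step to be the only real obstacle: purely formal $t$-exactness arguments do not suffice, because the stalk and costalk of $\mathrm{IC}_{\A^1}$ at $0$ sit in perverse degrees $-1$ and $+1$ and are therefore invisible to the one-sided exactness of $i_0^*$ and $i_0^!$, so one genuinely needs the structure theory of perverse sheaves on a curve. (Alternatively, one could first d\'evissage $N$ into simple constituents of the form $\delta_x$ and $(j_V)_{!*}(\mathcal{F}[1])$ and argue by hand, but this uses the same input.)

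Finally, once $R\Gamma(\A^1, j_{0!}N)$ is concentrated in degree $0$ we have $\dim H^0(\A^1, j_{0!}N)=\chi(\A^1, j_{0!}N)$. By additivity of the compactly supported Euler characteristic along the stratification $\A^1=\G_m\sqcup\{0\}$, together with $i_0^*j_{0!}N=0$, we get $\chi_c(\A^1, j_{0!}N)=\chi_c(\G_m,N)$; combined with Laumon's equality $\chi=\chi_c$ this gives
\[
\dim H^0(\A^1, j_{0!}N)=\chi(\G_m, N)=\chi_c(\G_m, N),
\]
as required.
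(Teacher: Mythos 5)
Your proof is correct, but it takes a different route from the one the paper relies on. The paper does not reprove this $n=1$ statement directly: it cites Katz's Lemma 3.4, whose proof goes through the classification of perverse sheaves on $\A^1$, and then reproves the statement (in the generality of $\G_m^n$, Lemma \ref{lemma: vanishing}) by a purely formal argument: positive-degree vanishing from right $t$-exactness of $j_{0!}$, $Rj_{\infty*}$ and affine pushforwards, negative-degree vanishing by Verdier duality on the compactification $\P^1$ (which swaps the roles of $0$ and $\infty$ and reduces to the positive-degree case for $DN$), and Laumon's $\chi=\chi_c$ for the dimension count. You instead kill $H^{-1}$ directly: the excision triangle at $0$ identifies it with the kernel of the restriction $H^0(\G_m,\mathcal{H}^{-1}N)\to(\mathcal{H}^{-1}N)_{\bar\eta}^{I_0}$, and injectivity follows because $\mathcal{H}^{-1}$ of a perverse sheaf on a smooth curve has no nonzero punctual subsheaf. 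This is lighter than the full classification Katz uses (it only invokes the costalk half of perversity), and all the identifications you make (vanishing of the skyscraper part under $i_0^*Rj_{0*}$, the degree bounds on $i_0^*Rj_{0*}N$, the support of a section being closed) are sound; the Euler-characteristic step via additivity plus Laumon matches the paper's. One quibble with your parenthetical remark: it is not true that formal $t$-exactness arguments cannot suffice — the paper's proof of Lemma \ref{lemma: vanishing} shows that, after pushing to $\P^1$ and commuting the $!$-extension at $0$ past the $*$-pushforward at $\infty$, Verdier duality converts the one-sided exactness into the full concentration statement with no curve-specific structure theory; this is exactly what lets the argument scale to $\G_m^n$, whereas your use of the structure of perverse sheaves on a curve ties the argument to $n=1$.
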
 

In \cite{KatzCE} this was proven using the classification of perverse sheaves on $\A^1$.  Here we prove a generalization of this lemma to $\G_m^n$ using a different method. 

\begin{lemma}
\label{lemma: vanishing}
For any perverse sheaf $N$ on $\G_m^n$, the groups $G_i\coloneqq H^i(\A^1, j_{10!}Rp_{2*}j_{20!} \cdots Rp_{n*}j_{n0!}N )$ vanish for $i\neq 0$, and 
\[
\dim H^0(\A^1, j_{10!}Rp_{2*}j_{20!} \cdots Rp_{n*}j_{n0!}N ) = \chi(\G_m^n, N)= \chi_c(\G_m^n, N). 
\]
\end{lemma}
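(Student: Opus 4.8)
The plan is to prove Lemma \ref{lemma: vanishing} by induction on $n$, peeling off one torus coordinate at a time. The base case $n=1$ is precisely Lemma \ref{lemma: katz dim}. For the inductive step, I would set $M\coloneqq Rp_{n*}j_{n0!}N$, a complex on $\G_m^{n-1}$, and hope to run the inductive hypothesis on its perverse cohomology sheaves. The key point to establish first is that $j_{n0!}N$ is itself perverse on $\G_m^{n-1}\times\A^1$: since $j_{n0}$ is an affine open immersion, $j_{n0!}$ preserves perversity up to the usual conventions, so this is standard. Then $M = Rp_{n*}j_{n0!}N$ sits in perverse degrees $[-1,1]$ or so by Artin vanishing (the fibers of $p_n$ are affine curves), and I need to understand $H^\bullet(\A^1, j_{1,0!}Rp_{2*}j_{2,0!}\cdots Rp_{(n-1)*}j_{(n-1)0!}{}^pH^i(M))$ for each $i$.

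The core computation is a Leray/base-change argument combined with the $n=1$ case applied fiberwise to $p_n$. Concretely, $Rp_{n*}j_{n0!}N$ should be computed as follows: for the extension-by-zero $j_{n0!}N$ along $\G_m^n\hookrightarrow\G_m^{n-1}\times\A^1$, the derived pushforward $Rp_{n*}$ along the $\A^1$-factor. Applying proper (or smooth) base change along a geometric point $x\in\G_m^{n-1}$, the stalk of $Rp_{n*}j_{n0!}N$ at $x$ computes $R\Gamma(\A^1, j_{0!}(N|_{\{x\}\times\G_m}))$, which by Lemma \ref{lemma: katz dim} is concentrated in degree matching $\chi(\G_m, N|_{\{x\}\times\G_m})$ — but one must be careful that $N|_{\{x\}\times\G_m}$ need not be perverse for all $x$. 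The cleaner route is: $\chi(\G_m^n, N) = \chi(\G_m^{n-1}, Rp_{n*}j_{n*}^{\G_m}N)$ type identities, together with the fact that on the total space the Euler characteristic is unchanged by $j_{n0!}$ and by $Rp_{n*}$ (Euler characteristic is additive and insensitive to the affine-line direction here because $\chi(\A^1,-) = \chi(\{pt\},-)$ for the relevant sheaves, i.e. $\chi_c(\G_m) = 0$). So the numerical equality $\dim H^0 = \chi(\G_m^n,N) = \chi_c(\G_m^n,N)$ follows once the vanishing in nonzero degrees is known, by taking Euler characteristics through the whole chain.

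For the vanishing itself, I would argue that $\omega$ factors as $\omega_{n-1}\circ(Rp_{n*}j_{n0!})$ where $\omega_{n-1}$ is the conjectural fiber-functor-to-be on $\G_m^{n-1}$ and $Rp_{n*}j_{n0!}$ lands (up to negligible sheaves and a shift) in perverse sheaves on $\G_m^{n-1}$; then apply the inductive hypothesis. Making "lands in perverse sheaves" precise is where the work is: one needs that the perverse cohomology ${}^pH^i(Rp_{n*}j_{n0!}N)$ vanishes for $i\neq 0$, or at least that the nonzero ones are negligible and hence killed cohomologically by the remaining operations. This is plausibly where the generic vanishing theorem (Theorem \ref{thm: generic vanishing}) enters: the "extra" perverse cohomology sheaves should be supported on tacs of positive codimension, and pushing forward such sheaves through the remaining $j$'s and $p$'s contributes nothing to $H^0(\A^1,-)$ and nothing to higher cohomology because their supports are lower-dimensional. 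I would combine this with Artin's affine vanishing theorem (each $p_k$ restricted to the relevant locus has affine fibers of dimension $\le 1$, and $j_{k0!}$ preserves the relevant semiperversity) to pin down the cohomological amplitude.

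\textbf{Main obstacle.} I expect the crux to be controlling the perverse amplitude of $Rp_{n*}j_{n0!}N$ and showing the off-degree pieces are negligible — equivalently, showing the chain of operations $Rp_{1*}j_{10!}\cdots Rp_{(n-1)*}j_{(n-1)0!}$ kills everything supported on proper tacs while being exact on the honestly-perverse part. The subtlety is that $j_{n0!}$ of a perverse sheaf is perverse, but $Rp_{n*}$ of that is generally not, and a naive induction stumbles because the intermediate object is a complex rather than a perverse sheaf; the generic vanishing theorem plus a careful weight/support bookkeeping (tracking that any $H^i$ with $i\neq 0$ forces the source sheaf onto a positive-codimension tac, whose further pushforward has no $H^0$ and no higher cohomology after one more extension-by-zero into $\A^1$) is what should make the induction close. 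A secondary technical point is justifying the base-change identity for $Rp_{n*}j_{n0!}$ at geometric points of $\G_m^{n-1}$ and the additivity of Euler characteristics through the whole tower, but those are routine once the amplitude is under control.
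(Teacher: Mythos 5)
Your plan does not close: the entire weight of the argument rests on the step you yourself flag as the ``main obstacle,'' namely that $Rp_{n*}j_{n0!}N$ is perverse up to pieces in other degrees that are negligible, and that such pieces are killed by the remaining operations. Nothing in the proposal proves this, and the tool you point to is not the right one: the generic vanishing theorem (Theorem \ref{thm: generic vanishing}) controls $H^i(T_{\kbar}, M_\chi)$ for character twists of a perverse sheaf on the torus; it says nothing about the perverse cohomology ${}^pH^{-1}(Rp_{n*}j_{n0!}N)$ of a partial pushforward with an extension by zero inserted, nor about its Euler characteristic. (Granting that ${}^pH^{-1}$ were negligible, the rest of your induction would in fact go through, since by the inductive hypothesis $\omega_{n-1}$ kills negligible perverse sheaves in all degrees; so the gap is precisely this unproved negligibility/vanishing, which is essentially the perversity statement the paper itself only conjectures in the remark after Theorem \ref{thm: ff}.) A secondary but real problem is your fiberwise computation of the stalks of $Rp_{n*}j_{n0!}N$: $p_n$ is not proper and the inclusion of a point is not smooth, so neither proper nor smooth base change applies at an arbitrary geometric point; only Deligne's generic base change, or a prior compactification, gives such a description, and your Euler-characteristic bookkeeping implicitly leans on it.

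The paper avoids the intermediate perversity question entirely. It first observes that extension by zero at $0$ commutes with pushforward at $\infty$, $Rj_{k\infty*}j_{k0!}=j_{k0!}Rj_{k\infty*}$, so that $G_i$ is computed on products of $\P^1$'s as the cohomology of a composite of the functors $Rp_{k*}$, $Rj_{k\infty*}$, $j_{k0!}$. All the relevant morphisms are affine, so by \cite[Theorem 4.1.1, Corollary 4.1.3]{BBD} every functor in the chain is right t-exact for the perverse t-structure; hence the total complex lies in ${}^pD^{\le 0}$ and $G_i=0$ for $i>0$. For $i<0$ it applies Verdier duality: because of the commutation above and because the projections from the $\P^1$-factors are proper (so $D$ commutes with them), the dual expression has exactly the same shape with the roles of $0$ and $\infty$ exchanged and $N$ replaced by $DN$, so the positive-degree vanishing applied to the dual gives the negative-degree vanishing. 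The dimension formula is then a separate, purely numerical chain of equalities using Laumon's theorem that $\chi=\chi_c$, pushing forward one coordinate at a time. If you want to salvage your inductive strategy, you would need to supply a proof that ${}^pH^{-1}(Rp_{n*}j_{n0!}N)$ is negligible (or zero) for every perverse $N$ --- which is comparable in difficulty to the lemma itself --- whereas the duality argument above sidesteps it.
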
 

\begin{proof} 
The presheaf given by extending $N$ by 0 at 0 on one component and then taking the pushforward at $\infty$ coincides with that obtained by these operations in the opposite order.  Taking the associated sheaf and deriving, we see that $Rj_{i\infty*}j_{i0!}N = j_{i0!}Rj_{i\infty*}N$ as sheaves on $\P^1$.  We may write 
\begin{align}
    G_i &= H^i(\P^1, Rj_{1\infty *}j_{10!}Rp_{2*}Rj_{2\infty *}j_{20!}\cdots Rp_{n*}Rj_{n\infty *}j_{n0!}N) \label{eq1} \\ 
    &= H^i(\P^1, j_{10!}Rj_{1\infty *}Rp_{2*}j_{20!}Rj_{2\infty *}\cdots j_{n0!} Rp_{n*}Rj_{n\infty *}N). 
\end{align}  

Note that here, $p_k$ is affine.  Then by \cite[Theorem 4.1.1, Corollary 4.1.3]{BBD}, we have that $Rp_{k*}$, $Rj_{k\infty *}$, and $j_{k0!}$ are right t-exact for the perverse t-structure, so $G_i\in {}^pD^{\le 0}$.  By definition this implies that $G_i$ is 0 for positive degrees.  It remains to show that $G_i$ is 0 for negative degrees, which we will reduce to this case using Verdier duality. 

By Verdier duality, the second expression of $G_i$ is equal to 
\begin{align}
    G_i &= H^{-i}\big(\P^1, D(j_{10!}Rj_{1\infty *}Rp_{2*}j_{20!}Rj_{2\infty *}\cdots Rp_{n*} j_{n0!} Rj_{n\infty *}N)\big) \\ 
    &= H^{-i}(\P^1, Rj_{10*}j_{1\infty !}Rp_{2*}Rj_{20*}j_{2\infty !}\cdots Rp_{n*} Rj_{n0*}j_{n\infty !}DN). 
\end{align}   

The last equality holds because $D\circ Rp_{k*}=Rp_{k!}\circ D = Rp_{k*}\circ D$, as here $p_{k}$ is proper.  As this form is the same as that of (\ref{eq1}), we see that it vanishes for $-i >0$, which means that $G_i$ vanishes for negative degrees as desired. \\ 

Next, we need to prove the equality of the dimension of $G_0$ with the Euler characteristic and compactly supported Euler characteristic of $N$.  As in the proof of Lemma 3.4 of \cite{KatzCE}, we use the result of Laumon \cite{Lau} that says that $\chi$ and $\chi_c$ of a $\Q_l$-sheaf on a separated variety over an algebraically closed field.  Thus we have the chain of equalities 
\begin{align*}
&\chi(\G_m^n, N) = \chi_c(\G_m^n, N) = \chi_c(\G_m^{n-1}\times \A^1, j_{n0!}N) = \chi(\G_m^{n-1}\times \A^1, j_{n0!}N) \\ 
= ~&\chi(\G_m^{n-1}, Rp_{n*}j_{n0!}N) = \chi_c(\G_m^{n-1}, Rp_{n*}j_{n0!}N)) = \cdots \\ 
= ~ &\cdots \\
= ~& \chi(\G_m, Rp_{2*}j_{20!} \cdots Rp_{n*}j_{n0!}N ). 
\end{align*} 
This last quantity is equal to 
\[
\chi_c(\G_m, Rp_{2*}j_{20!} \cdots Rp_{n*}j_{n0!}N ) = \chi_c(\A^1, j_{10!}Rp_{2*}j_{20!} \cdots Rp_{n*}j_{n0!}N ), 
\]
which by the vanishing of $G_i$ for $i\neq 0$ is equal to $\dim H^0(\A^1, j_{10!}Rp_{2*}j_{20!} \cdots Rp_{n*}j_{n0!}N )$, as desired. 
\end{proof}

\begin{proposition}
\label{prop: faithful}
The functor $\omega$ is faithful.
\end{proposition}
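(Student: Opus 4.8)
The plan is to deduce faithfulness from the dimension formula in Lemma~\ref{lemma: vanishing}, which identifies $H^0(\A^1, j_{10!}Rp_{2*}\cdots j_{n0!}N)$ with $\chi(\G_m^n, N) = \chi_c(\G_m^n, N)$. The key point is that on the Tannakian quotient category $\bar{\bP}$, the Euler characteristic detects nonzero objects: by Remark~\ref{remark: neg}, for the semiabelian (indeed toric) group $\G_m^n$, a perverse sheaf $N$ is negligible precisely when $\chi(\G_m^n, N) = 0$, and negligible objects are exactly the zero objects of $\bar{\bP}$.

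First I would observe that $\omega$ as defined is the composite of $N \mapsto Rp_{2*}j_{20!}\cdots Rp_{n*}j_{n0!}N$ (a complex on $\G_m$) followed by $Rp_{1*}j_{10!}$, and that Lemma~\ref{lemma: vanishing} shows this composite is concentrated in degree $0$, so $\omega(N)$ is genuinely a vector space of dimension $\chi(\G_m^n, N)$. To show faithfulness of an additive functor between abelian categories, it suffices to show that $\omega(N) = 0$ implies $N = 0$ in $\bar{\bP}$ — since once we know $\omega$ is exact (Proposition~\ref{prop: exact}, proved later, but faithfulness for an exact functor reduces to this statement on objects), a nonzero map $f$ would have nonzero image $M$ as a subquotient, and $\omega(f) = 0$ would force $\omega(M) = 0$, hence $M = 0$, a contradiction. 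Actually, to keep the argument self-contained at this stage, I would simply prove the object-level statement: if $\omega(N) = 0$ then $\dim H^0 = \chi(\G_m^n, N) = 0$, so $N$ is negligible, i.e.\ $N = 0$ in $\bar{\bP}$; combined with the exactness established in Proposition~\ref{prop: exact} this yields faithfulness in the usual way.

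Concretely, the steps are: (1) invoke Lemma~\ref{lemma: vanishing} to identify $\omega(N)$ with the single cohomology group $H^0(\A^1, j_{10!}Rp_{2*}j_{20!}\cdots Rp_{n*}j_{n0!}N)$, of dimension $\chi(\G_m^n, N)$; (2) note that if $N$ represents a nonzero object of $\bar{\bP} = \bP(\G_m^n)/\bN(\G_m^n)$, then $N$ is non-negligible, so by Remark~\ref{remark: neg} its Euler characteristic $\chi((\G_m^n)_{\bar k}, N)$ is nonzero; (3) conclude $\dim \omega(N) \neq 0$, so $\omega(N) \neq 0$; (4) upgrade from nonvanishing on objects to faithfulness using the exactness of $\omega$ — a nonzero morphism $f\colon A \to B$ in $\bar{\bP}$ has nonzero image $I$, and exactness gives $\omega(I) \hookrightarrow \omega(B)$ with $\omega(I) \neq 0$, so $\omega(f) \neq 0$.

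**The main obstacle** is not so much a deep difficulty as a bookkeeping point: one must be careful that the dimension formula genuinely applies to objects of the \emph{quotient} category. Since the Euler characteristic $\chi((\G_m^n)_{\bar k}, N)$ is additive in short exact sequences and vanishes on negligibles, it descends to a well-defined invariant on $\bar{\bP}$, and the identification of non-negligibility with $\chi \neq 0$ in Remark~\ref{remark: neg} is exactly what makes the argument go through; the only subtlety is that faithfulness as a statement about morphisms formally requires exactness, which is why this proposition is logically intertwined with Proposition~\ref{prop: exact}. I would either present the object-level nonvanishing here and defer the morphism-level upgrade, or simply cite Proposition~\ref{prop: exact} forward.
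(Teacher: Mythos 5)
Your proposal is correct and follows essentially the same route as the paper: identify $\dim\omega(N)$ with $\chi(\G_m^n,N)$ via Lemma \ref{lemma: vanishing}, invoke Remark \ref{remark: neg} to see that $\omega(N)=0$ exactly when $N$ is negligible, i.e.\ zero in $\bar{\bP}$, and conclude faithfulness. The only difference is that you spell out the morphism-level upgrade via exactness and images, which the paper leaves implicit in its final sentence.
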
 
\begin{proof}
Recall that by \ref{remark: neg}, a perverse sheaf is negligible if and only if its Euler characteristic vanishes.  Then by Lemma \ref{lemma: vanishing}, we have that $\omega$ applied to a perverse sheaf vanishes precisely if the perverse sheaf is negligible.  This implies the faithfulness of $\omega$ as a functor from $\bar{\bP}$ to vector spaces.  
\end{proof}

\begin{proposition}
\label{prop: exact}
The functor $\omega$ is exact.
\end{proposition}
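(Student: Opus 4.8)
The plan is to deduce exactness of $\omega$ on the quotient category $\bar{\bP}$ from the vanishing result in Lemma \ref{lemma: vanishing}. First I would recall that $\omega(N)$, for $N$ a perverse sheaf on $\G_m^n$, is (by the remark following Theorem \ref{thm: ff} and by Lemma \ref{lemma: vanishing}) concentrated in degree $0$: the complex $Rp_{1*}j_{10!}Rp_{2*}j_{20!}\cdots Rp_{n*}j_{n0!}N$ is just $H^0$, a single vector space of dimension $\chi(\G_m^n,N)$. So the content of exactness is this: given a short exact sequence $0\to A\to B\to C\to 0$ of perverse sheaves (equivalently, in $\bar{\bP}$ after killing negligibles), the long exact cohomology sequence coming from applying the triangulated functor $F\coloneqq Rp_{1*}j_{10!}\cdots Rp_{n*}j_{n0!}$ to the associated distinguished triangle $A\to B\to C\xrightarrow{+1}$ collapses to a short exact sequence $0\to \omega(A)\to \omega(B)\to \omega(C)\to 0$, with no contributions from neighboring degrees.

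The key steps, in order: (1) Observe that $F$ is a composite of functors each of which is right $t$-exact for the perverse $t$-structure — this is exactly what was established inside the proof of Lemma \ref{lemma: vanishing} using \cite[Theorem 4.1.1, Corollary 4.1.3]{BBD} (each $Rp_{k*}$ with $p_k$ affine, each $Rj_{k\infty *}$, and each $j_{k0!}$ is right $t$-exact), so $F$ sends $^pD^{\le 0}$ to $^pD^{\le 0}$; combined with the Verdier-duality argument of that same lemma, $F(N)$ lives in degree $0$ for any perverse $N$. (2) Apply $F$ to the distinguished triangle $A\to B\to C\xrightarrow{+1}$. Since $F$ is triangulated, we get a distinguished triangle $F(A)\to F(B)\to F(C)\xrightarrow{+1}$, hence a long exact sequence of cohomology sheaves (here, of the vector spaces $H^i$). (3) By step (1), $H^i F(A)=H^i F(B)=H^i F(C)=0$ for all $i\neq 0$, so the long exact sequence degenerates to $0\to H^0F(A)\to H^0F(B)\to H^0F(C)\to 0$, i.e. $0\to \omega(A)\to \omega(B)\to \omega(C)\to 0$. (4) Note this passes to the quotient $\bar{\bP}$: short exact sequences in $\bar{\bP}$ lift (up to negligible error, which $\omega$ kills by Proposition \ref{prop: faithful} / Lemma \ref{lemma: vanishing}) to such sequences of perverse sheaves, and $\omega$ is well-defined on $\bar{\bP}$ because it annihilates $\bN$. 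Hence $\omega\colon \bar{\bP}\to \mathrm{Vec}$ is an exact functor of abelian categories.

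The main obstacle — or rather the one point requiring the most care — is step (1): verifying that the full composite $F$ really is right $t$-exact and, dually, that $F$ of a perverse sheaf is concentrated in degree $0$ rather than merely in degrees $\le 0$. But this is not new work: it is precisely the vanishing half of Lemma \ref{lemma: vanishing}, which was proved by rewriting $F$ in terms of pushforwards to $\P^1$ (using $Rj_{i\infty *}j_{i0!}=j_{i0!}Rj_{i\infty *}$), invoking the right $t$-exactness of affine pushforward, open extension by zero, and pushforward along an open immersion, and then running Verdier duality to get the lower bound. So the proof of exactness is essentially bookkeeping on top of Lemma \ref{lemma: vanishing}: once that lemma is in hand, the only thing to say is that a triangulated functor which carries perverse sheaves into a single cohomological degree automatically takes short exact sequences of perverse objects to short exact sequences of vector spaces, and that this descends to the Tannakian quotient.
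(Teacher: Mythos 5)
Your proposal is correct and follows essentially the same route as the paper: apply the triangulated functor to the distinguished triangle of a short exact sequence, invoke Lemma \ref{lemma: vanishing} to collapse the long exact sequence to degree $0$, and use that $\omega$ kills negligible objects to descend to $\bar{\bP}$. The only difference is bookkeeping at your step (4): the paper makes the descent precise by splitting an exact sequence in $\bar{\bP}$ (where $\ker\beta/\im\alpha$ is merely negligible) into two genuine short exact sequences in $\bP$, one of which has the negligible quotient as its third term, which is exactly the lifting-up-to-negligible-error you gesture at.
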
 
\begin{proof}
Recall that an exact sequence in $\bar{\bP}$ is given by a sequence in $\bP((\G_m^n)_{\bar{k}}$ 
\[
0\rightarrow A\xrightarrow{\alpha}B\xrightarrow{\beta}C\rightarrow 0 
\] 
where $\alpha$ is injective, $\beta$ is surjective, $\beta\circ\alpha = 0$, and $\ker\beta/\im\alpha$ is negligible.  We may break this up into the two exact sequences 
\begin{align*}
    &0\rightarrow A\xrightarrow{\alpha} B \rightarrow \coker\alpha \rightarrow 0 \\ 
    & 0 \rightarrow C \rightarrow \coker \alpha \rightarrow \ker\beta/\im\alpha \rightarrow 0.
\end{align*} 
Now the functor sending $N$ to $j_{10!}Rp_{2*}j_{20!} \cdots Rp_{n*}j_{n0!}N$ preserves distinguished  triangles, so we obtain long exact sequences of cohomology corresponding to the two exact sequences (viewed as distinguished triangles in the derived category) above.  By Lemma~\ref{lemma: vanishing} these are actually short exact sequences, and since $\ker\beta/\im\alpha$ is negligible we immediately obtain the desired short exact sequence 
\[
0\rightarrow \omega(A)\rightarrow \omega(B)\rightarrow \omega(C)\rightarrow 0, 
\] 
as desired. 
\end{proof}

\subsubsection{Tensor functor}
In this section, $K, M, N$ will denote objects in $\bar{\bP}$.  We need to construct a bifunctorial isomorphism $\omega(K\cm N) \cong \omega(K)\otimes \omega(N)$.  We do so in a similar way to the construction for the case $n=1$ in \cite{KatzCE}, Chapter 30.  First we verify that the dimensions of these two vector spaces are the same; then we construct maps between them that compose to the identity. \\

\begin{lemma}
The natural maps 
\[
K*_!N\rightarrow K\cm N\rightarrow K*_*N
\] 
induce isomorphisms 
\[
\omega(K*_!N)\rightarrow \omega(K\cm N)\rightarrow \omega(K*_*N). 
\]
\end{lemma}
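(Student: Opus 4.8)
The plan is to show both maps $\omega(K *_! N) \to \omega(K \cm N)$ and $\omega(K \cm N) \to \omega(K *_* N)$ are isomorphisms by a sandwich argument: since the composite $\omega(K *_! N) \to \omega(K *_* N)$ (induced by the forget-supports map) is already known to be an isomorphism — indeed, in $\bar{\bD}(G)$ the forget-supports morphism $K *_! N \to K *_* N$ is an isomorphism by \cite[Theorem 3.10, 3.15]{FFK}, and $\omega$ is a well-defined functor on $\bar{\bD}(G)$ by the vanishing in Lemma~\ref{lemma: vanishing} — it suffices to prove that $\omega(K \cm N)$ has the same dimension as $\omega(K *_! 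N)$ and $\omega(K *_* N)$. Once the middle vector space has the same (finite) dimension as the two outer ones and the outer composite is an isomorphism, the first map is injective and the second is surjective between spaces of equal dimension, hence both are isomorphisms.

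So the real content is the dimension count. First I would use Lemma~\ref{lemma: vanishing}, which identifies $\dim \omega(P) = \chi(\G_m^n, P) = \chi_c(\G_m^n, P)$ for any perverse sheaf $P$ on $\G_m^n$; since $K \cm N$, being the image of $K *_! N \to K *_* N$, need not be perverse on the nose but lies in $\bar{\bP}$, I would pass to its perverse cohomology sheaves (all but $\mathcal{H}^0$ negligible in $\bar{\bP}$) and use additivity of Euler characteristic. Then the claim reduces to: $\chi_c(\G_m^n, K *_! N) = \chi_c(\G_m^n, K *_* N) = \chi_c(\G_m^n, K \cm N)$ up to negligible contributions. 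For the $*_!$ side this is immediate from the projection formula / Künneth: $\chi_c(\G_m^n, Rm_!(K \boxtimes N)) = \chi_c(\G_m^n \times \G_m^n, K \boxtimes N) = \chi_c(\G_m^n, K)\cdot \chi_c(\G_m^n, N)$. For the $*_*$ side, Laumon's theorem (\cite{Lau}, already invoked in the proof of Lemma~\ref{lemma: vanishing}) gives $\chi_c = \chi$ and one runs the same Künneth computation; and $K \cm N$ differs from both only by a negligible perverse sheaf, which contributes $0$ to the Euler characteristic by Remark~\ref{remark: neg}. This pins down $\dim \omega(K \cm N) = \dim \omega(K) \cdot \dim \omega(N)$, matching the two outer spaces.

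The step I expect to be the main obstacle is handling $K \cm N$ cleanly: it is not a priori perverse, only an object of $\bar{\bP}$ obtained as an image in a quotient category, so "its Euler characteristic" has to be interpreted via perverse cohomology sheaves and the fact that negligible objects have vanishing Euler characteristic. One must be careful that $\omega$, a priori defined by an explicit complex of functors, really does send the short exact sequences $K*_!N \twoheadrightarrow K\cm N$ and $K \cm N \hookrightarrow K*_*N$ in $\bar{\bP}$ to exact sequences of vector spaces — but this is exactly Proposition~\ref{prop: exact} (exactness of $\omega$), combined with Proposition~\ref{prop: faithful} (faithfulness, so negligible summands are killed). With exactness in hand, applying $\omega$ to $0 \to \ker \to K *_! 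N \to K \cm N \to 0$ and $0 \to K \cm N \to K *_* N \to \coker \to 0$, where $\ker$ and $\coker$ are negligible and hence killed by $\omega$, directly yields that $\omega(K *_! N) \to \omega(K \cm N) \to \omega(K *_* N)$ are isomorphisms — which is in fact a cleaner route than the dimension count and is the one I would ultimately write up, keeping the Euler-characteristic computation only as the input showing $\omega$ does not vanish on these objects.
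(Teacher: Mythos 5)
Your proposal is correct and, in the form you say you would actually write up, it is essentially the paper's own argument: both rest on the fact that the forget-supports map $K*_!N\to K*_*N$ has negligible cone (equivalently, the kernel and cokernel at the level of ${}^pH^0$ are negligible) by \cite[Lemma 3.8]{FFK}, together with Lemma \ref{lemma: vanishing}, which forces $\omega$ to kill negligible objects, so the long exact sequence (or exactness of $\omega$) yields the isomorphisms. The only cosmetic points are that your two short exact sequences should be written with ${}^pH^0(K*_!N)$ and ${}^pH^0(K*_*N)$ in place of the full complexes (their off-degree perverse cohomologies being negligible, this does not change $\omega$), and that the preliminary sandwich/dimension count is unnecessary.
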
 
\begin{proof}
Lemma 3.8 from \cite{FFK} ensures that the cone of the natural morphism $K*_!N\rightarrow K*_*N$ lies in $\bN$.  Then by Lemma~\ref{lemma: vanishing}, the long exact sequence of cohomology gives the desired isomorphisms.  
\end{proof} 

This implies the desired equality of dimensions as follows. 
\begin{lemma}
We have 
\[
\dim(\omega(K)\otimes \omega(N)) = \dim\omega(K*_{\mi} N). 
\]
\end{lemma}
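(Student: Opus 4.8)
The plan is to reduce the claimed dimension equality to the already-proven identity $\dim \omega(N) = \chi(\G_m^n, N)$ from Lemma \ref{lemma: vanishing}, together with the previous lemma identifying $\omega(K \cm N)$ with $\omega(K *_! N)$. First I would apply the previous lemma to replace $\omega(K \cm N)$ by $\omega(K *_! N)$, so that it suffices to show $\dim \omega(K *_! N) = \dim \omega(K) \cdot \dim \omega(N)$. By Lemma \ref{lemma: vanishing} (applied to the perverse cohomology sheaves of $K *_! N$, noting that $\omega$ kills the negligible ones and that $\omega$ is defined on $\bar{\bP}$ via the quotient), the left-hand side equals $\chi(\G_m^n, K *_! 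N)$, while the right-hand side equals $\chi(\G_m^n, K) \cdot \chi(\G_m^n, N)$.

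So the crux is the multiplicativity of the Euler characteristic under $!$-convolution: $\chi(\G_m^n, K *_! N) = \chi(\G_m^n, K) \cdot \chi(\G_m^n, N)$. The natural way to see this is via the Künneth formula and the Leray spectral sequence for the multiplication map $m \colon \G_m^n \times \G_m^n \to \G_m^n$. Since Euler characteristics are additive in distinguished triangles and compatible with $Rm_!$, one has
\[
\chi_c(\G_m^n, K *_! N) = \chi_c(\G_m^n, Rm_!(K \boxtimes N)) = \chi_c(\G_m^n \times \G_m^n, K \boxtimes N) = \chi_c(\G_m^n, K) \cdot \chi_c(\G_m^n, N),
\]
the last step being the Künneth formula for external tensor products. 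By Laumon's theorem (already invoked in the proof of Lemma \ref{lemma: vanishing}), $\chi_c = \chi$ on $\G_m^n$, which converts this into the statement about ordinary Euler characteristics, and hence about $\dim \omega$.

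One subtlety I would be careful about: $\omega$ is a functor on the quotient category $\bar{\bP}$, but $K *_! N$ and $K *_* N$ need not be perverse in $\bP(\G_m^n)$ — a priori they are only objects of $D^b_c$. To apply Lemma \ref{lemma: vanishing} I would pass to perverse cohomology sheaves ${}^p\! H^j(K *_! N)$, use additivity of both $\dim \omega$ (on the images of the perverse cohomology sheaves in $\bar{\bP}$) and of $\chi$ along the perverse filtration, and observe that $\omega$ annihilates exactly the negligible constituents while $\chi$ of those vanishes — so the two sides match term by term. I expect this bookkeeping to be the only real obstacle; the Künneth/Laumon input is standard. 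An alternative, cleaner route avoiding even that bookkeeping is to note that $\omega$ is already known to be exact and faithful (Propositions \ref{prop: exact}, \ref{prop: faithful}), hence additive on short exact sequences, so $\dim \omega$ of any object of $\bar{\bP}$ equals $\chi$ of a representing complex by Lemma \ref{lemma: vanishing}; combined with the previous lemma this gives the result immediately once multiplicativity of $\chi$ under convolution is in hand.
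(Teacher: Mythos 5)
Your proposal is correct and takes essentially the same route as the paper: Lemma \ref{lemma: vanishing} converts both dimensions to Euler characteristics, and multiplicativity of $\chi$ under convolution comes from the K\"unneth formula for the multiplication map. The paper passes through $K *_* N$ and $Rm_*$ rather than $K *_! N$ and $Rm_!$, but since $\chi = \chi_c$ (Laumon) this is an immaterial difference, and your extra bookkeeping about non-perversity of the convolution is harmless but not something the paper needs to dwell on.
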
 
\begin{proof}
By Lemma~\ref{lemma: vanishing} we just need to show that $\chi(\G_m^n, K)\times \chi(\G_m^n, N)=\chi(\G_m^n, K*_{\mi}N)$.  But we have 
\[
\chi(\G_m^n, K*_{\mi}N)=\chi(\G_m^n, K*_{*}N)=\chi(\G_m^n, Rm_*(K\boxtimes N)) = \chi(\G_m\times \G_m, K\boxtimes N), 
\] 
which equals $\chi(\G_m^n, K)\times \chi(\G_m^n, N)$ by the K\"unneth formula.
\end{proof} 

\begin{proposition}
    \label{prop: tensor}
    The functor $\omega$ is a tensor functor.
\end{proposition}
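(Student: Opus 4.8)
The plan is to adapt the argument of \cite[Ch.\ 30]{KatzCE} for $n=1$. The two preceding lemmas already supply the numerology: $\omega(K)\otimes\omega(N)$, $\omega(K\cm N)$, $\omega(K*_!N)$ and $\omega(K*_*N)$ are finite-dimensional of the same dimension, and the forget-supports maps induce isomorphisms $\omega(K*_!N)\xrightarrow{\ \sim\ }\omega(K\cm N)\xrightarrow{\ \sim\ }\omega(K*_*N)$. It therefore suffices to construct bifunctorial maps $f_{K,N}\colon\omega(K*_!N)\to\omega(K)\otimes\omega(N)$ and $g_{K,N}\colon\omega(K)\otimes\omega(N)\to\omega(K*_*N)$ whose composite $g_{K,N}\circ f_{K,N}$ is the forget-supports isomorphism above; then $f_{K,N}$ is a monomorphism of vector spaces of equal finite dimension, hence an isomorphism, and so is $g_{K,N}$, and transporting along the two lemmas yields the desired natural isomorphism $\varepsilon_{K,N}\colon\omega(K\cm N)\xrightarrow{\ \sim\ }\omega(K)\otimes\omega(N)$.

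To build $f_{K,N}$, note first that by Lemma~\ref{lemma: vanishing} we may write $\omega(M)=H^0(\A^1,\Phi(M))$ with $\Phi(M)=j_{10!}Rp_{2*}j_{20!}\cdots Rp_{n*}j_{n0!}M$, so by K\"unneth $\omega(K)\otimes\omega(N)\cong H^0(\A^1\times\A^1,\ \Phi(K)\boxtimes\Phi(N))$; since external products commute with the $Rp_{k*}$ and $j_{k0!}$, the right-hand complex is obtained from $K\boxtimes N$ on $\G_m^n\times\G_m^n$ by processing, for each $k$, the $k$-th coordinate of each of the two factors by $j_{k0!}$ followed by $Rp_{k*}$. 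On the other hand, under the reordering $\G_m^n\times\G_m^n\cong(\G_m\times\G_m)^n$ the multiplication $m$ becomes the product of the coordinate multiplications $m_k\colon\G_m\times\G_m\to\G_m$, and since operations on distinct coordinates commute, $\omega(K*_!N)=\omega(Rm_!(K\boxtimes N))$ is computed by processing the $k$-th coordinate pair by $Rm_{k!}$, then $j_{k0!}$, then $Rp_{k*}$. Because $j_{k0}$ is an open immersion and $j_{k0}\circ m_k$ is the restriction of the coordinate multiplication $\mu_k\colon\A^1\times\A^1\to\A^1$, this step coincides with $Rp_{k*}R\mu_{k!}(\text{extension by zero})$, whereas the corresponding step for $\omega(K)\otimes\omega(N)$ is $Rp_{k*}R\mu_{k*}(\text{extension by zero})$ (forgetting the product $x_ky_k$ amounting to forgetting the pair $(x_k,y_k)$); the forget-supports natural transformation $R\mu_{k!}\Rightarrow R\mu_{k*}$, applied one coordinate pair at a time, then produces the bifunctorial map $f_{K,N}$. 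For $g_{K,N}$ one dualizes the map $f$ attached to the Verdier duals $DK,DN$, invoking $D(K*_!N)\cong DK*_*DN$ together with the self-duality relating $\omega(M)^\vee$ to $\omega(DM)$ — up to the inversion automorphism $x\mapsto x^{-1}$ of $\G_m^n$, which is compatible with convolution — that is visible in the Verdier-duality computation in the proof of Lemma~\ref{lemma: vanishing}.

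It remains to verify that $g_{K,N}\circ f_{K,N}$ is the forget-supports isomorphism $\omega(K*_!N)\xrightarrow{\ \sim\ }\omega(K*_*N)$, a diagram chase unwinding the two constructions and the self-duality of $R(-)_!\Rightarrow R(-)_*$; granting it, the dimension count makes $\varepsilon_{K,N}$ an isomorphism. Being a tensor functor additionally requires $\varepsilon$ to be compatible with the unit, associativity and commutativity constraints: the unit check is the immediate computation $\omega(\delta_1)\cong\Qlb$ for $\delta_1$ the skyscraper at the identity of $\G_m^n$, and the associativity and commutativity coherences are routine diagram chases on the analogous explicit models over triple products, reducing to the corresponding coherences for $\cm$ via naturality of the forget-supports maps. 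The step I expect to be the main obstacle is the bookkeeping of the second paragraph: because $m$ and the projections $p_k$ are non-proper, none of the base-change identities needed to bring $\omega(K*_!N)$, $\omega(K*_*N)$ and $\omega(K)\otimes\omega(N)$ into a common shape hold formally, and one must lean throughout on the openness of the $j_{k0}$ and on Lemma~\ref{lemma: vanishing}, which collapses all the complexes involved into a single degree; consequently $f_{K,N}$ is a priori merely a linear map between spaces that happen to have the same dimension, and the real content lies in the duality-and-composition argument identifying it with the forget-supports isomorphism.
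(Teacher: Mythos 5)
Your outer skeleton matches the paper's (use the two preceding lemmas to get equal dimensions and the isomorphisms between $\omega(K*_!N)$, $\omega(K\cm N)$, $\omega(K*_*N)$; produce two natural maps whose composite is a known isomorphism; finish by a dimension count), and your map $f_{K,N}$ is constructed soundly: since $j_0\circ m$ is the restriction of the multiplication $\mu\colon\A^1\times\A^1\to\A^1$, one has $j_{0!}Rm_!\cong R\mu_!(\text{extension by zero})$, global sections factor through $R\mu_*$, and forget-supports yields $\omega(K*_!N)\to\omega(K)\otimes\omega(N)$. The genuine gap is everything after that. In your affine picture there is no evident natural map $\omega(K)\otimes\omega(N)\to\omega(K*_*N)$: the target is built from the mixed functor $j_{0!}Rm_*$, which is neither $R\mu_!(\cdot)_!$ nor $R\mu_*R(\cdot)_*$. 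You propose to manufacture $g_{K,N}$ by dualizing $f$ applied to $DK$, $DN$, but this presupposes a natural identification of $\omega(M)^\vee$ with $\omega(DM)$ (up to inversion), which is itself a nontrivial statement needing proof — Verdier duality on $\A^1$ identifies $\omega(DM)^\vee$ with $H^0_c(\A^1,Rj_{0*}M)$, not with $\omega(M)$ — and, more importantly, the assertion that the resulting composite $g\circ f$ equals the forget-supports isomorphism is exactly the mathematical content of the proposition and is left as an unexecuted diagram chase. That identification is where the Deligne--Katz argument does its real work: it is carried out on $\P^1\times\P^1$ for the complexes $Rj_{\infty*}j_{0!}(-)$, and it hinges on the two asymmetric vanishing conditions (the complexes vanish at $0$, their Verdier duals vanish at $\infty$), which kill precisely the boundary contributions that would otherwise obstruct the composite. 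Your sketch never isolates or uses these conditions and supplies no substitute mechanism, so the crucial step has no proof.

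For comparison, the paper does not reconstruct the maps at all: it sets $\mc{K}=Rj_{1\infty*}j_{10!}Rp_{2*}j_{20!}\cdots Rp_{n*}j_{n0!}K$ (similarly $\mc{N}$), writes $\omega(K)\otimes\omega(N)=H^*(\P^1\times\P^1,\mc{K}\boxtimes\mc{N})$ by K\"unneth, and observes that Katz's Chapter 30 argument for $n=1$ applies word-for-word because its only inputs are the vanishing of $\mc{K},\mc{N}$ at $0$ and of $D\mc{K},D\mc{N}$ at $\infty$, which persist for $n>1$. To salvage your route you would have to prove the duality statement for $\omega$ and then actually verify that $g\circ f$ is the forget-supports map — at which point you would essentially be redoing that chapter — or reduce to the one-variable case as the paper does.
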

\begin{proof}
     We would like to obtain bifunctorial maps 
\[
\omega(K)\otimes \omega(N) \rightarrow \omega(K*_!N)\cong \omega(K\cm N)\cong \omega(K*_* N)\rightarrow \omega(K)\otimes\omega(N)
\] 
which compose to the identity; by the previous lemmas, dimension considerations will give the desired isomorphism. 
  
Set 
\[
\mc{K} = Rj_{1\infty*}j_{10!}Rp_{2*}Rj_{20!}Rp_{3*}Rj_{30!}\cdots Rp_{n*}Rj_{n0!}K 
\]
and similar for $\mc{N}$, so that by the K\"unneth formula we have 
\[
\omega(K)\otimes \omega(N) = H^*(\P^1\times \P^1, \mc{K}\boxtimes\mc{N}). 
\]

The rest of the argument for this proceeds exactly as in Chapter 30 of \cite{KatzCE}, where the result is proven for $n=1$.
 Indeed, the only properties of $\mc{K}$ and $\mc{N}$ used in the remainder of the proof are that they both vanish at 0 and that $D\mc{K}$ and $D\mc{N}$ both vanish at $\infty$.  These still hold when $n>1$, so the proof carries through as desired.  (In fact, if we replace the perverse sheaves $K, N$ on $\G_m$ used in their proof with  $Rp_{2*}j_{20!} \cdots Rp_{n*}j_{n0!}K, Rp_{2*}j_{20!} \cdots Rp_{n*}j_{n0!}N$ in our setting, the proof carries through word-for-word.)
\end{proof}

\subsection{Basic properties of the fiber functor}
We will now record a few basic properties of the fiber functor we have constructed, beginning with a computation of its dimension.

\begin{proposition}
\label{prop: dim}
Let $i\colon X\rightarrow \G_m^n$ be the inclusion of a hypersurface nondegenerate with respect to its Newton polyhedron $\Delta$, working over any algebraically closed field $\bar{K}$.  Then 
\[
\omega(i_*\Q_l[n-1])=n!\Vol(\Delta(C)).\]
\end{proposition}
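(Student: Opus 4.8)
The goal is to compute $\dim \omega(i_*\Q_l[n-1])$ when $X \subset \G_m^n$ is a nondegenerate hypersurface with Newton polyhedron $\Delta$. By Lemma~\ref{lemma: vanishing}, $\omega$ applied to a perverse sheaf is concentrated in degree $0$ and its dimension equals the Euler characteristic $\chi(\G_m^n, i_*\Q_l[n-1])$ (which coincides with $\chi_c$). So the entire problem reduces to a single Euler-characteristic computation: I must show
\[
\chi\big(\G_m^n, i_*\Q_l[n-1]\big) = n!\,\Vol(\Delta(C)).
\]
First I would reduce $\chi(\G_m^n, i_*\Q_l[n-1])$ to $(-1)^{n-1}\chi(X, \Q_l) = (-1)^{n-1}\chi_c(X, \Q_l)$, using that $i$ is a closed immersion (so $i_*$ is exact and $R\Gamma(\G_m^n, i_*\Q_l[n-1]) = R\Gamma(X, \Q_l[n-1])$) together with Laumon's theorem on the equality of $\chi$ and $\chi_c$ over an algebraically closed field, which is already invoked in the proof of Lemma~\ref{lemma: vanishing}. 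Thus the task becomes computing the ordinary (compactly supported) Euler characteristic of a nondegenerate affine hypersurface in the torus in terms of its Newton polyhedron.

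The main tool for the second and crucial step is the classical formula of Kouchnirenko / Bernstein–Khovanskii–Kushnirenko for the Euler characteristic of a nondegenerate hypersurface in $\G_m^n$. Concretely, for $X = \{f = 0\}$ nondegenerate with Newton polyhedron $\Delta$, one has
\[
\chi_c(X, \Q_l) = (-1)^{n-1} n!\,\Vol(\Delta),
\]
with the convention (which the notation $\Delta(C)$ in the statement presumably fixes) that $\Vol$ is normalized so that the standard simplex has the appropriate value, or equivalently that $n!\,\Vol(\Delta)$ is the mixed-volume-type quantity appearing in Kouchnirenko's theorem; one should check whether the paper wants $\Delta$ itself or, say, the polyhedron together with the origin, and set $\Delta(C)$ accordingly. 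I would derive this either by citing Kouchnirenko's theorem directly, or — to stay self-contained and consistent with the toric-geometry language used elsewhere in the paper — by taking a projective toric variety $\P_\Delta$ associated to $\Delta$ in which $X$ has a nondegenerate compactification $\bar X$, stratifying $\bar X$ by the torus orbits of $\P_\Delta$, computing the Euler characteristic of each stratum (each being a nondegenerate hypersurface, or the whole torus orbit when the face is $\Delta$ itself), and summing; the torus strata of dimension $\geq 1$ contribute $0$ to the Euler characteristic, which collapses the sum to the single volume term. Combining with the sign from the first step gives $\dim\omega(i_*\Q_l[n-1]) = (-1)^{n-1}\cdot(-1)^{n-1} n!\,\Vol(\Delta) = n!\,\Vol(\Delta)$.

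\textbf{Main obstacle.} The routine parts — exactness of $i_*$, Laumon's theorem, the vanishing of $\chi$ for positive-dimensional torus orbits — are genuinely routine. The real work is pinning down the Kouchnirenko-type formula with the precise normalization of volume matching the notation $\Delta(C)$ in the statement, and, if one wants a self-contained proof rather than a citation, handling the inductive bookkeeping over all faces $\tau \preceq \Delta$ of the nondegenerate stratification (making sure nondegeneracy of $f$ with respect to $\Delta$ genuinely restricts to nondegeneracy of each $f_\tau$ on the corresponding torus orbit, which is exactly the face condition built into the definition of nondegeneracy recalled in the introduction). I expect the cleanest write-up is to cite Kouchnirenko's theorem (or the treatment in \cite{DL}, whose weight computations already presuppose exactly this setup) for the Euler-characteristic formula, and spend the few lines of the proof instead on the reduction via Lemma~\ref{lemma: vanishing} and Laumon, plus a sentence fixing the volume normalization.
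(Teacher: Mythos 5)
Your proposal follows essentially the same route as the paper: reduce $\dim\omega(i_*\Q_l[n-1])$ to the (compactly supported) Euler characteristic via Lemma~\ref{lemma: vanishing}, then invoke the Kouchnirenko-type formula for nondegenerate hypersurfaces, which the paper cites as Theorem~2.7 of \cite{DL}. The argument is correct; the extra toric-stratification sketch you offer is unnecessary since the paper simply cites \cite{DL} for that step.
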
 

This follows from the following theorem from \cite{DL}.
\begin{theorem} 
[Theorem 2.7 of \cite{DL}] 
Let $G(x_1, \ldots, x_n)$ be a Laurent polynomial over a field $K$ which is 0-nondegenerate with respect to $\Delta(G)$.  Then 
\[
\chi((\G_m)^n_{\overline{K}}\cap G^{-1}(0), \Q_l) = (-1)^{n-1}n!\Vol(\Delta(G))
\]
where $\chi$ denotes the Euler characteristic with respect to $l$-adic cohomology ($l\neq \ch K$). 
\end{theorem}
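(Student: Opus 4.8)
The Euler characteristic in this statement is a motivic invariant: over an algebraically closed field it agrees with $\chi_c$ by Laumon's theorem, and $\chi_c$ is additive for constructible partitions and unchanged by field extension. One may therefore take $K=\bar K$, and since the final formula will be expressed entirely through the faces of $\Delta := \Delta(G)$, the answer is automatically independent of $\ch K$ (alternatively one may simply work over $\C$ and argue topologically). The plan is to compactify inside a toric variety and induct on $n$. The base case $n=1$ is the classical fact that a Laurent polynomial nondegenerate with respect to an interval $\Delta=[a,b]$ has exactly $b-a = 1!\,\Vol(\Delta)$ distinct zeros in $\G_m$, so $\chi(V(G)\cap\G_m) = (-1)^{0}\,1!\,\Vol(\Delta)$.

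Fix a smooth projective toric variety $X_\Sigma$ whose fan refines the normal fan of $\Delta$, so that $X_\Sigma\supset\G_m^n$ is the dense orbit, $X_\Sigma = \bigsqcup_\tau O_\tau$ runs over the torus orbits — each $O_\tau$ lying over a unique face of $\Delta$, with $\dim O_\tau = \dim\tau$ for the orbits coming from faces of the normal fan — and $\Delta$ determines a nef divisor $D_\Delta$ with $H^0(X_\Sigma,\O(D_\Delta)) = \bigoplus_{m\in\Delta\cap\Z^n}\bar{\Q_l}\,x^m$. Let $\bar X\subset X_\Sigma$ be the zero scheme of the section attached to $G$, the closure of $X := G^{-1}(0)\cap\G_m^n$. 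The nondegeneracy hypothesis says exactly that every face polynomial $G_\tau$ cuts out a smooth (possibly empty) subvariety of $\G_m^n$; transported to the boundary, this means that for every orbit the intersection $\bar X\cap O_\tau$ is the zero locus inside $O_\tau\cong\G_m^{\dim O_\tau}$ of a Laurent polynomial whose Newton polytope is a lattice translate of $\tau$ and which is again nondegenerate with respect to all of its faces. In particular $\bar X$ is smooth, meets every torus orbit transversally, and misses all torus fixed points (there the relevant face is a vertex and $G_\tau$ is a nonvanishing monomial).

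Applying additivity of $\chi=\chi_c$ to the orbit stratification, and noting that the orbits introduced by refining the fan fiber over coarser orbits with positive-dimensional torus fibers and so contribute $0$, one gets $\chi(\bar X) = \chi(X) + \sum_\tau\chi(\bar X\cap O_\tau)$ over the proper faces $\tau$ of $\Delta$; the vertices contribute $0$, and by induction each positive-dimensional proper face contributes $(-1)^{\dim\tau-1}(\dim\tau)!\,\Vol(\tau)$, the lattice-normalized volume in the affine span of $\tau$. To evaluate $\chi(\bar X)$ itself, use that $\bar X$ is a smooth divisor in the smooth $X_\Sigma$: adjunction gives $c(T_{\bar X}) = \bigl(c(T_{X_\Sigma})/(1+D_\Delta)\bigr)\big|_{\bar X}$, hence $\chi(\bar X) = \int_{X_\Sigma}\bigl(c(T_{X_\Sigma})/(1+D_\Delta)\bigr)\cdot D_\Delta$ in degree $n$; writing $c(T_{X_\Sigma}) = \prod_\rho(1+D_\rho)$ over the toric prime divisors and converting the resulting products of toric divisor classes into mixed lattice volumes of faces of $\Delta$, this collapses — this is Khovanskii's Euler characteristic formula for a nondegenerate toric hypersurface — to $\chi(\bar X) = \sum_\tau(-1)^{\dim\tau-1}(\dim\tau)!\,\Vol(\tau)$ over all positive-dimensional faces $\tau$ of $\Delta$. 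Subtracting the boundary sum, which is precisely this expression with the top face removed, leaves the single term $\tau=\Delta$, i.e.\ $\chi(X) = (-1)^{n-1}n!\,\Vol(\Delta)$.

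The main obstacle is the middle of the last paragraph: carrying the adjunction computation through the toric intersection theory of $X_\Sigma$ and verifying that it reassembles exactly into Khovanskii's alternating sum of normalized face volumes, with the volume normalizations and the behaviour under fan refinement handled correctly. The remaining ingredients — the reduction to $\bar K$, extracting smoothness and orbit-transversality from nondegeneracy, and the additivity bookkeeping — are routine. Two alternatives avoid compactification entirely: Denef and Loeser's own argument, which realizes the desired Euler characteristic as a special value of a $p$-adic (or motivic) integral built from $G$ and evaluates it on a toric resolution dictated by $\Delta$ using nondegeneracy; and, in characteristic $0$, an argument via the degree of the logarithmic Gauss map of the smooth very affine variety $X$ — which equals $(-1)^{n-1}\chi(X)$ — that a Bernstein--Kushnirenko count identifies with $n!\,\Vol(\Delta)$.
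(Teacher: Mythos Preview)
The paper does not prove this statement: it is quoted as Theorem 2.7 of \cite{DL} and used only as a black box to deduce the dimension formula $\dim\omega(i_*\Q_l[n-1])=n!\Vol(\Delta)$ in Proposition \ref{prop: dim}. There is nothing to compare against.

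Your outline via the toric compactification is the classical Khovanskii route and is correct in structure; you have also correctly located the one substantive step, namely that the adjunction/Chern-class computation on $X_\Sigma$ collapses to the alternating sum of normalized face volumes. That identity is precisely Khovanskii's theorem \cite{Khovanskii1977}, so your argument is really ``nondegeneracy gives smoothness and transversality, then cite Khovanskii for $\chi(\bar X)$, then strip off the boundary by induction''. The two alternatives you mention at the end (Denef--Loeser's own approach, and the logarithmic Gauss map / Bernstein--Kushnirenko count in characteristic $0$) are indeed the other standard proofs; any of the three would serve here.
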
 

\begin{proof}
[Proof of Proposition \ref{prop: dim}]  The previous theorem gives us that $\chi(\G_m^2, i_*\Q_l) = (-1)^{n-1}n!\Vol(\Delta(C))$.  Then by Lemma \ref{lemma: vanishing}, we have that 
\[
\dim (\omega(i_*\Q_l[n-1])) = (-1)^{n-1}\chi(\G_m^n, i_*\Q_l[1]) = n!\Vol(\Delta(C)).
\]
\end{proof}

It will be useful at various points of the paper to analyze the fiber functor in a few different ways.  The following statement gives an alternate expression for the fiber functor, which is valid by proper base change.  

\begin{proposition}
    \label{prop: ff 1} 
    There is an isomorphism 
    \[
\omega(N) \cong R\Gamma((\P^1)^n, Rj_{1\infty*}j_{10!}Rj_{2\infty*}j_{20!}\cdots Rj_{n\infty*}Rj_{n0!}N).
    \]
\end{proposition}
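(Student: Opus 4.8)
The plan is to read off the isomorphism directly from the definition $\omega(N)=Rp_{1*}j_{10!}\cdots Rp_{n*}j_{n0!}N$, by first replacing each affine projection $p_k$ with its proper compactification and then transporting all the resulting proper pushforwards to the outermost position, where they coalesce into $R\Gamma((\P^1)^n,-)$.

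Concretely, I would factor $p_k=\bar p_k\circ j_{k\infty}$, where $\bar p_k\colon\G_m^{k-1}\times\P^1\to\G_m^{k-1}$ is proper, so that $Rp_{k*}=R\bar p_{k*}Rj_{k\infty*}$ and
$$
\omega(N)=R\bar p_{1*}Rj_{1\infty*}j_{10!}\,R\bar p_{2*}Rj_{2\infty*}j_{20!}\cdots R\bar p_{n*}Rj_{n\infty*}j_{n0!}N.
$$
Each $R\bar p_{k*}$ alters only the $k$-th coordinate, whereas every functor lying to its left in this composite ($j_{k'0!}$, $Rj_{k'\infty*}$, and $R\bar p_{k'*}$ for $k'<k$) alters only coordinates $k'\neq k$. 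The crux is therefore the commutation: for $k\neq k'$, the functor $R\bar p_{k*}$ commutes with $j_{k'0!}$ and with $Rj_{k'\infty*}$. Each such pair sits in a Cartesian square of products of copies of $\G_m,\A^1,\P^1$ in which the relevant projection is proper. For $Rj_{k'\infty*}$ the identity is just functoriality of $R(-)_*$ along the commuting square (no properness needed); for $j_{k'0!}$ one uses $R\bar p_{k*}=R\bar p_{k!}$ — this is the proper base change invoked in the text — together with functoriality of $R(-)_!$. Granting this (and the trivial commutation of the various $R\bar p_{k*}$ among themselves), I can slide every $R\bar p_{k*}$ out past all the slot operations. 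What survives on $(\P^1)^n$ is exactly $Rj_{1\infty*}j_{10!}\cdots Rj_{n\infty*}j_{n0!}N$, and the now-adjacent composite $R\bar p_{1*}\circ\cdots\circ R\bar p_{n*}$ is the derived pushforward along the structure map $(\P^1)^n\to\mathrm{pt}$, i.e. $R\Gamma((\P^1)^n,-)$; this yields the stated isomorphism, which is in fact an isomorphism of functors $D^b_c((\G_m^n)_{\bar k})\to D^b_c(\mathrm{pt})$. Equivalently one can run the argument as an induction on $n$, writing $\omega_n(N)=\omega_{n-1}(Rp_{n*}j_{n0!}N)$, substituting $Rp_{n*}j_{n0!}N=R\bar p_{n*}Rj_{n\infty*}j_{n0!}N$, and moving the single proper map $R\bar p_{n*}$ past the operations on the first $n-1$ coordinates to absorb it into $R\Gamma$.

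I do not anticipate a serious obstacle: the statement is essentially a reshuffling of the definition, and the only genuine input is proper base change for the maps $\bar p_k$ (equivalently, $R\bar p_{k*}=R\bar p_{k!}$ plus functoriality of the four operations involved). The one thing demanding care is the bookkeeping — setting up each Cartesian square with the correct ambient product of $\G_m$'s, $\A^1$'s and $\P^1$'s, and checking that after all the commutations the surviving functor on $(\P^1)^n$ is precisely $Rj_{1\infty*}j_{10!}\cdots Rj_{n\infty*}j_{n0!}$ with global sections taken last.
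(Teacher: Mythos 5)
Your argument is correct and is essentially the paper's: the text offers no detailed proof beyond the remark that the identity "is valid by proper base change" (plus the $n=2$ diagram), and your factorization $Rp_{k*}=R\bar p_{k*}Rj_{k\infty*}$ together with the commutations $j_{k'0!}R\bar p_{k*}\cong R\bar p_{k*}j_{k'0!}$ (via $R\bar p_{k*}=R\bar p_{k!}$ and functoriality of $R(-)_!$ on the Cartesian square) and $Rj_{k'\infty*}R\bar p_{k*}\cong R\bar p_{k*}Rj_{k'\infty*}$ (composition of pushforwards) is precisely the fleshed-out version of that appeal. No gap; the bookkeeping you describe is all that is needed.
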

For example, in the case $n=2$ we have the following diagram
\[\begin{tikzcd}
	C & {\G_m^2} & {\G_m\times \A^1} & {\G_m\times \P^1} & {\A^1\times \P^1} & {\P^1\times \P^1} \\
	&&&&& {\P^1} \\
	&&&&& {*}
	\arrow["i", from=1-1, to=1-2]
	\arrow["{j_{20}}", from=1-2, to=1-3]
	\arrow["{j_{2\infty}}", from=1-3, to=1-4]
	\arrow["{j_{10}}", from=1-4, to=1-5]
	\arrow["{j_{1\infty}}", from=1-5, to=1-6]
	\arrow["{q_2}", from=1-6, to=2-6]
	\arrow["{q_1}", from=2-6, to=3-6]
\end{tikzcd}\]
and the isomorphism 
\[
\omega(i_*\Q_l[1]) = \mc{H}^1(Rp_{1*}j_{10!}Rp_{2*}j_{20!}i_*\Q_l) \cong
\mc{H}^1(Rq_{1*}Rq_{2*}Rj_{1\infty*}j_{10!}Rj_{2\infty*}j_{20!}i_*\Q_l), 
\]
with the other $\mc{H}^i$ terms being 0.

\section{Weights of the fiber functor for curves}  
Recall that the elements of a Tannakian group give automorphisms of any of its fiber functors.  
Since the Frobenius acts on the fiber functor constructed in the previous section, it gives an element of the Tannakian group.  
We may restrict to a particular representation and compute the eigenvalues of the Frobenius on it.  
As we will see in a future section, this places restrictions on the Tannakian subcategory associated to that representation which we can use to compute the associated Tannakian group. 

The representations we are interested in come from the perverse sheaves defined by a translate of the constant sheaf on some subvariety of the torus.  
In this section we will tackle the case of $n=2$, which corresponds to curves in the two dimensional torus.  A curve $i\colon C\hookrightarrow \G_m^2$ is given by an irreducible polynomial $f\in \bar{k}[x, y]$ with $f\neq x, y$. 
 Consider the perverse sheaf $P=i_*\Q_l[1]$and the maps $C\xhookrightarrow{i}\G_m\times \G_m\xrightarrow{j_2}\G_m\times\A^1\xrightarrow{p_2}\G_m$; then the fiber functor is given by $K\coloneqq \omega(P) = Rp_{2*}j_{2!}i_*\Q_l[1]$. 
 Say $C$ is defined by the polynomial $f$.  We will assume that $C$ is non-degenerate with respect to its Newton polygon.  We will also assume that the $p=\ch k$ is greater than the degree of $f$.

\subsection{Relating the weights of the fiber functor to those of a simpler object}
The fiber functor applied to the perverse sheaf $i_*\Q_l[1]$ is given by the cohomology group 
\[
H^0(\A^1, j_{0!} Rp_{2*}j_{1!}i_*\Q_l[1]) = H^1(\A^1, j_{0!} Rp_{2*}j_{1!}i_*\Q_l).
\] 
To compute the weights of the Frobenius on it, we will first reduce it to a simpler object. 

\begin{lemma}
\label{lem: reduce 1}
    The cohomology group $H^1(\A^1, j_{0!} Rp_{2*}j_{1!}i_*\Q_l)$ is an extension of the two components $H^1(\A^1, j_{0!} p_{2*}j_{1!}i_*\Q_l)$ and $H^0(\A^1, j_{0!} R^1p_{2*}j_{1!}i_*\Q_l)$. 
\end{lemma}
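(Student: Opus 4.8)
The plan is to realize the claimed extension as the only interesting piece of the hypercohomology spectral sequence of the complex $\mathcal{M} := Rp_{2*}j_{1!}i_*\Q_l$ on $\G_m$, once we know that $\mathcal{M}$ is concentrated in cohomological degrees $0$ and $1$. Here $j_1\colon \G_m^2 \hookrightarrow \G_m\times\A^1$ is the extension by zero across $\{y=0\}$ and $p_2\colon \G_m\times\A^1\to\G_m$ the projection, so that $\mathcal{H}^0(\mathcal{M}) = p_{2*}j_{1!}i_*\Q_l$ and $\mathcal{H}^1(\mathcal{M}) = R^1 p_{2*}j_{1!}i_*\Q_l$ are exactly the two sheaves appearing in the statement.

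First I would establish the degree concentration. Since $i\colon C\hookrightarrow\G_m^2$ is a closed immersion of a curve, $i_*\Q_l[1]$ is perverse; the immersion $j_1$ is an affine open immersion (its complement is the principal divisor $\{y=0\}$) and $p_2$ is an affine morphism, so --- exactly as in the proof of Lemma~\ref{lemma: vanishing} --- $j_{1!}$ is perverse $t$-exact and $Rp_{2*}$ is perverse right $t$-exact. Hence $\mathcal{M}[1] = Rp_{2*}j_{1!}(i_*\Q_l[1]) \in {}^pD^{\le 0}(\G_m)$; on the curve $\G_m$ this support condition forces $\mathcal{H}^i(\mathcal{M}) = 0$ for $i\ge 2$ (and even makes $\mathcal{H}^1(\mathcal{M})$ punctual). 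On the other hand $j_{1!}i_*\Q_l$ is a genuine sheaf and $Rp_{2*}$ is left $t$-exact for the standard $t$-structure, so $\mathcal{H}^i(\mathcal{M}) = 0$ for $i<0$. Thus $\mathcal{M}$ lives in degrees $0$ and $1$.

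Next I would apply the exact functor $j_{0!}$, where $j_0\colon\G_m\hookrightarrow\A^1$, to the canonical truncation triangle
\[
\mathcal{H}^0(\mathcal{M}) \to \mathcal{M} \to \mathcal{H}^1(\mathcal{M})[-1] \to
\]
and then take $R\Gamma(\A^1,-)$, producing a long exact sequence whose terms around degree $1$ are
\[
H^{-1}(\A^1, j_{0!}\mathcal{H}^1(\mathcal{M})) \to H^1(\A^1, j_{0!}\mathcal{H}^0(\mathcal{M})) \to H^1(\A^1, j_{0!}\mathcal{M}) \to H^0(\A^1, j_{0!}\mathcal{H}^1(\mathcal{M})) \to H^2(\A^1, j_{0!}\mathcal{H}^0(\mathcal{M})),
\]
using $H^i(\A^1, j_{0!}\mathcal{H}^1(\mathcal{M})[-1]) = H^{i-1}(\A^1, j_{0!}\mathcal{H}^1(\mathcal{M}))$. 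The left-hand term vanishes for degree reasons, and the right-hand term vanishes by Artin's affine vanishing theorem, since $j_{0!}\mathcal{H}^0(\mathcal{M})$ is a constructible sheaf on the affine curve $\A^1$. What remains is the short exact sequence
\[
0 \to H^1(\A^1, j_{0!}p_{2*}j_{1!}i_*\Q_l) \to H^1(\A^1, j_{0!}Rp_{2*}j_{1!}i_*\Q_l) \to H^0(\A^1, j_{0!}R^1p_{2*}j_{1!}i_*\Q_l) \to 0,
\]
which is precisely the asserted extension.

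The only genuine content is the concentration of $\mathcal{M}$ in degrees $[0,1]$; I expect that to be the step needing care, while everything after it is formal given that the affine line has étale cohomological dimension $1$ for constructible sheaves. Equivalently, one may phrase the last two paragraphs through the spectral sequence $E_2^{p,q} = H^p(\A^1, j_{0!}\mathcal{H}^q(\mathcal{M})) \Rightarrow H^{p+q}(\A^1, j_{0!}\mathcal{M})$, which degenerates at $E_2$ because it is supported in the square $\{0,1\}\times\{0,1\}$ while every differential shifts $p$ by at least $2$, and whose two total-degree-$1$ contributions give the same short exact sequence.
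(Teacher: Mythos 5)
Your proof is correct and follows essentially the same route as the paper: the paper likewise takes the (truncation) distinguished triangle $p_{2*}j_{1!}i_*\Q_l \to Rp_{2*}j_{1!}i_*\Q_l \to R^1p_{2*}j_{1!}i_*\Q_l[-1]$, applies the exact functor $j_{0!}$, and reads off the short exact sequence from the long exact sequence in cohomology. Your extra justifications — concentration of $Rp_{2*}j_{1!}i_*\Q_l$ in degrees $[0,1]$ via affineness/perverse $t$-exactness and the Artin vanishing killing the $H^2$ term — simply make explicit what the paper leaves implicit.
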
  
\begin{proof}
We have a distinguished triangle
\[
p_{2*}j_{1!}i_*\Q_l \rightarrow Rp_{2*}j_{1!}i_*\Q_l \rightarrow R^1p_{2*}j_{1!}i_*\Q_l[-1] \xrightarrow{[+1]} p_{2*}j_{1!}i_*\Q_l[1] 
\]
Because $j_{0!}$ is exact, it keeps it distinguished.  The corresponding long exact sequence gives 
\begin{align*}
&\cdots \rightarrow H^{-1}(\A^1, j_{0!}R^1p_{2*}j_{1!}i_*\Q_l) \\ 
&\rightarrow H^1(\A^1, j_{0!}p_{2*}j_{1!}i_*\Q_l)\rightarrow H^1(\A^1, j_{0!}Rp_{2*}j_{1!}i_*\Q_l) \rightarrow H^0(\A^1, j_{0!}R^1p_{2*}j_{1!}i_*\Q_l) \\  &\rightarrow H^2(\A^1, j_{0!}p_{2*}j_{1!}i_*\Q_l) \rightarrow\cdots 
\end{align*}
which just becomes the short exact sequence 
\[0\rightarrow H^1(\A^1, j_{0!}p_{2*}j_{1!}i_*\Q_l)\rightarrow H^1(\A^1, j_{0!}Rp_{2*}j_{1!}i_*\Q_l) \rightarrow H^0(\A^1, j_{0!}R^1p_{2*}j_{1!}i_*\Q_l) \rightarrow 0,\]
as desired. 
\end{proof}

We deal with $H^1(\A^1, j_{0!} p_{2*}j_{1!}i_*\Q_l)$ first.  Let $n_0$ be the number of solutions $t\in \G_m$ to $f(t, 0)=0$. 

\begin{proposition}
The weights of $H^1(\A^1, j_{0!}p_{2*}j_{1!}i_*\Q_l)$ consist of those of $H^1(\A^1, j_{0!}p_{2*}j_{1*}i_*\Q_l)$ along with $n_0(C)$ of weight 0. 
\end{proposition}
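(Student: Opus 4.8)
The plan is to realize $H^1(\A^1, j_{0!}p_{2*}j_{1!}i_*\Q_l)$ as an extension of the ``simpler'' group $H^1(\A^1, j_{0!}p_{2*}j_{1*}i_*\Q_l)$ by a space which is pure of weight $0$ and of dimension $n_0$, by following the difference between the two extension functors $j_{1!}$ and $j_{1*}$ through the operations $p_{2*}$, $j_{0!}$ and $R\Gamma(\A^1,-)$ that make up the fiber functor.

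First I would set up the comparison on $\G_m\times\A^1$. Let $\bar{C}$ denote the closure of $C$ in $\G_m\times\A^1$ (under $j_1$), with $u\colon C\hookrightarrow\bar{C}$ the open immersion and $\bar{i}\colon\bar{C}\hookrightarrow\G_m\times\A^1$ the closed immersion, so that $\bar{C}\setminus C$ consists of the finitely many points $(t_k,0)$ with $f(t_k,0)=0$; nondegeneracy with respect to $\Delta$ ensures that $f(x,0)$, when not identically zero, has only simple roots in $\G_m$ and that $\bar{C}$ is smooth above them. Since $j_{1*}i_*\Q_l=\bar{i}_*u_*\Q_l$, the adjunction morphism gives a short exact sequence of sheaves on $\G_m\times\A^1$
\[
0\to j_{1!}i_*\Q_l\to j_{1*}i_*\Q_l\to Q\to 0,
\]
where $Q$ is a skyscraper sheaf supported on $\{(t_k,0)\}$, pure of weight $0$ and of total dimension $n_0$.

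Next I would push this sequence forward by $p_2$. The long exact sequence of $Rp_{2*}$ produces the punctual sheaf $Q'\coloneqq p_{2*}Q$ on $\G_m$ (supported on $\{t_k\}$, pure of weight $0$, of dimension $n_0$) together with a boundary map $Q'\to R^1p_{2*}(j_{1!}i_*\Q_l)$, and the key point is that $R^1p_{2*}(j_{1!}i_*\Q_l)$ vanishes at each $t_k$: étale-locally around $(t_k,0)$ the sheaf $j_{1!}i_*\Q_l$ decomposes as a direct sum, over the branches of $\bar{C}$, of extensions by zero of constant sheaves, where a branch meeting $y=0$ contributes $\iota_!\Q_l$ for the inclusion $\iota\colon\Spec K\hookrightarrow\Spec R$ of a henselian trait — which has vanishing cohomology over $\Spec R$, as one sees from the triangle relating $\iota_!\Q_l$, $R\iota_*\Q_l$ and the nearby fiber over the strictly henselian base — while a branch disjoint from $y=0$ is a strictly henselian local scheme with no higher cohomology. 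Hence $p_{2*}j_{1*}i_*\Q_l\to Q'$ is surjective, and we obtain a short exact sequence of sheaves on $\G_m$
\[
0\to p_{2*}j_{1!}i_*\Q_l\to p_{2*}j_{1*}i_*\Q_l\to Q'\to 0.
\]

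Finally I would apply the exact functor $j_{0!}$ and pass to cohomology over $\A^1$. Because $Q'$ is punctual and supported away from $0$, one has $H^0(\A^1,j_{0!}Q')=Q'$ and $H^i(\A^1,j_{0!}Q')=0$ for $i\neq 0$; moreover $H^0(\A^1,j_{0!}p_{2*}j_{1*}i_*\Q_l)=0$, since $H^0(\G_m,p_{2*}j_{1*}i_*\Q_l)=H^0(\bar{C},u_*\Q_l)=H^0(C,\Q_l)=\Q_l$ (as $C$ is irreducible) is spanned by the constant section, which has nonzero germ near $0$ and so is not annihilated by $j_{0!}$. The long exact sequence then collapses to
\[
0\to Q'\to H^1(\A^1,j_{0!}p_{2*}j_{1!}i_*\Q_l)\to H^1(\A^1,j_{0!}p_{2*}j_{1*}i_*\Q_l)\to 0,
\]
and, $Q'$ being pure of weight $0$ of dimension $n_0$, this exhibits the weights of the middle term as those of the right-hand term together with $n_0$ copies of weight $0$. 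I expect the main obstacle to be precisely the local étale-cohomology computation showing $R^1p_{2*}(j_{1!}i_*\Q_l)$ vanishes at the points $t_k$ — equivalently, that the first short exact sequence stays exact after the non-derived pushforward $p_{2*}$ — since the remaining steps are formal manipulations of long exact sequences and of weights.
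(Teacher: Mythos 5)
Your route is essentially the paper's: the difference between $j_{1!}$ and $j_{1*}$ is a punctual weight-zero sheaf of dimension $n_0$, one pushes it through $j_{0!}$ and the long exact sequence, and one kills the relevant $H^0$ (your connectedness argument for $H^0(\A^1, j_{0!}p_{2*}j_{1*}i_*\Q_l)=0$ is a valid shortcut around the paper's use of Proposition \ref{prop: H0} and Lemma \ref{lemma: katz dim}). The genuine gap is the step you yourself flag as the crux: the claim that $R^1p_{2*}(j_{1!}i_*\Q_l)$ vanishes at each $t_k$ is false in general. The stalk of $R^1p_{2*}(j_{1!}i_*\Q_l)$ at $\bar t$ is $H^1\bigl(\bar C\times_{\G_m}\Spec \O^{sh}_{\G_m,\bar t},\, u_!\Q_l\bigr)$, where $\bar C$ is the closure of $C$ in $\G_m\times\A^1$ and $u\colon C\hookrightarrow \bar C$. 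This scheme is a disjoint union of the henselian local pieces at the points of the fiber of $\bar C$ over $t$ — those indeed contribute nothing, as you argue — together with a piece lying entirely over the generic point of $\Spec\O^{sh}$, coming from the branches of the curve that escape to $y=\infty$ above $t$ (failure of properness of $p_2$ on $\bar C$). On that piece the sheaf is constant, and its $H^1$ is the Galois cohomology of a finite extension of $\operatorname{Frac}(\O^{sh})$, which is one-dimensional per such branch. So $R^1p_{2*}(j_{1!}i_*\Q_l)$ is nonzero at $t$ exactly when $f(t,\infty)=0$ — this is precisely the sheaf the paper analyzes in Proposition \ref{prop: curve weights}, where it produces the $n_\infty$ weights of weight $2$ — and nothing prevents a root of $f(x,0)$ from also being a root of $f(x,\infty)$ (i.e.\ of the top coefficient of $f$ in $y$), so the vanishing can fail at some $t_k$.

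Fortunately the conclusion you need survives: the connecting map $p_{2*}Q\to R^1p_{2*}(j_{1!}i_*\Q_l)$ is zero even when its target is nonzero at $t_k$. This is, in effect, what the paper proves by working directly with the underived pushforwards: it defines the punctual sheaf as the cokernel of $p_{2*}j_{1!}i_*\Q_l\to p_{2*}j_{1*}i_*\Q_l$ and computes its stalks. Concretely, each branch of $\bar C$ through a point $(t_k,0)$ is an entire connected component of $C\times_{\G_m}\Spec\O^{sh}_{\G_m,\bar t_k}$, so the constant section on that component is a germ of $p_{2*}j_{1*}i_*\Q_l$ at $t_k$ mapping onto the corresponding summand of $(p_{2*}Q)_{\bar t_k}$; hence $p_{2*}j_{1*}i_*\Q_l\to p_{2*}Q$ is surjective on stalks and your short exact sequence on $\G_m$ holds. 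Replacing your local vanishing claim by this stalkwise surjectivity (or simply adopting the paper's cokernel formulation, which never invokes $R^1p_{2*}$) repairs the argument; the remaining steps are correct.
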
  

\begin{proof} 
For $V$ \'etale over $\G_m\times \A^1$, the group $j_{1!}i_*\Q_l(V)$ consists of the sections $s$ whose support is proper over $V$.  This coincides with $j_{1*}i_*\Q_l(V)$ if and only if $V$ does not map onto a point of the form $(t, 0)\in C$.  Thus we see that the cokernel $\mc{G}$ of the natural morphism $p_{2*}j_{1!}i_*\Q_l\rightarrow p_{2*}j_{1*}i_*\Q_l$ is a direct sum of skyscraper sheaves over the points $t\in \G_m$ satisfying $f(t, 0)=0$.  At such a point $t$, the cokernel is given by the cokernel of 
\[
\bigoplus_{(t, y)\in C, y\neq 0} \Q_l\rightarrow \bigoplus_{(t, y)\in C} \Q_l,
\] 
so $\mc{G} = \bigoplus_{t|(t, 0)\in C} \Q_l$. 

The fiber functor is given by embedding these into $\A^1$ by $j_{0!}$ and taking the first cohomology group.  The associated long exact sequence gives 
\begin{align*}
    0&\rightarrow H^0(\A^1, j_{0!}p_{2*}j_{1!}i_*\Q_l) \rightarrow H^0(\A^1, j_{0!}p_{2*}j_{1*}i_*\Q_l)\rightarrow H^0(\A^1, j_{0!}\mc{G}) \\ 
    &\rightarrow H^1(\A^1, j_{0!}p_{2*}j_{1!}i_*\Q_l) \rightarrow H^1(\A^1, j_{0!}p_{2*}j_{1*}i_*\Q_l)\rightarrow 0. 
\end{align*}

We will prove in Proposition \ref{prop: H0} that $p_{2*}j_{1*}i_*\Q_l[1]$ is perverse.  By Lemma \ref{lemma: katz dim}, extending a perverse sheaf on $\G_m$ by 0 into $\A^1$ and taking the cohomology group of a given degree other than 0 gives 0.  Thus we have a short exact sequence
\begin{align*}
    0&\rightarrow  H^0(\A^1, j_{0!}\mc{G}) 
    \rightarrow H^1(\A^1, j_{0!}p_{2*}j_{1!}i_*\Q_l) \rightarrow H^1(\A^1, j_{0!}p_{2*}j_{1*}i_*\Q_l)\rightarrow 0. 
\end{align*} 

The weights of $H^0(\A^1, j_{0!}\mc{G})$ are all 0, so the proposition follows.
\end{proof}

 We now turn to dealing with $H^0(\A^1, j_{0!}R^1p_{2*}j_{1!}i_*\Q_l)$.  Let $n_{\infty}$ be the number of solutions $t\in \G_m$ to $f(t, \infty)=0$.  Let $j_{\infty}$ be the inclusion of $\G_m\times \A^1$ into $\G_m\times \P^1$ and $q$ the projection down to $\G_m$ as shown below.  
 
\[\begin{tikzcd}
	C & {\G_m^2} & {\G_m\times \A^1} & {\G_m\times \P^1} \\
	&& {\G_m}
	\arrow["i", from=1-1, to=1-2]
	\arrow["{j_1}", from=1-2, to=1-3]
	\arrow["{j_{\infty}}", from=1-3, to=1-4]
	\arrow["q", from=1-4, to=2-3]
	\arrow["{p_2}", from=1-3, to=2-3]
\end{tikzcd}\] 

\begin{proposition}
\label{prop: curve weights}
The weights of the fiber functor $H^0(\A^1, j_{0!} Rp_{2*}j_{1!}i_*\Q_l[1])$ consist of those of $H^1(\A^1, j_{0!}p_{2*}j_{1*}i_*\Q_l)$ along with $n_0$ of weight 0 and $n_{\infty}$ of weight 2. 
\end{proposition}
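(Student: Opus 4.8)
The plan is to combine Lemma \ref{lem: reduce 1}, the preceding proposition, and a single local computation at infinity. Since $H^0(\A^1, j_{0!}Rp_{2*}j_{1!}i_*\Q_l[1])=H^1(\A^1, j_{0!}Rp_{2*}j_{1!}i_*\Q_l)$ and the shift does not affect the Galois action, Lemma \ref{lem: reduce 1} presents this group as an extension of $H^1(\A^1, j_{0!}p_{2*}j_{1!}i_*\Q_l)$ by $H^0(\A^1, j_{0!}R^1p_{2*}j_{1!}i_*\Q_l)$; weights are additive along such extensions, so the multiset of weights is the union of those of the two pieces, and the first piece is controlled by the preceding proposition (its weights being those of $H^1(\A^1, j_{0!}p_{2*}j_{1*}i_*\Q_l)$ together with $n_0$ of weight $0$). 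Hence the statement reduces to showing that $H^0(\A^1, j_{0!}R^1p_{2*}j_{1!}i_*\Q_l)$ is pure of weight $2$ and of dimension $n_\infty$. Because $\A^1\supset\G_m$, the extension by zero of a skyscraper sheaf on $\G_m$ is again a skyscraper sheaf, and $H^0(\A^1,-)$ of a skyscraper is the sum of its stalks; so it suffices to prove that $R^1p_{2*}j_{1!}i_*\Q_l$ is a skyscraper sheaf on $\G_m$, pure of weight $2$, of total length $n_\infty$.

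To compute $R^1p_{2*}j_{1!}i_*\Q_l$ I would factor $p_2$ through the proper compactification $q\colon\G_m\times\P^1\to\G_m$ as $p_2=q\circ j_\infty$ and use the open--closed triangle for $j_\infty$ and the complementary closed inclusion $i_\infty\colon\G_m\times\{\infty\}\hookrightarrow\G_m\times\P^1$,
\[
j_{\infty!}j_{1!}i_*\Q_l \longrightarrow Rj_{\infty*}j_{1!}i_*\Q_l \longrightarrow i_{\infty*}\,i_\infty^*Rj_{\infty*}j_{1!}i_*\Q_l \xrightarrow{+1}.
\]
Apply $Rq_*$. The first term $j_{\infty!}j_{1!}i_*\Q_l$ is the extension by zero of $\Q_l$ from the locally closed curve $C\cap\{y\neq0\}$, and the induced map $C\cap\{y\neq0\}\to\G_m$ is quasi-finite and separated (here one uses that $C$ has no component of the form $x=t$, which is assumed throughout), so its $!$-pushforward of $\Q_l$ is concentrated in degree $0$, whence $Rq_*j_{\infty!}j_{1!}i_*\Q_l$ lives in degree $0$. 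Taking $\mathcal{H}^1$ of the triangle and using $Rq_*\circ Rj_{\infty*}=Rp_{2*}$ then gives $R^1p_{2*}j_{1!}i_*\Q_l\cong\mathcal{H}^1\!\big(Rq_*i_{\infty*}i_\infty^*Rj_{\infty*}j_{1!}i_*\Q_l\big)$, and since $q\circ i_\infty$ is an isomorphism onto $\G_m$ this is exactly the restriction to the divisor at infinity of $R^1j_{\infty*}(j_{1!}i_*\Q_l)$, transported to $\G_m$. By the Henselian-local description of higher direct images, its stalk at a point $(t,\infty)$ is the degree-$1$ cohomology of a punctured Henselian neighborhood of $(t,\infty)$ in the closure $\overline C$ of $C$ in $\G_m\times\P^1$, which vanishes unless $(t,\infty)\in\overline C$.

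It remains to read off $\overline C$ near the divisor at infinity from the Newton polygon. Writing $f=\sum_{i\le d}a_i(x)y^i$ with $a_d\neq0$, in the chart $u=1/y$ the closure $\overline C$ is cut out near $\{u=0\}$ by $g(x,u)=a_d(x)+a_{d-1}(x)u+\cdots$, so $\overline C\cap(\G_m\times\{\infty\})=\{(t,\infty):a_d(t)=0\}$, which is precisely the set of $n_\infty$ solutions of $f(t,\infty)=0$. Nondegeneracy of $f$ with respect to the face of its Newton polygon in the $y$-maximizing direction (face polynomial $a_d(x)y^d$) forces $a_d$ to have no repeated root in $\G_m$; since $\partial g/\partial x$ at $(t,0)$ equals $a_d'(t)\neq0$, the curve $\overline C$ is smooth, with a single branch, at each of these $n_\infty$ points. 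Therefore each stalk is the degree-$1$ cohomology of a punctured disc, i.e. $\Q_l(-1)$: one-dimensional and pure of weight $2$ (the local monodromy is tame, e.g. since $p=\ch k>\deg f$). Summing over the $n_\infty$ points, which fall into finitely many Frobenius orbits, shows $R^1p_{2*}j_{1!}i_*\Q_l$ is a skyscraper of total length $n_\infty$, pure of weight $2$, and the proposition follows. The main obstacle is this last geometric step: pinning down the points of $\overline C$ at infinity and their local structure from the combinatorics of the Newton polygon, and ensuring, via nondegeneracy, that no extra branches, singularities, or multiplicities inflate the count past $n_\infty$.
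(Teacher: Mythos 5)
Your proof is correct and follows the same overall skeleton as the paper's: reduce via Lemma \ref{lem: reduce 1} and the preceding proposition to the piece $H^0(\A^1, j_{0!}R^1p_{2*}j_{1!}i_*\Q_l)$, show that $R^1p_{2*}j_{1!}i_*\Q_l$ is a skyscraper supported at the points $(t,\infty)$ with $f(t,\infty)=0$, and identify each stalk with $\Q_l(-1)$. You diverge in two technical steps, both valid. First, where the paper factors $p_2=q\circ j_\infty$ and uses the low-degree exact sequence of the Leray spectral sequence, killing $R^1q_*$ and $R^2q_*$ of $j_{\infty*}j_{1!}i_*\Q_l$ by proper base change (fibrewise skyscrapers), you instead use the recollement triangle $j_{\infty!}\to Rj_{\infty*}\to i_{\infty*}i_\infty^*Rj_{\infty*}$ and kill the $j_{\infty!}$-term after $Rq_*$ by quasi-finiteness of the projection $C\to\G_m$ (which, as you note, presupposes $C$ has no vertical component; the paper needs this implicitly as well). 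Second, and more substantively, you verify via the chart $u=1/y$ and nondegeneracy on the $y$-maximizing face that the closure $\bar C$ is smooth with a single branch at each point $(t,\infty)$, $t\in\G_m$; the paper simply asserts that locally one has a smooth curve punctured at $z$, so your argument fills in the detail that guarantees the count is exactly $n_\infty$ (one copy of $\Q_l(-1)$ per point, no extra branches), with the case where the top face is a single vertex being vacuous since then $n_\infty=0$. The paper's route is slightly shorter; yours is more self-contained on the geometry at infinity.
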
 

\begin{proof}
By the discussion above, it suffices to compute the weights of $H^0(\A^1, j_{0!}R^1p_{2*}j_{1!}i_*\Q_l)$.  We can compute the stalk of $R^1p_{2*}j_{1!}i_*\Q_l$ at a point $t\in\G_m$ as follows.  By factoring $p_2=q\circ j_{\infty}$, the Grothendieck spectral sequence gives an exact sequence
\[
0\rightarrow R^1q_*(j_{\infty *}j_{1!}i_*\Q_l)\rightarrow R^1p_{2*}j_{1!}i_*\Q_l\rightarrow q_*R^1j_{\infty *}(j_{1!}i_*\Q_l)\rightarrow R^2q_*(j_{\infty *}j_{1!}i_*\Q_l) \rightarrow \cdots.
\]
Take a point $t\in \G_m$.  For $i=1, 2$, by the proper base change theorem we have 
\[
 (R^iq_*(j_{\infty *}j_{1!}i_*\Q_l))_t = H^i(\P^1, t^*j_{\infty *}j_{1!}i_*\Q_l) = 0, 
\]
because the higher cohomology of skyscraper sheaves vanish.  Thus we are only interested in the component $q_*R^1j_{\infty *}(j_{1!}i_*\Q_l)$.  Again by proper base change, the stalk of this sheaf at $t\in \G_m$ is given by $H^0(\P^1, t^*R^1j_{\infty *}(j_{1!}i_*\Q_l))$.  Note that $R^1j_{\infty*}(j_{1!}i_*\Q_l)$ is only supported on the points $(t, \infty)$ with $f(t, \infty)\in C$.  Fix such a point $z = (y, \infty)$.  Then as $U$ ranges over the \'etale neighborhoods of $z$, we have
\[
(R^1j_{\infty*}j_{1!}i_*\Q_l)_{z} = \varinjlim_{U} H^1(U \times_{\G_m\times \P^1} (\G_m\times \A^1), j_{1!}i_*\Q_l)  \cong \varinjlim_{U} H^1(U \times_{\G_m\times \P^1} C, \Q_l). 
\]

Passing the limit into $U\times C$, we get that $\varinjlim_{U}U\times C$ is given by $\Spec \Fqb[x_1, x_2]_{z}^h \times C$.  The \'etale local rings of the closed points of a smooth curve are all isomorphic to $\O_{\A^1, 0}^h$.  Here, $C$ is a smooth curve punctured at $z$. Thus, passing the limit inside, we have $\varinjlim_{U}H^1(U\backslash\{x\}, \Z/l) = H^1(\Spec \O_{\A^1, 0}^h\backslash\{0\}, \Z/l) = H^1(\Spec\operatorname{Fr}(\O_{\A^1, 0}^h), \Z/l)$.  The prime-to-$p$ part of $\Spec\operatorname{Fr}(\O_{\A^1, 0}^h)$ is given by $\prod_{p'\neq p}\Z_{p'}(1)$, so we see that this cohomology group is $\Z/l(-1)$.  Doing the same thing for $\Z/l^n$-coefficients and taking the limit, we get that $\varinjlim_{U}H^1(U\backslash\{x\}, \Q_l) = \Q_l(-1)$. \\ 

Since there are $n_\infty$ points of this form and $\Q_l(-1)$ has weight 2, the proposition follows from the previous discussion. 

\end{proof}

\subsection{Weights of the related sheaf}
We can use a result of Katz to compute the weights of $H^1(\A^1, j_{0!}p_{2*}j_{1*}i_*\Q_l)$.  Indeed, \cite[Theorem 16.1]{KatzCE}, stated as Theorem \ref{thm: katz-weights} below, gives a formula for the weights of $\omega(j_* \mc{F}[1])$ for $\mc{F}$ a lisse sheaf on some open $U\subset \G_m$.  The complete conditions are that $N = j_*\mc{F}[1]$ is perverse, arithmetically semisimple, and pure of weight 0.   We would like to show that  $(p_{2*}i_*\Q_l(1/2)/\Q_l)[1]$ is of this form.  By construction, it is arithmetically semisimple and pure of weight 0.  It remains to show that $p_{2*}i_*\Q_l$ is equivalent to the pushforward of the pullback over the open set in $\G_m$ which it is lisse over.  \\ 

We note that $p_{2*}\circ i_*:C\rightarrow\G_m$ is \'etale outside of the ramification points of $C$.  Let the unramified locus in the image be given by $j: U\hookrightarrow\G_m$ with complement $i:Z\hookrightarrow\G_m$, and let $\mc{F} = j^*(p_{2*}i_*\Q_l)$. 

\begin{proposition}
\label{prop: H0} 
We have $p_{2*}i_*\Q_l = j_*\mc{F}$. 
\end{proposition}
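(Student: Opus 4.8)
Set $\mathcal{G}:=p_{2*}i_*\Q_l$, so that $\mathcal{F}=j^*\mathcal{G}$ and the assertion is that the adjunction unit $\mathcal{G}\to j_*j^*\mathcal{G}$ is an isomorphism. The plan is to recognise $\mathcal{G}$ as the direct image of the constant sheaf along a \emph{finite} morphism out of a normal curve, for which the statement is standard.

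First I would rewrite $\mathcal{G}$. The functors $p_{2*}$ and $i_*$ appearing here are the ordinary ($0$-th) pushforwards — the derived piece $R^1p_{2*}$ having already been split off in the preceding propositions — and ordinary pushforward is compatible with composition, so $\mathcal{G}=\psi_*\Q_l$ for $\psi\colon C\to\G_m$ the first projection $(x,y)\mapsto x$. Since $C$ is not a line $\{x=t\}$, the morphism $\psi$ is quasi-finite, and it is separated, so by Zariski's main theorem it factors as $\psi=\bar\psi\circ\iota$ with $\iota\colon C\hookrightarrow\bar C$ an open immersion and $\bar\psi\colon\bar C\to\G_m$ finite; concretely I take $\bar C$ to be the normalization of $\G_m$ in the function field $\bar k(C)$, a smooth affine curve finite over $\G_m$, and — as $C$ is smooth — $C$ is then a dense open subscheme of $\bar C$ with finite complement. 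Because $\bar C$ is normal, hence geometrically unibranch, and $C$ is dense, $\iota_*\Q_l=\Q_l$ on $\bar C$ (the stalk at a geometric point is $\varinjlim_W H^0(W\setminus F,\Q_l)=\Q_l$, where $F$ is a finite set and $W\setminus F$ is connected for small connected étale $W$). Hence
\[
\mathcal{G}=\psi_*\Q_l=\bar\psi_*\,\iota_*\Q_l=\bar\psi_*\Q_l .
\]

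It remains to prove that $\bar\psi_*\Q_l=j_*j^*\bar\psi_*\Q_l$; I claim this holds for $j\colon U\hookrightarrow\G_m$ the inclusion of any dense open, in particular of the lisse locus of $\mathcal{G}$ (which is exactly the étale locus of $\bar\psi$). Write $\bar U:=\bar\psi^{-1}(U)$, with $j_{\bar C}\colon\bar U\hookrightarrow\bar C$ and $\bar\psi_U\colon\bar U\to U$ the evident maps; since $\G_m\setminus U$ is finite and $\bar\psi$ is finite, $\bar U$ is dense open in $\bar C$, so as above $\Q_l=(j_{\bar C})_*\Q_l$ on $\bar C$. Applying $\bar\psi_*$, using that ordinary pushforward composes and that $\bar\psi$, being finite, is proper (so proper base change for the cartesian square gives $j^*\bar\psi_*\Q_l=(\bar\psi_U)_*\Q_l$), one gets
\[
\mathcal{G}=\bar\psi_*\Q_l=\bar\psi_*(j_{\bar C})_*\Q_l=j_*(\bar\psi_U)_*\Q_l=j_*j^*\bar\psi_*\Q_l=j_*\mathcal{F},
\]
which is the claim. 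The only point requiring care is the very first step — checking that $p_{2*}i_*\Q_l$ is genuinely the $0$-th pushforward and therefore equals $\psi_*\Q_l$ for the composite $\psi$, despite $C$ not being closed in $\G_m\times\A^1$; once the problem is cast as ``$\psi_*\Q_l$ is its own middle extension'', the remainder is the familiar fact that a finite direct image of the constant sheaf on a normal curve is a middle extension.
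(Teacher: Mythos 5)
Your proof is correct, and it organizes the argument differently from the paper. The paper stays on $\G_m$: it writes the adjunction map $p_{2*}i_*\Q_l \to j_*\mc{F}$, observes there are no sections supported on $Z$, and then kills the cokernel by a stalk computation at each ramification point, namely the surjectivity of $\varinjlim_{V\ni x}\Gamma(V\times_{\G_m}C, \Q_l)\to \varinjlim_{V\ni x}\Gamma((V\setminus\{x\})\times_{\G_m}C, \Q_l)$, which holds because puncturing the smooth (hence normal) fiber products $V\times_{\G_m}C$ at finitely many points does not alter their connected components. You instead factor $\psi\colon C\to\G_m$ through the finite morphism $\bar\psi\colon\bar C\to\G_m$ furnished by Zariski's main theorem (the normalization of $\G_m$ in $k(C)$), identify $p_{2*}i_*\Q_l=\bar\psi_*\Q_l$ via $\iota_*\Q_l=\Q_l$ on the normal curve $\bar C$, and then deduce $\bar\psi_*\Q_l=j_*j^*\bar\psi_*\Q_l$ from finite base change together with the same unibranch argument applied to $\bar\psi^{-1}(U)\subset\bar C$. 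The geometric input is the same in both proofs---punctured \'etale neighborhoods of a normal curve remain connected---but your packaging buys a slightly stronger and cleaner statement (the identity holds for every dense open $U\subset\G_m$, not only the unramified locus, and it exhibits $p_{2*}i_*\Q_l$ as the finite direct image of the constant sheaf from a normal curve, hence a middle extension), whereas the paper's version is shorter and avoids introducing $\bar C$. Your preliminary identification of $p_{2*}i_*\Q_l$ with the underived pushforward $\psi_*\Q_l$ is also fine: underived direct images compose, the $R^1$ part was split off in the earlier lemmas, and the implicit hypothesis that $C$ is not a fiber of the projection (so that $\psi$ is quasi-finite) matches the paper's standing assumption.
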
 
\begin{proof}
We have an exact sequence of sheaves on $\G_m$: 
\[
0\rightarrow i_*i^!p_{2*}i_*\Q_l\rightarrow p_{2*}i_*\Q_l \rightarrow j_*j^*(p_{2*}i_*\Q_l) = j_*\mc{F}. 
\] 
Clearly $p_{2*}i_*\Q_l$ does not have any nonzero sections supported only on $Z$, so the first term is 0. The cokernel of the final map is a sheaf $\mc{H}$ which can only be supported on ramification points $x\in Z$.  At such a point, we have that 
\[
\mc{H}_x = \coker (\varinjlim_{V\ni x}\Gamma(V\times C, \Q_l)  \rightarrow \varinjlim_{V\ni x}\Gamma(V\backslash\{x\} \times C, \Q_l)).  
\] 

Note that $V\backslash\{x\}\times C$ is obtained by removing a finite number of points from $V\times C$, which does not change the set of its connected components in the Zariski topology.  This implies that the given map is surjective so $\mc{H}_x=0$, as desired. 
\end{proof}

Now take $N=j_*\mc{F}(1/2)[1]$.  This is a perverse sheaf on $\G_m/\F_q$ that is a constant summand $\Q_l[1]$ direct sum an arithmetically semisimple piece and pure of weight 0 (the half-Tate twist is included for this purpose).  Let $\mc{F}(0)$ and $\mc{F}(\infty)$ denote the inertia representations at 0 and $\infty$ attached to $\mc{F}$.  We look at the unipotent summand in the tame parts of these representations and obtain a Jordan decomposition: 
\[
\mc{F}(0)^{unip}=\bigoplus_{i=1}^{d_0}Unip(e_i), \quad \mc{F}(\infty)^{unip}=\bigoplus_{j=1}^{d_\infty}Unip(f_j).
\]

Now we can apply the following result from \cite{KatzCE}.
\begin{theorem}
\label{thm: katz-weights}
 [Theorem 16.1 of \cite{KatzCE}] For $N$ perverse, arithmetically semisimple, and of weight 0, the action of $Frob_{k, 1}$ on $\omega(N)=H^0(\A^1\otimes_k \overline{k}, j_{0!}N)$ has exactly $d_0$ eigenvalues of weight $<0$; their weights are $-e_1, \ldots, -e_{d_0}$.  It has exactly $d_{\infty}$ eigenvalues of weight $>0$; their weights are $f_1, \ldots, f_{d_{\infty}}$.  All other eigenvalues, if any, have weight zero.
\end{theorem}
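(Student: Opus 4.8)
\textit{Proof strategy.} The plan is to reduce to a single geometrically irreducible summand, compute $\omega(N)$ by two excision long exact sequences, and then read off the three kinds of Frobenius weights from the local monodromy of $\mathcal{F}$ at $0$ and $\infty$.

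First I would use that $\omega$ is exact and faithful (Propositions~\ref{prop: faithful}, \ref{prop: exact}) and additive on direct sums, and that both sides of the claimed identity are additive in $N$, to reduce — via the decomposition of the arithmetically semisimple, pure weight-$0$ sheaf $N$ into arithmetically simple summands — to the following cases. A punctual summand $\delta_a$ with $a\in\G_m$ contributes $\omega(\delta_a)\cong\Q_l$ of weight $0$ and carries no unipotent local block; a geometrically Kummer summand $\mathcal{L}_\chi[1]$ is negligible (Remark~\ref{remark: neg}) and has no unipotent local monodromy, so $\omega$ vanishes on it; a geometrically trivial summand is negligible, and its (vanishing) contribution is accounted for separately, so we may assume it absent. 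The remaining case is $N=\mathcal{G}[1]$ with $\mathcal{G}=j_{U*}\mathcal{F}$ the middle extension to $\G_m$ of an arithmetically irreducible, geometrically nontrivial lisse sheaf $\mathcal{F}$ pure of weight $-1$; then, $\G_m$ being an affine curve and $\mathcal{G}$ having no geometrically trivial sub- or quotient sheaf, $H^0(\G_m,\mathcal{G})=H^2(\G_m,\mathcal{G})=H^0_c(\G_m,\mathcal{G})=H^2_c(\G_m,\mathcal{G})=0$.

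Next I would compute $\omega(N)=H^0(\A^1,j_{0!}N)$ by excision. Writing $i_0\colon\{0\}\hookrightarrow\A^1$ for the complement of $j_0$, applying $R\Gamma(\A^1,-)$ to the localization triangle $i_{0*}i_0^!(j_{0!}N)\to j_{0!}N\to Rj_{0*}N\xrightarrow{+1}$ — in which $i_0^!(j_{0!}N)$ has cohomology sheaves $\mathcal{G}^{I_0}$ in degree $0$ and the local cohomology $H^1(I_0,\mathcal{G}_{\bar\eta})$ in degree $1$ — and using the vanishing of $H^i(\A^1,j_{0!}N)$ for $i\neq 0$ from Lemma~\ref{lemma: katz dim} together with $H^0(\G_m,\mathcal{G})=H^2(\G_m,\mathcal{G})=0$, one gets an exact sequence
\[
0\longrightarrow\mathcal{G}^{I_0}\longrightarrow\omega(N)\longrightarrow H^1(\G_m,\mathcal{G})\longrightarrow H^1(I_0,\mathcal{G}_{\bar\eta})\longrightarrow 0.
\]
Independently, with $k\colon\G_m\hookrightarrow\P^1$, the forget-supports triangle $k_!\mathcal{G}\to Rk_*\mathcal{G}\to Q\xrightarrow{+1}$ — whose cone $Q$ is supported on $\{0,\infty\}$ with stalk $R\Gamma(I_x,\mathcal{G}_{\bar\eta})$ at $x$ — gives, using the vanishing of $H^0,H^2,H^0_c,H^2_c$,
\[
0\longrightarrow\mathcal{G}^{I_0}\oplus\mathcal{G}^{I_\infty}\longrightarrow H^1_c(\G_m,\mathcal{G})\longrightarrow H^1(\G_m,\mathcal{G})\longrightarrow H^1(I_0,\mathcal{G}_{\bar\eta})\oplus H^1(I_\infty,\mathcal{G}_{\bar\eta})\longrightarrow 0,
\]
so that $H^1(\G_m,\mathcal{G})/H^1_!\cong H^1(I_0,\mathcal{G}_{\bar\eta})\oplus H^1(I_\infty,\mathcal{G}_{\bar\eta})$, where $H^1_!:=\operatorname{im}\bigl(H^1_c(\G_m,\mathcal{G})\to H^1(\G_m,\mathcal{G})\bigr)$ is pure of weight $0$ (by Deligne's bounds, $H^1_c$ having weights $\le 0$ and $H^1$ weights $\ge 0$; equivalently $H^1_!=H^0(\P^1,k_{!*}\mathcal{G}[1])$, which is pure by Gabber). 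The boundary map of the first sequence being the $0$-component of that of the second, a diagram chase shows that $\omega(N)$ carries a three-step filtration with successive graded pieces $\mathcal{G}^{I_0}$, then $H^1_!$, then $H^1(I_\infty,\mathcal{G}_{\bar\eta})$.

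Finally I would identify the weights. The middle piece $H^1_!$ is pure of weight $0$. For the outer pieces, observe that the wild part of the inertia representation $\mathcal{F}(x)$ contributes nothing to $\mathcal{G}^{I_x}$ or to $H^1(I_x,\mathcal{G}_{\bar\eta})=(\coker\text{ of the monodromy})(-1)$ — wild inertia having neither invariants nor coinvariants — and neither does a tame block with nontrivial semisimple part; on a unipotent block $Unip(e)$, which is pure of weight $-1$ and whose monodromy weight filtration has one-dimensional graded pieces in weights $-e,-e+2,\dots,e-2$, the invariants $\mathcal{G}^{I_x}$ form the bottom graded piece, of weight $-e$, while $H^1(I_x,\mathcal{G}_{\bar\eta})$ is the top graded piece twisted by $(-1)$, of weight $(e-2)+2=e$. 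Reading off the filtration of $\omega(N)$ then gives exactly $d_0$ eigenvalues of weight $<0$, namely $-e_1,\dots,-e_{d_0}$, exactly $d_\infty$ of weight $>0$, namely $f_1,\dots,f_{d_\infty}$, and all others of weight $0$. The hard part is this last step: purity of $\mathcal{G}$ must force the Frobenius weight on the $I_0$-invariants of a size-$e$ unipotent local block to be exactly $-e$ (and dually at $\infty$). The upper bound — that $\mathcal{G}^{I_x}$, being the stalk at $x$ of the $j_*$-extension of a pure weight-$(-1)$ sheaf, has weights $\le -1$ — is soft; the matching lower bound is precisely the local weight–monodromy theorem for pure $\ell$-adic local systems on curves, and this is where purity and semisimplicity are used essentially (semisimplicity rules out, e.g., nonsplit arithmetic self-extensions of the constant sheaf, whose spurious unipotent blocks are invisible to the negligible $\omega$). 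To forestall an over-count one also checks that the Swan conductors at $0$ and $\infty$ produce no further eigenvalues: $\dim H^1(I_x,\mathcal{G}_{\bar\eta})=\dim\mathcal{G}^{I_x}$, since Galois cohomology of a local field with algebraically closed residue field has vanishing Euler characteristic, so the wild part of $\dim\omega(N)$ is absorbed into $H^1_!$ and carries weight $0$.
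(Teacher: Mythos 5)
There is nothing in the paper to compare your argument with: Theorem~\ref{thm: katz-weights} is not proved here, it is quoted (up to the normalization discussed just before it) from \cite[Theorem 16.1]{KatzCE} and used as a black box. Judged on its own, your reconstruction is essentially Katz's argument and is essentially correct. The reduction to an arithmetically simple constituent, which apart from punctual and negligible ones is the middle extension $j_{U*}\mathcal{F}[1]$ of an irreducible lisse sheaf pure of weight $-1$, is right; the two localization sequences are correct (the first using the vanishing of $H^i(\A^1,j_{0!}N)$ for $i\neq 0$, i.e.\ Lemma~\ref{lemma: katz dim}); so is the resulting three-step filtration of $\omega(N)$ with graded pieces $\mathcal{G}^{I_0}$, $H^1_!$ and $H^1(I_\infty,\mathcal{G}_{\bar\eta})$, granted the (standard, but unverified by you) compatibility of the two boundary maps. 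Purity of $H^1_!$ follows from the Weil II weight bounds as you say, and pinning the weights of the outer pieces at exactly $-e_i$ and $f_j$ is precisely the local weight--monodromy theorem (Weil II 1.8.4) applied to the unipotent part of the local monodromy, the wild and nontrivially tame parts contributing neither invariants nor inertia $H^1$. This is the same mechanism Katz uses, so your route is not a genuinely different one.

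The one place you are too quick is the handling of geometrically constant constituents. As literally stated the theorem fails for such a constituent: $\Q_l(1/2)[1]$ contributes a $Unip(1)$ to both $\mathcal{F}(0)^{unip}$ and $\mathcal{F}(\infty)^{unip}$, yet it is negligible and contributes nothing to $\omega(N)$, so the asserted eigenvalue count is off by one each at weight $-1$ and weight $+1$. Saying its contribution is ``accounted for separately, so we may assume it absent'' is not an argument; one needs the hypothesis, implicit in Katz's setting and respected in this paper's application (where the constant summand $\Q_l[1]$ is explicitly split off before the unipotent pieces are computed and before the theorem is invoked), that $N$ has no geometrically constant constituent. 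Kummer constituents $\L_\chi[1]$ with $\chi$ nontrivial are indeed harmless for the reason you give, and the remaining unchecked assertion $\dim H^1(I_x,\mathcal{F}_{\bar\eta})=\dim\mathcal{F}^{I_x}$ is standard for $\ell\neq p$.
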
  

In our setting, we want the weights of $H^1(\A^1, j_{0!}p_{2*}j_{1*}i_*\Q_l)$.  To apply the previous theorem, we need to take out the constant summand $\Q_l[1]$ when computing the unipotent pieces.  We do not need to add anything back in because the constant summand does not contribute to the fiber functor.  Furthermore, because we are starting with $N$ of weight 1 (as it is placed in degree -1), we would have to apply the half-Tate twist to make it of degree 0.  Alternatively, we can just remember to add 1 to each of the weights.  Thus we will compute the unipotent pieces and subtract one $Unip(1)$ from both the $0$ and $\infty$ parts and apply the theorem with one added to all the weights to compute the desired weights.  

\subsection{Weights in terms of the Newton polygon}
In order to compute the unipotent pieces at 0 and $\infty$, we must determine the local monodromy action of a generator at both these points.  At 0, we know that $1\in I(0)/P(0)$ corresponds to $\sigma\in \Gal(\overline{\F_q(x)}/\F_q(x))$ that on each $\Fq(x^{1/m})/\Fq(x)$, sends $x^{1/m}$ to $\zeta_mx^{1/m}$ for $(m, p)=1$.  Thus we see that the action of 1 is given by a permutation matrix with an $m$-cycle for each solution to $f$ which solves $y$ as a polynomial in $x^{1/m}$.  At $\infty$, the same thing happens when we replace $x$ with $1/x$.  We can explicitly carry out these computations using the Newton polygon. \\ 

We say that $C$ is 0-nondegenerate with respect to its Newton polygon $\Delta(C)$ if there are no solutions to 
\[
\dfrac{\partial f}{\partial x} = \dfrac{\partial f}{\partial y} = 0 
\] 
on any of the faces of its Newton polygon, including $\Delta(C)$. \\ 

We may and do multiply by appropriate factors of $x$ and $y$ so that every exponent of $f$ is nonnegative and there is a term of the form $x^a$ and a term of the form $y^b$ (e.g. a nonzero constant term would satisfy both of these requirements).  Because we assume $C$ to be nondegenerate with respect to its Newton polygon, we can use the slopes of its Newton polygon to compute the ramification degrees.  We assume that the characteristic $p$ is large enough so that it does not divide any of the numbers appearing in the slopes of the Newton polygon.  For example, we can take $p>\deg f$.  Finally, we plot the Newton polygon by putting the $y$-exponents on the $x$-axis and the $x$-exponents on the $y$-axis (this gives the correct orientation of the Newton polygon when solving for $y$ in terms of $x$). \\ 

The main property of Newton polygons that we will use is the following.  Define the slope sequence of a Newton polygon to be the increasing sequence of slopes corresponding to the edges of the Newton polygon, beginning with the bottom vertex on the $y$-axis going to the bottom vertex with largest $x$-coordinate.  Define the inverse slope sequence to be the slope sequence obtained when the Newton polygon is reflected about the $x$-axis and shifted up appropriately.  Let $S_0(\Delta)$ and $S_\infty(\Delta)$ denote the sides of $\Delta$ in the slope sequence and inverse slope sequence respectively.  As in \cite{DL}, define the volume of a side to be its length normalized so that the intersection of a fundamental domain of its affine space with the lattice $\Z^2$ has unit length.  (This definition extends readily to higher dimensions.)  In particular, an edge represented by a vector of the form $(a, b)$ has volume $\gcd(a, b)$.  Finally, denote the length of a side $E_i$ along the $x$-axis (corresponding to the $y$-exponents) by $a(E_i)$.

\begin{proposition}
\label{prop: Newton general}
Assume that $C$ is 0-nondegenerate with respect to its Newton polygon and that $p>\deg f$.   Then the unipotent part of the local monodromy over 0 is given by $\sum_{E_i \in S_0(\Delta(C))}^k\Vol(E_i) - 1$ copies of $Unip(1)$, and over $\infty$ it is given by $\sum_{E_j \in S_{\infty}(\Delta(C))}^m \Vol(E_j) - 1$ copies of $Unip(1)$.   
\end{proposition}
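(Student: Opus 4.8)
The strategy is to identify the local monodromy representations $\mathcal F(0)$ and $\mathcal F(\infty)$ of the lisse sheaf $\mathcal F$ explicitly as permutation representations attached to the branches of $C$ over $x=0$ and $x=\infty$, and then to count those branches in terms of the Newton polygon of $f$.

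\emph{Step 1: the local representations.} Near $0$ (resp.\ $\infty$) the sheaf $\mathcal F$ is lisse, since $\pi\coloneqq p_2\circ i$ is unramified away from finitely many points of $\G_m$; restricting $\mathcal F$ to the punctured henselian disc it becomes the $I(0)$–permutation representation on the set of Puiseux roots $y$ of $f(x,y)$ over $\overline{\kbar((x))}$, equivalently on the branches of $C$ over $x=0$ (and likewise at $\infty$ after $x\mapsto 1/x$). Because $p>\deg f$ there is no wild inertia, so $\mathcal F(0)$ and $\mathcal F(\infty)$ are semisimple tame representations: a branch of ramification index $e$ contributes $\Ind_{\langle\gamma^e\rangle}^{\langle\gamma\rangle}\mathbf 1$, in which the trivial character occurs exactly once and all other constituents are nontrivial tame characters, hence not unipotent. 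Therefore the unipotent part of $\mathcal F(0)$ is $Unip(1)^{\oplus n_0}$, where $n_0$ is the number of branches of $C$ over $0$, and similarly $Unip(1)^{\oplus n_\infty}$ at $\infty$. As the constant summand $\Q_l\subseteq\mathcal F$ recorded above is everywhere unramified, removing it deletes one copy of $Unip(1)$ from each, leaving $n_0-1$ and $n_\infty-1$ copies.

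\emph{Step 2: counting branches via $\Delta(C)$.} By the Newton–Puiseux algorithm, the roots $y(x)$ near $x=0$ split according to their $x$–valuation, and the valuations occurring are exactly the negatives of the slopes of the lower hull of $\Delta(C)$ (in the chosen coordinates, with $y$–exponents on the horizontal axis); the roots attached to a lower edge $E$ are governed by its face polynomial $f_E$. If the primitive direction of $E$ is ``$j$ increases by $n_j$ as $i$ decreases by $n_i$'' with $\gcd(n_i,n_j)=1$, then $f_E=x^{i_1}y^{j_1}P_E\!\left(y^{n_j}x^{-n_i}\right)$ for a polynomial $P_E$ of degree $\Vol(E)$ with $P_E(0)\neq 0$, and $0$–nondegeneracy with respect to the face $E$ is precisely the statement that $P_E$ has no repeated root in $\G_m$; hence $P_E$ has $\Vol(E)$ distinct roots there. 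By Newton–Puiseux, each such root corresponds to a single branch of $C$ over $0$ of ramification index $n_j$ (which is prime to $p$, since $n_j\le\deg_y f<p$). Summing over the lower edges, $n_0=\sum_{E\in S_0(\Delta(C))}\Vol(E)$. At $\infty$ the substitution $x\mapsto 1/x$ reflects $\Delta(C)$ about the $x$–axis, turning the upper edges into the governing ones, so $n_\infty=\sum_{E\in S_\infty(\Delta(C))}\Vol(E)$. Combined with Step 1 this gives the two claimed counts of $Unip(1)$–summands.

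The main obstacle is the combinatorial core of Step 2: verifying that $0$–nondegeneracy with respect to each edge $E$ really does force simplicity of the roots of $f_E$, and then that Newton–Puiseux assigns to each such root exactly one branch with the stated ramification index, so that the branches over $0$ are in bijection with $\bigsqcup_{E\in S_0}\{\text{roots of }f_E\}$ and none of them coincide or further ramify. This requires a careful pass through the Puiseux expansions, with some attention to edges meeting the coordinate axes (where the face polynomial degenerates), but the hypothesis $p>\deg f$ makes all the ramification tame and keeps the analysis finite.
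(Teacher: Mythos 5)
Your proof is correct and follows essentially the same route as the paper: Newton--Puiseux expansions organized by the lower (resp.\ reflected) hull of $\Delta(C)$, with $0$-nondegeneracy forcing the face polynomials to have $\Vol(E)$ distinct roots, so that each resulting branch/cycle contributes exactly one $Unip(1)$ (you phrase this via $\Ind_{\langle\gamma^e\rangle}^{\langle\gamma\rangle}\mathbf 1$, the paper via diagonalizing cyclic permutation matrices), giving $\sum_E\Vol(E)$ copies over $0$ and over $\infty$. Your explicit attribution of the ``$-1$'' to removing the everywhere-unramified constant summand is consistent with (and slightly more carefully stated than) the paper's surrounding discussion.
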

\begin{proof}
We obtain the solutions to $y$ in terms of power series of $x^{1/n}$ using the method described above.  For each edge in $S_0(\Delta(C))$ whose endpoints form a vector $(a, b)$, we obtain $a$ distinct (by 0-nondegeneracy) solutions of $y$ in terms of $x^{\gcd(a, b)/a}$.  In matrix form, this gives $\gcd(a, b)$ permutation matrices of dimension $a/\gcd(a, b)\times a/\gcd(a, b)$ of the form 
$ \begin{bmatrix}
0 & 1 & 0 & \cdots & 0 \\
0 & 0 & 1 & \cdots & 0 \\
\cdots & \cdots & \ddots & \cdots & 0 \\
0 & 0 & 0 & \cdots & 1 \\
1 & 0 & 0 & \cdots & 0 
\end{bmatrix}$.  These are diagonalizable, with roots of unity on the diagonal.  We see that each of these contributes one $Unip(1)$, so this gives the result for the local monodromy over 0.  The case of the local monodromy over $\infty$ is similar.
\end{proof}

Putting together Proposition \ref{prop: Newton general}, Theorem \ref{thm: katz-weights}, Proposition \ref{prop: dim}, and the previous discussion, we arrive at the following conclusion.  

\begin{theorem}
\label{thm: curve weights}
Assume that $C$ is 0-nondegenerate with respect to its Newton polygon and that $p>\deg f$.  Let $w_i$ denote the multiplicity of $i$ in the weights of the fiber functor.  Then the numbers $w_i$ are given as follows: 
\begin{align}
   w_0 &= n_0 + \sum_{E_i \in S_0(\Delta(C))}^k\Vol(E_i) - 1, \\ 
    w_1 &= 2\Vol(\Delta(C)) - w_0 - w_2, \\ 
    w_2 &= n_{\infty}+\sum_{E_i \in S_{\infty}(\Delta(C))}^m \Vol(E_i)- 1. 
\end{align}
\end{theorem}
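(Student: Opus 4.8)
The plan is to assemble the pieces already established in this section. By Proposition~\ref{prop: curve weights}, the multiset of Frobenius weights of the fiber functor $\omega(i_*\Q_l[1])$ equals the multiset of weights of $H^1(\A^1, j_{0!}p_{2*}j_{1*}i_*\Q_l) = \omega\big(p_{2*}j_{1*}i_*\Q_l[1]\big)$ together with $n_0$ copies of $0$ and $n_\infty$ copies of $2$. So it suffices to determine the weights of the fiber functor on the perverse sheaf $p_{2*}j_{1*}i_*\Q_l[1]$ and then append these extra weights.

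Next I would invoke Katz's theorem in normalized form. By Proposition~\ref{prop: H0}, $p_{2*}j_{1*}i_*\Q_l = j_*\mc{F}$ with $\mc{F}$ lisse and pure of weight $0$ on the unramified locus $j\colon U\hookrightarrow\G_m$; on the smooth curve $\G_m$ we have $j_*\mc{F}[1] = j_{!*}(\mc{F}[1])$, a middle extension, hence perverse and arithmetically semisimple (the monodromy of $\mc{F}$ is a permutation representation of a finite group, so semisimple in characteristic $0$). Since $C$ is connected, $\mc{F} \cong \Q_l \oplus \mc{F}'$, and $\omega(j_*\Q_l[1]) = H^0(\A^1, j_{0!}\Q_l[1]) = 0$ by Lemma~\ref{lemma: katz dim} (as $\chi(\G_m,\Q_l)=0$), so $\omega(j_*\mc{F}[1]) = \omega(j_*\mc{F}'[1])$. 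Now $N \coloneqq (j_*\mc{F}'[1])(1/2)$ is perverse, arithmetically semisimple, and pure of weight $0$, so Theorem~\ref{thm: katz-weights} applies. By Proposition~\ref{prop: Newton general} the unipotent part of the local monodromy of $\mc{F}'$ at $0$ is $\big(\sum_{E_i \in S_0(\Delta(C))}\Vol(E_i) - 1\big)$ copies of $\mr{Unip}(1)$, and at $\infty$ it is $\big(\sum_{E_j \in S_\infty(\Delta(C))}\Vol(E_j) - 1\big)$ copies of $\mr{Unip}(1)$, the $-1$'s already absorbing the removed constant summand. Plugging this into Theorem~\ref{thm: katz-weights}, the Frobenius action on $\omega(N)$ has $\sum_{E_i \in S_0(\Delta(C))}\Vol(E_i) - 1$ eigenvalues of weight $-1$, then $\sum_{E_j \in S_\infty(\Delta(C))}\Vol(E_j) - 1$ eigenvalues of weight $+1$, and all remaining eigenvalues of weight $0$. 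Since $\omega(j_*\mc{F}'[1]) = \omega(N)(-1/2)$ raises every weight by $1$, we conclude that $H^1(\A^1, j_{0!}p_{2*}j_{1*}i_*\Q_l)$ has $\sum_{E_i \in S_0(\Delta(C))}\Vol(E_i) - 1$ weights equal to $0$, $\sum_{E_j \in S_\infty(\Delta(C))}\Vol(E_j) - 1$ weights equal to $2$, and the remainder equal to $1$.

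Finally, rather than computing the number of weight-$1$ eigenvalues directly, I would pin it down by a dimension count: Proposition~\ref{prop: dim} gives $\dim\omega(i_*\Q_l[1]) = 2\Vol(\Delta(C))$, so Proposition~\ref{prop: curve weights} forces $\dim H^1(\A^1, j_{0!}p_{2*}j_{1*}i_*\Q_l) = 2\Vol(\Delta(C)) - n_0 - n_\infty$. Combining everything yields
\[
w_0 = n_0 + \sum_{E_i \in S_0(\Delta(C))}\Vol(E_i) - 1, \qquad w_2 = n_\infty + \sum_{E_j \in S_\infty(\Delta(C))}\Vol(E_j) - 1,
\]
and, since $w_0 + w_1 + w_2 = \dim\omega(i_*\Q_l[1]) = 2\Vol(\Delta(C))$, also $w_1 = 2\Vol(\Delta(C)) - w_0 - w_2$, which is the assertion. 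The one step requiring genuine care is the normalization in the middle paragraph: one must fix conventions so that $\mc{F}'[1]$ is actually pure of weight $1$ (so that the twist $(1/2)$ produces the weight-$0$ object demanded by Theorem~\ref{thm: katz-weights}), check that $j_*\mc{F}'[1]$ carries no further summands that would violate the hypotheses, and make sure the constant summand is excised exactly once — it is already accounted for by the $-1$ in Proposition~\ref{prop: Newton general} — rather than twice. The remaining steps are direct citations of the results above.
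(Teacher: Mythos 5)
Your proposal is correct and follows essentially the same route as the paper: reduce via Proposition \ref{prop: curve weights}, identify $p_{2*}j_{1*}i_*\Q_l$ as $j_*\mc{F}$ (Proposition \ref{prop: H0}), apply Theorem \ref{thm: katz-weights} after excising the constant summand and normalizing by a half-Tate twist, read off the unipotent local monodromy from Proposition \ref{prop: Newton general}, and pin down $w_1$ by the dimension count of Proposition \ref{prop: dim}. The extra care you flag about semisimplicity, the twist, and excising the constant summand exactly once matches the paper's own discussion preceding and following Theorem \ref{thm: katz-weights}.
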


\begin{proof}
We apply Theorem \ref{thm: katz-weights} to $N$ such that $N$ direct sum the constant summand $\Q_l[1]$ is $j_{0!}p_{2*}j_{1*}i_*\Q_l[1]$.  Proposition \ref{prop: Newton general} gives us the values $e_i$ and $f_j$ appearing in the unipotent parts of the inertia representations at $0$ and $\infty$.  Namely, we have $\sum_{E_i \in S_0(\Delta(C))}^k\Vol(E_i)$ copies of $Unip(1)$ over 0, and $\sum_{E_j \in S_{\infty}(\Delta(C))}^m \Vol(E_j)$ copies of $Unip(1)$ over $\infty$.  These give $\sum_{E_i \in S_0(\Delta(C))}^k\Vol(E_i) - 1$ of weight 0 and $\sum_{E_j \in S_{\infty}(\Delta(C))}^m\Vol(E_j) - 1$ of weight 2, and some unspecified number of weight 1.  By Proposition \ref{prop: curve weights}, the weights of the fiber functor consist of these weights along with $n_0$ additional of weight 0 and $n_{\infty}$ additional of weight 2.  But we know that the total number of weights by Proposition \ref{prop: dim} is $2\Vol(\Delta(C))$.  This allows us to compute the number of weight 1, and the result follows.
\end{proof} 

\subsection{Examples}
We illustrate the results of this section with several examples.

\begin{example}
In this example we will explicitly carry out the computations described previously in the case of a specific curve.  Let $C$ be defined by the polynomial $f = x^4y^3 + 3x^2y^2 + x^2 + y$. \\  

\begin{tikzpicture}
\begin{axis}[
    axis lines=middle,
    xmin=-1, xmax=6,
    ymin=-1, ymax=6,
    xtick=\empty, ytick=\empty
]
\addplot [only marks] table {
3 4
2 2 
0 2   
1 0
};
\addplot [domain=0:3, samples=2, dashed] {2/3*x+2};
\addplot [domain=0:1, samples=2]
{-2*x+2};
\addplot [domain=1:3, samples=2]
{2*x-2};
\end{axis}
\end{tikzpicture} 

The Newton polygon (with the $y$-exponents plotted on the $x$-axis) is illustrated above.  For the side going from $(0, 2)$ to $(1, 0)$ we set $Y=y/x^2$ and multiply $f$ by $x^2$ to obtain the polynomial $1+Y+3x^4Y^2+x^8Y^3$.  Solving for $Y$ as a power series in terms of $x$, we obtain a unique solution by Hensel's lemma.  This gives $y$ as a polynomial of $x$, so the monodromy generator at 0 acts trivially.  For the other side going from $(1, 0)$ to $(3, 4)$ we set $Y=yx^2$ and multiply $f$ by $x^2$ to obtain the polynomial $Y^3+3Y^2+Y+x^4$.  This gives two additional solutions for $y$ in terms of $x$, and the monodromy generator at 0 acts trivially here too.  Together, these solutions give 3 copies of $Unip(1)$.   \\

At $\infty$, it suffices to set $x' = 1/x$ and carry out the same process as above using $x'$ instead of $x$.  This gives the polynomial $g = y^3+3x'^2y^2+x'^2+x'^4y$.  Setting $Y=y/x'^{1/3}$ and dividing $g$ by $x'$, we get $Y^3 + 3Y^2x'^{5/3} + Yx'^{10/3}+x'$.  This gives three solutions for $y$ in terms of $x'^{1/3}$.  This gives one copy of $Unip(1)$. \\ 

Thus, after dealing with the constant summand, we see that in this scenario $H^1(\A^1, j_{0!}p_{2*}j_{1*}i_*\Q_l)$ gives $2$ weights of weight 0 and none of weight 2.  The total number of weights is $2\Vol(\Delta)=4$, so in this case we have $w_0=2, w_1=2, w_2=0$. 
\end{example}

\begin{example}
Let $C$ be defined by the polynomial $f = x^3+xy+y^3$.  By Theorem \ref{thm: curve weights}, we have $w_0=1, w_1=0, w_2=2$.
\begin{tikzpicture}
\begin{axis}[
    axis lines=middle,
    xmin=-1, xmax=6,
    ymin=-1, ymax=6,
    xtick=\empty, ytick=\empty
]
\addplot [only marks] table {
3 0
1 1 
0 3
};
\addplot [domain=0:3, samples=3, dashed] {-x+3};
\addplot [domain=0:1, samples=2]
{-2*x+3};
\addplot [domain=1:3, samples=2]
{-1/2*x+3/2};
\end{axis}
\end{tikzpicture} 
 \\ 
\end{example}

\section{Weights of the fiber functor for higher dimensional varieties}
In this section we describe another approach to computing the weights of the fiber functor that uses results of \cite{DL}.  We begin with describing some of these results in the next subsection which apply to any dimension.  Then we use them to recompute the weights of the fiber functor for curves, and some surfaces and higher dimensional varieties.  While our computations for higher dimensional varieties is incomplete, in some scenarios we achieve what is necessary to compute the convolution monodromy group.  

The basic strategy is to use Proposition \ref{prop: ff 1} to view the fiber functor applied to the perverse sheaf corresponding to a variety as a cohomology group of a certain complex on $(\P^1)^n$.  One can then piece together the weights by computing them on a stratification of $(\P^1)^n$.  We note that the main difficulty lies in treating the bottom stratum where all coordinates are 0 or $\infty$, because the defining polynomial may have singularities at these points.

\subsection{Weights of compactly supported cohomology} 
Our starting point is the following result of \cite{DL}.  Given a hypersurface in the torus over a finite field $k$ defined by $f$, one can ask for the generating function given by the weight multiplicities of $f^*\L_{\psi}$ where $\L_{\psi}$ is the Artin-Schreier sheaf corresponding to a nontrivial additive character $\psi\colon k\rightarrow \C^\times$.  That is, let 
\[
E(\G_m^n, f) = \sum_{w=0}^ne_wT^w
\]
where $e_w$ is the number of eigenvalues of absolute value $q^{w/2}$ of the Frobenius action on \allowbreak $H^n_c((\G_m^n)_{\kbar}, f^*\L_{\psi})$.  Given a convex polyhedron $\Delta$ with integral vertices, \cite[Formulas 1.7.1, 1.7.2]{DL} define a polynomial $E(\Delta)$ and $e(\Delta)$ which give a recursive formula for $E(\G_m^n, f)$ and $e_n$ respectively.  

\begin{theorem}
[Theorem 1.8 of \cite{DL}] 
Suppose that $f:\G_m^n\rightarrow \A^1$ is nondegenerate with respect to $\Delta = \Delta_{\infty}(f)$, and that $\dim \Delta = n$.  Then $E(\G_m^n, f)=E(\Delta)$ and $e_n=e(\Delta)$. 
\end{theorem}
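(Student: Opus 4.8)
The plan is to prove the identity by induction on $n = \dim\Delta$, realizing $E(\G_m^n, f)$ as the genuine weight polynomial of a single cohomology group and then computing that polynomial by peeling off boundary contributions in a toric compactification adapted to $\Delta = \Delta_\infty(f)$. The recursions for $E(\Delta)$ and $e(\Delta)$ in \cite[Formulas 1.7.1, 1.7.2]{DL} are built as sums over the faces of $\Delta$ together with a volume term, so the strategy is to match that combinatorial recursion with the geometric stratification of the compactification into torus orbits.

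\textbf{Step 1 (concentration).} First I would show that nondegeneracy of $f$ with respect to $\Delta$, together with $\dim\Delta = n$, forces $H^i_c((\G_m^n)_{\kbar}, f^*\L_\psi) = 0$ for $i \neq n$. Since $f^*\L_\psi[n]$ is a perverse sheaf (a shifted rank-one local system) on the smooth affine variety $\G_m^n$, Artin's affine vanishing theorem gives $H^i_c = 0$ for $i < n$; the vanishing for $i > n$ is the substantive input and follows from the Newton-polyhedron estimates for exponential $L$-functions of Adolphson and Sperber, which are sharp precisely under the nondegeneracy hypothesis. Granting this, $E(\G_m^n, f) = \sum_{w=0}^n e_w T^w$ is the weight polynomial of the single group $H^n_c$; in particular it has nonnegative integer coefficients, and $\sum_w e_w = (-1)^n\chi_c((\G_m^n)_{\kbar}, f^*\L_\psi) = n!\Vol(\Delta)$ by \cite[Theorem 2.7]{DL} and Laumon's $\chi = \chi_c$ theorem \cite{Lau}, so the ``$e_n = e(\Delta)$'' part will follow by reading off the top coefficient once the $E$-statement is proved.

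\textbf{Step 2 (toric compactification and weight additivity).} Choose a smooth projective toric variety $X \supset \G_m^n$ whose fan refines the normal fan of $\Delta$, and resolve the indeterminacy of the rational map $f\colon X \dashrightarrow \P^1$ so that $f$ becomes a morphism $\bar f$; this can be arranged so that over each torus orbit $O_\tau \subset X$ the fibre behaviour of $\bar f$ is governed by the face polynomial $f_\tau$, which nondegeneracy of $f$ forces to be nondegenerate with respect to its own Newton polyhedron $\tau$. Now use that the $E$-polynomial $E(Y, \mathcal{G}) := \sum_{i,w} (-1)^i \dim \mathrm{gr}^W_w H^i_c(Y_{\kbar}, \mathcal{G})\,T^w$ is additive over locally closed decompositions (Deligne's weight formalism, \cite{Weil2}). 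Applying this to the stratification of the compactification by the open torus and the boundary orbits, and feeding in the inductive hypothesis $E(O_\tau, f_\tau^*\L_\psi) = E(\tau)$, one obtains an identity expressing $E(\G_m^n, f)$ as a sum over faces $\tau$ of $\Delta$ of the $E(\tau)$'s with explicit coefficients (coming from the toric contributions and from $\L_\psi$ being wildly ramified at $\infty$, so that orbits on which $\bar f \equiv \infty$ drop out) plus the top-dimensional volume term; this is exactly the recursion of \cite[Formulas 1.7.1, 1.7.2]{DL}, and it closes the induction. An equivalent route, closer to the spirit of the earlier sections of this paper, is to write $H^*_c(\G_m^n, f^*\L_\psi) = H^*_c(\A^1, (Rf_!\bar{\Q_l})\otimes \L_\psi)$, compute the weights of the sheaves $R^if_!\bar{\Q_l}$ on $\A^1$ (cohomology of affine hypersurfaces in $\G_m^n$, controlled inductively) together with their local monodromy at $0$ and $\infty$, and then read off the weights of the Artin--Schreier-twisted cohomology via the Fourier-transform formalism of Katz--Laumon.

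\textbf{Main obstacle.} The hard part is the boundary analysis underlying Step 2: showing that a toric compactification (and resolution of $\bar f$) with the stated transversality properties exists --- which is where full-dimensionality of $\Delta$ and $0 \in \Delta$ are used --- that nondegeneracy of $f$ descends simultaneously to every face polynomial, and, most delicately, that the deepest strata (where all coordinates degenerate and $f$ may acquire singularities along the toric boundary) contribute precisely the terms predicted by the recursion and introduce no spurious weights. This is also the point at which one would, for the sharper statement, bring in the decomposition theorem to identify the $e_w$ with graded pieces of the primitive intersection cohomology of $\P_\Delta$ and hence with the combinatorial $g$-polynomial of $\Delta$; for the form stated here, the careful additive bookkeeping against \cite[Formulas 1.7.1, 1.7.2]{DL} is what remains.
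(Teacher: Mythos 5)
This statement is not proved in the paper at all: it is quoted verbatim as Theorem 1.8 of \cite{DL} and used as a black box (only the explicit low-dimensional formulas \cite[(8.3)]{DL} and the recursive definitions \cite[1.7.1, 1.7.2]{DL} are ever invoked), so there is no internal proof to compare your argument against. Judged on its own terms, your outline does follow the general lines of Denef--Loeser's actual argument: concentration of $H^*_c((\G_m^n)_{\kbar}, f^*\L_\psi)$ in degree $n$ under nondegeneracy (Adolphson--Sperber, with the Euler characteristic $n!\Vol(\Delta)$ from \cite[Theorem 2.7]{DL} and Laumon's $\chi=\chi_c$ \cite{Lau}), followed by a toric compactification adapted to $\Delta$, additivity of the virtual $E$-polynomial in the sense of \cite{Weil2}, and an induction over the face polynomials $f_\tau$, with intersection cohomology of $\P_\Delta$ and the $g$-polynomial combinatorics entering for the sharper identification of the individual $e_w$.

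The gap is that, as written, this is a program rather than a proof, and the part you defer to the ``main obstacle'' is precisely the mathematical content of the theorem. Three points in particular are asserted but not established: (i) the existence of a compactification and resolution of $\bar f$ with the transversality you need, and the fact that nondegeneracy of $f$ with respect to $\Delta$ controls the behaviour of $\bar f$ along \emph{every} boundary orbit, including the deep strata where $f$ genuinely degenerates --- this is where most of the work in \cite{DL} happens; (ii) the bookkeeping that the boundary contributions, after discarding the orbits where $\L_\psi$ is wildly ramified, reproduce exactly the coefficients in the recursions 1.7.1--1.7.2 rather than some other face-indexed sum; and (iii) the passage from the additive (virtual) $E$-polynomial back to the actual weight multiplicities of the single group $H^n_c$, which requires concentration not just for $f$ on $\G_m^n$ but for each face datum appearing in the induction. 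Also note that your ``$e_n = e(\Delta)$ follows by reading off the top coefficient'' presupposes that the $E$-identity determines the top weight term, which again rests on (iii). So the approach is reasonable and consistent with the literature, but the proof itself still lives in \cite{DL}; nothing in the present paper supplies, or needs to supply, the missing steps.
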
  

We will use explicit formulas for $E(\Delta)$ in low dimensions, given in \cite[(8.3)]{DL}.  Using this, we can compute the weights of the compactly supported cohomology of the smooth hypersurface in $\G_m^n$ defined by $f$ as follows.  We will be interested in the alternating signed sum by degree of the weights, as we will relate it to the fiber functor in the next subsection. 

\begin{proposition}
The alternating signed sum of the weights of the compactly supported cohomology of a smooth hypersurface defined by $f$ are given by those corresponding to the Artin-Schreier sheaf $(fw)^*\L_\psi$, where $w$ is another variable and $\psi$ is a non-trivial character, added to the alternating signed weights of $H^i_c(\G_m^n, \Q_l)$, with every weight shifted down by 2. 
\end{proposition}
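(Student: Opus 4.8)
The plan is to establish this identity first as a statement about point counts over every finite extension of $\F_q$, and then to translate it into the language of Frobenius weights via the Grothendieck--Lefschetz trace formula, exploiting the principle that the alternating signed weight multiplicities of the compactly supported cohomology of a variety-with-sheaf over $\F_q$ are determined by the traces of all powers of geometric Frobenius. Write $X=\{f=0\}\subset\G_m^n$, keep the fixed nontrivial additive character $\psi$, and set $\psi_m=\psi\circ\Tr_{\F_{q^m}/\F_q}$. The only elementary input needed is the orthogonality relation, for a scalar $c\in\F_{q^m}$,
\[
\sum_{w\in\G_m(\F_{q^m})}\psi_m(wc)=\begin{cases}q^m-1,& c=0,\\ -1,& c\neq 0,\end{cases}
\]
which follows at once from $\sum_{w\in\F_{q^m}}\psi_m(wc)$ being $q^m$ for $c=0$ and $0$ otherwise.

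First I would adjoin an auxiliary coordinate $w$ on a fresh copy of $\G_m$, so that $\G_m^{n+1}=\G_m^n\times\G_m$ carries the morphism $fw\colon\G_m^{n+1}\to\A^1$, $(x,w)\mapsto f(x)w$, and hence the rank-one sheaf $(fw)^*\L_\psi$, whose trace function at $(x,w)\in\G_m^{n+1}(\F_{q^m})$ is $\psi_m(f(x)w)$. Summing the orthogonality relation over $x\in\G_m^n(\F_{q^m})$ and separating the locus $f(x)=0$, which is $X(\F_{q^m})$, from its complement yields, for every $m\ge1$,
\[
q^m\,\#X(\F_{q^m})=\#\G_m^n(\F_{q^m})+\sum_{(x,w)\in\G_m^{n+1}(\F_{q^m})}\psi_m(f(x)w).
\]
Applying the Lefschetz trace formula to each of the three terms --- to $X$, to $\G_m^n$, and to the pair $(\G_m^{n+1},(fw)^*\L_\psi)$ --- and rewriting the factor $q^m$ as a Tate twist, via $q^m\cdot\Tr(\Fr^m\mid V)=\Tr(\Fr^m\mid V(-1))$, converts this into
\begin{align*}
\sum_i(-1)^i\Tr\bigl(\Fr^m\mid H^i_c(X_{\kbar},\Q_l)(-1)\bigr)
&=\sum_i(-1)^i\Tr\bigl(\Fr^m\mid H^i_c((\G_m^n)_{\kbar},\Q_l)\bigr)\\
&\qquad+\sum_i(-1)^i\Tr\bigl(\Fr^m\mid H^i_c((\G_m^{n+1})_{\kbar},(fw)^*\L_\psi)\bigr),
\end{align*}
valid for all $m\ge1$.

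Next I would upgrade this equality of trace functions to an equality of virtual multisets of Frobenius eigenvalues. Since a finite multiset of complex numbers is determined by its power sums, the signed eigenvalue multiset attached to $\bigl(H^*_c(X,\Q_l)(-1),\Fr\bigr)$ --- eigenvalues occurring in even cohomological degrees minus those in odd degrees --- coincides with the analogously assembled signed multiset built from $H^*_c(\G_m^n,\Q_l)$ and from $H^*_c(\G_m^{n+1},(fw)^*\L_\psi)$. Sorting eigenvalues by absolute value $q^{w/2}$ turns this into an identity of alternating signed weight multiplicities. The only bookkeeping left is the Tate twist: $H^i_c(X,\Q_l)(-1)$ has a weight-$w$ eigenvalue exactly where $H^i_c(X,\Q_l)$ has a weight-$(w-2)$ one, so passing from $H^*_c(X)(-1)$ back to $H^*_c(X)$ lowers every weight on the left by $2$. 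Equivalently, the alternating signed weights of $H^*_c(X,\Q_l)$ are obtained from the sum of those attached to $(fw)^*\L_\psi$ and those of $H^*_c(\G_m^n,\Q_l)$ by shifting every weight down by $2$, which is the asserted statement.

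There is no serious obstacle; the only point requiring care is the passage in the last step from the numerical identity (an equality of integers for all $m$) to the claimed equality of the weight-by-weight, sign-by-degree multiplicities. This rests on the Brauer--Nesbitt-style fact that the semisimplified virtual $\Fr$-module is pinned down by the values $\Tr(\Fr^m\mid-)$ for all $m\ge1$ --- concretely, the eigenvalues and their multiplicities are read off as the reciprocal poles, with orders, of the generating series $\sum_m\Tr(\Fr^m\mid-)\,t^m$ --- together with the observation that weights depend only on the absolute values of the Frobenius eigenvalues. I note that smoothness of $X$ is not used in this argument; it is retained only because it is the standing hypothesis under which the right-hand side will subsequently be evaluated, through the polyhedral formulas of \cite{DL} applied to $\Delta_\infty(fw)$ on $\G_m^{n+1}$.
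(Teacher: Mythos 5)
Your proof is correct, and it reaches the stated virtual identity by a genuinely different route than the paper. The paper argues at the level of sheaves: it shows $Rg_!(fw)^*\L_{\psi}\cong i_*\Q_l[-2](-1)$ for the projection $g\colon \G_m^n\times\A^1\to\G_m^n$ (proper base change plus the vanishing of $H^*_c(\A^1,\L_{c\psi})$ for $c\neq 0$), which yields an actual isomorphism $H^*_c(\G_m^n,i_*\Q_l)\cong H^{*-2}_c(\G_m^n\times\A^1,(fw)^*\L_\psi)$ up to Tate twist, and then splits off the $w=0$ stratum via the excision triangle for $\G_m^{n+1}\hookrightarrow\G_m^n\times\A^1\hookleftarrow\G_m^n\times\{0\}$, whose long exact sequence produces exactly the contribution $H^*_c(\G_m^n,\Q_l)$. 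Your argument is the trace-function shadow of this: the orthogonality relation you invoke is precisely the pointwise statement that $Rg_!$ of the Artin--Schreier sheaf is a shifted, twisted skyscraper on $\{f=0\}$, and Grothendieck--Lefschetz plus the Brauer--Nesbitt/power-sum argument then recovers the identity only in the Grothendieck group of virtual Frobenius modules. For the proposition as stated (alternating signed weight multiplicities) that virtual identity is all that is needed, and your handling of the Tate twist and of the passage from traces to signed eigenvalue multisets is correct; your remark that smoothness is not used also applies equally to the paper's proof. What the paper's sheaf-level argument buys is degreewise information (genuine long exact sequences rather than an equality of Euler characteristics), which is closer in spirit to how the output is later combined with the Denef--Loeser formulas; what your argument buys is elementariness, avoiding proper base change and the computation of $Rg_!$ at the cost of collapsing everything to a virtual statement from the start.
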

\begin{proof}
Consider the following diagram. 
\[\begin{tikzcd}
	{X\times \A^1} & {\G_m^n\times\A^1} & {\A^1} \\
	X & {\G_m^n}
	\arrow[from=1-1, to=1-2]
	\arrow[from=1-1, to=2-1]
	\arrow[from=2-1, to=2-2]
	\arrow["g", from=1-2, to=2-2]
	\arrow["{f(t)w}", from=1-2, to=1-3]
\end{tikzcd}\]
First, we claim that $Rg_!(fw)^*\L_{\psi}=i_*\Q_l[-2](-1)$.  Indeed, at some $t\in\G_m^n$ for which $f(t)\neq 0$, by the proper base change theorem we have $t^*R^ig_!(fw)^*\L_\psi = H^i(\A^1, \L_{f(t)\psi})=0$, since $\L_{f(t)\psi}$ is a nontrivial Artin-Schreier sheaf.  On the other hand, taking $X=V(f)$, by proper base change we have that the pullback of $Rg_!(fw)^*\L_{\psi}$ to $X$ is just the compactly supported pushforward of $\Q_l$ from $X\times \A^1$ to $X$, which is $i_*\Q_l[-2](-1)$ as desired. \\ 

Thus, we have $H^*_c(\G_m^n, i_*\Q_l) = H^{*-2}_c(\G_m^n, Rg_!(fw)^*\L_{\psi}) = H^{*-2}_c(\G_m^n\times \A^1, (fw)^*\L_\psi)$.  Now consider the stratification $\G_m^{n+1}\xrightarrow{j}\G_m^n\times \A^1\xleftarrow{i}\G_m^n\times\{0\}$.  We have $j^*(fw)^*\L_{\psi}$ is just the pullback of the Artin-Schreier sheaf associated to $\psi$ by the polynomial $fw$, considered as a polynomial in $n+1$ variables.  We have $i^*(fw)^*\L_{\psi}=\Q_l$.  Thus we have a long exact sequence 
\begin{align}
    0&\rightarrow H^0_c(\G_m^{n+1}, (fw)^*\L_\psi)\rightarrow H^0_c(\G_m^{n}\times \A^1, (fw)^*\L_\psi)\rightarrow H^0_c(\G_m^{n}, \Q_l) \\ 
    &\rightarrow H^1_c(\G_m^{n+1}, (fw)^*\L_\psi)\rightarrow H^1_c(\G_m^{n}\times \A^1, (fw)^*\L_\psi)\rightarrow H^1_c(\G_m^{n}, \Q_l) \\ 
    &\rightarrow \cdots
\end{align} 
We wish to compute the alternating signed weights of $H^*(\G_m^n, i_*\Q_l) = H_c^{*+2}(\G_m^n\times \A^1, (fw)^*\L_\psi(1))$.  The result follows from the long exact sequence. 
\end{proof} 

Let us make explicit what these results imply for small $n$. \\ 

In the case $n=2$, we wish to compute the signed weights of $H_c^{*+2}(\G_m^2\times \A^1, (fw)^*\L_\psi)$.  We have that
\[
H^i_c(\G_m^2, \Q_l) = \begin{cases}
\Q_l & i = 2 \\ 
\Q_l(-1)^2 & i = 3 ~ \text{(weight 2)} \\ 
\Q_l(-2) & i = 4 ~ \text{(weight 4)} \\ 
0 & \text{ otherwise}
\end{cases}
\]

We also need to compute the signed weights of $H^i_c(\G_m^3, (fw)^*\L_\psi)$.  By \cite[Theorem 1.3]{DL}, these are 0 for $i\neq 3$, and for $i=3$ are given by the following.  For a convex polyhedral cone $\sigma$ of dimension $n\le 4$, according to \cite[(8.3)]{DL}, we have the following formulas for $\G_m^3$. 
\begin{align}
    e_3 &= 6V_3-2V_2+V_1-(F_0(1)-2)V_0, \\  
    e_2 &= 2V_2-2V_1+(2F_0(1)-3)V_0, \\ 
    e_1 &= V_1-F_0(1)V_0, \\ 
    e_0 & =V_0. 
\end{align} 

Note that in our scenario $F_0(1)$ is just the number of vertices in the original Newton polygon, because every vertex lifted up to $w=1$ in the new coordinate and connected to the origin.  Call this number $m$.  We can express $V_0, V_1, V_2, V_3$ in terms of the corresponding volumes in the original polygon, which we will denote with $U_i$.  Namely, we have 

\begin{align}
    V_3 &= U_2/3, \\  
    V_2 &= U_1/2, \\ 
    V_1 &= U_0, \\ 
    V_0 &= 1. 
\end{align} 
This gives 
\begin{align}
    e_3 &= 2U_2-U_1+2, \\  
    e_2 &= U_1-3, \\ 
    e_1 &= 0, \\ 
    e_0 &= 1. 
\end{align} 

Finally, combining with the weights coming from $H^i_c(\G_m^2, \Q_l)$ and shifting down the weights by 2, we get the following 
\begin{proposition}
\label{prop: coh curve weights}
The alternating signed weights of $H^i_c(\G_m^2, i_*\Q_l)$, beginning with negative the weight for $i=0$, plus the weight for $i=1$, etc., are given by the following numbers. 
\begin{align}
    f_2 &= -1, \\ 
    f_1 &= 2U_2-U_1+2, \\ 
    f_0 &= U_1-1. 
\end{align} 
\end{proposition}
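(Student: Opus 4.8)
The plan is to read the result off from the computations already assembled above; the proof is essentially bookkeeping. For a variety $Y$ over $\Fq$ and a constructible sheaf $\mathcal{F}$, I will write $\chi^{\mathrm{wt}}_c(Y,\mathcal{F})$ for the weight-refined Euler characteristic of $H^*_c(Y_{\kbar},\mathcal{F})$: the Laurent polynomial in a formal variable $T$ whose coefficient of $T^w$ is $\sum_i(-1)^i$ times the multiplicity of weight $w$ in $H^i_c$. With this notation the numbers sought are minus the coefficients of $\chi^{\mathrm{wt}}_c(\G_m^2,i_*\Q_l)$, i.e.\ each $f_w$ is minus its $T^w$-coefficient. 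By the preceding proposition, $\chi^{\mathrm{wt}}_c(\G_m^2,i_*\Q_l)$ is obtained from $\chi^{\mathrm{wt}}_c(\G_m^3,(fw)^*\L_\psi)+\chi^{\mathrm{wt}}_c(\G_m^2,\Q_l)$ by the Tate twist $(1)$, i.e.\ by the substitution $T^w\mapsto T^{w-2}$; the accompanying shift of cohomological degree is even, so it does not disturb the signs.

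Next I would insert the two summands, both already made explicit above. The K\"unneth decomposition of $H^*_c(\G_m^2,\Q_l)$ gives $\chi^{\mathrm{wt}}_c(\G_m^2,\Q_l)=1-2T^2+T^4$. For the other summand, \cite[Theorem 1.3]{DL} concentrates $H^*_c(\G_m^3,(fw)^*\L_\psi)$ in degree $3$, so $\chi^{\mathrm{wt}}_c(\G_m^3,(fw)^*\L_\psi)=-(e_0+e_1T+e_2T^2+e_3T^3)$ with the $e_w$ computed from \cite[(8.3)]{DL} applied to $\Delta_\infty(fw)$. Since $\Delta_\infty(fw)$ is the pyramid with apex the origin over the height-one lift of $\Delta$, its volumes are $V_3=U_2/3$, $V_2=U_1/2$, $V_1=U_0$, $V_0=1$, and the invariant $F_0(1)$ entering \cite[(8.3)]{DL} is just the number $m$ of vertices of $\Delta$; substituting gives $e_0=1$, $e_1=0$, $e_2=U_1-3$, $e_3=2U_2-U_1+2$, with the dependence on $m$ cancelling. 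The one genuinely substantive point here --- already invoked in the preceding proposition --- is that $fw$, viewed as a Laurent polynomial in $n+1$ variables, is nondegenerate with respect to $\Delta_\infty(fw)$, so that \cite{DL} applies; this is inherited from the nondegeneracy of $f$ because the proper faces of $\Delta_\infty(fw)$ are pyramids over faces of $\Delta$ in the $w$-direction.

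Finally I would add, twist, and negate. The sum of the two summands is $(1-U_1)T^2-(2U_2-U_1+2)T^3+T^4$; applying $T^w\mapsto T^{w-2}$ gives $(1-U_1)-(2U_2-U_1+2)T+T^2$; negating yields $(U_1-1)+(2U_2-U_1+2)T-T^2$, that is $f_0=U_1-1$, $f_1=2U_2-U_1+2$, $f_2=-1$, as claimed. There is no conceptual obstacle once the preceding proposition and the \cite{DL} formulas are granted; the main thing to watch is consistent tracking of the $(-1)^i$ sign convention, the even degree shift, and the direction of the Tate twist.
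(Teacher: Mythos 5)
Your proof is correct and follows essentially the same route as the paper: the preceding proposition reducing to $(fw)^*\L_\psi$ plus $H^*_c(\G_m^2,\Q_l)$ with the shift by $2$, then \cite[Theorem 1.3]{DL} and the formulas of \cite[(8.3)]{DL} with the substitutions $V_3=U_2/3$, $V_2=U_1/2$, $V_1=U_0$, $V_0=1$, $F_0(1)=m$, and the sign/twist bookkeeping — exactly the computation the paper performs inline before stating the proposition. Your added remark verifying that $fw$ is nondegenerate with respect to $\Delta_\infty(fw)$ is a point the paper leaves implicit, but it does not change the argument.
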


For $n=3$, (8.3) of \cite{DL} gives the following formulas for $\G_m^4$.  Let $F_0(i)$ be the number of $i$-dimensional faces of $\Delta$ containing the origin.  Then 
\begin{align}
    e_4 &= 24V_4-6V_3+2V_2-(W_1-2V_1)+(F_0(3)-3)V_0, \\ 
    e_3 &= 6V_3-4V_2+(2W_1-3V_1)-(F_0(3)+F_0(2)-6)V_0, \\ 
    e_2 &= 2V_2-W_1+(F_0(2)+F_0(1)-4)V_0, \\ 
    e_1 &= V_1-F_0(1)V_0, \\ 
    e_0 &= V_0, 
\end{align}  
where 
\[
W_1\coloneqq \sum_{\substack{\tau \text{ face of } \Delta \\ 0\in \tau, \dim \tau = 1}} F_\tau(3)\Vol(\tau). 
\]

Expressing $V_i$ through the volumes $U_i$ of the original polygon, we have 
\begin{align}
    V_4 &= U_3/4, \\ 
    V_3 &= U_2/3, \\  
    V_2 &= U_1/2,  \\ 
    V_1 &= U_0, \\ 
    V_0 &= 1. 
\end{align} 

Furthermore, we have $F_0(1)=U_0=V, F_0(2) = E, F_0(3)=F$ where $V, E, F$ are the number of vertices, edges, and faces of the original polyhedron. 
This gives 
\begin{align}
    e_4 &= 6U_3 - 2U_2 + U_1 +2U_0 + F - W_1 - 3, \\ 
    e_3 &= 2U_2-2U_1 - 3U_0 - E - F + 2W_1+6, \\  
    e_2 &= U_1 +U_0 +E - W_1 -4, \\ 
    e_1 &= 0, \\ 
    e_0 &= 1. 
\end{align}

Moreover, the weights of $H^i_c(\G_m^3, \Q_l)$ are as follows. 
\[
H^i_c(\G_m^3, \Q_l) = \begin{cases}
\Q_l & i = 3 \\ 
\Q_l(-1)^3 & i = 4 ~ \text{(weight 2)} \\ 
\Q_l(-2)^3 & i = 5 ~ \text{(weight 4)} \\ 
\Q_l(-3) & i = 6 ~ \text{(weight 6)} \\ 
0 & \text{ otherwise}
\end{cases}
\] 

Combining these results gives the following proposition. 
\begin{proposition}
\label{prop: coh surface weights}
The alternating signed weights of $H^i_c(\G_m^3, i_*\Q_l)$, beginning with positive the weight for $i=0$, minus the weight for $i=1$, etc, are given by the following numbers. 
\begin{align}
    f_4 &= 1, \\ 
    f_3 &= 0, \\ 
    f_2 &= 6U_3 - 2U_2 + U_1 +2U_0 + F - W_1 - 6, \\ 
    f_1 &= 2U_2- 2U_1 -3U_0 + 2W_1+6, \\  
    f_0 &= U_1 +U_0 +E- W_1 -1. \\ 
\end{align}
\end{proposition}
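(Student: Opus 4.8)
The plan is to carry out the argument of Proposition~\ref{prop: coh curve weights} one dimension higher. By the proposition above, which expresses the alternating signed weights of the compactly supported cohomology of a smooth hypersurface through those of $(fw)^*\L_\psi$ and of $H^*_c(\G_m^n, \Q_l)$, the alternating signed weight polynomial of $H^*_c(\G_m^3, i_*\Q_l)$ (with the class in degree $i$ weighted by $(-1)^i$, as in the statement) equals $T^{-2}$ times the sum of the alternating signed weight polynomial of $H^*_c(\G_m^4, (fw)^*\L_\psi)$, where $fw$ is regarded as a Laurent polynomial in the four variables $x_1, x_2, x_3, w$, and that of $H^*_c(\G_m^3, \Q_l)$. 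So the whole task is to evaluate the first summand and perform the bookkeeping.

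First I would verify that $fw$ is nondegenerate with respect to $\Delta_\infty(fw)$, which is $4$-dimensional: a face $\tau$ of $\Delta_\infty(fw)$ is either the cone over a face $\sigma$ of $\Delta(f)$, in which case $(fw)_\tau = w\,f_\sigma$, or the base $\Delta(f)\times\{1\}$, in which case $(fw)_\tau = fw$; since $w$ is invertible on $\G_m^4$, simultaneous vanishing of the partials of $(fw)_\tau$ forces the $x_i$-partials of $f_\sigma$ (resp.\ of $f$) to vanish, which is impossible by the nondegeneracy of $f$. Theorems~1.3 and~1.8 of~\cite{DL} then give that $H^*_c(\G_m^4, (fw)^*\L_\psi)$ is concentrated in degree $4$ with weight multiplicities the coefficients $e_w$ of $E(\Delta_\infty(fw))$. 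Because $\Delta_\infty(fw)$ is the pyramid over $\Delta(f)\times\{1\}$ with apex at the origin, each of its faces through the origin is the cone over a face of $\Delta(f)$; this is the dictionary translating the data of formula~(8.3) of~\cite{DL} into invariants of $\Delta(f)$, namely $V_{k+1} = U_k/(k+1)$, $V_0 = 1$, $F_0(1) = U_0 = V$, $F_0(2) = E$, $F_0(3) = F$, and the indicated $W_1$. Substituting produces $e_0 = 1$, $e_1 = 0$ and the displayed $e_2, e_3, e_4$.

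Finally I would add to this the alternating signed weight polynomial of $H^*_c(\G_m^3, \Q_l)$ recorded above, each pure class weighted by $(-1)^i$ for its degree $i$ (so degrees $3$ and $5$ enter with a minus sign), and multiply the result by $T^{-2}$. The coefficients of $T^{-2}$ and $T^{-1}$ vanish automatically — $e_0 = 1$ cancels the weight-$0$ class of $H^3_c(\G_m^3, \Q_l)$, and $e_1 = 0$ — so the surviving coefficients are those in weights $0$ through $4$, which are $f_0, \ldots, f_4$.

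I do not expect a genuine obstacle: the one substantive input is the comparison proposition above, which has already absorbed the non-properness of the hypersurface via the eigenspaces of a compactification, and everything after that is combinatorics on $\Delta(f)$. The step I would scrutinize most carefully, since any slip there propagates into all of $f_0, \ldots, f_4$, is the dictionary between the face lattice of the pyramid $\Delta_\infty(fw)$ and the invariants $U_k, V, E, F, W_1$ of $\Delta(f)$ — in particular the normalizations of $W_1$ and of the volumes $V_k$.
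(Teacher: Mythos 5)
Your proposal is correct and follows essentially the same route as the paper: the unnumbered comparison proposition reducing to $(fw)^*\L_\psi$ on $\G_m^4$ plus $H^*_c(\G_m^3,\Q_l)$ shifted by $2$, Denef--Loeser's formula (8.3) for the $4$-dimensional pyramid with the dictionary $V_{k+1}=U_k/(k+1)$, $F_0(1)=U_0$, $F_0(2)=E$, $F_0(3)=F$, and the final bookkeeping $f_0=e_2+3$, $f_1=e_3$, $f_2=e_4-3$, $f_3=0$, $f_4=1$ (your explicit check that $fw$ is nondegenerate and that the weight $-2,-1$ terms cancel is a welcome addition the paper leaves implicit). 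One remark: your bookkeeping gives $f_1=e_3=2U_2-2U_1-3U_0-E-F+2W_1+6$, which agrees with the paper's later restatement in Section \ref{sec: gens} but shows that the displayed $f_1$ in the proposition itself drops the $-E-F$ terms, an apparent typo rather than a flaw in your argument.
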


\subsection{Curves revisited} 
We will now use this approach to give another computation in the case of curves.
Recall that the situation of a curve in $\G_m^2$ is illustrated by the following diagram. 
\[\begin{tikzcd}
	C & {\G_m^2} & {\G_m\times \A^1} & {\G_m\times \P^1} & {\A^1\times \P^1} & {\P^1\times \P^1} \\
	&&&&& {\P^1} \\
	&&&&& {*}
	\arrow["i", from=1-1, to=1-2]
	\arrow["{j_{20}}", from=1-2, to=1-3]
	\arrow["{j_{2\infty}}", from=1-3, to=1-4]
	\arrow["{j_{10}}", from=1-4, to=1-5]
	\arrow["{j_{1\infty}}", from=1-5, to=1-6]
	\arrow["{q_2}", from=1-6, to=2-6]
	\arrow["{q_1}", from=2-6, to=3-6]
\end{tikzcd}\]

Let $K=Rj_{1\infty*}j_{10!}Rj_{2\infty*}j_{20!}i_*\Q_l$.  By Proposition \ref{prop: ff 1}, the fiber functor applied to the perverse sheaf associated to the curve is given by $H^1(\P^1\times \P^1, K)$. 

 We have a stratification given by $\G_m^2\xhookrightarrow{j} \P^1\times \P^1\xleftarrow{i}Z$, where $Z$ is given by $\G_m\times \{0, \infty\}\cup \{0, \infty\}\times \G_m \cup\{(0, 0), (0, \infty), (\infty, 0), (\infty, \infty)\}$.  We can calculate the pullback of the complex $K=Rj_{1\infty*}j_{10!}Rj_{2\infty*}j_{20!}i_*\Q_l$ restricted to these components. 

\subsubsection{Computing $K$ on each layer of the stratification}
 
First, it is clear that $j^*K$ is simply given by $i_*\Q_l$.  
 
 Because the stalk of the extension by 0 of an open embedding is 0 at points outside the open set, we have that the stalk of $K$ is 0 at all points where the first coordinate is equal to 0.  We denote the inclusions by $i_1:\G_m\times\{\infty\}\rightarrow \P^1\times \P^1, i_2:\infty\times \G_m\rightarrow \P^1\times \P^1, i_0:(\infty,  \infty)\rightarrow \P^1\times \P^1$. \\
 
 \textbf{At $\G_m\times\{\infty\}$ and  $\{\infty\}\times \G_m$} 
 
 At $\G_m\times\{\infty\}$, it is only supported on the points $(t, \infty)$ such that $f(t, \infty)=0$.  Denote these points by the inclusions $\alpha_1, \ldots \alpha_{n_1}$.  At such a point $z$, we have that as $U$ ranges over the \'etale neighborhoods of $z$ in $\G_m\times \P^1$, we have  
 \[
 (R^ij_{2\infty *}j_{20!}i_*\Q_l)_z = \varinjlim_{U} H^i(U\times_{\G_m\times \P^1}C, \Q_l).
 \]

For $i=0$ this is $\Q_l$, we showed in Proposition \ref{prop: curve weights} that for $i=0$ this is $\Q_l(-1)$, and for $i>1$ it is 0.  Applying $Rj_{1\infty*}j_{10!}$ to a complex on $\G_m\times \P^1$ and pulling back to $\G_m\times \{\infty\}$ is just the restriction, so indeed $\mc{H}^0(i_1^*K)=\bigoplus_{k=1}^{n_1} (\alpha_k)_*\Q_l$ and $\mc{H}^1(i_1^*K)=\bigoplus_{k=1}^{n_1} (\alpha_k)_*\Q_l(-1)$.  \\ 

At $\{\infty\}\times \G_m$, the calculation is essentially the same.  Indeed, denote the inclusions of the points $(\infty, t)$ with $f(\infty, t)=0$ by $\beta_1, \ldots, \beta_{n_2}$.  We can factor the inclusion $i_2$ as $\infty\times \G_m\xrightarrow{i_2}\P^1\times \G_m\xrightarrow{h}\P^1\times \P^1$.  Then by smooth base change on the squares represented by 
\[\begin{tikzcd}
	{\G_m\times \G_m} & {\G_m\times \P^1} \\
	{\P^1\times \G_m} & {\P^1\times \P^1}
	\arrow["{h'}", from=1-1, to=1-2]
	\arrow["{j_{1\infty}, j_{10}}", from=1-2, to=2-2]
	\arrow["{j_{1\infty}, j_{10}}"', from=1-1, to=2-1]
	\arrow["h"', from=2-1, to=2-2]
\end{tikzcd}\]

we see that $i_{2}^*h^*Rj_{1\infty*}j_{10!}Rj_{2\infty*}j_{20!}i_*\Q_l=i_2^*Rj_{1\infty*}j_{10!}h'^*Rj_{2\infty*}j_{20!}i_*\Q_l = i_2^*Rj_{1\infty*}j_{10!}i_*\Q_l$.  At this point the calculation is the exact same as the calculation of the restriction to $\G_m\times\{\infty\}$, and we see that $\mc{H}^0(i_2^*K)=\bigoplus_{k=1}^{n_2} (\beta_k)_*\Q_l$ and $\mc{H}^1(i_2^*K)=\bigoplus_{k=1}^{n_2} (\beta_k)_*\Q_l(-1)$. \\ 

\textbf{At $(\infty, 0)$ and $(\infty, \infty)$} 

We note that $P_1 = (\infty, 0)$ and $P_2 = (\infty, \infty)$ points may be singular.  If $\overline{C}$, the completion of $C$ in $\P^1\times \P^1$, contains $P_1$, then we consider the normalization of the curve.  Say this is given by $\overline{D}\rightarrow \overline{C}$ with $Q_1, \ldots, Q_{r_1}$ being the preimages of $P_1$.  We wish to compute the stalk of $R^ij_{1\infty *}j_{10!}Rj_{2\infty*}j_{20!}i_*\Q_l$ at $P_1$.  We know that, after passing to an appropriate open neighborhood of $P_1$, that $Rj_{2\infty*}j_{20!}i_*\Q_l$ is just given by $i_*\Q_l$.  Then as in the previous calculation, the stalk is given by 
\[
\varinjlim_{U}H^i(U\times_{\P^1\times \P^1}C, \Q_l) = \varinjlim_{U}H^i(U\times_{\P^1\times \P^1}D, \Q_l). 
\] 
This last expression is isomorphic to the direct sum of the stalks of the $i$th derived pushforward of $\Q_l$ at the $r_1$ points of the completed normalized curve $\overline{D}$, which we have seen to be $\Q_l$ in degree 0 and $\Q_l(-1)$ in degree 1.  The same argument applies for $P_2$, so we wish to compute these corresponding numbers $r_1$ and $r_2$.  They have a nice interpretation in terms of the Newton polygon.  Indeed, the normalization of a singularity at $(0, 0)$ is given by the different Puiseaux series corresponding to the slopes of the Newton polygon going from te $y$-axis to the $x$-axis.  The number of such Puiseaux series is given by the sum of the volumes of the associated sides.  Thus, for $(\infty, 0)$ we reflect the Newton polygon across the $x$-axis and shift it up to see which sides are counted.  For $(\infty, \infty)$ we do the same thing, except that we reflect it across both the $x$ and $y$-axes. 

\subsubsection{Computing the weights} 
We now compute the weights as follows.  
Using the long exact sequence associated to the short exact sequence 
\[
0\rightarrow j_!j^*K\rightarrow K\rightarrow i_*i^*K\rightarrow 0, 
\]
we can calculate the signed weights that give the fiber functor by doing so on the components of the stratification.  The part that comes from $j_!j^*K$ is the compactly supported cohomology of $S$, which we computed in Proposition \ref{prop: coh curve weights}. \\ 

Doing the same thing for $i_*i^*K$ part, we also have the compactly supported cohomology of $K$ restricted to $\G_m\times\{\infty\}$ and $\infty\times \G_m$.  We have a spectral sequence $E_2^{ij}=H^i_c(\G_m, \mc{H}^j(i_1^*K))\Rightarrow H^{i+j}_c(\G_m, i_1^*K)$.  But since $i_1^*K$ is a direct sum of skyscraper sheaves, only the $i=0$ terms are nonzero.  Doing the same for $i_2$, we see that in degree 0 we get $n_1+n_2$ weights of weight 0, and in degree 1 we get $n_1+n_2$ weights of weight 2. \\  

Finally, we have seen that the points $(\infty, \infty)$ and $(\infty, 0)$ contribute $r_1+r_2$ weight 0 in degree 0 and $r_1+r_2$ weight 2 in degree 1.  We arrive at the following result.  

\begin{proposition}
\label{prop: curve weights 2}
The weights of the fiber functor are given by the following numbers.
\begin{align}
    f_2 &= n_1+n_2+r_1+r_2 -1. \\ 
    f_1 &= 2U_2 - f_0-f_2 \\ 
    f_0 &= U_1 - n_1-n_2-r_1-r_2 - 1. 
\end{align}
\end{proposition}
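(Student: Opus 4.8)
The plan is to read off the Frobenius weights of $\omega(i_*\Q_l[1]) = H^1(Y, K)$, where $Y := \P^1\times\P^1$ and $K = Rj_{1\infty*}j_{10!}Rj_{2\infty*}j_{20!}i_*\Q_l$, from the excision distinguished triangle
\[
j_!j^*K \longrightarrow K \longrightarrow i_*i^*K \xrightarrow{\,+1\,}
\]
attached to the stratification $\G_m^2 \xhookrightarrow{j} Y \xleftarrow{i} Z$. Applying $R\Gamma(Y,-)$, taking the alternating sum of hypercohomology, and sorting Frobenius eigenvalues by weight gives the identity of virtual weight classes
\[
\textstyle\sum_m(-1)^m\big[H^m(Y,K)\big] = \sum_m(-1)^m\big[H^m_c(\G_m^2, j^*K)\big] + \sum_m(-1)^m\big[H^m(Z, i^*K)\big].
\]
By Proposition \ref{prop: ff 1} together with Lemma \ref{lemma: vanishing}, $H^\bullet(Y,K)$ is concentrated in degree $1$, so the left-hand side is $-\,[\omega(i_*\Q_l[1])]$; since $\omega$ is a genuine representation sitting in a single cohomological degree, its actual weight multiplicities $f_0, f_1, f_2$ are recovered from this virtual class with no cancellation.

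First I would handle the open stratum. As noted above, $j^*K = i_*\Q_l$, so $H^m(Y, j_!j^*K) = H^m_c(\G_m^2, i_*\Q_l) = H^m_c(C, \Q_l)$, whose weight-graded alternating class is precisely what Proposition \ref{prop: coh curve weights} tabulates: in the sign convention used there, weight $0$ contributes virtual multiplicity $U_1 - 1$, weight $1$ contributes $2U_2 - U_1 + 2$, and weight $2$ contributes $-1$.

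Next I would assemble the boundary stratum $Z$. By the stalk computations carried out layer by layer above, $i^*K$ is a complex of skyscraper sheaves: its stalk vanishes at every point with first coordinate $0$; at each of the $n_1$ points of $\bar{C}$ over $\G_m\times\{\infty\}$ and each of the $n_2$ over $\{\infty\}\times\G_m$ one has $\mc{H}^0 = \Q_l$ and $\mc{H}^1 = \Q_l(-1)$; and the (possibly singular) points $(\infty,0)$ and $(\infty,\infty)$ contribute, via the normalization $\bar{D}\to\bar{C}$, $r_1$ and $r_2$ further copies of $\Q_l$ in degree $0$ and $\Q_l(-1)$ in degree $1$. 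Writing $N = n_1 + n_2 + r_1 + r_2$, this yields $H^0(Z, i^*K) = \Q_l^{\,N}$ and $H^1(Z, i^*K) = \Q_l(-1)^{\,N}$, so $\sum_m(-1)^m[H^m(Z,i^*K)]$ contributes $+N$ in weight $0$, nothing in weight $1$, and $-N$ in weight $2$.

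Combining the two inputs, the weight-$w$ multiplicity of $\omega$ equals the Proposition \ref{prop: coh curve weights} value minus the weight-$w$ contribution of $H^\bullet(Z,i^*K)$, which gives $f_0 = (U_1-1) - N$, $f_1 = 2U_2 - U_1 + 2$, and $f_2 = -1 - (-N) = N - 1$; one then rewrites $f_1$ as $2U_2 - f_0 - f_2$ and checks that $f_0 + f_1 + f_2 = 2U_2 = 2\Vol(\Delta(C))$, consistent with Proposition \ref{prop: dim}. I do not expect a real obstacle here, since the delicate sheaf-theoretic input has already been produced in the preceding computations; the only points demanding care are matching the $(-1)^{m+1}$ sign convention of Proposition \ref{prop: coh curve weights} against the excision long exact sequence, and observing that the passage from virtual weight classes to honest multiplicities is legitimate precisely because $\omega$ is concentrated in one degree (and that, under the running hypotheses on $C$, the arithmetic does return non-negative answers).
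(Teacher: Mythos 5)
Your proposal is correct and follows essentially the same route as the paper: it stratifies $\P^1\times\P^1$, feeds the already-computed restrictions of $K$ (the open stratum via Proposition \ref{prop: coh curve weights}, the boundary via the skyscraper stalks with $N=n_1+n_2+r_1+r_2$) into the excision triangle $j_!j^*K\to K\to i_*i^*K$, and uses concentration of $H^\bullet(\P^1\times\P^1,K)$ in a single degree to pass from signed weight classes to actual multiplicities. The bookkeeping reproduces the stated $f_0,f_1,f_2$ and matches the paper's argument.
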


Now we note that $r_1, n_2, r_2$ correspond to the parts of the inverse slope sequence of the Newton polygon with positive, 0, and negative slope respectively, while $n_1$ corresponds to the rightmost vertical side.  Thus, this agrees with the result calculated using the previous method in Theorem \ref{thm: curve weights}. 

\subsection{Generalities for surfaces}
\label{sec: gens}
We now consider the case of surfaces.  Here the perverse sheaf in question is of the form $i_*\Q_l[2]$, for some smooth surface $i:S\hookrightarrow \G_m^3$ nondegenerate with respect to its Newton polyhedron.  We take the same approach as for curves, and set $K=Rj_{1\infty*}j_{10!}Rj_{2\infty*}j_{20!}Rj_{3\infty*}j_{30!}i_*\Q_l$.   

 \subsubsection{Computing $K$ on each layer of the stratification}
 
 The stratification is given by $\G_m^3, \G_m^2\times \{0, \infty\} \cup \G_m\times \{0, \infty\}\times \G_m\cup \{0, \infty\}\times \G_m^2, \G_m\times \{0, \infty\}^2\cup \{0, \infty\}\times \G_m\times \{0, \infty\}\cup \{0, \infty\}\times \G_m, \{0, \infty\}^3$. \\ 
 
 On $\G_m^3$, the pullback of $K$ is just $i_*\Q_l$.  Furthermore, the stalk of $K$ is 0 at every point with 0 as the first coordinate, and at points in $\G_m\times 0\times \P^1$. 
 
\subsubsection{At $\G_m^2\times\{\infty\}$ et al.}  
 
 Now consider the pullback $i_{12}^*K$ where $i_{12}:\G_m^2\times\{\infty\}\rightarrow \G_m^2\times \P^1$ is the inclusion.  We are interested in computing $\mc{H}^i(i_{12}^*K)$.  In general, we have that
 \[
 \mc{H}^i(i_{12}^*K) = i_{12}^*( j_{1\infty*}j_{10!}j_{2\infty*}j_{20!}R^ij_{3\infty*}j_{30!}i_*\Q_l). 
 \]
 
 For $i=0$, we have that $\mc{H}^0(i_{12}^*K)$ is simply the sheaf $i_{3\infty*}\Q_l$ restricted to the curve $i_{3\infty}:C_{3\infty}\rightarrow \G_m^3$ defined by the equation $f(t_1, t_2, \infty)=0$.  The following propositions will show that $\mc{H}^1(i_{12}^*K) = i_{3\infty*}\Q_l(-1)$ and that $\mc{H}^i(i_{12}^*K)$ vanishes for larger $i$.
 
\begin{proposition}
\label{prop: curve inclusion}
Let $i_{3\infty}:C_{3\infty}\rightarrow \G_m^3$ be the inclusion of the curve $f(t_1, t_2, \infty)=0$ into $\G_m^3$.  Then $R^1j_{3\infty*}j_{30!}i_*\Q_l = i_{3\infty*}\Q_l(-1)$.  
\end{proposition}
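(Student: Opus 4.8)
The plan is to pass to a relative compactification in the $t_3$-direction and to identify $R^1j_{3\infty*}j_{30!}i_*\Q_l$ with the standard sheaf obtained by extending the constant sheaf across a smooth boundary divisor on the compactified surface. Let $\bar{S}$ be the closure of $S=V(f)$ inside $\G_m^2\times\P^1$. First I would restrict everything to the open set $\G_m^2\times(\P^1\smallsetminus\{0\})$, which contains $\G_m^2\times\{\infty\}$ and meets $\G_m^2\times\A^1$ exactly in $\G_m^3$; writing $s$ for the coordinate with $\{s=0\}=\G_m^2\times\{\infty\}$, one has $\bar{S}'\coloneqq\bar{S}\cap(\G_m^2\times\A^1_s)=V(g)$ with $g=s^{d}f$ for $d=\max_{\alpha}\alpha_3$, a closed surface meeting $\{s=0\}$ in $C_{3\infty}=V(f_\infty)$, where $f_\infty$ is the leading $t_3$-form of $f$. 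Over $\{s\ne 0\}=\G_m^3$ the restriction of $j_{30!}i_*\Q_l$ is just $i_*\Q_l$, and $j_{3\infty}$ is the open immersion $\G_m^3\hookrightarrow\G_m^2\times\A^1_s$, so factoring $S\xhookrightarrow{a}\bar{S}'\hookrightarrow\G_m^2\times\A^1_s$ gives, on this open set,
\[
Rj_{3\infty*}j_{30!}i_*\Q_l \;=\; \bigl(\bar{S}'\hookrightarrow\G_m^2\times\A^1_s\bigr)_*\,Ra_*\Q_l,
\]
using that the closed immersion of $\bar{S}'$ has exact pushforward. So it suffices to compute $Ra_*\Q_l$ near $C_{3\infty}$, where $a\colon S\hookrightarrow\bar{S}'$ is an open immersion whose complement, in a neighbourhood of $\G_m^2\times\{\infty\}$, is $C_{3\infty}$.

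Next I would use nondegeneracy. Applying the nondegeneracy of $f$ with respect to the face $\tau_\infty=\{\alpha_3=d\}\cap\Delta$ shows that $f_\infty$ and its two partials in $t_1,t_2$ have no common zero in $\G_m^2$; chasing this through the expansion $g=f_\infty+s(\cdots)$ gives $\nabla g\ne 0$ along $C_{3\infty}$ and moreover that $ds$ does not vanish on the tangent space of $\bar{S}'$ there, i.e.\ $\bar{S}'$ is smooth near $C_{3\infty}$ and $C_{3\infty}=\bar{S}'\cap\{s=0\}$ is a smooth divisor in it. Granting this, near $C_{3\infty}$ the pair $(\bar{S}',C_{3\infty})$ is étale-locally a smooth surface with a smooth coordinate divisor, and $Ra_*\Q_l$ is computed on the strict henselization $\O_{\bar{S}',z}^{h}$ at a point $z\in C_{3\infty}$: if $u$ is a local equation of $C_{3\infty}$, then $(R^ia_*\Q_l)_z=H^i_{\et}\bigl(\Spec\O_{\bar{S}',z}^{h}[u^{-1}],\Q_l\bigr)$, and the localization (Gysin) sequence for $\Spec\O_{\bar{S}',z}^{h}[u^{-1}]\hookrightarrow\Spec\O_{\bar{S}',z}^{h}\hookleftarrow\Spec(\O_{\bar{S}',z}^{h}/u)$, together with the triviality of the étale cohomology of a strictly henselian local scheme and absolute purity (which identifies the cohomology with supports $H^2_{V(u)}$ with $\Q_l(-1)$), yields $\Q_l$ in degree $0$, $\Q_l(-1)$ in degree $1$, and $0$ in degrees $\ge 2$. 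Since these identifications are canonical along $C_{3\infty}$, we get $R^0a_*\Q_l=\Q_l$, $R^1a_*\Q_l=(C_{3\infty}\hookrightarrow\bar{S}')_*\Q_l(-1)$ and $R^{\ge 2}a_*\Q_l=0$; plugging back into the displayed formula gives $R^1j_{3\infty*}j_{30!}i_*\Q_l=i_{3\infty*}\Q_l(-1)$, and simultaneously the vanishing of the higher $R^{i}j_{3\infty*}j_{30!}i_*\Q_l$ used in the surrounding discussion. This step is the surface analogue of the local computation in the proof of Proposition~\ref{prop: curve weights}, where $\bar{C}$ was smooth at the points $(t,\infty)$ and one used $H^1$ of a punctured henselian disc.

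The hard part, exactly as in the curve case, is the geometric input of the second paragraph: that nondegeneracy with respect to $\Delta$ forces $\bar{S}'$ to be smooth near $\G_m^2\times\{\infty\}$ and $C_{3\infty}$ to be a smooth divisor there — equivalently, that $\bar{S}'$ meets the torus orbit $\G_m^2\times\{\infty\}$ transversally. For a single toric boundary divisor this only involves nondegeneracy with respect to the one face $\tau_\infty$ of $\Delta$ (and its effect on the leading form $f_\infty$), so it should be a direct computation as sketched, but it is the place requiring care, and it is the standard relation between nondegeneracy with respect to a Newton polyhedron and transversality with torus orbits in a partial toric compactification (cf.\ the setup of \cite{DL}). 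The remaining ingredients — the base-change identities of the first paragraph and the purity computation on $\O_{\bar{S}',z}^{h}[u^{-1}]$ — are routine and parallel the $n=2$ case already carried out in the text.
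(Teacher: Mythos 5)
Your proof is correct, and it reaches the statement by a genuinely different route than the paper in the key local step. The paper covers $C_{3\infty}$ by two Zariski opens $C'_{3\infty}, C''_{3\infty}$ on which the two coordinate projections of the closure $\overline{S}$ are \'etale, uses base change to build \'etale neighborhoods isomorphic to \'etale neighborhoods inside $C'_{3\infty}\times\A^1$, shows these are cofinal, and then reduces the stalk computation by smooth base change to the already-established one-variable fact $(R^1(\G_m\hookrightarrow\A^1)_*\Q_l)_0=\Q_l(-1)$, concluding with a ``Zariski-locally constant is constant'' gluing step. You instead work directly with the pair $(\bar S', C_{3\infty})$ in the chart at infinity, verify smoothness and transversality from nondegeneracy with respect to the top face, and compute $Ra_*\Q_l$ by the localization/absolute purity argument on strict henselizations. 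What each buys: the paper's argument stays within its inductive one-variable toolkit and needs no purity input, while yours makes the geometric input explicit (the paper's phrase ``by the smoothness of $S$'' is really the same nondegeneracy/transversality fact you spell out via the leading form), obtains the identification canonically so no separate constancy/gluing step is needed, and yields the vanishing of $R^{i}j_{3\infty*}j_{30!}i_*\Q_l$ for $i\ge 2$ at no extra cost, which the paper proves separately in Proposition~\ref{prop: surface vanishing} via Artin vanishing. Two small points of fine print: the step ``these identifications are canonical along $C_{3\infty}$'' is cleanest if you quote the standard identification $R^1a_*\mu_{\ell^m}\cong i_{3\infty*}\Z/\ell^m$ coming from the Kummer sequence and the divisor (valuation) map, then twist and pass to the limit, rather than bare stalkwise purity; and for completeness one should note that both sides of the asserted equality vanish on $\G_m^2\times\{0\}$, the part of the base missed by your chart, which is immediate from the extension by zero. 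Neither affects the correctness of the argument.
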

\begin{proof}
The stalk of $q_*R^1j_{3\infty *}j_{30!}i_*\Q_l$ at each $t\in \G_m^2$ is zero unless $f(t_1, t_2, \infty)=0$.  Letting the curve defined by this equation be denoted $i_{3\infty}: C_{3\infty}\rightarrow \G_m^2\times \A^1$, we claim that this sheaf is given by $q_*i_{3\infty*}\Q_l(-1)$. \\
 
We will first show that there is an open cover of $C_{3\infty}$ given by $C'_{3\infty}\cup C''_{3\infty}$, such that both of them have \'etale neighborhoods in $S$ completed at $\infty$ that are isomorphic to \'etale neigborhoods of $C'_{3\infty}$ and $C''_{3\infty}$ inside $C'_{3\infty}\times \A^1$ and $C'_{3\infty}\times \A^1$, respectively.  Consider the two projections of $\overline{S}$, which is $S$ completed at $\infty$ in the last coordinate (which is the vanishing locus of $f$ in $\G_m^2\times \A^1$), onto $\G_m\times \A^1$.  If the first projection is \'etale on $U'\subset \overline{S}$ and the second is \'etale on $U''\subset \overline{S}$, then by the smoothness of $S$ we have that $C_{3\infty}\subset U'\cup U''$.  We set $C'_{3\infty}=C_{3\infty}\cap U'$.  Now the projection composed with the inclusion gives an \'etale morphism $C'_{3\infty}\rightarrow \A^1$.  Base changing this by the projection composed with the inclusion $U'\rightarrow \A^1$ gives an \'etale neighborhood of $U'$ given by $C'_{3\infty}\times_{\A^1}U'$.  Then this neighborhood contains $C'_{3\infty}$ via the diagonal, and is also \'etale over $C'_{3\infty}\times \A^1$.  This shows the claim. \\ 

Now we will compute $R^1j_{3\infty*}j_{30!}i_*\Q_l$ restricted to $C'_{3\infty}$ and $C''_{3\infty}$ and show they are both $\Q_l(-1)$.  Then because a Zariski-locally constant sheaf is constant, this will imply that $R^1j_{3\infty *}j_{30!}i_*\Q_l = i_{3\infty *}\Q_l(-1)$, from which the desired result follows.    \\

Note that $R^1j_{3\infty *}j_{30!}i_*\Q_l = i_{3\infty *}\Q_l(-1)$ is the sheafification of a presheaf that for $h:V\rightarrow \G_m^2\times \P^1$ \'etale, that does not map onto the $z=0$ part (which is ok since we are only concerned with \'etale neighborhoods of $C_{3\infty}$) sends 
\[
V\mapsto H^1(V\times_{\G_m^2\times \P^1}S, \Q_l). 
\]
For a fixed $U\subset C'_{3\infty}$, we consider the $V$ above that map onto $U$.  Then the \'etale neighborhoods  $U\times \A^1$ coming from the \'etale neighborhood $C_{3\infty}\times \A^1$ are cofinal in the category of all such $V$.  Therefore, taking the limit over all \'etale neighborhoods mapping onto $U$, the sheaf coincides with $R^1f_*\Q_l(U)$ where $f:C'_{3\infty}\times \G_m\hookrightarrow C'_{3\infty}\times \A^1$ is the inclusion.  It follows that we just need to compute the sheaf $R^1f_*\Q_l(U)$, which we know how to do.  Indeed, from the previous section we know that for $f_1:\G_m\rightarrow \A^1$, we have $(R^1f_{1*}\Q_l)_0 = \Q_l(-1)$.  Then by smooth base change on the square 
\[\begin{tikzcd}
	{C_{3\infty}\times \G_m} & {\G_m} \\
	{C_{3\infty}\times \A^1} & {\A^1}
	\arrow[from=1-1, to=1-2]
	\arrow[from=1-2, to=2-2]
	\arrow[from=1-1, to=2-1]
	\arrow[from=2-1, to=2-2]
\end{tikzcd}\]
we see that $R^1f_*\Q_l$ is indeed $\Q_l(-1)$ supported on the 0 section of $C'_{3\infty}\times \A^1$, as desired.  This implies that $R^1j_{3\infty *}j_{30!}i_*\Q_l = i_{3\infty *}\Q_l(-1)$.  
\end{proof} 

\begin{proposition} 
\label{prop: surface vanishing}
The sheaf $R^ij_{3\infty*}j_{30!}i_*\Q_l$ vanishes for $i\ge 2$.
\end{proposition}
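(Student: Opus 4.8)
The plan is to compute the stalks of $R^{i}j_{3\infty*}j_{30!}i_{*}\Q_{l}$ directly and show they vanish for $i\ge 2$. Over the open locus $\G_{m}^{2}\times\A^{1}$ the complex $Rj_{3\infty*}j_{30!}i_{*}\Q_{l}$ agrees with $j_{30!}i_{*}\Q_{l}$ placed in degree $0$, so its higher direct images vanish there and the whole question is local around a point $z=(t_{1},t_{2},\infty)$ of the divisor $D:=\G_{m}^{2}\times\{\infty\}$. At such a $z$ the stalk of $R^{i}j_{3\infty*}j_{30!}i_{*}\Q_{l}$ is $\varinjlim_{U}H^{i}(U\setminus D,\,j_{30!}i_{*}\Q_{l})$, the colimit running over \'etale neighborhoods $U$ of $z$ in $\G_{m}^{2}\times\P^{1}$. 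Since the last coordinate of $z$ is $\infty\ne 0$, for $U$ small one has $U\setminus D=U\cap\G_{m}^{3}$, and on $\G_{m}^{3}$ the sheaf $j_{30!}i_{*}\Q_{l}$ coincides with $i_{*}\Q_{l}$; as $i$ is a closed immersion this identifies the stalk with $\varinjlim_{U}H^{i}(S\cap U,\Q_{l})$, the \'etale cohomology of the part of $S$ lying in a strictly henselian neighborhood of $z$.

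I would then feed in the local geometry already established in the proof of Proposition \ref{prop: curve inclusion}. If $z$ does not lie on the curve $C_{3\infty}=\overline{S}\cap D$, then $S\cap U$ is empty for $U$ small and every positive-degree stalk vanishes. If $z\in C_{3\infty}$, then, using the smoothness of $S$ together with nondegeneracy with respect to the Newton polyhedron exactly as in that proof, $\overline{S}$ is smooth along $C_{3\infty}$ and, \'etale-locally near $z$, the pair $(\overline{S},C_{3\infty})$ is isomorphic to $(C'_{3\infty}\times\A^{1},\,C'_{3\infty}\times\{0\})$ for a neighborhood $C'_{3\infty}$ of $z$ in $C_{3\infty}$, with $D$ pulling back to $C'_{3\infty}\times\{0\}$. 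Hence, \'etale-locally near $z$, the trace $S\cap U$ looks like $C'_{3\infty}\times\G_{m}$ inside $C'_{3\infty}\times\A^{1}$, and $\varinjlim_{U}H^{i}(S\cap U,\Q_{l})$ is computed by the stalk along the zero section of $R^{i}f_{*}\Q_{l}$, where $f\colon C'_{3\infty}\times\G_{m}\hookrightarrow C'_{3\infty}\times\A^{1}$; by smooth base change along the projection to $\A^{1}$ this equals the stalk at $0$ of $R^{i}\iota_{*}\Q_{l}$ for the open immersion $\iota\colon\G_{m}\hookrightarrow\A^{1}$. That sheaf is $\Q_{l}$ in degree $0$ and $\Q_{l}(-1)$ in degree $1$ (recovering Proposition \ref{prop: curve inclusion}), and it vanishes in degrees $\ge 2$ because a strictly henselian trait punctured at its closed point has $\ell$-adic cohomological dimension $1$. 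Combining the three cases gives the claimed vanishing.

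The main point, and the only place with genuine content, is the input from the proof of Proposition \ref{prop: curve inclusion}: the \'etale-local product description of $(\overline{S},C_{3\infty})$ along the boundary stratum, which is exactly where nondegeneracy is used to ensure that $\overline{S}$ is smooth along $C_{3\infty}$ and that $C_{3\infty}$ is a reduced smooth divisor there (rather than a locus where $\overline{S}$ is singular or tangent to $D$) — this is what forces a punctured henselian neighborhood of $z$ in $S$ to look like a punctured disk times a henselian neighborhood in $C_{3\infty}$, so that Künneth applies. Everything after that is formal. As a sanity check, and as an independent argument for vanishing in the range $i\ge 3$, one may also observe that $j_{30}$ and $j_{3\infty}$ are affine open immersions, so $Rj_{3\infty*}j_{30!}(i_{*}\Q_{l}[2])$ is perverse on the threefold $\G_{m}^{2}\times\P^{1}$; the support condition for a perverse sheaf then kills $R^{i}j_{3\infty*}j_{30!}i_{*}\Q_{l}$ for $i\ge 3$ outright and confines it to dimension $\le 0$ for $i=2$, and the local computation above removes the remaining $i=2$ contribution.
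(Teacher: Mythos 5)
Your proof is correct, but at the crucial degree $i=2$ it takes a genuinely different route from the paper's. The paper never computes the degree-two stalk: for $i>2$ it notes the stalks are colimits of cohomology of affine surfaces and vanish by Artin vanishing, and for $i=2$ it observes that the stalks are the same at every point of the curve $f(x,y,\infty)=0$, while affineness of $j_{3\infty}$ (Artin vanishing again, i.e.\ the perverse support bound you state as a sanity check) forces the support of $R^2j_{3\infty*}j_{30!}i_*\Q_l$ to have dimension $\le 0$; constancy along the one-dimensional curve then forces all these stalks to be zero. You instead compute every boundary stalk directly: using the \'etale-local product structure of $(\overline{S}, C_{3\infty})$ established in the proof of \Cref{prop: curve inclusion}, smooth base change identifies the stalk with $(R^i\iota_*\Q_l)_0$ for $\iota\colon \G_m\hookrightarrow \A^1$, which vanishes for $i\ge 2$ since the punctured strictly henselian trait has $\ell$-adic cohomological dimension one (the same fact the paper exploits in \Cref{prop: curve weights}). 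Your route gives a uniform local computation handling all $i\ge 2$ at once and recovering the $R^1$-stalk values; its only extra burden is checking that the common \'etale neighborhood from \Cref{prop: curve inclusion} identifies the boundary divisor with $C'_{3\infty}\times\{0\}$, which that construction does provide and which you should state explicitly. The paper's route is shorter given what is already proved but is more indirect (homogeneity plus support dimension rather than an actual stalk computation); your closing perversity remark is essentially the paper's own argument, so the two proofs coincide for $i\ge 3$ and differ only in how the residual $i=2$ contribution is killed.
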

\begin{proof}
  The sheaf $R^2j_{3\infty*}j_{30!}i_*\Q_l$ is only supported on points $z = (x, y, \infty)$ with $f(x, y, \infty)=0$. \\ 
For such a point $z$, we have 
\[
(R^ij_{3\infty*}j_{30!}i_*\Q_l)_{z} = \varinjlim_{U} H^i(U \times_{\G_m^2\times \P^1} (\G_m^2\times \A^1), j_{1!}i_*\Q_l)  \cong \varinjlim_{U} H^i(U \times_{\G_m^2\times \P^1} S, \Q_l). 
\]
For $i>2$, this is 0 because of the Artin vanishing theorem.  Now consider $i=2$.  These stalks are the same for each point on the curve $f(x, y, \infty) = 0$.  However, since $j_{3\infty}$ is affine, by Artin vanishing we know what the stalks of the $R^2j_{3\infty*}$ from the surface $S$ can be nonzero only on a dimension 0 subset.  Therefore these stalks cannot be nonzero, so they are all 0. 
\end{proof} 
 
 We now want to compute the corresponding restrictions $i_{13}^*K$ and $i_{23}^*K$ to $\G_m\times\{\infty\}\times \G_m$ and $\infty\times \G_m^2$.  We claim that the analogous results hold for them, except that the curve $C_{3\infty}$ is replaced with $C_{2\infty}$ and $C_{1\infty}$, defined by $f(x, \infty, z)=0$ and $f(\infty, y, z)=0$ respectively.  The proof of this is the same as that of the analogous result which was given in the case of a curve.  Namely, by the smooth base change theorem the restriction comes down to computing $R^ij_{2\infty*}j_{20!}i_*\Q_l$ and $R^ij_{1\infty*}j_{10!}i_*\Q_l$, which have just been done.  
 
 \subsubsection{At $\G_m\times\{\infty\}\times \{0\}, \G_m\times \{\infty\}\times\{\infty\}$ et al.}  
 \label{sec: sw3}
 
 Next comes the issue of computing the pullback $i_1^*K$ where $i_1:\G_m\times \{\infty\}\times\{ \infty\}\rightarrow \G_m\times (\P^1)^2$ is the inclusion, and the analogous pullback to $\G_m\times\{\infty\}\times 0$.  In particular we want to compute 
 \[
 \mc{H}^i(i_{1}^*K) = i_{1}^*( j_{1\infty*}j_{10!}R^i(j_{2\infty*}j_{20!}j_{3\infty*}j_{30!})i_*\Q_l).
 \]
 
 There are three cases for $\G_m\times\{\infty\}\times\{\infty\}$; first where there are no terms that have the largest $y$ and $z$ exponent, second when there is one such term, and third when there is more than one such term.  In the first, we have $f(t, \infty, \infty)=0$ for all $t\in \G_m$, in the second we have $f(t, \infty, \infty)\neq 0$ for all $t\in \G_m$, and in the third we have $f(t, \infty, \infty)=0$ for some finite number of possible $t\in\G_m$.  Similar reasoning applies to $\G_m\times\{\infty\}\times \{0\}$.  There is nothing to calculate in the second case.  
 
 \paragraph{The first case} 
 We get that for both $\G_m\times\{\infty\}\times\{\infty\}$ and $\G_m\times\{\infty\}\times \{0\}$, the pullback in this scenario will just be $\Q_l$ in degree 0 and $\Q_l(-1)$ in degree 1.

 \paragraph{The third case}

There is a spectral sequence 
\[E_2^{ab} = R^aj_{2\infty *}j_{20!}R^bj_{3\infty*}j_{30!}i_*\Q_l \Rightarrow R^{a+b}(j_{2\infty*}j_{20!}j_{3\infty*}j_{30!})i_*\Q_l.\]
Recall that we have computed that $R^0j_{3\infty*}j_{30!}i_*\Q_l$ is $i'_*\Q_l$ where $i'$ is the inclusion of the completion of the surface in $\G_m^2\times \A^1$, with $\A^1$ referring to $\G_m\cup \{\infty\}$, while $R^1j_{3\infty*}j_{30!}i_*\Q_l = i_{3\infty *}\Q_l(-1)$, and $R^ij_{3\infty*}j_{30!}i_*\Q_l = 0$ for larger $i$.  This implies that $E_2^{ab}$ is as follows. 
\[\begin{tikzcd}
	{i_{3\infty*}\Q_l(-1)} & {i_{23\infty *}\Q_l(-2)} \\
	{i''_*\Q_l} & {i_{2\infty*}\Q_l(-1)}
\end{tikzcd}\]

Here $i''$ is the inclusion of the completion of the surface in $\G_m\times (\P^1)^2$ except for the points in $\G_m^2\times \{0\}$, while $i_{2\infty}$ is the inclusion of $f(x, \infty, z) = 0$ in $\G_m\times (\P^1)^2$, and $i_{23\infty}$ is the inclusion of $f(x, \infty, \infty) = 0$ in $\G_m\times (\P^1)^2$.  This spectral sequence clearly degenerates.  Then restricting back to $\G_m\times\{\infty\}\times\{\infty\}$, we obtain the following.  Let $\alpha_1, \ldots, \alpha_{n_1}$ be the inclusions of the $n_1$ solutions to $f(x, \infty, \infty)=0$.  Then 
\[\mc{H}^0(i_1^*K) = \bigoplus_{k=1}^{n_1}\alpha_{k*}\Q_l, \mc{H}^1(i_1^*K) = \bigoplus_{k=1}^{n_1}\alpha_{k*}(\Q_l(-1))^2, \mc{H}^2(i_1^*K) = \bigoplus_{k=1}^{n_1}\alpha_{k*}\Q_l(-2).\] 

Next, we claim that the signed weights of the restriction to $i_1': \G_m\times\{\infty\}\times 0\rightarrow \G_m\times (\P^1)^2$ are 0.

Indeed, first note that the stalk of $R^kj_{2\infty *}j_{20!}R^aj_{3\infty *}j_{30!}i_*\Q_l$ at $(t, \infty, 0)$ is the same as that of $R^kj_{2\infty *}j_{30!}i_*\Q_l$.  Then if we let $i_{30}$ be the inclusion of $(t_1, t_2, 0)$ on this surface, we have a long exact sequence relating $R^kj_{2\infty*}j_{30!}\Q_l, R^kj_{2\infty*}\Q_l, R^kj_{2\infty*}i_{30*}\Q_l$ which, upon taking the stalk at $(t, \infty, 0)$, shows that the signed weights of the stalk of $R^kj_{2\infty*}j_{30!}\Q_l$ at those points is the difference of the latter two.  Both of those are $\Q_l$ in degree 0 and $\Q_l(-1)$ in degree 1, so we are done. \\ 

Using the same argument as for the previous layer of the stratification, we get analogous results for the restriction to $\{\infty\}\times \G_m\times\{\infty\}$ and $\{\infty\}\times\{\infty\}\times \G_m$.  Namely, 
\[
\mc{H}^0(i_2^*K) = \bigoplus_{k=1}^{n_2}\beta_{k*}\Q_l, \mc{H}^1(i_2^*K) = \bigoplus_{k=1}^{n_2}\beta_{k*}(\Q_l(-1))^2, \mc{H}^2(i_2^*K) = \bigoplus_{k=1}^{n_2}\beta_{k*}\Q_l(-2)
\] and 
\[\mc{H}^0(i_3^*K) = \bigoplus_{k=1}^{n_1}\gamma_{k*}\Q_l, \mc{H}^1(i_3^*K) = \bigoplus_{k=1}^{n_1}\gamma_{k*}(\Q_l(-1))^2, \mc{H}^2(i_3^*K) = \bigoplus_{k=1}^{n_1}\gamma_{k*}\Q_l(-2).\] 
Similarly, the restrictions to $\infty\times \G_m\times 0$ and $\infty\times 0\times \G_m$ also do not change the total signed weights.

\subsubsection{Computing the weights outside of the singular stratum}  
As in the case of a curve, we can compute the alternating signed weights of the fiber functor by doing so on each layer of the stratification.  We have computed the cohomology sheaves of the complex in question over every stratum except for the corners, which are more difficult.  For now, we will see how to put all this information together and do so except for the corners.  We represent the stratification as follows. 
\[
\adjustbox{scale=0.83,center}{%
\begin{tikzcd}
	{\G_m^3} & {(\P^1)^3} & {Z_1} & {Z_2} & {\{0,\infty\}\times\{0,\infty\}\times\{0,\infty\}} \\
	&& {\G_m^2\times\{0, \infty\} \text{ et al.}} & {\G_m\times\{0,\infty\}\times\{0,\infty\} \text{ et al.}}
	\arrow[hook, from=1-1, to=1-2]
	\arrow[hook', from=1-3, to=1-2]
	\arrow[hook, from=2-3, to=1-3]
	\arrow[hook', from=1-4, to=1-3]
	\arrow[hook', from=1-5, to=1-4]
	\arrow[hook, from=2-4, to=1-4]
\end{tikzcd}
}\]

\paragraph{On $\G_m^3$}
Proposition \ref{prop: coh surface weights} gives the result on $\G_m^3$ to be 
\begin{align}
    f_4 &= 1, \\ 
    f_3 &= 0, \\ 
    f_2 &= 6U_3 - 2U_2 + U_1 +2U_0 + F - W_1 - 6, \\ 
    f_1 &= 2U_2- 2U_1 -3U_0 -F -E + 2W_1+6, \\  
    f_0 &= U_1 +U_0 +E- W_1 -1. \\ 
\end{align}
Here $F_0(i)$ is the number of $i$-dimensional faces, $U_i$ is the sum of the volumes of the $i$-dimensional faces, and 
\[
W_1\coloneqq \sum_{\substack{\tau \text{ face of } \Delta \\ 0\in \tau, \dim \tau = 1}} F_\tau(3)\Vol(\tau). 
\] 

\paragraph{On the codimension 1 stratum}
On $\G_m^2\times\{\infty\}$, we need to compute the signed weights of $H^i_c(\G_m^2, i_{12}^*K)$.  We have a spectral sequence $E_2^{ab} = H^{a}_c(\G_m^2, \mc{H}^b(i_{12}^*K)) \Rightarrow H^{a+b}_c(\G_m^2, i_{12}^*K)$.  By our calculations of $\mc{H}^b(i_{12}^*K)$, this is given by the following. 
\[\begin{tikzcd}
	{H^0_c(C_{3\infty}, \Q_l(-1))} & {H^1_c(C_{3\infty}, \Q_l(-1))} & {H^2_c(C_{3\infty}, \Q_l)} \\
	{H^0_c(C_{3\infty}, \Q_l)} & {H^1_c(C_{3\infty}, \Q_l)} & {H^2_c(C_{3\infty}, \Q_l)}
	\arrow[from=1-1, to=2-3]
\end{tikzcd}\] 
Because we are just looking for the signed weights, this is given by the signed weights of the compactly supported cohomology of $C_{3\infty}$ added to the negative of itself with all weights shifted up by 2.  We recall the Proposition \ref{prop: coh curve weights} gives these to be 
\begin{align}
    f_2 &= 1, \\ 
    f_1 &= -2U_2+U_1-2, \\ 
    f_0 &= -U_1+1. 
\end{align} 
if we start with positive the weight for $i=0$.  Thus, here the contribution is 
\begin{align}
    f_4 &= -1, \\ 
    f_3 &= 2U_2-U_1+2, \\ 
    f_2 &= U_1, \\ 
    f_1 &= -2U_2+U_1-2, \\ 
    f_0 &= -U_1+1. 
\end{align} 

The same formulas hold for $C_{1\infty}$ and $C_{2\infty}$.  These correspond to the curves in $\G_m^2$ determined by the monomials with the largest $x, y, z$ exponents.  

\paragraph{On the codimension 2 stratum}
Next we need to compute the alternating signed weights on $\G_m\times\{\infty\}\times\{\infty\}$.  In the case that $K$ is supported on $n_1$ points, this gives $n_1$ of weight 1, $-2n_1$ of weight 2, and $n_1$ of weight 4. \\ 

In the case that it is supported on all of $\G_m$, we have that the pullback is $\Q_l$ in degree 0 and $\Q_l(-1)$ in degree 1.  Then the weights are given by $\Q_l$ in degree 1, $\Q_l(-1)^2$ in degree 2, and $\Q_l(-2)$ in degree 3.  The signed weights would thus be given by 
\begin{align}
    f_4 &= -1, \\ 
    f_2 &= 2, \\  
    f_0 &= -1, 
\end{align}

\subsection{Examples of specific surfaces} 
\label{sec: weight examples}
The previous results allow us to calculate the weights of a reasonably large class of surfaces.  As long as the defining polynomial $f$ does not have roots of the form $\infty\times \{0, \infty\} \times \{0, \infty\}$, then we can combine the results above to compute the desired weights.  This condition is satisfied if the face of the Newton polyhedron with largest $z$-coordinate is a rectangle with sides parallel to the $x$ and $y$ axes.  

\subsubsection{Rectangular prism}
Consider a polyhedron with Newton polyhedron given by $1+x^a+y^b+z^c + x^ay^b + y^bz^c + x^az^c + x^ay^bz^c$.  For $\G_m^3$, we have that $U_0 = 8, U_1 = 4a+4b+4c, U_2 = 2ab+2bc+2ca, U_3 = abc$.  

Here, $W_1$ refers to the polyhedron in one higher dimension with 1 added, so it is $\sum_v F_v(3) = 8\cdot 3 = 24$.  So this part gives 
\begin{align}
    f_4 &= 1, \\ 
    f_3 &= 0, \\ 
    f_2 &= 6abc - 4ab-4bc-4ca +4a+4b+4c - 8. \\ 
    f_1 &= 4ab+4bc+4ca-8a-8b-8c+12, \\  
    f_0 &= 4a+4b+4c-5.
\end{align}. 

Next, the contribution of the codimension 1 strata come from three of the faces, which gives  
\begin{align}
    f_4 &= -3, \\ 
    f_3 &= 2ab+2bc+2ca - 4a-4b-4c+6 \\ 
    f_2 &= 4a+4b+4c, \\ 
    f_1 &= -2ab-2bc-2ca + 4a+4b+4c-6, \\ 
    f_0 &= - 4a-4b-4c+3. 
\end{align}

Finally, the contribution of the codimension 2 strata come from the six edges.  The three of the edges corresponding to $\infty\times\{\infty\}\times \G_m, \infty\times \G_m\times\{\infty\}\times\{\infty\}\times \G_m$ give 
\begin{align}
    f_4 &= a+b+c, \\ 
    f_2 &= -2a-2b-2c, \\ 
    f_0 &= a+b+c. 
\end{align} 
The other three do not give any signed weights; see the argument used in the example of the pyramid below. \\ 

This gives a final total of 
\begin{align}
    f_4 &= a+b+c-2, \\ 
    f_3 &= 2ab+2bc+2ca - 4a-4b-4c+6, \\ 
    f_2 &= 6abc - 4ab-4bc-4ca +6a+6b+6c -8, \\ 
    f_1 &= 2ab+2bc+2ca -4a-4b-4c+6, \\  
    f_0 &= a+b+c-2.
\end{align}

\subsubsection{Pyramid} 
\label{sec: pyramid}
Consider a polyhedron with Newton polyhedron given by $1+y^a+z^b+y^{a}z^b + x^{-c}y^dz^{e}$.  Choose $c, d, e$ so that $0< d < a, 0< e < b$, and $c$ large so that the volume of every edge involving the vertex is 1.  We have that $U_0 = 5, U_1 = 2a+2b+4, U_2 = ab+a+b, U_3 = abc/3$.  Also, $W_1 = 4\cdot 3+4=16$ and $V = 5, E = 8, F = 5$. 

\paragraph{On $\G_m^3$} 
We get 
\begin{align}
    f_4 &= 1, \\ 
    f_3 &= 0, \\ 
    f_2 &= 2abc - 2(ab+a+b) + (2a+2b+4) + 2\cdot 5 +5 -16-6 = 2abc -2ab - 3. \\ 
    f_1 &= 2(ab+a+b) - 2(2a+2b+4) -15-5-8+2\cdot 16+6 = 2ab-2a-2b+2, \\  
    f_0 &= 2a+2b+4 + 5 +8-16-1 = 2a+2b.
\end{align}. 

\paragraph{On codimension 1} 
Next, the contribution of the codimension 1 strata comes from the top rectangular face and two of the edges.  The first corresponds to the zero locus of $1+y^a+z^b+y^az^b$ in $\G_m^2\times\{\infty\}$.  This gives the following weights.  
\begin{align}
    f_4 &= -1, \\ 
    f_3 &= 2ab-2a-2b+2 \\ 
    f_2 &= 2a+2b, \\ 
    f_1 &= -2ab+2a+2b-2, \\ 
    f_0 &= -2a-2b+1. 
\end{align}

For the side corresponding to $y^a+y^az^b$ in $\infty\times \G_m^2$, we obtain $b$ copies of $\G_m$ over which the pullback of $K$ is $\Q_l$ in degree 0 and $\Q_l(-1)$ in degree 1.  A similar result holds for the side corresponding to $z^b+y^az^b$, and together these two sides give the following weights. 
\begin{align}
    f_4 &= -a-b, \\  
    f_2 &= 2a+2b, \\ 
    f_0 &= -a-b. 
\end{align} 

\paragraph{On codimension 2} 
Finally, the contribution of the codimension 2 strata comes from $\infty\times\{\infty\}\times \G_m, \infty\times \G_m\times\{\infty\}, \infty\times 0\times \G_m$, and $\infty\times \G_m\times 0$. \\

The first is supported on $1+z^b$ in $\infty\times\{\infty\}\times \G_m$.  Before pushing forward the second time, the complex in degree 0 is $\Q_l$ on the surface and in degree 1 is $\Q_l(-1)$ on the intersection of the surface and $z=\infty$.  Thus each of these $a$ points contribute one weight 2, -2 weight 2, 1 weight 4.  The same reasoning holds for the restriction to $\infty\times \G_m\times\{\infty\}$, so the contribution here is given by 

\begin{align}
    f_4 &= a+b, \\ 
    f_2 &= -2a-2b, \\ 
    f_0 &= a+b.  
\end{align}

The stalk at the third divisor is supported on $1+z^b$ in $\infty\times 0\times \G_m$.  After applying $j_{20!}$ to the complex, near these points it is the constant sheaf supported on the surface minus its intersection with $(\infty, y, z)$ and $(x, 0, z)$.  Calling these divisors $D_1$ and $D_2$ respectively and the inclusions $j:S-D_1-D_2\rightarrow S-D_1$, $f:S-D_1\rightarrow S$, we wish to compute $R^if_*j_!j^*\Q_l$.  Using the exact sequence 
\[
0\rightarrow j_!j^*\Q_l\rightarrow \Q_l\rightarrow i_*i^*\Q_l\rightarrow 0, 
\]
we see that in the K-group these are given by the difference of the stalks of $R^if_*\Q_l$ and $R^if_*i_*i^*\Q_l$.  But these are the same, namely $\Q_l$ in degree 0 and $\Q_l(-1)$ in degree 1, so their difference is 0.  Thus there is no net contribution here to the signed weights.

\paragraph{Final total} 
This gives a final total of 
\begin{align}
    f_4 &= 0, \\ 
    f_3 &= 2ab-2a-2b+2, \\ 
    f_2 &= 2abc -2ab+2a+2b - 3, \\ 
    f_1 &= 0, \\  
    f_0 &= 1.
\end{align}.

\subsection{Computing the highest weight multiplicity of higher-dimensional polyhedra} 
\label{sec: higher weights}
\subsubsection{Setup}
The difficulty we encountered in computing the weights of the fiber functor on surfaces arose from the possible singularities arising from the corners in the compactification into $(\P^1)^3$.  This same issue arises when we move to higher dimensions, but we can avoid it by looking at Newton polyhedra which do not contain such points in their compactifications.  The prime example of such a polyhedron is an $n$-dimensional rectangular prism.  To go beyond these, we note that because the fiber functor is built from some extension-by-0 functors, we can allow for some corners to be included in the compactification.  This motivates us to consider the following polyhedron. 

Consider the case of a Newton polyhedron given by a rectangular prism with the corner near the origin sawed off; i.e. in the positive orthant with dimensions $b_1, \ldots, b_n$ except that the origin is replaced with $(a_1, 0, \ldots, 0), \ldots, (0, \ldots, 0, a_n)$ for $0 < a_i < b_i$.  We aim to compute the multiplicity of the highest weight, namely $f_{2n-2}$. \\ 

We first describe the plan of the calculation.  As before, we will consider the restriction of the complex $Rj_{1\infty *}j_{10!}Rj_{2\infty *}j_{20!}\cdots Rj_{n\infty *}j_{n0!}i_*\Q_l$ to each part of the stratification of $(\P^1)^n$ defined by setting some coordinates to $\{0, \infty\}$.  The key point is that when some of the coordinates are set to 0, the restriction of the complex to the resulting subvarieties do not contribute to the signed weights.  This was already seen in Section \ref{sec: sw3}, where in the case $n=3$ we computed the restriction of the complex to $\G_m\times \{\infty\}\times \{0\}$.  This allows us to proceed by computing the contributions from the subvarieties obtained by setting some coordinates to 0.

\begin{proposition}
    Let $X\xrightarrow{i} \G_m^n$ be a hypersurface nondegenerate with respect to its Newton polyhedron such that its compactification $\bar{X}\subset (\P^1)^n$ only contains corners where the first coordinate is 0.  Let $K$ be the complex $j_{10!}Rj_{1\infty*}\cdots j_{n0!}Rj_{n\infty*}(i_*\Qlb[n-1])$.  Then the restriction of the $K$ to the pieces of the stratification of $\bar{X}\subset (\P^1)^n$ where a coordinate is equal to 0 contributes nothing to the signed weights. 
\end{proposition}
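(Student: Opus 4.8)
The plan is to prove the following stronger and essentially formal fact: \emph{the complex $K$ has vanishing stalk at every point of $(\P^1)^n$ that lies on one of the coordinate hyperplanes $\{x_k=0\}$}. Granting this, the proposition is immediate: any stratum of $\bar X$ along which some coordinate equals $0$ is contained in such a hyperplane, so the restriction of $K$ to it is the zero complex, its compactly supported cohomology vanishes, and hence its contribution to the signed weights is $0$.

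The first step is to unwind the definition $K=j_{10!}Rj_{1\infty*}\cdots j_{n0!}Rj_{n\infty*}\big(i_*\Qlb[n-1]\big)$ into its coordinatewise pieces. For $1\le k\le n$ let $\Phi_k$ denote the operation carried out in the $k$-th coordinate, i.e.\ ``extend by zero across $x_k=0$ and derived-pushforward across $x_k=\infty$'', performed purely along the coordinate-$k$ factor; thus $\Phi_k=Rj_{k\infty*}j_{k0!}=j_{k0!}Rj_{k\infty*}$, the two orders agreeing by the presheaf identity established in the proof of Lemma~\ref{lemma: vanishing}, and $K=\Phi_1\Phi_2\cdots\Phi_n\big(i_*\Qlb[n-1]\big)$. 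Since $\Phi_l$ only alters the complex in the $l$-th coordinate, and compactifications in distinct coordinate directions are independent, the functors $\Phi_1,\dots,\Phi_n$ commute with one another (the mixed commutations such as $Rj_{l\infty*}j_{m0!}=j_{m0!}Rj_{l\infty*}$ for $l\ne m$ can be checked directly on stalks, exactly as in the single-coordinate case of Lemma~\ref{lemma: vanishing}). Consequently, for any fixed $k$ I can rewrite $K=\Phi_k(M)$ with $M:=\big(\prod_{l\ne k}\Phi_l\big)\big(i_*\Qlb[n-1]\big)$, a complex whose $k$-th coordinate still lies in $\G_m$ (the operations $\Phi_l$, $l\ne k$, do not touch it, and $i_*\Qlb$ is supported on $X\subset\G_m^n$).

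The second step is the stalk computation. Fix a point $p$ with $x_k(p)=0$ and write $\Phi_k(M)=Rj_{k\infty*}\big(j_{k0!}M\big)$. The map $j_{k0}$ is the open immersion in the coordinate-$k$ direction whose complement is exactly $\{x_k=0\}$, so extension by zero gives $(j_{k0!}M)_p=0$. The map $j_{k\infty}$ is the open immersion whose complement is $\{x_k=\infty\}$, so its image contains $\{x_k=0\}$; as $p$ lies in that image, derived pushforward along $j_{k\infty}$ does not change the stalk at $p$, i.e.\ $(Rj_{k\infty*}\mathcal G)_p=\mathcal G_p$ for any $\mathcal G$. Hence $K_p=\Phi_k(M)_p=(j_{k0!}M)_p=0$. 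Since $k$ and $p$ were arbitrary, $K$ vanishes on $\bigcup_{k=1}^n\{x_k=0\}$, which proves the claim.

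The only thing that really needs verification is the commutation of the $\Phi_l$ across distinct coordinates and the resulting reduction $K=\Phi_k(M)$; this is routine (it is the multi-coordinate analogue of the commutation used in the proof of Lemma~\ref{lemma: vanishing} and invoked throughout the surface computations of Section~\ref{sec: sw3}), so I do not expect a genuine obstacle. I note that the hypothesis on the corners of $\bar X$ is not needed for this statement — it enters only when the strata on which all nonzero coordinates equal $\infty$ are analyzed. One can also run the argument in the style of Section~\ref{sec: sw3} without invoking the vanishing directly: use the excision triangle $j_{k0!}(i_*\Qlb)\to(\text{constant sheaf on the closure of }X\text{ across }\{x_k=0\})\to(\text{constant sheaf on its }\{x_k=0\}\text{-slice})\xrightarrow{+1}$, push it through the remaining functors $\prod_{l\ne k}\Phi_l$ and take stalks at $p$; the two outer terms agree by smooth base change in the transverse $x_k$-direction, so $[K_p]=0$ in the Grothendieck group of mixed $\Qlb$-modules, which again suffices.
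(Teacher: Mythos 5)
Your argument has a genuine gap at its central step: the cross-coordinate commutation. The identity $Rj_{k\infty*}j_{k0!}=j_{k0!}Rj_{k\infty*}$ used in Lemma~\ref{lemma: vanishing} works because $\{x_k=0\}$ and $\{x_k=\infty\}$ are \emph{disjoint} divisors in the same $\P^1$-factor; for $l\neq m$ the divisors $\{x_m=0\}$ and $\{x_l=\infty\}$ intersect, and there $j_{m0!}Rj_{l\infty*}\neq Rj_{l\infty*}j_{m0!}$ in general. Concretely, $j_{m0!}Rj_{l\infty*}M$ has zero stalks along $\{x_m=0\}$, but $Rj_{l\infty*}j_{m0!}M$ need not: the derived pushforward across $x_l=\infty$ recreates nonzero stalks at points of $\{x_m=0\}\cap\{x_l=\infty\}$ from nearby sections where $x_m\neq 0$ (test this with $M$ the constant sheaf on a curve whose closure meets such a point). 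So your reduction $K=\Phi_k(M)$ with $j_{k0!}$ "outermost" is only valid for $k=1$, and the claimed stronger fact --- that $K$ has vanishing stalks on all of $\bigcup_k\{x_k=0\}$ --- is false. The paper's own computation for curves makes this explicit: in Section 4.2 and Proposition~\ref{prop: curve weights 2}, when $\bar{C}$ contains $P_1=(\infty,0)$ the stalk of the analogous complex at $P_1$ is $\Qlb^{r_1}$ in degree $0$ and $\Qlb(-1)^{r_1}$ in degree $1$, and it contributes $r_1$ weights of weight $0$ and $r_1$ of weight $2$ --- a nonzero contribution from a stratum where the second coordinate is $0$. This also refutes your remark that the hypothesis on corners is not needed: that hypothesis exists precisely to exclude such points from $\bar{X}$ (or rather, to force any such bad points to have first coordinate $0$).

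For comparison, the paper's proof is structured around this obstruction. The auxiliary complex $K'$ built from the constant sheaf on all of $\G_m^n$ \emph{does} vanish on every $\{x_k=0\}$, but only because it is an external product of the one-variable complexes $j_{0!}Rj_{\infty*}\Qlb$ on $\P^1$, a structure that $i_*\Qlb$ does not have. The paper then argues that at smooth points of $\bar{X}$ with a zero coordinate the restriction of $K$ agrees with that of $K'$, hence vanishes, and only at the singular locus $S$ does it invoke the formal argument you propose --- which is legitimate there because $j_{10!}$ genuinely is the outermost functor and, by the corner hypothesis, every point of $S$ has first coordinate $0$. Your closing excision-plus-base-change sketch is essentially the cancellation argument of Section~\ref{sec: sw3} for $\G_m\times\{\infty\}\times\{0\}$; it handles points lying on a single $\{x_k=0\}$ transverse to a smooth piece of $\bar{X}$, but it does not cover the corner points, which is exactly where the hypothesis and the placement of $j_{10!}$ must be used.
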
 
\begin{proof}
First, we note that $K'=j_{10!}Rj_{1\infty*}\cdots j_{n0!}Rj_{n\infty*}\Qlb$ has stalks equal to 0 on points with any coordinates set to equal to 0.  Let $S$ be the set of singularities of $\bar{X}$.  Then $\bar{X}\backslash S\hookrightarrow (\P^1)^n$ is smooth, so the restriction of $K$ to the points of $\bar{X}\backslash S\hookrightarrow (\P^1)^n$ with 0 as a coordinate is the same as that of $K'$, which is 0.  Finally, the fact that $K$ is constructed by $j_{10!}$ as the final functor applied implies that its restriction to $S$ is also 0, as in the conditions the first coordinate of every point in $S$ is 0.   
\end{proof}

Thus the complex is restricted to pieces of the stratification where some coordinates are set to $\infty$, and the contribution to the signed weights from these pieces come from the compactly supported cohomology of the corresponding variety.  The formula for this is given through recursive formulas of Denef-Loeser \cite{DL}, though they are generally difficult to compute.  However, it will be sufficient for our purposes to compute the highest weight multiplicity, which we can do by hand. 

\subsubsection{The example of $n=3$} 
In the case of $n=3$, the Newton polyhedron is given by 
\[
(1+x^a)(1+y^b)(1+z^c) - 1 + x^d+y^e+z^f
\] 
for $a>d, b>e,  c>f$.  
We recall that the signed weights begin with positive the weight of $H^0$.  Over $\G_m^3$, we have $f_4=1$.  We recall that for curves, we have $f_2=1$, but their sign will change when being pushed up to $f_4$. \\ 

In codimension 1, i.e. $\G_m^2\times\{\infty\}, \G_m\times\{\infty\}, \infty\times \G_m^2$, the complex restricts to that of a curve in each of the 3 cases.  This gives $f_4 = -3$. \\ 

In codimension 2, i.e. $\G_m\times \{\infty\}\times \{\infty\}, \G_m\times\{\infty\}\times \G_m, \infty\times \G_m^2$, we obtain $a$ points in each scenario, corresponding to the roots of $1+x^a, 1+y^b, 1+z^c$.  Note that by varying the corner cut off, these exponents can differ from each other.  This gives $f_4=a+b+c$. \\ 

Since there is nothing of dimension 0, this gives a total of $f_4 = a+b+c-2$. 

\subsubsection{In general}
\label{sec: truncated}
The Newton polyhedron is of the form 
\[
(1+x_1^{b_1})\cdots (1+x_n^{b_n}) - 1 + x_1^{a_1} +\cdots + x_n^{a_n}
\]
with $a_i<b_i$. \\ 

Recall that for $n$ even, we begin with negative the weight of $H^0$ and with $n$ odd, positive the weight of $H^0$. \\ 

On $\G_m^n$, we have a contribution to $f_{2n-2}$ of $(-1)^{n+1}$.  In general, if we set $k$ of these coordinates to be $\infty$ for $1\le k\le n-2$, we will get a variety that contributes a single weight.  Accounting for the sign, along with the initial weight coming from $\G_m^n$ this gives a subtotal of 
\[
(-1)^{n+1}\left(1 -\binom{n}{1} + \binom{n}{2} -\cdots +(-1)^{n-2}\binom{n}{n-2}\right)= -n+1. 
\]
Finally, setting $n-1$ of the coordinates to be $\infty$, we obtain equations of the form $1+x_j^{b_j}$ for each $1\le j\le n$, which contribute $(-1)^{n+1}(-1)^{n-1}(\sum_{j=1}^nb_j) = \sum_{j=1}^nb_j$ to $f_{2n-2}$.  Thus, the final total is $f_{2n-2} = \sum_{j=1}^nb_j-n+1$. 

\section{Computing the convolution monodromy groups} 
Using our computation of the weights of the fiber functor, we will now appeal to two known results to show that the convolution monodoromy group is large in several cases. 
\subsection{A criterion regarding weights}
The following result gives a criterion for a subgroup of $GL_R(\Qlb)$ to be large in terms of the eigenvalues of one of its elements.  Given a tuple $\lambda\in (\Qlb^*)^R$, define $\mu_R(\lambda)$ to be the partition of $R$ defined by the multiplicity of the absolute values of the elements of $\lambda$, after taking an embedding into $\C$.  

\begin{theorem}
[Theorem C.4.1 of \cite{HKR}]
\label{thm: A}
Let $r, R$ be positive integers and G be a connected reductive subgroup of $GL_R(\overline{\Q}_l)$.
Let $g \in G$ be an element and $\gamma\in (\overline{\Q}_l^*)^R$ be an eigenvector tuple of g. 
Suppose that $G$ is irreducible,
that $\gamma$ lies in $(\overline{\Q}_l^*)^R$, and that $c=\mu_R(\lambda)$ satisfies $\len(c) \le r + 1$ and 
$1 = c_{\len(c)} < c_{\len(c)-1}$ and
$c_2 \le r$. 
If $R > 72(r^2 + 1)^2$, then either $G = SL_R(\overline{\Q}_l)$ or $G = GL_R(\overline{\Q}_l)$.
\end{theorem}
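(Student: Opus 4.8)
The plan is to reduce to the case of a connected \emph{semisimple} group acting irreducibly, then split into a ``tensor-decomposable'' case and an ``almost simple'' case, and in each case convert the combinatorial hypotheses on the partition $c=\mu_R(\gamma)$ (a simple part, at most $r+1$ parts in all, every part except the largest of size $\le r$) into an upper bound for $R$ that is polynomial in $r$; this contradicts $R>72(r^2+1)^2$ unless $G$ is $\SL_R$ or $\GL_R$, which is the desired conclusion.

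First I would fix an embedding $\Qlb\hookrightarrow\C$ so that the absolute values defining $\mu_R$ make sense, and note that $\mu_R(\gamma)$ is unchanged when $g$ is replaced by $zg$ for a scalar $z$, since all eigenvalue absolute values are then multiplied by the single factor $|z|$. As $G$ is connected reductive and acts irreducibly, Schur's lemma gives $G=Z^{\circ}\cdot G^{\mr{der}}$ with $Z^{\circ}$ a central torus acting by scalar matrices; writing $g=z\,h$ with $z$ scalar and $h\in H\coloneqq G^{\mr{der}}$, it suffices to analyze the triple $(H,V,h)$ with $H$ connected semisimple, $V=\Qlb^R$, and to study the eigenvalue absolute values of $h$. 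If $H=\SL_R$ we are done, because a connected reductive subgroup of $\GL_R$ containing $\SL_R$ is either $\SL_R$ or $\GL_R$; so I assume $H\subsetneq\SL_R$ acts irreducibly on $V$ and aim to prove $R\le 72(r^2+1)^2$.

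Next I would invoke the structure of irreducible representations of connected semisimple groups in characteristic $0$: either $V\cong A\otimes B$ as $H$-modules with $\dim A,\dim B\ge 2$ (this covers the case where $H$ has several simple factors as well as certain tensor-decomposable representations of a single simple factor), or $H$ is simple and $V$ admits no such factorization. In the tensor-decomposable case $h$ acts as $h_A\otimes h_B$, and the key point is that neither $h_A$ nor $h_B$ can have all of its eigenvalues of a single absolute value: otherwise every absolute-value class of $h$ on $V$ would have multiplicity at least $\min(\dim A,\dim B)\ge 2$, contradicting $c_{\len(c)}=1$. Hence each of $h_A,h_B$ realizes at least two distinct absolute values, so its largest eigenvalue-multiplicity is at most $\dim A-1$ resp.\ $\dim B-1$; this forces $c_1\le\dim A\cdot(\dim B-1)$ and, symmetrically, $c_1\le(\dim A-1)\cdot\dim B$, hence $R-c_1\ge\max(\dim A,\dim B)$. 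On the other hand $R-c_1=c_2+\cdots+c_{\len(c)}\le(\len(c)-1)\,c_2\le r^2$, so $\dim A,\dim B\le r^2$ and therefore $R=\dim A\cdot\dim B\le r^4<72(r^2+1)^2$: this case cannot occur.

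Finally, in the almost simple case let $\rho$ be the rank of $H$; the absolute values of the eigenvalues of $h$ on $V=V_\lambda$ are the numbers $e^{\langle\mu,\theta\rangle}$, where $\mu$ runs over the weights of $V_\lambda$ with multiplicity and $\theta$ is the real cocharacter attached to the semisimple part of $h$ in a maximal torus. The standard representation of $\SL_m$ is excluded (that is the case $H=\SL_R$ already treated), and the standard representations of symplectic and even orthogonal groups are excluded directly, because there the simple absolute-value classes occur in reciprocal pairs $\{a,a^{-1}\}$ with $a\ne 1$, forcing $c_{\len(c)-1}=c_{\len(c)}=1$ and violating the hypothesis $1=c_{\len(c)}<c_{\len(c)-1}$. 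For every remaining pair $(H,V_\lambda)$ I would go through the classification of simple algebraic groups and their irreducible representations: the constraint that $\langle\mu,\theta\rangle$ take at most $r+1$ values, one attained exactly once and the rest with multiplicity $\le r$, together with an analysis of the Weyl-orbit decomposition of the weight system of $V_\lambda$, bounds both the rank $\rho$ and the highest weight $\lambda$ (hence $R=\dim V_\lambda$) by functions of $r$ not exceeding $72(r^2+1)^2$. Since the two cases are exhaustive, $R\le 72(r^2+1)^2$ in all cases, which is the contrapositive of the theorem. The main obstacle is exactly this last step: making uniform, across the entire classification of simple groups and their irreducible representations, the passage from ``$g$ has one simple absolute-value class, at most $r+1$ classes, and secondary multiplicities $\le r$'' to an explicit polynomial bound on $R$ — in particular controlling how the multiplicities of the values $\langle\mu,\theta\rangle$ interact with the Weyl orbits in $V_\lambda$, and squeezing out precisely the orthogonal and symplectic standard representations through the asymmetry built into the condition $1=c_{\len(c)}<c_{\len(c)-1}$.
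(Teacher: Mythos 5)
This statement is not proved in the paper at all: it is quoted verbatim as Theorem C.4.1 of \cite{HKR}, so there is no internal proof to compare your argument against. Judged on its own terms, your write-up is a plan rather than a proof, and the decisive step is missing. The preliminary reductions are fine: scaling by the central torus, passing to $H=G^{\mathrm{der}}$, and the tensor-decomposable case are all correct as you present them (the count $c_1\le\dim A\,(\dim B-1)$, hence $R-c_1\ge\max(\dim A,\dim B)$, against $R-c_1\le r^2$, does give $R\le r^4$ there). The exclusion of the standard representations of $\mathrm{Sp}_R$ and $\mathrm{SO}_R$ ($R$ even) via the reciprocal pairing of absolute-value classes is also sound.

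The gap is the entire remaining case: $H$ simple, $V$ tensor-indecomposable and not a standard representation. There you write that one ``would go through the classification of simple algebraic groups and their irreducible representations'' and that the constraints on $\mu_R(\gamma)$ ``bound both the rank and the highest weight (hence $R$) by functions of $r$ not exceeding $72(r^2+1)^2$'' --- but no such bound is derived, and it does not follow formally from $\len(c)\le r+1$, $c_2\le r$, $c_{\len(c)}=1<c_{\len(c)-1}$ without a genuine analysis of how the multiplicities of the values $\langle\mu,\theta\rangle$ are forced by the Weyl-orbit structure of the weights of $V_\lambda$ (exterior and symmetric powers, spin and adjoint representations, fundamental representations of exceptional groups, etc., all need to be controlled uniformly, and the specific constant $72(r^2+1)^2$ has to emerge from that analysis). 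This is precisely the substantive content of the cited theorem, as you yourself acknowledge in calling it ``the main obstacle''; asserting its outcome is not a proof of it. As it stands, the argument establishes the easy cases and defers the theorem's actual core to an unexecuted case check.
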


We can apply this theorem to curves and surfaces, using our computations of their weights in the previous sections.  Furthermore, because of the asymmetry in the definition of the fiber functor with respect to 0 and $\infty$, the conclusion of the theorem may hold even if the conditions do not directly apply to the computation of the weights of the fiber functor.  For example, given a curve we may switch the role of $x$ and $y$, obtaining an isomorphic curve whose fiber functor may have different weights.  More generally, we may apply a transformation $x\mapsto x^ay^b, y\mapsto x^cy^d$ for $\begin{bsmallmatrix}a & b \\ c & d\end{bsmallmatrix}\in GL(2, \Z)$.  Such transformations send the curve $C$ to an isomorphic one in $\G_m^2$ defined by different equations, which may have a different Newton polygon and different weights.  Of course, the monodromy group remains the same.  We make a few observations as to the new weights which can be obtained in this way. 

\begin{itemize}
    \item Reflecting across the $x$-axis exchanges the monodromy at 0 and at $\infty$.  Modulo this, we can consider $\begin{bsmallmatrix}a & b \\ c & d\end{bsmallmatrix}\in SL(2, \Z)$. 
    
    \item A linear transformation preserves convex hulls; therefore the same sides will be used in the calculation of the local monodromy. 
    
    \item By factoring $SL(2, \Z)$ into $S$ and $T$, using the Euclidean algorithm we see that each side will also give the same number of cycles and thus the same number of $Unip(1)$s, unless it is vertical in which case it doesn't give anything. 
    
    \item The only other way for the weights to change is for a different contiguous set of edges to be used for calculating monodromy over 0.
\end{itemize}

One can check that all possible configurations of a triangle can be obtained.  In the following example, the original triangle does not satisfy the conditions of the theorem but some transformation of it does. 

\begin{example}
\label{ex: curve}
Consider a curve, 0-nondegenerate to its Newton polygon, whose Newton polygon is a triangle.  If the greatest common factors of the changes in $x$ and $y$ comprising the sides are $1$, $2$, and $3$ in any order and the area of the triangle is greater than 7204, then the conclusion of the theorem applies.  

For example, the vertices could be $(0, p_1), (2p_2, 0), (2p_2+ 60000, p_1 + 60003)$ where $p_1, p_2$ are distinct odd primes with $(p_1+60003, 60000)=2$ and $(2p_2-3, 60003)=1$ (which occurs with a positive density for pairs of primes). 
\end{example}

When it comes to surfaces, some of those with Newton polyhedra given by pyramids will satisfy the conditions of the theorem. 

\begin{proposition} 
\label{prop: surface}
    Consider $f=1+y^a+z^b+y^{a}z^b + x^{-c}y^dz^{e}$ nondegenerate with respect to its Newton polyhedron in the situation of Section \ref{sec: pyramid}.  Then if  
    \[
2ab-2a-2b+2 \ge 2, 2abc > (72(2ab-2a-2b+2)^2+1)^2, 
\]
the Tannakian mondoromy group is $SL_R(\Qlb)$ or $GL_R(\Qlb)$. 
\end{proposition}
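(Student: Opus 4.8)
The plan is to read off this statement from the weight computation already carried out in Section~\ref{sec: pyramid} together with Theorem~\ref{thm: A}: since that section does the hard work of pinning down the weights of the fiber functor, what is left is essentially to check that the hypotheses of Theorem~\ref{thm: A} are met. To set up, write $M = i_*\Q_l[2]$ for the perverse sheaf of the surface $S = V(f)\subset\G_m^3$ and $R = \dim\omega(M)$. Since $f$ is nondegenerate with respect to the pyramid $\Delta$ of Section~\ref{sec: pyramid}, for which $\Vol(\Delta) = U_3 = abc/3$, Proposition~\ref{prop: dim} gives $R = 3!\,\Vol(\Delta) = 2abc$. The Frobenius gives an element $F$ of $G^*\subseteq\GL(\omega(M))$, and the final weight table of Section~\ref{sec: pyramid} says its eigenvalues on $\omega(M)$ have absolute values $1$, $q$, $q^{3/2}$ with multiplicities $f_0 = 1$, $f_2 = 2abc - 2ab + 2a + 2b - 3$, $f_3 = 2ab - 2a - 2b + 2$, with $f_1 = f_4 = 0$.

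Next I would verify the structural hypotheses Theorem~\ref{thm: A} imposes on $G^*$. Reductivity is immediate: $M$ is pure of weight $2$, being the pushforward of $\Q_l[2]$ along a closed immersion of a smooth variety, hence geometrically semisimple, so $G^*$ is reductive. Irreducibility of the representation $\omega(M)$ follows because $M$ is a simple perverse sheaf — the intermediate extension of a shifted constant sheaf on the irreducible smooth $S$ — which is not negligible, since $\chi(\G_m^3, M) = \pm R \neq 0$; hence $M$ remains simple in $\bar{\bP}$, so in $\langle M\rangle$, and by Tannakian duality $\omega(M)$ is $G^*$-irreducible. Connectedness of $G^*$ is the point where I expect to have to be careful, since Theorem~\ref{thm: A} genuinely requires a connected group: I would deduce it from the fact that a pyramidal Newton polyhedron prevents $X$ from being invariant under any subtorus or torsion translation (ruling out a nontrivial finite quotient of $G^*$), or appeal to the relevant connectedness statement for such hypersurface sheaves; absent that, one would have to rule out by hand that the irreducible $G^{*\circ}$-constituents of $\omega(M)$ are permuted nontrivially by $G^*/G^{*\circ}$.

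Granting this, the numerical checks are routine bookkeeping. The partition $c = \mu_R(\lambda)$ has parts $\{f_2, f_3, 1\}$; the hypothesis $f_3 \ge 2$ forces $a,b\ge 2$ and the bound $2abc > (72 f_3^2 + 1)^2$ forces $abc$ large, so $f_2 > f_3 > 1$, and a parity check ($f_2$ is odd while $f_3 = 2(a-1)(b-1)$ is even) shows the three parts are distinct; thus $c = (f_2, f_3, 1)$ and $\len(c) = 3$. Taking $r = f_3 = 2ab - 2a - 2b + 2$, one has $\len(c) = 3 \le r+1$ since $r \ge 2$, then $1 = c_{\len(c)} < c_{\len(c)-1} = f_3 = r$, and $c_2 = f_3 = r \le r$. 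Finally $R = 2abc > (72r^2+1)^2 \ge 72(r^2+1)^2$, the last inequality because $(72r^2+1)^2 - 72(r^2+1)^2 = 72\cdot 71\, r^4 - 71 > 0$ for $r \ge 1$. So all hypotheses of Theorem~\ref{thm: A} hold, and it yields $G^* = \SL_R(\Qlb)$ or $\GL_R(\Qlb)$. The main obstacle, as emphasized above, is not any computation but confirming the connectedness of $G^*$ (together with the reductivity and irreducibility, which are comparatively easy) that Theorem~\ref{thm: A} assumes.
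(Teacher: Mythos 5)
Your proposal is correct and follows essentially the same route as the paper, whose entire proof is the one line ``apply Theorem~\ref{thm: A} with $\len(c)=3$ to the weights $f_0=1$, $f_2=2abc-2ab+2a+2b-3$, $f_3=2ab-2a-2b+2$ from Section~\ref{sec: pyramid}''; your choice $r=f_3$ and the check $R=2abc>(72r^2+1)^2\ge 72(r^2+1)^2$ is exactly the intended bookkeeping. The structural hypotheses of Theorem~\ref{thm: A} (connectedness, reductivity, irreducibility) that you rightly flag are simply not discussed in the paper's proof of this proposition, so your extra care there goes beyond, rather than diverges from, the paper's argument.
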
 
\begin{proof}
    This follows directly from applying Theorem \ref{thm: A} with $\len(c)=3$ to the results from Section \ref{sec: pyramid}. 
\end{proof}

\subsection{A result of Gabber} 
In the higher dimensional cases, we only achieved partial results in the weight computations.  However, in the case where the dimension of the representation is a prime, a result of Gabber allows us to sometimes conclude big monodromy.  

Let $V$ be an $n$-dimensional vector space over an algebraically closed field of characteristic 0.  Recall the following result of Gabber. 

\begin{theorem}
\label{thm: gabber}
    \cite[Theorem 1.6]{Katz90} 
    Let $\mf{g}$ be a semisimple Lie-subalgebra of $\End(V)$ which acts irreducibly on $V$.  Suppose that $\dim V$ is a prime $p$.  Then $\mf{g}$ is either $\mf{sl}(2)$ in $\Sym^{p-1}(\std)$, or $\mf{sl}(V)$ or $\mf{so}(V)$ or, if $n=7$, possibly $\Lie(G_2)$ in the seven-dimensional irreducible representation of $G_2$.
\end{theorem}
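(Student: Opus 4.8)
The plan is to reduce to the case where $\mf{g}$ is simple and then run through the classification of simple Lie algebras together with the list of their low-dimensional irreducible representations. First I would reduce to $\mf{g}$ simple: writing $\mf{g} = \mf{g}_1 \oplus \cdots \oplus \mf{g}_k$ as a sum of simple ideals, an irreducible representation of $\mf{g}$ is a tensor product $V \cong V_1 \otimes \cdots \otimes V_k$ with $V_i$ an irreducible $\mf{g}_i$-module, so $p = \dim V = \prod_i \dim V_i$; since $p$ is prime all but one factor is one-dimensional, hence trivial, and the corresponding $\mf{g}_i$ acts by $0$, contradicting $\mf{g} \hookrightarrow \End(V)$. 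Thus $k = 1$. If $\mathrm{rank}(\mf{g}) = 1$ then $\mf{g} \cong \mf{sl}_2$, whose irreducible representations are the $\Sym^m(\std)$ of dimension $m+1$, giving $V \cong \Sym^{p-1}(\std)$; so assume $\mathrm{rank}(\mf{g}) \geq 2$. Since such an algebra has no nontrivial representation of dimension $\le 2$, the prime $p$ is odd.

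Next I would split on self-duality of $V$. If $V \cong V^{\vee}$ then $\mf{g}$ preserves a nondegenerate invariant bilinear form, which is symmetric or alternating; an alternating form forces $\dim V$ even, which is impossible, so the form is symmetric and $\mf{g} \subseteq \mf{so}(V) = \mf{so}_p$. If $V \not\cong V^{\vee}$ then $\mf{g} \subseteq \mf{sl}(V) = \mf{sl}_p$. In either case $\mf{g}$ is either the ambient classical Lie algebra --- these are the alternatives $\mf{sl}(V)$ and $\mf{so}(V)$ --- or a proper simple subalgebra acting irreducibly on $V$. To pin down the latter possibility I would combine Dynkin's classification of the irreducible (in particular maximal) subalgebras of the classical Lie algebras with the Weyl dimension formula, going type by type: the only irreducible representations of a simple Lie algebra of rank $\ge 2$ whose dimension is an odd prime are the standard representation of $\mf{sl}_p$, the standard representation of $\mf{so}_p$, the representation $\Sym^{p-1}(\std)$ of a principal $\mf{sl}_2$, and, when $p = 7$, the seven-dimensional representation of $G_2$ --- which indeed sits properly inside $\mf{so}_7$.

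The concrete checks behind this last assertion are elementary: $\wedge^k$ of the standard representation of $\mf{sl}_m$ has dimension $\binom{m}{k}$, which for $2 \le k \le m-2$ is never prime (if it were a prime $q$ then $q \ge \binom{m}{2} > m$, so $q \nmid k!$ yet $q$ divides $m(m-1)\cdots(m-k+1) = q\cdot k!$, forcing $q$ to divide one of those factors, all of which are positive and $< q$, a contradiction); the symmetric square $\Sym^2$ of the standard representation of $\mf{sl}_m$ has dimension $\binom{m+1}{2}$, composite for $m \ge 3$; the adjoint representation of any simple Lie algebra of rank $\ge 2$ has composite dimension; spin and half-spin representations have dimension a power of $2$; the traceless exterior square of the standard representation of $\mf{sp}_{2n}$ has dimension $(2n+1)(n-1)$, prime only for $n = 2$, where it is the standard representation of $\mf{so}_5$; and for the exceptional types $F_4, E_6, E_7, E_8$ every irreducible representation has composite dimension. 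Together with the reduction steps, this exhausts the list in the theorem.

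The hard part is making the enumeration in the previous two paragraphs genuinely finite, since a priori it ranges over all dominant weights of each simple type. This I would handle using the Weyl dimension formula $\dim V_\lambda = \prod_{\alpha > 0} \langle \lambda + \rho, \alpha^{\vee}\rangle / \langle \rho, \alpha^{\vee}\rangle$, which exhibits $\dim V_\lambda$ as a polynomial in the coordinates of $\lambda$ with positive coefficients of degree equal to the number of positive roots: once any coordinate of $\lambda$ is moderately large the dimension visibly factors as a product of at least two integers greater than $1$ (for instance it becomes divisible by a fixed small prime), leaving only finitely many $\lambda$ per type to check by hand, including for the exceptional algebras. The other nontrivial input --- ruling out proper simple subalgebras strictly between $\mf{sl}_2$, respectively $\Lie(G_2)$, and $\mf{sl}(V)$ or $\mf{so}(V)$ --- is supplied by Dynkin's classification of the irreducible subalgebras of the classical Lie algebras rather than reproved here.
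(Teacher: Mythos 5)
The paper itself contains no proof of this statement: it is quoted as Gabber's theorem and used as a black box via the citation \cite[Theorem 1.6]{Katz90}, so the only meaningful comparison is with the proof in that reference, which is indeed the classification-based argument you outline (reduce to $\mathfrak{g}$ simple, dispose of rank one, then show type by type that a simple Lie algebra of rank at least two has no irreducible representation of odd prime dimension other than the standard representations of $\mathfrak{sl}_p$ and $\mathfrak{so}_p$ and the seven-dimensional representation of $G_2$). Your reduction steps are correct: the tensor factorization over simple ideals together with faithfulness forces $\mathfrak{g}$ simple, the rank-one case gives $\Sym^{p-1}(\std)$, self-duality plus the parity of $p$ rules out the symplectic case, and the binomial-coefficient and $\Sym^2$ computations are fine.

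The genuine gap is at the decisive step. The sentence ``the only irreducible representations of a simple Lie algebra of rank $\ge 2$ whose dimension is an odd prime are the standard representation of $\mathfrak{sl}_p$, the standard representation of $\mathfrak{so}_p$, \ldots and, when $p=7$, the seven-dimensional representation of $G_2$'' is essentially the theorem itself, and the mechanism you offer for it does not work as stated: the Weyl dimension formula is a ratio $\prod_{\alpha>0}\langle\lambda+\rho,\alpha^\vee\rangle/\langle\rho,\alpha^\vee\rangle$, not a product of integer factors, and the claim that for large $\lambda$ the dimension ``becomes divisible by a fixed small prime'' is false even in rank two (e.g.\ in type $A_2$ the dimensions $(a+1)(a+2)/2$ along $b=0$ are composite for $a\ge 2$, but no fixed prime divides them all); establishing compositeness of all but the listed weights genuinely requires the per-type analysis of highest weights that constitutes the bulk of the proof in \cite{Katz90}, including the assertion--here merely stated--that $F_4$, $E_6$, $E_7$, $E_8$ have no prime-dimensional irreducible representations. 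Invoking Dynkin's classification of irreducible subalgebras of the classical algebras does not fill this in and is not needed: once $\mathfrak{g}$ is simple and acts irreducibly in prime dimension, what is required is exactly the list of prime-dimensional irreducible representations of simple Lie algebras, i.e.\ the content of the theorem. So your outline has the right shape and matches the cited proof's strategy, but the hard enumeration--the actual content of Gabber's theorem--is asserted rather than proved.
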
  

To apply this theorem, consider $\Lie(G)_{ss}$ of the Tannakian groups $G$ of interest, namely $\langle i_*\Qlb[n-1]\rangle$ where $i$ is the inclusion of the hypersurface.  The representation $\omega(i_*\Qlb[n-1])$ is an irreducible $G$-representation which we assume to have prime dimension.  We wish to show the corresponding Lie algebra representation remains irreducible, which is equivalent to the representation being irreducible restricted to the identity component of $G$.  In our case, this follows from the following sequence of results.  

    


    



\begin{lemma}
\label{lem: punctual}
    Let $N\in \mc{P}_{arith}(\G_m^n)$ be arithmetically semisimple with finite geometric convolution monodromy group.  Then $N$ is punctual.
\end{lemma}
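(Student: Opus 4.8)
The plan is to encode the finiteness of the monodromy group in a concrete étale algebra object and then to show that, on a torus, such an object is necessarily punctual; the lemma will follow formally. Throughout, write $G$ for the geometric convolution monodromy group of $N$, so that after base change to $\bar k$ the fiber functor $\omega$ of Theorem \ref{thm: ff} identifies the Tannakian category $\langle N\rangle$ with $\Rep_{\Qlb}(G)$, and $G$ is finite by hypothesis. Arithmetic semisimplicity of $N$ ensures that $N_{\bar k}$ is semisimple as well, so it is harmless to work over $\bar k$; and since a negligible constituent is zero in $\bar{\bP}$ and does not affect punctuality, we may assume $N$ has no negligible summand.

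First I would pass to the regular representation. The category $\Rep_{\Qlb}(G)$ contains the regular representation $\Qlb[G]$, which is a commutative \'etale algebra object of dimension $|G|$ with the property that every object of $\Rep_{\Qlb}(G)$ — in particular $\omega(N)$ — embeds into a finite direct sum of copies of it. Transporting this through the Tannakian equivalence, we obtain an object $\mathcal A\in\langle N\rangle\subseteq\bar{\bP}$ which is a commutative \'etale algebra for middle convolution with $\dim\omega(\mathcal A)=|G|$, and into finitely many copies of which $N$ embeds in $\bar{\bP}$.

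Next I would run $\mathcal A$ through the Mellin transform $\scr M$ on the torus in the sense of Gabber--Loeser \cite{Gabber} (see also \cite{FFK}): via the universal character sheaf it turns middle convolution on $\bar{\bP}((\G_m^n)_{\bar k})$ into tensor product of constructible sheaves on the dual torus $\widehat{\G_m^n}$, compatibly with $\omega$, and it is an equivalence of $\langle N\rangle$ onto its image. As a symmetric monoidal functor it carries the \'etale algebra $\mathcal A$ to a commutative \'etale algebra object of generic rank $|G|$ in constructible sheaves on $\widehat{\G_m^n}$ with tensor product; being \'etale, such an object is lisse with fibres of uniform dimension, hence of the form $\pi_*\Qlb$ for a finite \'etale covering $\pi\colon Z\to\widehat{\G_m^n}$ of degree $|G|$. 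Since $\widehat{\G_m^n}$ is a torus over $\bar k$, its connected finite \'etale coverings are isogenies (Kummer theory for the prime-to-$p$ fundamental group), so $\pi_*\Qlb$ is a direct sum of character sheaves $\L_\eta$ with $\eta$ of finite order. Applying $\scr M^{-1}$, and recalling that $\scr M(\delta_a)$ is precisely the character sheaf on $\widehat{\G_m^n}$ attached to a point $a\in\G_m^n(\bar k)$, this says that $\mathcal A\cong\bigoplus_{a\in H}\delta_a$ for a finite subgroup $H\subseteq\G_m^n(\bar k)$ (and, incidentally, $G\cong H$ is abelian). Finally, a subobject in $\bar{\bP}$ of a finite direct sum of the pairwise non-isomorphic simple non-negligible objects $\delta_a$ is again such a direct sum, so the embedding of $N$ into copies of $\mathcal A$ exhibits $N$ as punctual, as desired.

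The part I expect to be the main obstacle is the third step: making the Mellin transform precise enough over the algebraically closed field $\bar k$ that it genuinely exchanges middle convolution with tensor product, is (sufficiently) fully faithful on $\langle N\rangle$, and carries the \'etale algebra $\mathcal A$ to an \'etale algebra of the expected rank on the dual torus — after which the Kummer-theoretic classification of finite \'etale covers of a torus takes over. The generic vanishing theorems (Theorems \ref{thm: generic} and \ref{thm: generic vanishing}) are exactly what one uses to know that the sheaves produced by the transform are concentrated in a single cohomological degree after restriction to a dense open of $\widehat{\G_m^n}$, so that the algebra-object bookkeeping is legitimate; all of these ingredients are available in \cite{Gabber} and \cite{FFK}, so what remains is assembly rather than new input.
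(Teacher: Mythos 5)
Your reduction steps (passing to the regular representation, transporting it to a commutative \'etale algebra object $\mathcal{A}\in\langle N\rangle$ of rank $|G|$, and the final observation that an embedding of $N$ into copies of a punctual semisimple object forces punctuality) are fine, but the third step, which you yourself flag as the main obstacle, is where the argument actually breaks. The Mellin transform of Gabber--Loeser on $(\G_m^n)_{\bar k}$ does not take values in constructible $\Qlb$-sheaves on a ``dual torus'' over $\bar k$: it takes values in coherent complexes on the character scheme $\scr{C}(\G_m^n)$ over $\Qlb$, a disjoint union of spectra of Iwasawa-type algebras parametrizing continuous tame characters, and it exchanges convolution with the derived tensor product of $\mathcal{O}$-modules there. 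Consequently there is no ``prime-to-$p$ fundamental group of $\widehat{\G_m^n}$'' to which Kummer theory applies, and the dictionary you use is off: $\scr{M}(\delta_a)$ is a rank-one \emph{evaluation module} (an invertible coherent sheaf on the character scheme), not a finite-order character sheaf, while finite-order Kummer sheaves $\L_\eta$ on $\G_m^n$ itself have Euler characteristic zero and are negligible, hence zero in $\bar{\bP}$. So the chain ``\'etale algebra $\Rightarrow$ $\pi_*\Qlb$ for a finite \'etale cover $\Rightarrow$ sum of finite-order character sheaves $\Rightarrow$ $\mathcal{A}\cong\bigoplus_{a\in H}\delta_a$'' has no justification in the framework you invoke; classifying commutative \'etale algebra objects of finite rank for middle convolution (equivalently, showing finite Tannakian monodromy forces punctuality) is precisely the content of the lemma, not an assembly of results from \cite{Gabber} and \cite{FFK}.

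A symptom of the gap is that your argument never uses the arithmetic hypotheses at all (arithmetic semisimplicity, definedness over a finite field), whereas the known one-variable case — \cite[Theorem 6.4]{KatzCE}, whose proof goes through purity and Frobenius weights — genuinely does. The paper's proof imports exactly this input: it inducts on $n$, using the explicit fiber functor to show that $Rp_{n*}j_{n0!}N$ again has finite monodromy on $\G_m^{n-1}$, concluding punctuality there by induction, deducing that $N$ is supported on finitely many $\G_m$-fibers, and then applying the $n=1$ theorem once more. To repair your approach you would either have to prove the required classification of finite \'etale algebra objects over the character scheme (which amounts to reproving Katz's theorem in Mellin-transform language, with the arithmetic input reappearing there), or simply cite the one-variable theorem and reduce to it as the paper does.
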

\begin{proof}
    We induct on $n$; the base case of $n=1$ is given by \cite[Theorem 6.4]{KatzCE}.  First, we claim that $Rp_{n*}j_{n0!}N$ has finite geometric monodromy group as a perverse sheaf on $\G_m^{n-1}$.  Indeed, the fiber functor on $\G_m^{n-1}$ yields the fiber functor on $\G_m^{n}$ when composed with $Rp_{n*}j_{n0!}N$, so any automorphism of it applied to $Rp_{n*}j_{n0!}N$ gives an automorphism of the fiber functor applied to $N$.  Thus for $N$ to have finite monodromy, $Rp_{n*}j_{n0!}N$ must have finite monodromy.  By induction it must be punctual.  Then by taking $H^0$ we see that the pushforward of $N$ along the projection $p_n$ must be supported on a finite number of points, and thus $N$ must be supported on a finite number of copies of $\G_m$.  By induction again, this implies $N$ must be punctual as desired.
\end{proof}

\begin{lemma}
\label{lem: semi irred}
    Let $V$ be the standard representation of the Tannakian group $G$ corresponding to $i_*\Qlb[n-1]$, where $i$ is the inclusion of the hypersurface $X=V(F)\hookrightarrow \G_m^n$.  If $X$ is not stable under translation by some point $P\in \G_m^n$ not equal to the identity, then the semisimple part of the Lie algebra acts irreducibly.
\end{lemma}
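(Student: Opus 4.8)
The strategy is to separate the Lie algebra $\Lie(G)$ into its center and its semisimple part and analyze how each can fail irreducibility of $V$ when restricted to the identity component $G^\circ$. Since $V$ is an irreducible representation of $G$ (the perverse sheaf $i_*\Qlb[n-1]$ generates $\langle i_*\Qlb[n-1]\rangle$), reducibility of $V|_{G^\circ}$ forces $V$ to decompose as a sum of $G^\circ$-isotypic pieces permuted transitively by the finite group $G/G^\circ$; equivalently, $V$ is induced from a proper subgroup, so it carries a grading by a nontrivial finite cyclic (or more general finite abelian) quotient. First I would invoke the Tannakian dictionary: a $\Z/m$-grading on $\langle i_*\Qlb[n-1]\rangle$ compatible with convolution corresponds to a homomorphism from the Tannakian fundamental group of $(\G_m^n)_{\bar k}$ to $\mu_m$, which, via the Gabber--Loeser/FFK description of the Tannakian group of the torus, is the data of a torsion character; concretely the grading is realized by the subsheaves on which a fixed point of finite order in $\G_m^n$ (acting by translation) acts through a fixed root of unity. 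Thus if $V|_{G^\circ}$ is reducible, there is a point $P\in\G_m^n$ of finite order $m>1$ such that $i_*\Qlb[n-1]$ decomposes under the induced $\mu_m$-action, which forces the support $X=V(F)$ to be stable under translation by $P$ — contradicting the hypothesis. This handles the "non-central" obstruction to irreducibility.

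Next I would rule out the possibility that the semisimple part is trivial and the whole Lie algebra is the center, i.e. that $G^\circ$ is a torus. If $G^\circ$ were a torus acting on the irreducible $V$, then $V$ would be one-dimensional (over $G^\circ$) and $G$ would be a finite-by-torus group with $V$ faithful, so the geometric convolution monodromy group of $i_*\Qlb[n-1]$ would be finite (after passing to the derived/semisimple quotient, the relevant statement is that $\langle i_*\Qlb[n-1]\rangle$ is a category of finite type whose geometric monodromy is finite). But then Lemma \ref{lem: punctual} applies: an arithmetically semisimple object with finite geometric convolution monodromy group on $\G_m^n$ is punctual. A punctual perverse sheaf is a sum of (shifted) skyscrapers, which $i_*\Qlb[n-1]$ is not, since $X$ is a positive-dimensional hypersurface. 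This contradiction shows $\Lie(G)$ has nontrivial semisimple part.

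Finally, combining these: write $\mf g = \mf z \oplus \mf g^{ss}$ with $\mf g^{ss}\neq 0$. Because $V$ is irreducible over $G$ and, by the first paragraph, also over $G^\circ$, Schur's lemma forces $\mf z$ to act by scalars, so the irreducibility of $V|_{G^\circ}$ is equivalent to irreducibility of $V$ as a $\mf g^{ss}$-module; that is exactly the assertion. The main obstacle I anticipate is making the first step fully rigorous — namely establishing the precise correspondence between a $G^\circ$-isotypic decomposition of $V$, a finite grading on the Tannakian category $\langle i_*\Qlb[n-1]\rangle$, and translation-invariance of $X$ by a torsion point. This requires the explicit identification of gradings on the Tannakian category of perverse sheaves on $\G_m^n$ with torsion characters (equivalently, with the Pontryagin dual of the finite quotients of $\G_m^n$ acting by translation), which should follow from the structure theory in \cite{Gabber} and \cite{FFK} together with the fact that translation by $P$ induces an autoequivalence of $\bar\bP((\G_m^n)_{\bar k})$ acting on convolution as a tensor autoequivalence; the key point is that a nontrivial such grading produces a nontrivial subobject of $i_*\Qlb[n-1]$ under the $\langle P\rangle$-action, and a subsheaf of $i_*\Qlb[n-1]$ of the form "eigen-summand under translation by $P$" can only exist if $X$ is $P$-stable, which I would check directly on the level of supports.
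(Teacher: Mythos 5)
The central gap is in your first step. Clifford theory does give that $V|_{G^\circ}$ is semisimple with isotypic components permuted transitively by the finite group $G/G^\circ$, but this is \emph{not} equivalent to $V$ being induced from a proper subgroup, and it does not produce a grading of $\langle i_*\Qlb[n-1]\rangle$ by a nontrivial finite cyclic (or abelian) quotient: the decomposition may consist of a single isotypic component with multiplicity $>1$, the stabilizer of a component need not be normal, and $G/G^\circ$ need not be abelian, so no finite-order character of the Tannakian group (equivalently, no rank-one object of finite order) need exist at all. Moreover, even granting such a character, i.e.\ that some skyscraper sheaf $\delta_P$ at a torsion point $P$ lies in $\langle i_*\Qlb[n-1]\rangle$, this by itself does not give a nonzero morphism $\delta_P * i_*\Qlb[n-1]\to i_*\Qlb[n-1]$, which is what one actually needs in order to read off on supports that $X$ is $P$-stable; a surjection $G\to\mu_m$ (a rank-one object) and an eigenspace decomposition of $V$ (a central $\mu_m$ inside $G$) are different pieces of data, and your sketch conflates them. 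So the step you yourself flag as the main obstacle is not merely in need of polishing — as formulated it fails in general.

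The paper's proof avoids this entirely: it sets $L=\Hom_{G^\circ}(V,V)$, which has dimension $>1$ precisely when $V|_{G^\circ}$ is reducible; since the $G$-action on $L$ factors through the finite group $G/G^\circ$, the corresponding object of the Tannakian category has finite geometric convolution monodromy and is therefore punctual by Lemma \ref{lem: punctual}, and the nontrivial evaluation map $L\otimes V\to V$ then yields, for some $P$ not the identity, a nonzero map $[x\mapsto Px]_*i_*\Qlb\to i_*\Qlb$, whence $P$-stability of the irreducible $X$. This is where Lemma \ref{lem: punctual} belongs; your use of it in the second paragraph is both unnecessary and incorrect: if $G^\circ$ were a torus, the geometric convolution monodromy group of $i_*\Qlb[n-1]$ would contain that torus and hence not be finite, so the punctuality lemma does not apply there. (That step is also not needed: once irreducibility of $V|_{G^\circ}$ is known, the torus case would force $\dim V=1$, where the conclusion is vacuous, exactly as in the paper's reduction via reductivity of $G^\circ$ and Schur's lemma, which matches your final paragraph.)
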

\begin{proof}
    Let $G^0$ be the identity component of $G$.  Then if $V$ is an irreducible representation of $G$, it must be a semisimple representation of $G^0$, and we claim that it suffices to check that it is in fact an irreducible representation of $G^0$.  Indeed, because $G^0$ is connected and has a finite semisimple representation, it is reductive \cite[Theorem 22.138]{MilneAG}.  This implies that it is the almost direct product (i.e. quotient of the direct product by a finite group) of a central torus and a semisimple group  \cite[Theorem 22.125]{MilneAG}.  Then if $V$ is an irreducible representation of $G^0$, the torus part acts by scalars, so $V$ must be irreducible restricted to $G^0$.  Then by the Lie correspondence, the semisimple part of the Lie algebra also acts irreducibly as desired.
    
    If $V$ is a reducible representation of $G^0$, then let $L = \Hom_{G^0}(V, V)$; this will then have dimension greater than 1 and strictly contain $\Hom_G(V, V)$, which has dimension 1.  Note that the $G$-action on $L$ factors through $G/G^0$, which is finite.  
    Thus the corresponding perverse sheaf, $\mc{F}_L$, has finite geometric convolution monodromy group.  Moreover, $L$ can be defined over some finite field, so by applying Lemma \ref{lem: punctual}, it is punctual.   This gives a nontrivial morphism $L\otimes V\rightarrow V$.  Note that the convolution of a constant sheaf on a hypersurface with a skyscraper sheaf is given by the translation of the hypersurface by the point defining the skycraper sheaf.  Thus there must be some point $P$ not equal to the identity for which there is a nontrivial morphism $[x\mapsto Px]_*i_*{\Qlb} \rightarrow i_*{\Qlb}$.  The locus where such a morphism is nonzero is open, and since $X$ is assumed to be irreducible, this implies that $X$ must be stable under translation by $P$. 
\end{proof}

Now we wish to apply Theorem \ref{thm: gabber} to the scenario of Section \ref{sec: truncated} where we dealt with truncated prisms.  Note that in this case the hypersurface is not invariant under non-trivial translation, because the existence of the corners in the Newton polyhedron ensure that translation by any nontrivial constant in $\G_m^n$ will not preserve the equation of the hypersurface up to a constant.  Then by \ref{lem: semi irred}, it remains to rule out the case that $\mf{g}_{\sss}=\mf{sl}(2)$ acting in $\Sym^{p-1}(\std)$. 
 Because $G$ normalizes its Lie algebra, in this case the Frobenius element must be in the normalizer of $\mf{sl}(2)$.  Thus the Frobenius is contained in the image of $\GL_2$ in $\Sym^{p-1}(\std)$, which is isomorphic to $\GL_2/\mu_{p-1}$.  This implies that the absolute value of the eigenvalues of the Frobenius are all the same or all different, which does not occur in the case of the truncated prisms computed in the previous section.  Thus in this case, we can conclude that $\mf{g}_{\sss}=\mf{sl}(V)$ or $\mf{so}(V)$.  

\begin{corollary}
\label{cor: prism monodromy}
    If the Newton polyhedron of a hypersurface $X$ is a truncated prism and the dimension is a prime, then the convolution monodromy group contains either $SL(V)$ or $SO(V)$. 
\end{corollary}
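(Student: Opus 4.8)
The plan is to combine Gabber's classification (\Cref{thm: gabber}) with the rigidity of the Frobenius eigenvalues established in \Cref{sec: truncated}. Write $G$ for the Tannakian group of $\langle i_*\Qlb[n-1]\rangle$ and $V=\omega(i_*\Qlb[n-1])$ for its standard representation; by \Cref{prop: dim} we have $\dim V = R = n!\Vol(\Delta)$, which by hypothesis is a prime $p$. The first thing I would record is that a hypersurface whose Newton polyhedron is a truncated prism is not translation-invariant by any nontrivial point of $\G_m^n$: such a translation would multiply the defining Laurent polynomial by a monomial, and the removed corner obstructs any such relation. Hence \Cref{lem: semi irred} applies, so the semisimple part $\mf{g}_{\sss}=[\Lie G,\Lie G]$ of the Lie algebra acts irreducibly on $V$, and \Cref{thm: gabber} reduces us to four cases: $\mf{g}_{\sss}$ is $\mf{sl}(2)$ in $\Sym^{p-1}(\std)$, or $\mf{sl}(V)$, or $\mf{so}(V)$, or (only if $p=7$) $\Lie(G_2)$ in its seven-dimensional representation.

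Next I would eliminate the principal $\mf{sl}_2$ and, when $p=7$, the $G_2$ possibility, using the Frobenius. The geometric Frobenius acts as an automorphism of the fiber functor $\omega$, hence gives an element $\Fr\in G$ that normalizes $\mf{g}_{\sss}$. If $\mf{g}_{\sss}$ were $\mf{sl}(2)$ acting via $\Sym^{p-1}$, then $\Fr$ would lie in the normalizer of the image of $\mathrm{PGL}_2$ inside $\mathrm{PGL}(V)$; since $\mathrm{Out}(\mathrm{PGL}_2)$ is trivial, that normalizer is $\GL_2/\mu_{p-1}$ acting through $\Sym^{p-1}$, so the eigenvalues of $\Fr$ on $V$ are $\lambda^{p-1},\lambda^{p-3},\dots,\lambda^{-(p-1)}$ for a single scalar $\lambda$, and their absolute values are then either all equal or pairwise distinct. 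But \Cref{sec: truncated} gives the multiplicity of the top Frobenius weight on $V$ as $f_{2n-2}=\sum_{j=1}^n b_j-n+1$, which — since every $b_j\ge 2$ once a corner is cut off, while $R=n!\Vol(\Delta)$ is far larger — satisfies $2<f_{2n-2}<R$, contradicting both alternatives. The $\Lie(G_2)$ case with $p=7$ is handled the same way: in the seven-dimensional representation of $G_2$ the largest absolute value among the eigenvalues of any element occurs with multiplicity $1$, $2$, or $7$, whereas $f_{2n-2}$ equals $3$ in every truncated prism with $R=7$.

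This leaves $\mf{g}_{\sss}=\mf{sl}(V)$ or $\mf{so}(V)$, which yields the statement: $G^{0}$ is reductive, hence the almost-direct product of a central torus and a semisimple group with Lie algebra $\mf{g}_{\sss}$, and irreducibility of $V$ forces the torus to act by scalars, so $G$ contains $\SL(V)$ or $\mathrm{SO}(V)$ accordingly. I expect the crux to be the exclusion of the principal $\mf{sl}_2$: it hinges on pinning down the normalizer of $\Sym^{p-1}(\mathrm{PGL}_2)\subset\mathrm{PGL}(V)$ precisely enough to force $\Fr$ into $\GL_2/\mu_{p-1}$, and on verifying that the explicit value $f_{2n-2}=\sum_j b_j-n+1$ from \Cref{sec: truncated} genuinely lies strictly between $1$ and $R$ and avoids the short list $\{1,2,7\}$ of multiplicities allowed in the $G_2$ case. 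The remaining steps are routine Tannakian bookkeeping.
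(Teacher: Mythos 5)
Your proposal is correct and follows essentially the same route as the paper: non-invariance under translation from the cut corner, \Cref{lem: semi irred} to get irreducibility of $\mf{g}_{\sss}$, Gabber's theorem, and then exclusion of the principal $\mf{sl}(2)$ by noting that the Frobenius would have to land in $\GL_2/\mu_{p-1}$, forcing its eigenvalue absolute values to be all equal or all distinct, which contradicts the top-weight multiplicity $f_{2n-2}=\sum_j b_j-n+1$ from \Cref{sec: truncated}. The only difference is that you also explicitly dispose of the $\Lie(G_2)$ possibility when $\dim V=7$ via the multiplicity constraint $\{1,2,7\}$ versus $f_{2n-2}=3$ --- a case the paper's proof passes over in silence --- which is extra care rather than a different method.
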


\section{Big monodromy from big convolution monodromy} 
\label{sec: big mon}
In this section we aim to show that geometric monodromy groups associated to a variety are large if the convolution monodromy group of the variety is large.  

\subsection{Fiber functors through characters and the arithmetic Tannakian group} 
For our calculations of monodromy groups, we are interested in restricting to the Tannakian subcategories generated by a single perverse sheaf.  While the fiber functor we have constructed can be used for both these subcategories as well as the entire category, we can construct more fiber functors for the subcategories through the use of characters of the fundamental group.  The basics of this theory have already been worked out in \cite[Chapter 2]{FFK}.  We briefly review the results here. 

We recall that Theorem \ref{thm: generic vanishing} implies that for a fixed perverse sheaf $M$, most characters have nice cohomological properties with respect to $M$.  We would like to say more, namely that they define a fiber functor on $\langle M\rangle$.  This is given by the following result.

\begin{theorem}
\label{thm: chf}
    \cite[Theorem 3.26]{FFK} If $G$ is a torus, an abelian variety or a unipotent group over a finite field $k$, then any semisimple
object of $\bP_{\intt}(G)$ is generically unramified. 
 That is, the subset of characters $\chi\in \widehat{G}$ such that the functor 
 \[
N\mapsto w_{\chi}(N) = H^0(G_{\bar{k}}, N_{\chi}) 
 \] 
 is a fiber functor is generic.
\end{theorem}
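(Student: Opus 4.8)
The plan is to deduce this from the standard recognition principle: on a neutral Tannakian category that already possesses a fibre functor (here $\langle N\rangle$, equipped with the restriction of the $\omega$ of Theorem~\ref{thm: ff}), any $\Qlb$-linear, exact, faithful, symmetric monoidal functor to $\mathrm{Vec}_{\Qlb}$ is automatically a fibre functor. So it suffices to exhibit a \emph{generic} set of $\chi$ for which $w_\chi(M)=H^0(G_{\bar{k}},M_\chi)$ enjoys these four properties on $\langle N\rangle$; note that $w_\chi$ automatically sends the unit $\delta_e$ to $\Qlb$, since $(\L_\chi)_e=\Qlb$, and, being exact monoidal, will automatically respect duals. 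The strategy is to isolate a single ``bad locus'' $Z\subseteq\widehat{G}$ attached to $N$ and $N^\vee$ alone, outside of which all four properties hold; its non-genericity will come straight from the generic vanishing theorems quoted above.

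Concretely, I would let $Z$ be the set of $\chi\in\widehat{G}$ for which the conclusion of Theorem~\ref{thm: generic} fails for $M=N$ or for $M=N^\vee$, i.e. for which $H^i(G_{\bar{k}},N_\chi)$ or $H^i_c(G_{\bar{k}},N_\chi)$ is nonzero for some $i\neq 0$, or the forget-supports map $H^0_c\to H^0$ fails to be an isomorphism (likewise for $N^\vee$). By Theorem~\ref{thm: generic vanishing} (for a torus; and the analogous statement for abelian or unipotent $G$) the set $Z$ is contained in a finite union of translates of proper algebraic subgroups of $\widehat{G}$, hence $\widehat{G}\setminus Z$ is generic. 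The central claim to prove is then: for every $\chi\notin Z$ and every $M\in\langle N\rangle$, the natural map $R\Gamma_c(G_{\bar{k}},M_\chi)\to R\Gamma(G_{\bar{k}},M_\chi)$ is an isomorphism onto a complex concentrated in degree $0$.

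Granting the claim, the three nontrivial properties of $w_\chi$ on $\langle N\rangle$ fall out. Exactness follows from the long exact cohomology sequence, since the $H^{\neq 0}$ all vanish. Faithfulness: $\dim_{\Qlb} w_\chi(M)=\dim H^0(G_{\bar{k}},M_\chi)$ equals, up to the fixed sign $(-1)^{\dim G}$, the Euler characteristic $\chi(G_{\bar{k}},M_\chi)$, which equals $\chi(G_{\bar{k}},M)$ by the independence of the twist for semiabelian groups (Remark~\ref{remark: neg}) and hence equals $\dim\omega(M)$ by Lemma~\ref{lemma: vanishing} (and its analogue); this is nonzero for every nonzero $M\in\langle N\rangle$ because such $M$ is non-negligible. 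Monoidality: $m^*\L_\chi\cong\L_\chi\boxtimes\L_\chi$, so by the projection formula and Künneth $R\Gamma_c(G_{\bar{k}},(M*_!M')_\chi)\simeq R\Gamma_c(G_{\bar{k}},M_\chi)\otimes R\Gamma_c(G_{\bar{k}},M'_\chi)$; applying $H^0$, using $*_!=*_*$ on $\bar{\bP}$ and the claim, gives a functorial isomorphism $w_\chi(M*M')\cong w_\chi(M)\otimes w_\chi(M')$, and the associativity and commutativity constraints go over to the standard ones by the formal compatibility of Künneth with the multiplication map and with the swap on $G\times G$.

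The heart of the matter, and where the hypotheses on $G$ and the semisimplicity of $N$ genuinely enter, is the proof of the claim: $\langle N\rangle$ has in general infinitely many simple constituents, and intersecting their individual generic-vanishing loci need not stay generic. My plan here is to use that $\langle N\rangle$ is a \emph{semisimple} abelian category — which follows from semisimplicity of $N$ together with the fact that middle convolution preserves semisimplicity of perverse sheaves (Gabber--Loeser and Katz \cite{Gabber,KatzCE} in the torus case, Kr\"amer--Weissauer \cite{Kramer} for abelian varieties) — so that every object of $\langle N\rangle$ is a direct summand of some $P_k=(N\oplus N^\vee)^{*k}$. For $\chi\notin Z$, iterating the Künneth identity above (after the routine reductions using $*_!=*_*$ on $\bar{\bP}$, which control the negligible error terms) shows that $R\Gamma_c(G_{\bar{k}},(P_k)_\chi)$ and $R\Gamma(G_{\bar{k}},(P_k)_\chi)$ are tensor products of complexes concentrated in degree $0$ with $H^0=H^0_c$, hence share that property; a direct summand inherits it, which is the claim. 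I expect this last step — forcing a single non-generic locus $Z$ to govern the whole category, rather than a different one for each simple object — to be the main obstacle; a more machinery-heavy but more robust alternative is the Mellin transform $\mathfrak{M}(M)=R(\mathrm{pr}_2)_!(\mathrm{pr}_1^*M\otimes\mathcal{L})$ on $\widehat{G}$, which over a dense open is lisse, concentrated in degree $0$, exact, intertwines $*$ with $\otimes$, and has fibre at $\chi$ equal to $w_\chi$ (cf. \cite{Gabber}), and which sidesteps the explicit appeal to semisimplicity.
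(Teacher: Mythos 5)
First, a point of comparison: the paper does not prove this statement at all — it is quoted verbatim from \cite[Theorem 3.26]{FFK}, so the only meaningful benchmark is the proof in that reference (for tori it is essentially the Gabber--Loeser Mellin-transform argument, i.e.\ the ``machinery-heavy alternative'' you mention in your last sentence; for abelian varieties FFK need a genuinely different argument, since there is no finite-type character scheme).

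Your sketch has a genuine gap, and it is exactly at the step you yourself flag as the main obstacle: uniformity of the bad locus over all of $\langle N\rangle$. Your single bad set $Z$ is built only from $N$ and $N^\vee$, and your K\"unneth identity controls $R\Gamma_c$ of \emph{!-convolution} powers and $R\Gamma$ of \emph{*-convolution} powers. But the objects of $\langle N\rangle$ are (images in the quotient category of) summands of \emph{middle} convolution powers, and to transfer concentration in degree $0$ from $P_k^{!}$ and $P_k^{*}$ to $P_k^{\mathrm{mid}}$ you must kill $H^*\bigl((\,\cdot\,)_\chi\bigr)$ of the negligible cones comparing the three convolutions. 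Those cones depend on $k$, each has its own generic set of good $\chi$, and intersecting them over all $k$ is precisely the countable intersection that need not be generic — so the argument collapses back to an object-dependent good set, which proves only that each $M\in\langle N\rangle$ is individually well-behaved for generic $\chi$, not that a single generic set of $\chi$ makes $w_\chi$ exact, faithful and monoidal on the whole subcategory, which is what ``fiber functor'' requires. A second, smaller issue: a direct summand in the quotient category $\bar{\bP}$ is not an honest direct summand of perverse sheaves, so even granting concentration for the total convolution power you need semisimplicity of the actual perverse sheaf $P_k^{\mathrm{mid}}$ (itself a nontrivial input) before ``a direct summand inherits it'' applies; Jordan--H\"older constituents alone do not inherit degree-$0$ concentration from the total object. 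The parts of your plan that do work — exactness and the twisted K\"unneth/tensor compatibility once concentration is granted, and faithfulness via the twist-independence of the Euler characteristic (valid for semiabelian $G$, which is all the paper needs) — are fine, but the heart of the theorem is the uniform concentration claim, and for that one really needs the finiteness supplied by the Mellin transform on the character scheme (torus case) or the FFK/Kr\"amer--Weissauer-style control of bad loci (abelian case), not the elementary summand-of-convolution-powers argument.
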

\begin{remark}
    The proof the theorem above shows that the set of $\chi$ appearing in the above theorem is contained within the set satisfying the generic vanishing theorem.
\end{remark}

Note that in the theorem above, we are working with perverse sheaves on $G_{\bar{k}}$.  However, the same theory holds over finite fields in that $N\mapsto H^0(G_{\bar{k}}, N_{\chi})$ gives a fiber functor on the Tannakian category of perverse sheaves over negligible sheaves on $G_k$ becasue all the necessary properties can be checked over the algebraic closure; see e.g. \cite[Lemma 2.5]{finiteness}. 

Fix a character $\chi$ of $\pi_1^{\et}((\G_m^n)_{\bar{k}})$.  If $\chi$ is $\Gal(\bar{k}/k)$-invariant, then it canonically descends to a character of $\pi_1^{\et}((\G_m^n)_k)$, and thus $\L_{\chi}$ also descends (see \cite[Definition 2.4]{finiteness} for details).

We define $\mc{P}^\chi,  \mc{P}_{\kbar}^\chi$ to be the categories of perverse sheaves on $\G_m^n$ and $(\G_m^n)_{\kbar}$ satisfying Theorem \ref{thm: chf}, respectively.  We let $\mc{N}^\chi,\mc{N}_{\kbar}^\chi$ denote $\mc{P}^\chi\cap \mc{N}$ and $\mc{P}_{\kbar}^\chi\cap \mc{N}_{\kbar}$ respectively.  Our goal now is to show that if we restrict to the subcategories defined by the pure perverse sheaves, we still obtain Tannakian categories with the geometric one being a normal subgroup of the arithmetic one.  For abelian varieties over fields of characteristic 0 this was achieved in Lemmas 2.7 - 2.9 of \cite{finiteness}.  Most of the same arguments work in our setting, except that we use pure perverse sheaves instead of the geometrically semisimple ones used in \cite{finiteness} because we do not have access to Kashiwara's conjecture, which has only been proved over characteristic 0. 

 \begin{lemma}
     The full subcategory of $\mc{P}^{\chi}/\mc{N}^{\chi}$ consisting of pure sheaves is a Tannakian subcategory of $\mc{P}^{\chi}/\mc{N}^{\chi}$.  Furthermore, the full subcategory of $\mc{P}_{\overline{k}}^{\chi}/\mc{N}_{\overline{k}}^{\chi}$ consisting of pullbacks of pure sheaves in $\mc{P}^{\chi}/\mc{N}^{\chi}$ is a Tannakian subcategory.
 \end{lemma}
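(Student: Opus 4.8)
The plan is to verify that the full subcategory of pure objects satisfies the standard list of closure conditions characterizing a Tannakian subcategory: stability under passing to subquotients, under middle convolution $\cm$, under the Tannakian dual, and containing the unit object; once this is done, the restriction of the fiber functor $w_\chi$ of \Cref{thm: chf} equips it with the structure of a neutral Tannakian category, and similarly for the geometric variant. The overall shape of the argument follows \cite[Lemmas 2.7--2.9]{finiteness}, the only substantive change being that the role played there by geometrically semisimple perverse sheaves (which in characteristic $0$ relies on Kashiwara's conjecture) is played here by pure perverse sheaves, using the fact \cite[5.3.8]{BBD} that a pure perverse sheaf over a finite field is geometrically semisimple.

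The key point, and the one I expect to require the most care, is that middle convolution preserves purity. Given pure perverse sheaves $M$, $N$ on $\G_m^n$ of weights $w_M$, $w_N$, the external product $M\boxtimes N$ on $\G_m^n\times\G_m^n$ is pure of weight $w_M+w_N$; by Deligne's weight estimates \cite{Weil2}, $M*_!N=Rm_!(M\boxtimes N)$ has weights $\le w_M+w_N$ and $M*_*N=Rm_*(M\boxtimes N)$ has weights $\ge w_M+w_N$, and the same inequalities persist after applying ${}^p\mathcal{H}^0$. In the quotient category $\bar{\bP}$ the object $\bar M\cm\bar N$ is represented by a perverse sheaf that is simultaneously a quotient of ${}^p\mathcal{H}^0(M*_!N)$ and a subobject of ${}^p\mathcal{H}^0(M*_*N)$; hence it has weights both $\le$ and $\ge w_M+w_N$, i.e. it is pure of weight $w_M+w_N$. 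Note that, in contrast to the characteristic $0$ situation of \cite{finiteness}, no decomposition theorem for semisimple perverse sheaves is needed here---only the two one-sided weight bounds of \cite{Weil2}. That $\bar M\cm\bar N$ again lies in $\mc{P}^\chi$ is automatic, since $\mc{P}^\chi$ is by construction stable under convolution for $\chi$ in the generic locus of \Cref{thm: chf}.

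The remaining closure properties are routine. Stability under subquotients holds because any subobject or quotient in the category of perverse sheaves of a pure perverse sheaf of weight $w$ is again pure of weight $w$ \cite{BBD}, and this survives the Serre quotient by $\mc{N}^\chi$ since every subquotient of the image of a sheaf is the image of a subquotient of that sheaf. Stability under duals follows from the fact that Verdier duality sends a pure perverse sheaf of weight $w$ to one of weight $-w$, together with the identification of the Tannakian dual with (a Tate twist of) the Verdier dual. Finally the unit object $\delta_1$---the skyscraper $\Qlb$ at the identity---is punctual, hence pure of weight $0$, and $w_\chi(\delta_1)=H^0((\G_m^n)_{\kbar},\delta_1\otimes\L_\chi)=\Qlb$, so it lies in the subcategory. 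This establishes the first assertion.

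For the second assertion, I would use that pullback along $(\G_m^n)_{\kbar}\to(\G_m^n)_k$ commutes with convolution and with Verdier duality by base change, so that closure of the category of pullbacks of pure objects under $\cm$ and under duals is immediate from the first part, and $\delta_1$ pulls back to $\delta_1$. Stability under subquotients is where the geometric semisimplicity input \cite[5.3.8]{BBD} is used: if $M$ is a pure object of $\mc{P}^\chi/\mc{N}^\chi$ then $M_{\kbar}$ is semisimple, so any subquotient of $M_{\kbar}$ is a direct sum of geometrically simple constituents of $M_{\kbar}$; each such constituent is defined over a finite extension of $k$ and, after a suitable twist, is pure there \cite{BBD}, hence is itself the pullback of a pure object of the corresponding category (recall that $\bP(\G_m^n)$ consists of sheaves defined over some finite extension of $k$). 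Thus the subcategory of pullbacks of pure objects is closed under subquotients, and being also closed under $\cm$, under duals, and containing the unit, it is a Tannakian subcategory of $\mc{P}_{\kbar}^\chi/\mc{N}_{\kbar}^\chi$, with fiber functor the restriction of $w_\chi$ over $\kbar$.
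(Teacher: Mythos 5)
Your proposal is correct and follows essentially the same route as the paper: the only non-routine point is purity of the middle convolution, which you establish exactly as the paper does, via Deligne's weight bounds on $Rm_!$ and $Rm_*$ together with the identification of $\cm$ with both convolutions modulo negligible objects. The remaining closure axioms, which the paper simply defers to the argument of Lemma 2.7 of \cite{finiteness}, are spelled out in your write-up (including the use of purity $\Rightarrow$ geometric semisimplicity for the $\kbar$-statement) in a way consistent with that reference.
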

 \begin{proof}
     All the axioms of Tannakian categories are straightforward  to check as in the proof of Lemma 2.7 on \cite{finiteness}, except that the convolution of two pure sheaves in $\mc{P}^{\chi}/\mc{N}^{\chi}$ is pure.  That is, if $M$ and $N$ are pure, then $M*N$ is equivalent to a pure sheaf modulo a negligible sheaf.  By Lemma 3.8 of \cite{FFK}, (we can pass to the algebraic closure), the cone of $M *_! N\rightarrow M*_*N$ is negligible.  Now if $p_1^*M\otimes p_2^*N$ is pure of weight $n$, then by Deligne's theorem $m_!(p_1^*M\otimes p_2^*N)$ is mixed of weights at most $n$ while $m_*(p_1^*M\otimes p_2^*N)$ is mixed of weights at least $n$, so we see that $M*N$ is pure as desired. 
 \end{proof}

    Now let $G_k$ be the convolution monodromy group of the full subcategory of $\mc{P}^\chi/\mc{N}^\chi$ consisting of pure perverse sheaves on $(\G_m^n)_{k}$ and let $G_{\bar{k}}$ be the convolution monodromy group of the full subcategory of $\mc{P_{\kbar}}^\chi/\mc{N_{\kbar}}^\chi$ consisting of summands of pullbacks to $(\G_m^n)_{\kbar}$ of pure perverse sheaves on $(\G_m^n)_{k}$.  Let $\Gal_k$ be the Tannakian group of the category of $p$-adic Galois representations over $k$.   Then the analogues of Lemmas 2.8 and 2.9 of \cite{finiteness} follow using the same proof, replacing the abelian variety $A$ with the torus $\G_m^n$ and geometrically semisimple perverse sheaves with pure perverse sheaves. 

\begin{lemma}
\label{lemma: normal}
    [Analogue of Lemma 2.8 in \cite{finiteness}] The group $G_{\overline{k}}$ is a normal subgroup of $G_k$, with quotient $\Gal_k$.  
\end{lemma}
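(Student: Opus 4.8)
The plan is to run, for the torus, the Tannakian exact‑sequence argument that proves Lemma~2.8 of \cite{finiteness}, with the single structural change that the ambient subcategory is cut out by purity rather than by geometric semisimplicity. First I would set up the two functors producing the chain $G_{\kbar}\to G_k\to\Gal_k$. Base change along $(\G_m^n)_{\kbar}\to(\G_m^n)_k$ gives an exact tensor functor $\pi^*$ from the pure part of $\mc{P}^\chi/\mc{N}^\chi$ to the subcategory of $\mc{P}_{\kbar}^\chi/\mc{N}_{\kbar}^\chi$ generated by pullbacks; since the fiber functor $w_\chi(N)=H^0((\G_m^n)_{\kbar},N_\chi)$ is already computed over $\kbar$, it is compatible with $\pi^*$, so $\pi^*$ induces a homomorphism $G_{\kbar}\to G_k$. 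In the other direction, sending a $p$-adic representation $V$ of $\Gal_k$ to the skyscraper $\delta_e\otimes V$ at the identity $e\in\G_m^n(k)$ (which is non‑negligible, having Euler characteristic $\dim V\neq 0$) defines a fully faithful exact tensor functor $\Rep(\Gal_k)\hookrightarrow\mc{P}^\chi/\mc{N}^\chi$, and $\pi^*(\delta_e\otimes V)\cong\delta_e^{\oplus\dim V}$ is a trivial object; dualizing the composite $\Rep(\Gal_k)\to\mc{P}^\chi/\mc{N}^\chi\xrightarrow{\pi^*}\mc{P}_{\kbar}^\chi/\mc{N}_{\kbar}^\chi$ gives the chain above.

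Next I would check the three formal conditions making $1\to G_{\kbar}\to G_k\to\Gal_k\to 1$ exact. The map $G_{\kbar}\to G_k$ is a closed immersion because, by construction, every object of the geometric category is a summand, hence a subquotient, of $\pi^*$ of an arithmetic object. The map $G_k\to\Gal_k$ is faithfully flat because $\Rep(\Gal_k)\to\mc{P}^\chi/\mc{N}^\chi$ is fully faithful with essential image stable under subobjects, a subobject of a skyscraper at $e$ being again such a skyscraper (and purity being inherited). The composite $G_{\kbar}\to G_k\to\Gal_k$ is trivial, since $\pi^*(\delta_e\otimes V)$ is trivial. These give $G_{\kbar}\subseteq\ker(G_k\to\Gal_k)$ with quotient $\Gal_k$, so the only substantive point is the reverse inclusion: any pure $M$ in $\mc{P}^\chi/\mc{N}^\chi$ with $\pi^*M$ trivial lies in the image of $\Rep(\Gal_k)$.

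For that, I would represent $M$ by the pure perverse sheaf on $(\G_m^n)_k$ with no negligible perverse summands (possible since pure perverse sheaves are geometrically semisimple). Over $\kbar$ it is semisimple, and its image in $\mc{P}_{\kbar}^\chi/\mc{N}_{\kbar}^\chi$ is $\delta_e^{\oplus r}$, with $\delta_e$ the simple non‑negligible unit object for convolution; comparing simple constituents, every non‑negligible geometric constituent of $M_{\kbar}$ is $\delta_e$. Since the geometric constituents of each arithmetically simple summand of $M$ form a single Frobenius orbit, a summand meeting $\delta_e$ is forced to be a power of $\delta_e$, while a summand meeting none of the $\delta_e$'s would be geometrically negligible, hence negligible (negligibility on the semiabelian $\G_m^n$ being vanishing of the Euler characteristic, by Remark~\ref{remark: neg}), contradicting our choice of representative. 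Thus $M_{\kbar}=\delta_e^{\oplus r}$ exactly, so $\supp M$ is a closed subscheme of $(\G_m^n)_k$ with geometric support $\{e\}$, whence $M=\delta_e\otimes\mathcal V$ for a finite‑dimensional continuous $\Qlb$‑representation $\mathcal V$ of $\Gal_k$, pure because $M$ is. This places $M$ in the image of $\Rep(\Gal_k)$, completing the exactness, and hence $G_{\kbar}$ is normal in $G_k$ with quotient $\Gal_k$. The main obstacle is exactly this last reverse inclusion together with the bookkeeping needed to substitute ``pure'' for ``geometrically semisimple'' throughout — in particular ensuring the pure representatives behave well under $\pi^*$ and that the skyscraper representation one extracts is itself pure, so that $\Gal_k$ may be read as the Tannakian group of the relevant $p$-adic representations as in \cite{finiteness}.
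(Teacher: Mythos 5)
Your overall route is the same as the paper's: the paper proves this lemma simply by invoking the proof of Lemma 2.8 of \cite{finiteness}, with $A$ replaced by $\G_m^n$ and ``geometrically semisimple'' replaced by ``pure'', and your three-step verification of the Tannakian exactness criterion (closed immersion because every geometric object is a summand of a pullback, faithful flatness via the skyscraper functor $V\mapsto\delta_e\otimes V$ from Galois representations, and identification of the kernel with the essential image of that functor) is exactly that argument.

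There is, however, one step that does not work as written, and it is precisely where ``pure'' has to stand in for ``geometrically semisimple''. In the kernel identification you decompose $M$ into ``arithmetically simple summands'' and, after discarding negligible summands, conclude that $M_{\kbar}\cong\delta_e^{\oplus r}$ on the nose. Purity of $M$ gives semisimplicity of $M_{\kbar}$ (by \cite{BBD}), but it does not give arithmetic semisimplicity of $M$ over $k$, so such a direct-sum decomposition need not exist; moreover your normalization only removes negligible direct summands, not negligible Jordan--H\"older constituents, so the claim that $M_{\kbar}$ is exactly $\delta_e^{\oplus r}$ (and hence that $\supp M=\{e\}$) is not forced. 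The standard repair, which is what the argument of \cite{finiteness} amounts to, is: since $M_{\kbar}$ is semisimple, its $\delta_e$-isotypic part is cut out by a canonical, Frobenius-invariant idempotent; since negative-degree $\Hom$'s between perverse sheaves vanish, $\End_k(M)\cong\End_{\kbar}(M_{\kbar})^{\mathrm{Frob}}$, so this idempotent descends and splits $M=M_1\oplus M_2$ over $k$, where $(M_1)_{\kbar}$ is a sum of copies of $\delta_e$ --- hence $M_1=\delta_e\otimes V$ for a (pure) Galois representation $V$ --- and $(M_2)_{\kbar}$ is a sum of negligible simples, hence $M_2$ is negligible. This yields $M\cong\delta_e\otimes V$ in the quotient without assuming arithmetic semisimplicity, and with that substitution your proof coincides with the paper's. (Your closing caveat is apt: with the ``pure'' setup the quotient should really be read as the Tannakian group of the Galois representations that actually arise as skyscraper coefficients, an imprecision the lemma inherits from the paper's formulation rather than from your argument.)
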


The arithmetic convolution monodromy group $G_k$ is given by automorphisms of the fiber functor sending $K$ to $H^0((\G_m)^n_{\kbar}, K\otimes \L_\chi)$.  The Galois group acts on this functor and can thus be realized inside $G_k$.  By the previous lemma it is in the normalizer of $G_{\kbar}$.  Furthermore, the commutator subgroup of the identity component of $G_{\kbar}$ is a characteristic subgroup, so the Galois group normalizes it as well. 

\begin{lemma}
\label{lemma: bart}
    [Analogue of Lemma 2.9 in \cite{finiteness}]
    Let $\chi$ be a $\Gal(\overline{k}/k)$-invariant character of $\pi_1^{\et}((\G_m)^n_{\bar{k}})$ and let $\L_{\chi}$ be the associated character sheaf.  

    Let $K$ be a geometrically semisimple perverse sheaf on $A$ such that $H^i((\G_m)^n_{\kbar}, K\otimes \L_\chi)$ vanishes for $i\neq 0$.  Then the action of $\Gal(\kbar/k)$ on $H^0((\G_m)^n_{\kbar}, K\otimes \L_\chi)$ normalizes the geometric convolution monodromy group of $K$ and the commutator subgroup of its identity component.  
\end{lemma}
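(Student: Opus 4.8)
The plan is to deduce this from Lemma~\ref{lemma: normal} by passing to the Tannakian subcategory $\langle K\rangle$. I would first pin down the character. By Theorem~\ref{thm: chf} and the remark following it, the set of characters for which $w_\chi(-) = H^0((\G_m^n)_{\kbar}, (-)\otimes\L_\chi)$ is a fiber functor on $\langle K\rangle$ over both $k$ and $\kbar$, and for which $H^i((\G_m^n)_{\kbar}, K\otimes\L_\chi)=0$ for all $i\neq 0$, is generic; the hypotheses of the lemma amount to saying $\chi$ lies in this set (after, if necessary, replacing $K$ by its pure constituents, which changes nothing). Since $\chi$ is $\Gal(\kbar/k)$-invariant, $\L_\chi$ descends to $(\G_m^n)_k$, so for any $M$ defined over $k$ the group $\Gal(\kbar/k)$ acts on $w_\chi(M)$ via its action on $(\G_m^n)_{\kbar}$; this action is functorial in $M$ and compatible with the K\"unneth isomorphism, hence yields a homomorphism $\Gal(\kbar/k)\to \Aut^\otimes(w_\chi) = G_k$. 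Write $H_k$ (resp.\ $H_{\kbar}$) for the arithmetic (resp.\ geometric) convolution monodromy group of $K$, that is, the Tannakian group of $\langle K\rangle$ over $k$ (resp.\ of $\langle K_{\kbar}\rangle$) attached to $w_\chi$.

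Next I would record the compatibility of these groups with the category-level groups $G_k$, $G_{\kbar}$ of the discussion preceding the statement. The inclusions of Tannakian subcategories induce surjections $G_k\twoheadrightarrow H_k$ and $G_{\kbar}\twoheadrightarrow H_{\kbar}$, which fit with the closed immersions $G_{\kbar}\hookrightarrow G_k$ and $H_{\kbar}\hookrightarrow H_k$ into a commutative square; the map $H_{\kbar}\hookrightarrow H_k$ is a closed immersion because every object of $\langle K_{\kbar}\rangle$ is a subquotient of the pullback of an object of $\langle K\rangle$ (this is where semisimplicity of $K_{\kbar}$, i.e.\ geometric semisimplicity of $K$, enters, ensuring $\langle K_{\kbar}\rangle$ is no larger than this). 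By Lemma~\ref{lemma: normal}, $G_{\kbar}$ is normal in $G_k$; the image of a normal subgroup under the surjection $G_k\twoheadrightarrow H_k$ is normal, and by commutativity of the square this image is precisely $H_{\kbar}$. Hence $H_{\kbar}\trianglelefteq H_k$.

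The two assertions then follow at once. Restricting the homomorphism $\Gal(\kbar/k)\to G_k$ along $G_k\twoheadrightarrow H_k$ realizes the action of $\Gal(\kbar/k)$ on $w_\chi(K) = H^0((\G_m^n)_{\kbar}, K\otimes\L_\chi)$ inside $H_k\subseteq\GL(w_\chi(K))$, and $H_k$ normalizes $H_{\kbar}$; this gives the first claim. For the second, $H_{\kbar}^0$ is a characteristic subgroup of $H_{\kbar}$, hence so is its commutator subgroup $[H_{\kbar}^0, H_{\kbar}^0]$ (a characteristic subgroup of a characteristic subgroup is characteristic). Since $H_{\kbar}$ is normal in $H_k$, conjugation by any element of $H_k$ restricts to an automorphism of $H_{\kbar}$ and therefore preserves $[H_{\kbar}^0, H_{\kbar}^0]$; in particular the image of $\Gal(\kbar/k)$ normalizes it.

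The one step that requires genuine care is the identification in the second paragraph of the geometric convolution monodromy group of $K$ with the image of the normal subgroup $G_{\kbar}$, rather than with some less controlled subquotient — this is exactly where the (geometric) semisimplicity hypothesis is used, guaranteeing that $\langle K_{\kbar}\rangle$ is generated by the pullback of $K$ and nothing more. Everything else is the bookkeeping already carried out for the full category in the discussion before the statement.
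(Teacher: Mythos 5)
Your proposal is correct and follows essentially the same route as the paper, which proves this lemma by carrying over the proof of Lemma 2.9 of \cite{finiteness}: realize the $\Gal(\kbar/k)$-action inside the arithmetic Tannakian group via the fiber functor $H^0((\G_m^n)_{\kbar}, -\otimes\L_\chi)$, invoke Lemma \ref{lemma: normal} for normality, project onto the (arithmetic and geometric) monodromy groups of $K$, and conclude via the fact that the commutator subgroup of the identity component is characteristic. The only cosmetic difference is your aside about replacing $K$ by pure constituents, which is unnecessary since the ambient categories the paper uses here already consist of pure perverse sheaves.
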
 

This provides a link between the geometric and convolution monodromy groups, which we will exploit in the remainder of this section. 

\subsection{Setup}
Let $X$ be a variety and let $Y\subseteq X\times \G_m^n$ be a family over $X$ of smooth hypersurfaces in $\G_m^n$, all defined over $\kbar$, the algebraic closure of some finite field.  Let $f:Y\rightarrow X$ and $g:Y\rightarrow \G_m^n$ be the projections. We would like to show that the geometric monodromy group of $R^{n-1}f_*(g^*\L_{\chi_i})$ is large, for appropriate characters $\chi_i$ of $\pi_1^{\et}(\G_m^n)$.  Then we will deduce the corresponding statement over $\C$. 

\begin{remark}
    In \cite{finiteness}, the authors do the same thing for $\bigoplus_{i=1}^cR^{n-1}f_*(g^*\L_{\chi_i})$ for a fixed $c\in \N$, because they use it in their version of the numerical conditions that come later in the paper.  In this paper we address the numerical conditions differently, and are thus content to set $c=1$.  However, the same result can be proven for arbitrary $c$ with minimal changes.
\end{remark}

Let $G$ be the convolution monodromy group of the perverse sheaf $K = i_*\Qlb[n-1]$ on $(\G_m^n)_{\overline{\eta}}$.  Let $G^*$ be the commutator subgroup of the identity component of $G$.  Then we may restrict the distinguished representation of $G$ arising from $K$ to $G^*$, and consider $N(G^*)$ to be the normalizer of $G^*$ inside the group of automorphisms of this representation.  Thus $N(G^*)$ is realized as a subgroup of $GL(H^0((\G_m)^n_{\overline{k}}, K\otimes \L_{\chi}))$. \\ 

We wish to prove the following theorem, which is an analogue of Theorem 4.7 in \cite{finiteness}. 

\begin{theorem}
\label{thm: blah}
    Assume that $Y_{\overline{\eta}}$ is not translation-invariant by any nonzero element of $\G_m^n$, that $G^*$ is a simple algebraic group with irreducible distinguished representation, and that $Y$ is not equal to a constant family of hypersurfaces translated by a section of $\G_m^n$. 

    Then for all characters $\chi$ of $\pi_1^{\et}(\G_m^n)$, avoiding some finite
set of tacs of $\G_m^n$, the
following conditions are satisfied:  
\begin{itemize}
    \item $R^kf_*(g^*\L_{\chi_i})=0$ for $k\neq n-1$, and 
    \item the geometric monodromy group of $R^{n-1}f_*(g^*\mc{L}_{\chi})$ contains $G^*$ as a normal subgroup, where its representation restricted to $G^*$ is isomorphic to the distinguished representation of $G^*$. 
\end{itemize}
\end{theorem}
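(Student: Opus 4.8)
The plan is to adapt the proof of \cite[Theorem~4.7]{finiteness}, making the changes forced by the non-properness of our hypersurfaces and by the fact that the relative Tannakian formalism of the previous subsection is built from pure rather than geometrically semisimple perverse sheaves (Lemmas~\ref{lemma: normal} and \ref{lemma: bart}). Write $\eta$ for the generic point of $X$, fix a geometric point $\bar{\eta}$ above it, let $K=i_*\Qlb[n-1]$ be the perverse sheaf attached to the (smooth, irreducible, hence geometrically simple) generic fibre $Y_{\bar\eta}$, and set $V=w_\chi(K)=H^{n-1}(Y_{\bar\eta},g^*\L_\chi)$, which under the identification of the fibre functor is also the stalk at $\bar\eta$ of $R^{n-1}f_*(g^*\L_\chi)$. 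The first bullet, $R^kf_*(g^*\L_\chi)=0$ for $k\neq n-1$, is fibrewise the assertion that $H^k(Y_x,g^*\L_\chi)=0$ for $k\neq n-1$, which is the generic vanishing theorem (Theorem~\ref{thm: generic}) applied to the perverse sheaf $(i_x)_*\Qlb[n-1]$ on $\G_m^n$; the excluded characters lie in a union of tacs of dimension $<n$ by Theorem~\ref{thm: generic vanishing}, and a spreading-out argument over the finite-type base $X$ makes this union uniform in $x$. Passing from fibre cohomology to the stalks of $Rf_*$ requires generic base change — this is where the non-properness of $f$ first intervenes — which is harmless after restricting $X$ to a dense open (and, if the statement is wanted everywhere on $X$, after stratifying).

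Next I would show $\Gamma\subseteq N(G^*)$, where $\Gamma$ denotes the geometric monodromy group of $R^{n-1}f_*(g^*\L_\chi)$. Applying the relative Tannakian formalism of the previous subsection, now over the function field $k(X)$ in place of a finite field (the analogues of Lemmas~\ref{lemma: normal} and \ref{lemma: bart}), the action of $\Gal(\overline{k(X)}/k(X))$ on $w_\chi(K)$ — which, realised through the monodromy of the lisse sheaf $R^{n-1}f_*(g^*\L_\chi)$, is the monodromy action on $V$ — normalises the geometric convolution monodromy group $G$ of $K_{\bar\eta}$ together with the commutator subgroup $G^*$ of its identity component. Since $N(G^*)$ is Zariski closed, $\Gamma\subseteq N(G^*)\subseteq GL(V)$. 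Once we know $G^*\subseteq\Gamma$ it is automatically normal, and $V=w_\chi(K)$ restricts on $G^*$ to the distinguished representation; so everything reduces to proving $G^*\subseteq\Gamma$.

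Because $G^*$ is simple with irreducible distinguished representation, the group-theoretic criterion \cite[Lemma~4.5]{finiteness} reduces ``$G^*\subseteq\Gamma$'' to showing that $\Gamma$ has no nonzero invariant vector in any member of an explicit finite list $W_1,\dots,W_r$ of representations, each a subquotient of a tensor construction on $V\oplus V^\vee$ lying in the ``non-trivial'' part. Each $W_j$ is an object of $\langle K\rangle$ over $\eta$ and so, by the analogue of \cite[Lemma~2.8]{finiteness}, corresponds to a perverse sheaf $M_j$ on $(\G_m^n)_\eta$ on which $G^*$ — equivalently the geometric convolution monodromy group of $(M_j)_{\bar\eta}$ — acts non-trivially by construction. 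A $\Gamma$-invariant vector in $W_j=w_\chi(M_j)$ would, by the Tannakian dictionary, give a trivial geometric sub-object of $(M_j)_{\bar\eta}$; this is ruled out in two stages. First, an analogue of \cite[Lemma~4.3]{finiteness} shows that no $(M_j)_{\bar\eta}$ is a pullback from $(\G_m^n)_{\overline{k(X)}}$ — and this is exactly where the hypotheses that $Y_{\bar\eta}$ is not translation-invariant by a nonzero element of $\G_m^n$ and that $Y$ is not a constant family translated by a section are used, since such a pullback would force one of these degeneracies on $K$ and hence on $Y$. Second, an analogue of \cite[Lemma~4.4]{finiteness}, via Theorem~\ref{thm: generic vanishing}, shows that a perverse sheaf on $(\G_m^n)_\eta$ that is not geometrically a pullback has no monodromy invariants on $w_\chi(M_j)$ once $\chi$ avoids a further finite union of tacs. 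Taking the union over $j$ of all excluded tacs, together with those from the vanishing step, produces the finite set in the statement, and then $G^*\trianglelefteq\Gamma$ with $V$ restricting to the distinguished representation, as required.

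The main obstacle is the non-constancy input, i.e.\ the analogues of \cite[Lemmas~4.3 and 4.4]{finiteness}: one must track the auxiliary perverse sheaves $M_j$ through the convolution Tannakian category of the \emph{non-proper} variety $\G_m^n$, translate the geometric hypotheses on $Y$ into genuine geometric non-constancy of the $(M_j)_{\bar\eta}$, and then couple this with generic vanishing on tori to kill the invariants for generic $\chi$. The relative Tannakian set-up underlying the containment $\Gamma\subseteq N(G^*)$ is itself delicate, since without Kashiwara's conjecture in positive characteristic one is forced to run the arguments of \cite{finiteness} with pure perverse sheaves throughout.
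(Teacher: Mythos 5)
Your proposal follows essentially the same route as the paper's proof: generic vanishing gives the vanishing bullet and identifies the stalk of $R^{n-1}f_*(g^*\L_{\chi})$ with the fiber functor, the purity-based relative Tannakian lemmas (Lemmas \ref{lemma: normal} and \ref{lemma: bart}) place the geometric monodromy group inside $N(G^*)$, and then \cite[Lemma 4.5]{finiteness} together with the analogues of Lemmas 4.1 and 4.4 of \cite{finiteness} (Lemmas \ref{lemma: 4.1} and \ref{lemma: 4.4}) rules out invariants on the finite list of representations once $\chi$ avoids a finite union of tacs. The only detail to add is the explicit appeal to Deligne's theorem to know that the geometric monodromy group is reductive, which is required before \cite[Lemma 4.5]{finiteness} applies.
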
 
 
The first condition can be achieved through Theorem \ref{thm: generic vanishing}.  The next subsections are devoted to the second condition.

\subsection{Controlling the geometric monodromy group in characteristic $p$} 
The geometric monodromy group of $R^{n-1}f_*(g^*\L_{\chi})$, which we denote by $G_{\geo}$, is given by the Zariski closure of the image of $\Gal(\overline{\kbar(\eta)}/\kbar(\eta))$ acting on the stalk at the geometric generic point of $X$.  By Lemma \ref{lemma: bart}, the action of the Galois group on $H^0((\G_m)^n_{\overline{\kbar(\eta)}}, K\otimes \L_{\chi})$, which is indeed the distinguished representation of $G^*$, normalizes the image of $G^*$.  This implies that $G_{\geo}$ is contained in $N(G^*)$.  \\ 

To show $G_{\geo}$ contains $G^*$, we will combine this fact with Lemma 4.5 of \cite{finiteness}, which we recall below.  

\begin{lemma} 
[Lemma 4.5 of \cite{finiteness}]
    Let $G^*$ be a simple algebraic group and $c$ be a natural number.  Fix an irreducible representation of $G^*$.  Let $N(G^*)$ be the normalizer of $G^*$ inside the group of automorphisms of this representation.  Then there is a finite list of irreducible representations of $N(G^*)^c$ such that a reductive subgroup $B$ of $N(G^*)$ contains $G^*$ if and only if $B$ has no invariants on any of these representations. 
\end{lemma}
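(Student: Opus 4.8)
The plan is to prove this as a purely group-theoretic statement, and for notational simplicity I will carry out the case $c=1$ that is actually used in this paper (the general $c$ requires only an extra index in the bookkeeping). Write $N = N(G^*)$ and let $V$ be the fixed irreducible representation. Since $G^*$ is simple and $V$ is irreducible, Schur's lemma shows that the centralizer of $G^*$ in $\GL(V)$ is the group of scalars $\G_m$; hence the identity component $N^0$ equals $G^*\cdot\G_m$, the component group $N/N^0$ embeds into $\Out(G^*)$ and is therefore finite, and $G^*\cap\G_m = Z(G^*)$. Let $p\colon N^0 \twoheadrightarrow N^0/\G_m = G^*_{\mathrm{ad}}$ be the adjoint quotient.

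The first step is the elementary observation that for a connected reductive subgroup $H \le G^*\G_m$ one has $G^* \le H$ if and only if $p(H) = G^*_{\mathrm{ad}}$: if $p(H)$ is everything then the derived group $[H,H]$ is semisimple, lies in $G^*$ (it admits no nontrivial map to the torus $G^*\G_m/G^*$), and surjects onto $G^*_{\mathrm{ad}}$, which forces $[H,H] = G^*$. Applying this with $H=B^0$, the lemma reduces to the following: produce finitely many $N$-representations $W_j$, each with no $G^*$-invariants, such that a reductive $B\le N$ satisfies $p(B^0) = G^*_{\mathrm{ad}}$ if and only if $W_j^{B} = 0$ for all $j$.

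To build the $W_j$, use that a proper connected reductive subgroup of the simple group $G^*_{\mathrm{ad}}$ is conjugate into one of the finitely many maximal proper connected subgroups (Dynkin's list of maximal simple subgroups together with the Borel--de Siebenthal maximal-rank reductive subgroups), and, after conjugating a reductive subgroup of a parabolic into its Levi, into one of finitely many maximal reductive subgroups $M_1,\dots,M_r$. For each $M_j$, Matsushima's criterion gives that $G^*_{\mathrm{ad}}/M_j$ is affine, so its coordinate ring $\bigoplus_\lambda V_\lambda \otimes (V_\lambda^*)^{M_j}$ strictly contains the constants; pick a nontrivial $\lambda_j$ with $V_{\lambda_j}^{M_j}\neq 0$, inflate $V_{\lambda_j}$ along $p$ to $N^0$ (letting $\G_m$ act trivially), and set $W_j=\mathrm{Ind}_{N^0}^{N}V_{\lambda_j}$. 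Because $G^*\trianglelefteq N$, the $N^0$-summands of $W_j$ are conjugates of $V_{\lambda_j}$ and each is a nontrivial irreducible $G^*$-module, so $W_j^{G^*}=0$. Then $G^*\le B$ forces $p(B^0)=G^*_{\mathrm{ad}}$, which acts on each $W_j$ without invariants, hence $W_j^{B}=0$; conversely, if $G^*\not\le B$ then $p(B^0)\subsetneq G^*_{\mathrm{ad}}$ lies in a conjugate of some $M_j$, so $W_j^{B^0}\supseteq V_{\lambda_j}^{M_j}\neq 0$.

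The main obstacle is the final implication: I have only obtained $W_j^{B^0}\neq 0$, whereas the lemma needs $W_j^{B}\neq 0$, and the finite group $B/B^0$ (which injects into $\Out(G^*)$, so has order at most $6$) may act on the finite-dimensional space $W_j^{B^0}$ with no fixed vectors — the subgroup $N_{G^*}(T)$ acting on the Cartan subalgebra inside the adjoint representation already illustrates this. Overcoming it requires enlarging the finite list: there are only finitely many subgroups $\Gamma\le\Out(G^*)$, and for each pair $(M_j,\Gamma)$ one must choose instead a representation induced from a subgroup of $N$ generated by (a lift of) $M_j$ and (a lift of) $\Gamma$ — equivalently, a large enough tensor/induced construction on which every reductive subgroup of $N$ whose identity-component image lies in $M_j$ and whose component group lies in $\Gamma$ is forced to have invariants, while $G^*$ still is not. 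Checking that finitely many such representations do the job is the combinatorial heart of the argument; once it is in place, repeating the two implications for $N(G^*)^{c}$ and the diagonal $G^*$ yields the statement for arbitrary $c$.
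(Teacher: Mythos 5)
This lemma is not proved in the paper at all — it is quoted verbatim as Lemma 4.5 of \cite{finiteness} and used as a black box — so there is no in-paper argument to compare yours with; it has to stand on its own, and as written it does not. Your analysis of the connected part is fine: $N^0=G^*\cdot\mathbb{G}_m$ by Schur, the reduction ``$G^*\le B$ iff $p(B^0)=G^*_{\mathrm{ad}}$'', the choice of nontrivial $V_{\lambda_j}$ with $V_{\lambda_j}^{M_j}\neq 0$ via affineness of $G^*_{\mathrm{ad}}/M_j$, and the forward implication all work. But the converse is exactly where the lemma has content, and you only obtain $W_j^{B^0}\neq 0$ rather than $W_j^{B}\neq 0$. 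This is not a removable technicality: the reductive subgroups $B$ the lemma is applied to (Zariski closures of monodromy/Galois images) are typically disconnected, and your own example $B=N_{G^*}(T)$ acting on the Cartan inside the adjoint representation shows that the finite list you constructed genuinely fails to detect such $B$. Note also that, absent the finiteness requirement, the equivalence itself is easy — since $(G^*)^c$ is normal in $N(G^*)^c$ and $N(G^*)^c/B$ is affine for reductive $B$, one has $B\supseteq (G^*)^c$ iff every irreducible constituent of $\mathcal{O}(N(G^*)^c/B)$ has $(G^*)^c$-invariants — so the uniform finiteness over all (possibly disconnected) $B$ is the entire point, and it is precisely what is missing.

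The sketched repair also rests on a false premise: $B/B^0$ does not inject into $\Out(G^*)$; only the image of $B$ in $N/N^0$ does. In your own counterexample $B/B^0$ is the Weyl group, acting through inner automorphisms, and in general the finite group acting on $W_j^{B^0}$ involves $N_{G^*_{\mathrm{ad}}}(p(B^0))/p(B^0)$, which varies with $B$ and is in no way bounded by $6$. So enlarging the list by pairs $(M_j,\Gamma)$ with $\Gamma\le\Out(G^*)$ cannot succeed as described. A correct argument must control the image of all of $B$ (not just $B^0$) in $\Aut(G^*)$, for instance by using that a reductive group has only finitely many conjugacy classes of maximal \emph{closed}, possibly disconnected, subgroups (Dynkin in characteristic $0$, Liebeck--Seitz in general) and then running the coordinate-ring argument for each such subgroup — or by following the proof actually given in \cite{finiteness}. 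Two smaller points: the representations produced should be decomposed into irreducibles to match the statement (harmless, since each constituent still has no $G^*$-invariants), and the statement you are asked to prove concerns $N(G^*)^c$ and $(G^*)^c$, so the case of general $c$ should at least be addressed via the projections, not only asserted to follow.
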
 

To apply this to $G_{\geo}$, we will show that the Galois group has no invariants on all representations which $G^*$ acts nontrivially on.  We do this by first characterizing such representations as certain pullbacks in Lemma \ref{lemma: 4.1}, which is an analogue of Lemma 4.1 of \cite{finiteness}.  Then we show that the Galois group has no invariants on these in Lemma \ref{lemma: 4.4}, which is an analogue of Lemma 4.4 of \cite{finiteness}.  To do the former, we will need a result on the structure of perverse sheaves with Euler characteristic 1 on the product $X\times \G_m^n$.  The classification in the case of $\G_m^n$ is in terms of hypergeometric complexes, which we now briefly review. 

Given $\psi$, a nontrivial $\Qlb$-valued additive character of a finite subfield $k_0$ of $\bar{k}$, and tame $\Qlb$-valued characters of $\pi_1((\G_m)_{\bar{k}})$, one defines the corresponding hypergeometric sheaf 
\[\Hyp(!, \psi, \chi 's, \rho 's) \in D^b_c((\G_m)_{\bar{k}}, \Qlb).\]  
Given $\lambda\in \kbar^*$, we define the multiplicative translate $\Hyp_{\lambda}(!, \psi, \chi 's, \rho 's)\coloneqq [x\mapsto \lambda x]_*\Hyp(!, \psi, \chi 's, \rho 's)$.  These are perverse sheaves \cite[Theorem 8.4.5]{Katz90}.
 For a discussion of the construction and basic properties of these sheaves, we refer the reader to \cite[Chapter 8]{Katz90}.  Katz proves in \cite[Theorem 8.5.3]{Katz90} states that if $K$ is a perverse simple object of $D^b_c(\G_m, \Qlb)$ whose Euler characteristic is 1, then $K$ is hypergeometric.  
 
 This was generalized to higher-dimensional tori $T$ in \cite[Section 8]{Gabber}.  Write $H(\psi; \chi)\coloneqq \L_{\chi}\otimes j^*\L_{\psi}[1])$ to be the hypergeometric sheaf on $\G_m$, which in Katz's notation would be written as $\Hyp(!, \psi; \chi; \emptyset)$.  Let $\delta_{\lambda}$ be the punctual sheaf $i_{\lambda *}\Qlb$ corresponding to the inclusion of a closed point $\lambda$ on $T$.  Given morphisms of tori $\pi_i\colon \G_m\rightarrow T$ for $1\le i\le m$, define the following hypergeometric complexes 
 \[
\Hyp(\mi, \lambda, (\chi_i), (\pi_i))\coloneqq \delta_{\lambda}*_{\mi} R\pi_{1*}(H(\psi; \chi_1))*_{\mi}\cdots *_{\mi}R\pi_{m*}(H(\psi; \chi_m)).
 \]

 Finally, we let $\Hyp_{\mi}(T)$ denote the category of $\mi$-hypergeometric complexes, those objects of $D^b_c(T, \Qlb)$ isomorphic to one of the $\Hyp(\mi, \lambda, (\chi_i), (\pi_i))$.
 This allows us the state the following classification result. 
 \begin{theorem} 
 \label{thm: gab}
     \cite[Theorem 8.2]{Gabber} Let $T$ be a torus over an algebraically closed field $k$ of characteristic $p$.  An object $A$ of $\Perv(T, \Qlb)$ is irreducible and satisfies $\chi(T, A)=1$ if and only if it is isomorphic to an object of $\Hyp_{\mi}(T)$.
 \end{theorem}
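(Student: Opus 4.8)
Since this is the classification theorem of Gabber--Loeser, in the paper we simply cite it; but the strategy one would follow to prove it is the following.

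The ``if'' direction is a direct computation. Each elementary factor $H(\psi;\chi) = \L_\chi \otimes j^*\L_\psi[1]$ is the shift by $[1]$ of a lisse rank-one sheaf on $\G_m$ that is tame at $0$ and has Swan conductor $1$ at $\infty$, so Grothendieck--Ogg--Shafarevich (with $\chi_c(\G_m)=0$) gives $\chi(\G_m, H(\psi;\chi)) = 1$; hence $H(\psi;\chi)$ is an invertible object of $\bar{\bP}((\G_m)_{\bar{k}})$. Invertibility is preserved by pushforward $R\pi_{i*}$ along a homomorphism of tori (which descends to a tensor functor of the Tannakian categories modulo negligibles, by the formalism of \cite{FFK}), by middle convolution, and by multiplicative translation $\delta_\lambda \cm(-)$; moreover $\delta_\lambda$ is itself perverse and irreducible with $\chi(T,\delta_\lambda)=1$. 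Therefore every $\Hyp(\mi,\lambda,(\chi_i),(\pi_i))$ is an invertible object of $\bar{\bP}(T)$, and an invertible object is automatically simple with one-dimensional image under the fiber functor, which equals $\chi(T,-)$ by the generic vanishing theorem (Theorem \ref{thm: generic}) together with the character-independence of the Euler characteristic on a torus (Remark \ref{remark: neg}).

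For the converse one first reinterprets the hypothesis: an irreducible $A \in \Perv(T,\Qlb)$ with $\chi(T,A)=1$ is precisely an invertible object of $(\bar{\bP}(T), \cm)$, by the same dimension count. The plan is then to linearize via the Mellin transform $\mf{M}$, which --- after the generic vanishing theorem --- is $t$-exact and carries $\cm$ to the tensor product of coherent modules over the ring of functions on the character scheme of $T$, with $\mf{M}(A)$ of generic rank $\chi(T,A)$. Thus $\mf{M}(A)$ is a coherent module of generic rank $1$, i.e.\ generically a line bundle, and the problem becomes the classification of the rank-one modules that arise as Mellin transforms of perverse sheaves: one shows their ``non-free locus'' is a finite union of translates of subtori of the character scheme, computes the Mellin transforms of the basic factors $R\pi_{i*}H(\psi;\chi_i)$ and $\delta_\lambda$ (up to twist, structure sheaves of such subtori, respectively skyscrapers), and matches the two descriptions. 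A d\'evissage on $n = \dim T$ organizes this: the base case $n=1$ is Katz's recognition theorem \cite[Theorem 8.5.3]{Katz90}, where the constraint $\chi=1$ forces via Grothendieck--Ogg--Shafarevich that $A$ is lisse away from $0$, $\infty$, and at most one further point with minimal ramification, after which hypergeometric rigidity identifies it; for $n>1$ one projects onto a quotient torus $\G_m^{n-1}$, applies the inductive hypothesis to the (again invertible) image under the fiber functor $Rp_{n*}j_{n0!}$, and reconstructs $A$ from the resulting family of hypergeometrics on the fibers.

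The hard part will be this last matching step: controlling the behaviour of $A$ ``at the boundary'' --- along the various coordinate subtori --- and proving that the only non-free loci of $\mf{M}(A)$ that can occur are those realized by iterated convolutions of the elementary factors. This is the technical heart of \cite[Section 8]{Gabber}, requiring the full compatibility of the Mellin transform with convolution and duality together with a stationary-phase-type decomposition of perverse sheaves on $T$.
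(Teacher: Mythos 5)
The paper does not prove this statement: it is quoted verbatim as an external result of Gabber--Loeser, and your proposal's primary move --- citing \cite[Theorem 8.2]{Gabber} --- is exactly what the paper does, so there is nothing to compare at the level of proof. Your appended sketch (Euler-characteristic/invertibility computation for the elementary factors, Mellin-transform linearization, and d\'evissage to Katz's $n=1$ recognition theorem) is a fair outline of the Gabber--Loeser strategy, but as you acknowledge it defers the technical heart of their Section 8 and should not be mistaken for a self-contained argument.
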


\begin{lemma}
\label{lemma: section}
    Let $X$ be a variety over an algebraically closed field $k$ of characteristic $p$ and let $K$ be an irreducible sheaf on $X\times \G_m^n$ which restricts to a perverse sheaf on $\G_m^n$ of dimension 1 at each $x\in X$.  Then for a nonempty open subset $U\subset X$, the restriction of $K$ to $U$ is a translate by a section of a pullback of a sheaf on $\G_m^n$.
\end{lemma}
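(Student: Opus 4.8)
The plan is to pass to the geometric generic fibre of $X$, apply Gabber's classification (Theorem~\ref{thm: gab}) to put that fibre in hypergeometric form, and then spread the resulting description out over a dense open of $X$. Let $\eta$ be the generic point of $X$ and $\bar\eta$ a geometric point above it. The restriction $K_{\bar\eta}$ is a perverse sheaf on $(\G_m^n)_{\bar\eta}$ with $\omega(K_{\bar\eta})$ one-dimensional; by Lemma~\ref{lemma: vanishing} this forces $\chi(\G_m^n,K_{\bar\eta})=1$, and (shrinking $X$ so that $K$ is not supported over a proper closed subset, and using that one-dimensionality of $\omega$ leaves no room for a negligible or a conjugate summand — a short Clifford-theory argument) $K_{\bar\eta}$ is a simple perverse sheaf of Euler characteristic $1$. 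Hence by Theorem~\ref{thm: gab} it is $\mi$-hypergeometric: there are $\lambda\in\G_m^n(\bar\eta)$, tame characters $\chi_1,\dots,\chi_m$ and torus morphisms $\pi_i\colon\G_m\to\G_m^n$ with
\[
K_{\bar\eta}\;\cong\;\delta_\lambda *_{\mi} H_0,\qquad H_0 := R\pi_{1*}H(\psi;\chi_1)*_{\mi}\cdots *_{\mi}R\pi_{m*}H(\psi;\chi_m),
\]
and $H_0$, being assembled from data over a finite subfield of $\bar k$, is the pullback of a fixed perverse sheaf on $\G_m^n$ over $\bar k$, which I still denote $H_0$.

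Next I would turn the translation parameter into a section and then spread the isomorphism out. A hypergeometric complex has its singularities concentrated at prescribed points of $(\P^1)^n$ with prescribed ramification there, so its translation stabiliser $\Stab_{\G_m^n}(H_0)$ is finite; thus $\lambda$ is one of finitely many points with $\delta_\lambda^{-1}*_{\mi}K_{\bar\eta}\cong H_0$, and its coordinates are algebraic over $k(X)$. Shrinking $X$ to a dense open $U$, and (if $\Stab(H_0)\neq 1$) first replacing it by a connected finite \'etale cover over which the resulting isomorphism can afterwards be descended using the irreducibility of $K$, we obtain a morphism $s\colon U\to\G_m^n$ with $s(\bar\eta)=\lambda$. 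Let $\tau_s\colon U\times\G_m^n\to U\times\G_m^n$ be $(x,t)\mapsto(x,s(x)^{-1}t)$ and put $L:=\tau_s^{*}\,\mathrm{pr}_2^*H_0$ (shifted to be perverse), a perverse sheaf whose fibre over $x$ is the translate $\delta_{s(x)}*_{\mi}H_0$; by construction $L_{\bar\eta}\cong K_{\bar\eta}$. To promote this to $K\cong L$ over a dense open $U'\subseteq U$, apply generic base change to $R\mathrm{pr}_{1*}R\mathcal{H}om(L,K)$: over a dense open $U'$ it commutes with base change, so its stalk at $\bar\eta$ is $R\Hom(L_{\bar\eta},K_{\bar\eta})$ and contains the chosen isomorphism $\phi_{\bar\eta}\colon L_{\bar\eta}\xrightarrow{\sim}K_{\bar\eta}$; after shrinking, $\phi_{\bar\eta}$ extends to a morphism of perverse sheaves $\phi\colon L|_{U'}\to K|_{U'}$ that is an isomorphism over the generic point of $U'$. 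The kernel and cokernel of $\phi$ for the perverse $t$-structure on $U'\times\G_m^n$ are then supported over a proper closed subset of $U'$; deleting it makes $\phi$ an isomorphism, so over a dense open of $X$ we have $K\cong\tau_s^{*}\mathrm{pr}_2^*H_0$, i.e. a translate by the section $s$ of the pullback of the fixed sheaf $H_0$ on $\G_m^n$, as desired.

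The main obstacle is this last spreading-out step: upgrading a fibrewise isomorphism over the geometric generic point to an isomorphism of perverse sheaves over a dense open of $X$. It requires care with generic base change for $R\mathrm{pr}_{1*}R\mathcal{H}om$ in the relative perverse setting, and with the assertion that a morphism of perverse sheaves on $U'\times\G_m^n$ which is a fibrewise isomorphism over the generic point of $U'$ is an isomorphism over a dense open --- one must bound the support of its kernel and cokernel. A secondary delicate point is ensuring the translation parameter defines an honest section over an open of $X$ rather than only over a finite cover; this rests on the rigidity (finiteness of the translation stabiliser) of hypergeometric complexes together with the irreducibility of $K$.
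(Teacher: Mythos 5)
Your proposal follows the same skeleton as the paper's proof: pass to the geometric generic fibre, invoke Theorem~\ref{thm: gab} to write the fibre as a hypergeometric complex $\delta_\lambda \cm H_0$, observe that the data defining $H_0$ (the additive character over a finite subfield, the tame Kummer characters) already lives over $\bar{k}$ so that $H_0$ is a pullback from $\G_m^n$, and then spread the resulting description out over a dense open of $X$. Where you genuinely diverge is in how the translation point $\lambda$ is turned into a section. The paper argues directly from Galois invariance: since $K$ is defined over $k(X)$, any $\Gal(\overline{k(X)}/k(X))$-conjugate of $\lambda$ produces the same translate of $H_0$, so either the sheaf is translation-invariant (in which case there is nothing to prove) or $\lambda\in k(X)$, and $\lambda$ itself is the section on the open set where it is regular. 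You instead assert that the translation stabiliser of $H_0$ is finite, deduce that $\lambda$ is merely algebraic over $k(X)$, pass to a finite \'etale cover and then ``descend using irreducibility.'' Two caveats there: the finiteness of the stabiliser is not really a consequence of ``singularities at prescribed points'' --- the clean reason is that invariance under a positive-dimensional subtorus would force the Euler characteristic to vanish, contradicting $\chi=1$; and knowing $\lambda$ only up to a finite Galois-stable set does not by itself yield a section of $U$ (a Galois-stable coset of the finite stabiliser need not contain a $k(X)$-rational point), so the descent step you defer is exactly the point the paper's Galois-invariance dichotomy is designed to settle. Your more elaborate spreading-out via generic base change on $R\mathrm{pr}_{1*}R\sheafhom(L,K)$ is fine and fills in what the paper compresses into the one-line remark that constructibility propagates a generic-fibre isomorphism to an open set; you also treat general $n$ directly, whereas the paper writes out only $n=1$.
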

\begin{proof}
    We first consider the case $n=1$.  Let $K_{\bar{k(X)}}$ denote the pullback of $K$ to the geometric generic fiber $(\G_m)_{\bar{k(X)}}$; denote.  If we show that $K_{\bar{k(X)}}$ is a translate of a pullback from $\G_m$, then the same must hold over an open set because these are in the derived category of constructible sheaves.  
    By Theorem \ref{thm: gab}, $K_{\bar{k(X)}}$ is of the form $\Hyp(\mi, \lambda, (\chi_i), (\pi_i))$ where $\lambda\in \bar{k(X)}^*$, $\psi$ is an additive character of a finite subfield, and the $\chi_i$ are tame characters of $\pi_1((\G_m)_{\bar{k(X)}})$.  
    For this to be a pullback translated by a section, it suffices to know that the characters are defined over $k$, and that the translate by $\lambda$ gives the desired section.  
    The additive character is defined over a finite subfield by definition, and the tame fundamental group of $\G_m$ is the same over $k$ and $\bar{k(X)}$.  Next, we need to know that $\lambda$ is actually contained in $k(X)$.  Note that the pullback of $K$ to the geometric generic fiber factors through $(\G_m)_{\overline{k}(X)}$, and thus the Galois group $\Gal(\overline{k(X)}/k(X))$ must act trivially on $K_{\bar{k(X)}}$.  Because the characters are invariant under this Galois action that implies that the translate of the sheaf by any Galois conjugate of $\lambda$ has to be the same.  Thus either the sheaf is constant in which case there is nothing to prove, or $\lambda\in k(X)$ as desired.  Then $\lambda$ defines the desired section over the open subset on which it is a regular function.  

\end{proof}

Now we prove an analogue of Lemma 4.1 of \cite{finiteness}.
\begin{lemma}
    \label{lemma: 4.1} 
    Assume that $G^*$
is a simple algebraic group with irreducible distinguished representation, and that Y is not equal to a constant family of hypersurfaces
translated by a section of $\G_m^n$.
Let $K_0$
be an irreducible perverse sheaf on $(\G_m^n)_{\overline{\eta}}$ in the Tannakian category generated
by $K$. Assume that $K_0$
is a pullback from $(\G_m^n)_{\Fqb}$ to $(\G_m^n)_{\overline{\eta}}$ of a perverse sheaf on $(\G_m^n)_{\Fqb}$. Then
$G^*$ acts trivially on the irreducible representation of $G$ corresponding to $K_0$.
\end{lemma}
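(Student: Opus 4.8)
I would argue by contradiction, combining the Tannakian formalism in families with Schur's lemma and the classification results recalled above (Theorem \ref{thm: gab} and Lemma \ref{lemma: section}). Suppose $G^*$ acts nontrivially on $V_0 := \omega(K_0)$, where $K_0 = \pi^*M$ for a perverse sheaf $M$ on $(\G_m^n)_{\Fqb}$. The plan is to show this forces $Y$ to be a constant family of hypersurfaces translated by a section, contradicting the hypothesis.

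\textbf{Setting up the relative picture.} First I would pass to a dense open $U\subseteq X$ over which $Y$ extends to a smooth family of hypersurfaces, and form the Tannakian category of suitably nice perverse sheaves on $U\times\G_m^n$ modulo negligible ones under relative convolution along $U$, with fiber functor ``restrict to $\overline{\eta}$, then apply $\omega$''. Its group $G_U$ contains $G$ as a normal subgroup with $G_U/G$ a quotient of $\pi_1^{\et}(U,\overline{\eta})$ (let $\Pi$ be its image in $G_U$, and $\Pi'$ its normal closure in $G_U$); the relative distinguished object $\mathcal{K}$ restricts over $\overline{\eta}$ to $K$; and $G^0$, hence $G^*=[G^0,G^0]$, is normal in $G_U$. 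Since $K_0\in\langle K\rangle$ it is a summand of some $K^{*a}*(K^\vee)^{*b}$, so it lifts to a summand $\mathcal{K}_0$ of $\mathcal{K}^{*a}*(\mathcal{K}^\vee)^{*b}$; because $K_0=\pi^*M$ is a pullback, after further shrinking $U$ one has $\mathcal{K}_0\cong p_{\G_m^n}^*M\otimes p_U^*\mathcal{N}$ for a rank one local system $\mathcal{N}$ on $U$ (full faithfulness of $\pi^*$ on perverse sheaves pins down $M$). Hence $\Pi$, and therefore $\Pi'$, acts on $V_0=\omega(\mathcal{K}_0)$ by scalars (through the monodromy character of $\mathcal{N}$).

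\textbf{The Schur-lemma step.} Since $G^*$ and $\Pi'$ are both normal in $G_U$ and $G^*$ is simple, $G^*\cap\Pi'$ is either $G^*$ or finite central in $G^*$. If $G^*\subseteq\Pi'$ then $G^*$ acts on $V_0$ by scalars, hence trivially (a connected semisimple group has no nontrivial characters), which is the conclusion of the lemma. So assume $G^*\cap\Pi'$ is finite; then $[G^*,\Pi']$ is a finite central subgroup of $G^*$, which acts on the distinguished representation $V=\omega(\mathcal{K})$ by roots of unity since $V$ is $G^*$-irreducible. Then for $x\in\Pi'$, conjugation by $x$ scales each element of the image of $G^*$ in $\GL(V)$ by a root of unity, and the map $G^*\to\G_m$, $h\mapsto$ (this scalar for $[x,h]$), is a homomorphism, hence trivial; so $x$ centralizes the image of $G^*$, and since $V$ is an absolutely irreducible $G^*$-representation, Schur's lemma forces $x$ to act on $V$ by a scalar. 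Thus $\Pi'$ acts on $V$ by scalars: the image of $\pi_1^{\et}(U,\overline{\eta})$ in $\GL(V)$ consists of scalars.

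\textbf{Concluding.} It remains to turn ``$\mathcal{K}$ is projectively constant along $U$'' into ``$Y$ is a constant family translated by a section''. Projective constancy means $\underline{\End}(\mathcal{K})$ has trivial $\Pi'$-action, so over a dense open it is a pullback from $(\G_m^n)_{\Fqb}$ up to a rank one twist from $U$; extracting the twist produces a rank one object of the relative category whose fibers are perverse of $\omega$-dimension $1$, and Lemma \ref{lemma: section} identifies it, over a dense open, as a translate by a section of a pullback. Since on a torus every rank one character sheaf is negligible (so the only scalar twists available in $\overline{\bP}$ are the translations $\delta_\lambda$), this gives $\mathcal{K}\cong\delta_{\lambda(-)}*p_{\G_m^n}^*K'$ over a dense open of $X$ for a section $\lambda$ and some $K'$ on $(\G_m^n)_{\Fqb}$; restricting to $\overline{\eta}$ shows $Y$ is a constant family of hypersurfaces translated by a section, contradicting the hypothesis. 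Hence the second case cannot occur, and $G^*$ acts trivially on $V_0$, as claimed. I expect the main obstacle to be making the relative Tannakian formalism precise (relative convolution over $U$, normality of $G$ in $G_U$ with the stated quotient, and lifting objects of $\langle K\rangle$ over $\overline{\eta}$ to the relative category) and, in the same spirit, the final descent from projective constancy to ``constant up to a section''; the representation-theoretic core above is comparatively soft.
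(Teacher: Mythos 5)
Your route is genuinely different from the paper's: the paper imports the representation-theoretic core of Lemma 4.1 of \cite{finiteness} essentially verbatim---producing irreducible objects $K_1,K_2$ with $K_2*\mc{F}=K_1$ for some $\mc{F}$ of Euler characteristic $1$---and only replaces the endgame by Lemma \ref{lemma: section} together with an orbit-closedness argument. You instead try to get there with a relative Tannakian group $G_U$, normality, and Schur's lemma. The Schur step itself is fine (for $x\in\Pi'$ the map $h\mapsto[h,x]$ is a morphism from the connected group $G^*$ into the finite group $G^*\cap\Pi'$, hence constant), but the step that fails as stated is the descent in your concluding paragraph. The principle you invoke---trivial $\Pi'$-action on the relative $\omega$ forces an object to be a pullback from $(\G_m^n)_{\Fqb}$ up to a rank-one twist from $U$---is false: the family $\delta_{s(-)}*p_{\G_m^n}^*M_0$ attached to a non-constant section $s$ has trivial monodromy on the relative fiber functor, yet is neither a pullback nor a pullback tensored with a local system on $U$. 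For $\underline{\End}(\mc{K})=\mc{K}*\mc{K}^\vee$ the translations cancel, but then passing from ``$\underline{\End}(\mc{K})$ is constant'' back to ``$\mc{K}$ is a translate by a section of a pullback'' is exactly the nontrivial projective-descent statement; your phrase ``extracting the twist produces a rank one object of the relative category'' is unjustified, since a priori the discrepancy between $\mc{K}$ and a constant family is a torsor under automorphisms, not an object of the convolution category to which Lemma \ref{lemma: section} could be applied. This is precisely the work the imported Lawrence--Sawin construction does for you: it hands you the rank-one object $\mc{F}$ with $K_2*\mc{F}=K_1$, and only then does Lemma \ref{lemma: section} enter.

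There are two further gaps. First, the relative formalism you lean on (a convolution Tannakian category over $U\times\G_m^n$ with group $G_U$, $G\trianglelefteq G_U$ with $G_U/G$ a quotient of $\pi_1^{\et}(U,\overline{\eta})$, lifting of objects of $\langle K\rangle$, and the identification $\mc{K}_0\cong p_{\G_m^n}^*M\otimes p_U^*\mc{N}$) is established neither in this paper nor in its references; the paper deliberately channels the Galois/monodromy action through the character fiber functors $\omega_\chi$ (Lemmas \ref{lemma: normal} and \ref{lemma: bart}), because that is where normality of the geometric group inside the arithmetic one is actually proved. Calling this part ``comparatively soft'' understates it. Second, even granting everything, you only contradict constancy-up-to-translation of $Y$ over a dense open of $X$, whereas the hypothesis of the lemma is global; the paper closes this by showing the relevant $\G_m^n$-orbit is closed in the space of hypersurfaces with Newton polyhedron $\Delta$ (boundary orbits have positive-dimensional stabilizers, hence Newton polyhedra with empty interior), so that the generic statement globalizes. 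Your proposal omits this step entirely.
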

\begin{proof}
    The proof of Lemma 4.1 in \cite{finiteness} carries over until the last paragraph, replacing the abelian variety $A$ with $\G_m^n$ and working over $\Fqb$ rather than over $\C$.  Using the same notation, by arguing by contradiction we have in our situation that $K_2* \mc{F} = K_1$ where $\mc{F}$ is a perverse sheaf with Euler characteristic 1.  By Lemma \ref{lemma: section}, for some non-empty open set $U\subset \G_m^n$ we have that $\mc{F}$ restricted to $X\times U$ is given by a complex on a  that is a translate by a section of $U$.  Then for any two points $x_1, x_2\in U$, we have that the perverse sheaf given by the inclusions of $Y_{x_1}$ and $Y_{x_2}$ are both given by the convolution of $Y$ restricted to another point in $U$, a fixed sheaf, and a section of $U$.  Thus looking at the supports, $Y_{x_1}$ and $Y_{x_2}$ are related by a translate by that section. This implies that over an open set, the image of $Y$ in $X\times \G_m^n$ lies in an orbit of a single hypersurface under the action of $\G_m^n$ by translation.  It thus suffices to show that this orbit is closed.  The closure of such an orbit in the space where we consider all hypersurfaces with the given Newton polyhedron (not just the ones nondegenerate with respect to their Newton polyhedron) will be a union of torus orbits.  All other torus orbits have lower dimension and thus have positive-dimensional stabilizers.  Such hypersurfaces have newton polyhedra with empty interiors, so they cannot appear in the closure.  Thus the original orbit is indeed closed and $Y$ is globally a translate by a section of $\G_m^n$, giving the desired contradiction. 
\end{proof}

Next we give an analogue of Lemma 4.4 of \cite{finiteness}).  

\begin{lemma}
\label{lemma: 4.4}
    Let $K\in D^b_c((\G_m)^n_{\overline{k}(\eta)}, \Qlb)$ be a perverse sheaf of geometric origin.  If no irreducible component of $K$ is a pullback from $A_{\kbar}$, then for all characters $\chi:\pi_1^{\et}((\G_m)^n_{\kbar(\eta)})\rightarrow \Qlb^\times$ outside a finite set of torsion translates of proper subtori of $\Pi(A)$, we have $H^i((\G_m)^n_{\overline{\kbar(\eta)}}, K\otimes \mc{L}_\chi)=0$ for $i\neq 0$ and $\left(H^0((\G_m)^n_{\overline{\kbar(\eta)}}, K\otimes \L_{\chi})\right)^{\Gal(\overline{\kbar(\eta)}/\kbar(\eta)}=0$.  
\end{lemma}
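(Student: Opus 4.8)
The plan is to follow the proofs of Lemma \ref{lemma: 4.1} and of Lemma 4.4 of \cite{finiteness}, substituting $\G_m^n$ for the abelian variety $A$ throughout and using purity of perverse sheaves in place of geometric semisimplicity (Kashiwara's conjecture not being available in characteristic $p$).

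\emph{Reductions and the first conclusion.} Both assertions are additive in $K$: they hold for $K$ as soon as they hold for each irreducible constituent of each perverse cohomology sheaf ${}^pH^j(K)$, and the hypothesis that no irreducible component of $K$ is a pullback from $(\G_m^n)_{\Fqb}$ is inherited by every such constituent. So I may assume $K$ is an irreducible perverse sheaf of geometric origin on $(\G_m^n)_{\kbar(\eta)}$ which is not a pullback from $(\G_m^n)_{\Fqb}$. The vanishing $H^i((\G_m^n)_{\overline{\kbar(\eta)}}, K\otimes\L_\chi)=0$ for $i\neq 0$ and all $\chi$ outside a finite union of tacs is then immediate from generic vanishing, Theorem \ref{thm: generic}, applied over the field $\kbar(\eta)$; this is precisely the first bullet, so it remains to rule out Galois invariants on $H^0$.

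\emph{Spreading out.} Since $K$ is of geometric origin it descends to a field finitely generated over $\F_p$: there are a finite field $\F_q$, a variety $X_0/\F_q$ with $\F_q(X_0)\subset\kbar(\eta)$, and an irreducible perverse sheaf $K_0$ of geometric origin on $(\G_m^n)_{\F_q(X_0)}$, pure after a Tate twist, whose pullback is $K$. Spreading out, there are a dense open $U_0\subseteq X_0$ and a mixed perverse sheaf $\widetilde K_0$ on $U_0\times\G_m^n$ restricting to $K_0$ on the generic fibre and to a perverse sheaf on each fibre; shrinking $U_0$ and invoking Theorem \ref{thm: generic vanishing} uniformly over the base by constructibility, for every $\chi$ outside a finite union of tacs the complex $Rp_{1*}(\widetilde K_0\otimes p_2^*\L_\chi)$ is lisse and concentrated in degree $0$, equal to a sheaf $\mathcal V_\chi$ on $U_0$ whose geometric generic stalk is $H^0((\G_m^n)_{\overline{\kbar(\eta)}}, K\otimes\L_\chi)$ with its $\Gal(\overline{\kbar(\eta)}/\kbar(\eta))$-action given by the monodromy of $\mathcal V_\chi$ on $(U_0)_{\Fqb}$. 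In particular a nonzero Galois invariant is the same thing as a nonzero element of $H^0((U_0)_{\Fqb}, \mathcal V_\chi)=H^0((U_0\times\G_m^n)_{\Fqb}, \widetilde K_0\otimes p_2^*\L_\chi)$, i.e.\ a geometrically constant sub-local-system of $\mathcal V_\chi$.

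\emph{The core step and the main obstacle.} Suppose the set of $\chi$ admitting such an invariant is not contained in any finite union of torsion translates of proper subtori. Since $\widetilde K_0$ is pure, $\mathcal V_\chi$ is pure by Weil~II, so a geometrically constant sub-local-system is, after a Tate twist, arithmetically trivial; spreading this over the character scheme and using that the locus of such $\chi$ is constructible, one produces — exactly as in the proof of Lemma \ref{lemma: 4.1} — a nonzero morphism into $K_0$, defined over $\F_q(X_0)$, whose source is (the middle convolution of a punctual sheaf with) a hypergeometric complex, the hypergeometric factor being forced by the classification of Euler--characteristic--one irreducible perverse sheaves on the torus, Theorem \ref{thm: gab}. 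By Lemma \ref{lemma: section} this source is, over a nonempty open of $\G_m^n$, a translate by a section of a pullback from $(\G_m^n)_{\Fqb}$; since $K_0$ is irreducible, this forces $K$ itself to be a pullback from $(\G_m^n)_{\Fqb}$, contradicting the hypothesis, so the bad locus of $\chi$ must lie in a finite union of tacs. The genuinely delicate point is this last step: passing from ``an invariant exists for many $\chi$'' to ``a single morphism of sheaves on $\G_m^n$ exists,'' which needs the constructibility and generic--base--change inputs of the generic vanishing package (Theorem \ref{thm: generic vanishing}) applied uniformly in both the base parameter and the character, together with the Weil~II weight bounds — and the latter are exactly why one first descends $K$ to a finite field and why purity, rather than mere geometric semisimplicity, is the natural hypothesis to carry through.
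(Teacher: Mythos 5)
Your reduction to an irreducible constituent, the use of Theorem \ref{thm: generic} for the vanishing in degrees $i\neq 0$, and the descent to a model over a finite field so that purity is available are all consistent with the paper. But the core of the second claim has a genuine gap. A nonzero Galois invariant in the generic stalk only gives, a priori, a monodromy-invariant (hence constant) sub-local-system of $R^0\rho_*(K'\otimes\L_\chi)$ over a $\chi$-dependent open subset of the base; to feed this into the tacs-structure theorem you must convert it into a nonvanishing statement about a \emph{fixed} complex (e.g.\ a nonzero map from the constant sheaf, equivalently a trivial direct summand of the full pushforward $R\rho_*(K'\otimes\L_\chi)$ on the whole base, whence by adjunction and pushforward to $\G_m^n$ a cohomological nonvanishing for $R\pi_*K'$ twisted by $\L_\chi$). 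In \cite{finiteness} this summand exists by the decomposition theorem because $\rho$ is proper; here $\G_m^n$ is not proper, and this is exactly the point the paper's proof has to repair: it invokes the relative generic vanishing result \cite[Theorem 2.11]{FFK}, giving $R\rho_*(K'\otimes\L_\chi)=R\rho_!(K'\otimes\L_\chi)$ perverse for generic $\chi$, and then purity (geometric origin plus Deligne's weight bounds for $R\rho_!$ and $R\rho_*$) to conclude the pushforward is pure, hence geometrically semisimple, so the invariant piece really does split off. Your proposal never supplies this step; your identification of the invariants with $H^0$ of $\widetilde K_0\otimes p_2^*\L_\chi$ on all of $U_0\times\G_m^n$ silently assumes it (and also only holds after shrinking $U_0$ in a way depending on $\chi$), and purity of $\mathcal V_\chi$ by Weil II does not by itself make a geometrically constant sub split or become ``arithmetically trivial after a Tate twist.''

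Separately, your endgame misroutes the contradiction: the hypergeometric classification (Theorem \ref{thm: gab}) and Lemma \ref{lemma: section} belong to the proof of Lemma \ref{lemma: 4.1}, where one has a convolution identity with an Euler-characteristic-one perverse sheaf; they play no role in Lemma \ref{lemma: 4.4}. Here the contradiction with ``no component of $K$ is a pullback'' comes from applying the finite-union-of-tacs structure (Theorem \ref{thm: generic vanishing}) to the fixed pushforward complex on $\G_m^n$ obtained after the summand/global-section step, following the remainder of the argument of \cite[Lemma 4.4]{finiteness}; producing ``a morphism from a hypergeometric complex into $K_0$'' is neither what that argument yields nor a substitute for it.
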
 
\begin{proof}
    The proof of Lemma 4.4 of \cite{finiteness} applies here, except that we must replace a few arguments for abelian varieties with those for tori.  First, the generic vanishing theorem for abelian varieties used can be replaced with the corresponding result for tori, which is Theorem \ref{thm: generic vanishing}, to prove the first claim. 

    For the second claim, as in the proof of Lemma 4.4 of \cite{finiteness} we may assume $X$ is smooth by restricting to an open subset, say of dimension $m$, and spread $K$ out to a sheaf $K'$ over $\G_m^n\times X$ such that $K'[m]$ is perverse.  Let $\pi\colon \G_m^n\times X\rightarrow \G_m^n$ and $\rho\colon \G_m^n\times X\rightarrow X$ be the projections.
    
    By the proper base change theorem, the stalk of this complex at the generic point is given by $H^*((\G_m^n)_{\bar{\C(\eta)}}, K\otimes \L_{\chi})$, which by assumption is supported in degree 0.  Thus on an open set $R\rho_*(K'\otimes \L_{\chi})$ is lisse.  The Galois action matches the monodromy action on this subset, so if the Galois invariants on the stalk are nonzero then there is some monodromy-invariant complex $\mc{F}^0\subset R\rho_*(K'\otimes \L_{\chi})$.

    The key in the proof of Lemma 4.4 of \cite{finiteness} is to use that $\mc{F}^0$ is a direct summand, so that it produces a nonzero global section of $H^0(X, R\rho_*(K'\otimes \mc{L}_{\chi}))$.  This is achieved using the decomposition theorem there, which we do not have access to here because $\rho$ is not proper in this scenario.  Instead, we use the following argument. 

     By \cite[Theorem 2.11]{FFK},  for generic $\chi$ we have that $R\rho_*(K'\otimes \L_{\chi}) = R\rho_!(K'\otimes \L_{\chi})$ and is perverse.  Furthermore, since $K$ is of geometric origin and Deligne's theorems on the behavior of weights under $R\rho_*$ and $R\rho_!$, this complex is also pure and thus geometrically semisimple.  Since we are working over an algebraically closed field, this complex is indeed a sum of shifts of irreducible perverse sheaves, as desired.  The rest of the proof of Lemma 4.4 of \cite{finiteness} carries over.
\end{proof} 

Finally, we recall the statement of Lemma 4.5 of \cite{finiteness}.   
\begin{lemma}
    \label{lemma: 4.5} Let $G^*$ be a simple algebraic group and $c$ a natural number. Fix an irreducible representation of $G^*$.  Let $N(G^*)$ be the normalizer of $G^*$ inside the group of automorphisms of this representation.  Then there is a finite list of irreducible representations of $N(G^*)^c$ such that a reductive subgroup $B$ of $N(G^*)^c$ contains $(G^*)^c$ if and only if $B$ has no invariants on any of these representations. 
\end{lemma}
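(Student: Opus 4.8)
The assertion is purely group-theoretic and is established as Lemma~4.5 of \cite{finiteness}; the plan is to reproduce that argument. Write $H = N(G^*)^c$ and $S = (G^*)^c$, which is a normal subgroup of $H$. Since $G^*$ is simple and the fixed representation $V$ is irreducible, Schur's lemma gives $C_{\GL(V)}(G^*) = \G_m$, so $N(G^*)^\circ = G^*\cdot\G_m$ and $N(G^*)/N(G^*)^\circ$ embeds into $\Out(G^*)$ and is therefore finite; in particular $H$ is reductive and $S$ is a connected normal semisimple subgroup of $H$, with $H/S$ an extension of a finite group by a torus.

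First I would establish the equivalence for an a priori infinite list. The ``only if'' direction is immediate: if $S\subseteq B$ then $W^{B}\subseteq W^{S}$, so as soon as every representation $W$ in the list satisfies $W^{S}=0$, any $B\supseteq S$ has no invariants on any of them. For the ``if'' direction, suppose $B\le H$ is reductive with $S\not\subseteq B$. By Matsushima's criterion $H/B$ is affine, and $S$ acts trivially on $\O(H/B)$ exactly when $S$ lies in every $H$-conjugate of $B$, in particular when $S\subseteq B$; so under our hypothesis $S$ acts nontrivially on $\O(H/B)$. As $H$ is linearly reductive, $\O(H/B)$ is a direct sum of irreducible $H$-representations, and some irreducible constituent $W_{B}$ carries a nontrivial $S$-action. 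Because $S\trianglelefteq H$, Clifford theory shows $W_{B}|_{S}$ is a sum of $H$-conjugates of a single irreducible $S$-representation, all nontrivial, whence $W_{B}^{S}=0$; and $W_{B}^{B}\neq 0$ since $W_{B}$ occurs in $\O(H/B)$ and $B$ is reductive. Thus the family $\{W_{B}\}$, as $B$ ranges over reductive subgroups not containing $S$, has the desired property, and $W_{B}$ depends only on the $H$-conjugacy class of $B$.

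It remains to extract a finite sublist, and this is where the real work lies. The essential input is the finiteness, in characteristic $0$, of the number of conjugacy classes of connected semisimple subgroups of a fixed reductive group. To use it I would first enlarge the candidate list so that it is closed under twisting by characters of the central torus $\G_m^{c}\subseteq H$; since all representations in play have vanishing $S$-invariants, one may then pass to the quotient $H/\G_m^{c}$, which absorbs the central torus direction and leaves the finitely many conjugacy classes of connected semisimple subgroups of $\prod_{i=1}^{c}\bigl(G^*/Z(G^*)\bigr)$ together with the finitely many identifications of the $c$ factors by outer automorphisms of $G^*$ (a Goursat-type analysis using the finiteness of $\Out(G^*)$). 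Keeping the representation $W_{B}$ attached to each of these finitely many classes, together with the adjoint and $\Hom$-type representations of and between the factors, produces the finite list. I expect this last reduction to be the main obstacle — honestly controlling disconnected and small-dimensional reductive $B$, and showing that finitely many representations suffice despite the infinitely many conjugacy classes of subtori of $H$ — while everything upstream of it is the soft invariant-theoretic argument above.
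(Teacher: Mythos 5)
First, a point of comparison: the paper does not prove this statement at all — it is quoted verbatim from \cite{finiteness} (Lemma 4.5 there) and used as a black box — so there is no internal argument to measure yours against; the relevant comparison is with Lawrence--Sawin's own proof, which your sketch does not reconstruct.

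Judged on its own terms, your write-up has a genuine gap, and it sits exactly where the content of the lemma lies. The soft half is fine: for a single reductive $B\subseteq H=N(G^*)^c$ with $(G^*)^c\not\subseteq B$, the $\mathcal{O}(H/B)$ argument together with Clifford theory for the normal subgroup $(G^*)^c$ does produce an irreducible $H$-representation with no $(G^*)^c$-invariants but with $B$-invariants (modulo the small fix that Frobenius reciprocity gives $B$-invariants on duals of constituents, which is harmless since one may close the list under duality). What is not proved is the finiteness of the list, and the reduction you propose does not supply it: finiteness of conjugacy classes of connected semisimple subgroups says nothing about the reductive subgroups whose failure to contain $(G^*)^c$ comes from their torus or finite parts. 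Already for $c=1$ and $G^*=\SL_2\subset\GL_2$, the infinitely many pairwise non-conjugate subtori, the normalizer of the maximal torus, and the finite subgroups all have trivial connected semisimple derived group, so ``keeping the representation $W_B$ attached to each of the finitely many classes'' attaches nothing to them; and adding ``adjoint and $\Hom$-type representations'' is not enough as stated (the zero-weight vector of the adjoint representation is negated by the Weyl element, so the adjoint representation fails to detect $N(T)$, while $\Sym^4(\std)\otimes\det^{-2}$ does detect it — one must actually exhibit, with bounds, finitely many representations whose weight-zero or invariant vectors catch every subtorus, every torus-normalizer, every finite subgroup, and every disconnected $B$). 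You explicitly flag this as the unresolved ``main obstacle''; but that obstacle is the theorem. As it stands, the proposal establishes the easy equivalence with an a priori infinite list and defers the finiteness, so it cannot be accepted as a proof of the lemma.
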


We will use this with $c=1$.  We can now prove Theorem \ref{thm: blah}. 
\begin{proof}
    [Proof of Theorem \ref{thm: blah}]  As stated earlier, Theorem \ref{thm: generic vanishing} ensures that for $\chi$ avoiding a finite set of tacs, we have $R^kf_*(g^*\mc{L}_{\chi})=0$ for $k\neq n-1$.  We now only consider such $\chi$.  

    Call this geometric monodromy group $G'$.  Recall that the geometric monodromy group of a sheaf is given by the Zariski closure of the Galois group of the function field of the generic point $k$ acting on the stalk at the geometric generic point.  
    Because $\chi$ is chosen to satisfy the generic vanishing theorems, by proper base change the stalk of $R^{n-1}f_*(g^*\mc{L}_{\chi})$ at the generic point is given by $H^0((\G_m)^n_{\bar{k(X)}}, K\otimes \L_\chi)$ where $K$ is the perverse sheaf given by the inclusion of the hypersurface into $(\G_m)_{\kbar}^n$ in degree $-n+1$. 
    This stalk can be interpreted as the arithmetic fiber functor and we are thus in the setting of Lemma \ref{lemma: bart}.
    Both $\Gal(\overline{k(X)}/k(X))$ and $G$ act on it, and by the lemma the former normalizes $G^*$, so $G'\subset N(G^*)$ as desired.   

    By Deligne's theorem, $G'$ is reductive.  Thus by Lemma \ref{lemma: 4.5}, there is a finite list of irreducible representations of $N(G^*)$ such that if the action of $G'$ on these representations has no invariants, then $G'$ contains $G^*$.  
    Because $i_*\Qlb[n-1]$ is pure, we can apply Lemmas \ref{lemma: normal} and \ref{lemma: bart}.  
    Then the action of $G_k$ on the associated representation factorizes through the normalizer of $G_{\overline{k}}$, and thus through $N(G^*)$.  
    Thus all representations of $N(G^*)$ correpsond to pure perverse sheaves in the Tannakian subcategory $\langle i_*\Qlb[n-1]\rangle $ of the arithmetic Tannakian category of pure perverse sheaves on $(\G_m)_{k, \eta}^n$ modulo negligible sheaves. 

    Because $G^*$ acts nontrivially on these representations, by Lemma \ref{lemma: 4.1}, none of these is a pullback from $(\G_m^n)_{\kbar}$.  Then by Lemma \ref{lemma: 4.4} the Galois group has no invariants on them outside some finite set of tacs, as desired.  
\end{proof}

\subsection{Lifting to characteristic 0 and application to an orbit of characters}

Now that we have a way of proving a big geometric monodromy result from a big convolution monodromy group over characteristic $p$, we will now lift this to characteristic 0.  We recall the following result. 
\begin{theorem}
    \cite[Theorem 8.18.2]    {Katz90}
    \label{thm: sp} 
 Let $S$ be a normal connected Noetherian scheme with generic point $\eta$, $X/S$ a smooth $S$-scheme with geometrically connected fibers, $l$ a prime number and $\mc{F}$ a lisse $\Qlb$-sheaf of rank $n\ge 1$ on $X$.  For each geometric point $s$ in $S$, define 
    \[
\Gamma(s)\coloneqq \text{ the image of } \pi_1(X_s) \text{ in } \GL(n, \Qlb).
    \] 
    Then 
    \begin{enumerate}
        \item the group $\Gamma(s)$ decreases under specialization, in the sense that if $t$ is a specialization of $s$, then $\Gamma(t)$ is conjugate in $\GL(n, \Qlb)$ to a subgroup of $\Gamma(s)$. 
        \item there exists an open neighborhood $V$ of $\eta$ in $S$ such that for any geometric point $s\in V$, $\Gamma(s)$ is conjugate in $\GL(n, \Qlb)$ to $\Gamma(\bar{\eta})$. 
    \end{enumerate}
\end{theorem}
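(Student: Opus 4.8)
The plan is to recast both assertions as statements about the subgroups $\operatorname{Im}(\pi_1^{\et}(X_{\bar s})\to\pi_1^{\et}(X))$ of $\pi_1^{\et}(X)$ as the geometric point $\bar s$ of $S$ varies: writing $\rho\colon\pi_1^{\et}(X)\to\GL(n,\Qlb)$ for the monodromy representation of $\mc{F}$, one has $\Gamma(s)=\rho\big(\operatorname{Im}(\pi_1^{\et}(X_{\bar s})\to\pi_1^{\et}(X))\big)$ (or its Zariski closure), so everything comes down to how these subgroups move.

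For part (1) I would first reduce to the case $S=\Spec R$ with $R$ a strictly henselian discrete valuation ring, $\eta$ the generic and $t$ the closed point: a general specialization $t\in\overline{\{s\}}$ is a finite chain of codimension-one specializations, and a standard noetherian reduction (spread out to finite type over $\Z$, normalize $\overline{\{s\}}$, localize, strictly henselize) produces at each step a morphism $\Spec R\to S$ of this type, along which base-changing $X$ changes no $\Gamma$ since $\pi_1^{\et}$ of a quasi-compact scheme is insensitive to extension of the algebraically closed base field. The key point is then that $\pi_1^{\et}(X_{\bar\eta})\to\pi_1^{\et}(X)$ is surjective: $X$ is connected and normal (smooth over the regular $R$, with geometrically connected fibres), so a connected finite \'etale cover $Y\to X$ split over $X_{\bar\eta}$ has $Y_\eta\cong X_\eta\otimes_{k(\eta)}L$ for a finite separable $L/k(\eta)$, and fppf descent along the faithfully flat $X\to S$ converts \'etale-ness of $Y\to X$ into that of $\Spec\mathcal{O}_L\to S$, forcing $L/k(\eta)$ unramified and hence trivial (the fraction field of a strictly henselian valuation ring has no nontrivial unramified extension); so $Y\to X$ is trivial. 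Therefore $\operatorname{Im}(\pi_1^{\et}(X_{\bar t})\to\pi_1^{\et}(X))\subseteq\pi_1^{\et}(X)=\operatorname{Im}(\pi_1^{\et}(X_{\bar\eta})\to\pi_1^{\et}(X))$, and applying $\rho$ gives $\Gamma(t)\subseteq\Gamma(\bar\eta)$; iterating along the chain proves (1).

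For part (2), (1) already gives $\Gamma(s)\subseteq\Gamma(\bar\eta)$ up to conjugacy for all $s$ (every point specializes from $\eta$), so it suffices to show $s\mapsto\Gamma(s)$ is, up to conjugacy, a constructible function on $S$; being constructible and monotone under specialization, it is then constant on a dense open, which necessarily contains $\eta$. Constructibility I would get by spreading out: $\Gamma(\bar\eta)$ is Zariski-generated by the $\rho$-images of finitely many loops $\gamma_1,\dots,\gamma_r\in\pi_1^{\et}(X_{\bar\eta})$ (its $\rho$-image is compact, hence topologically finitely generated); writing $X_{\bar\eta}=\varprojlim_{\tilde U}X_{\tilde U}$ over connected $\tilde U$ finite \'etale over neighbourhoods of $\eta$ and using that $\pi_1^{\et}$ commutes with such limits, these loops, together with finitely many finite \'etale covers of $X_{\tilde U}$ witnessing them, descend to a single $\tilde U$. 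For each such cover $W\to X_{\tilde U}$, the function $s\mapsto\#\pi_0(W_{\bar s})$ is constructible (generic base change for $H^0$) and monotone non-decreasing under specialization — this is part (1) applied to the rank-$(\deg W)$ push-forward local system of $W$, a smaller monodromy group having more invariants — hence constant on a dense open; on the intersection of these opens the image of $\pi_1^{\et}(X_{\bar s})$ still Zariski-generates $\Gamma(\bar\eta)$, giving (2).

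The step I expect to be the main obstacle is the spreading-out in part (2): because $X\to S$ is only smooth, not proper, the persistence of loops to nearby fibres is genuinely delicate, and it is exactly here that the monodromy could a priori fail to fill out $\Gamma(\bar\eta)$. Making this precise requires some care with the homotopy exact sequence for a non-proper smooth family — or, equivalently, with the specialization map of \'etale fundamental groups of SGA 1, Exp.\ XIII and its surjectivity on the quotients that matter — and this is precisely what the constructibility-plus-monotonicity of $s\mapsto\#\pi_0$ of the relevant covers is designed to circumvent. By contrast, the reductions in part (1), including extracting honest DVRs from a general noetherian base, are routine.
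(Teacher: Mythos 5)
The paper does not prove this statement; it is quoted verbatim from Katz \cite[8.18.2]{Katz90}, so there is no internal proof to compare against and I judge your argument on its own terms. Your part (1) is essentially the standard argument and is sound: the reduction to a strictly henselian DVR is routine, and the identification of a connected cover split on $X_{\bar\eta}$ with $X\times_S S_L$ (both being the normalization of $X$ in the same function field) together with fppf descent of \'etaleness does force $L=k(\eta)$. The only slip is the parenthetical justification that $\pi_1$ of a quasi-compact scheme is ``insensitive to extension of the algebraically closed base field'': for non-proper $X$ in characteristic $p$ this is false (think of $\A^1$ and Artin--Schreier covers); what is true, and all you need, is that $\pi_1(X_{K})\to\pi_1(X_{k})$ is surjective, so the image $\Gamma(s)$ is unchanged.

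Part (2) has a genuine gap. First, the theorem is about the actual compact images $\Gamma(s)\subseteq\GL(n,\Qlb)$, not their Zariski closures; your conclusion that ``the image of $\pi_1(X_{\bar s})$ still Zariski-generates $\Gamma(\bar\eta)$'' would at best give equality of Zariski closures, and the two one-sided containments up to conjugacy do not formally yield conjugacy of the groups themselves (a compact subgroup of $\GL(n,\Qlb)$ can be properly contained in a conjugate of itself, e.g.\ the upper-triangular unipotent group over $\Z_l$ conjugated by $\mathrm{diag}(1,l)$). Second, and more fundamentally, finitely many finite \'etale covers only control the image of $\pi_1(X_{\bar s})$ in finitely many \emph{finite} quotients of $\pi_1(X_{\tilde U})$; constancy of $\#\pi_0$ of covers ``witnessing the loops $\gamma_i$'' cannot show that the image of $\pi_1(X_{\bar s})$ contains elements mapping to $\rho(\gamma_i)$ (an element of a profinite group is not pinned down by finitely many finite quotients), so neither Zariski-density nor the full compact image is recovered this way.

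The missing idea is a Frattini-type lemma: a compact subgroup $\Gamma(\bar\eta)\subseteq\GL(n,\Qlb)$ lies in $\GL(n,\mathcal{O}_E)$ for some finite $E/\Q_l$ and admits a \emph{single} open normal subgroup $K$ (take $K=\Phi(N)$ for $N$ an open normal uniform pro-$l$ subgroup) such that any closed subgroup $H$ with $HK=\Gamma(\bar\eta)$ equals $\Gamma(\bar\eta)$. Granting this, one takes the connected Galois cover of $X_{\bar\eta}$ corresponding to $\Gamma(\bar\eta)\twoheadrightarrow\Gamma(\bar\eta)/K$, spreads it out to some $X_{\tilde U}$, and then your constructibility-plus-monotonicity of $s\mapsto\#\pi_0(W_{\bar s})$ (which is correct, via EGA~IV constructibility and part (1) applied to the pushforward local system) shows that on an open neighborhood of $\eta$ the image of $\pi_1(X_{\bar s})$ surjects onto $\Gamma(\bar\eta)/K$, hence equals $\Gamma(\bar\eta)$; the conjugacy bookkeeping is absorbed by noting that the image of $\pi_1(X_{\bar\eta})$ in $\pi_1(X_{\tilde U})$ is normal (it is the image of a normal subgroup under the surjection $\pi_1(X_{\eta})\twoheadrightarrow\pi_1(X_{\tilde U})$, the base being normal), so the conjugate of $\Gamma(\bar\eta)$ produced by part (1) is $\Gamma(\bar\eta)$ itself. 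In short, your $\pi_0$ device is the right tool, but it must be aimed at the Frattini quotient of the actual image rather than at Zariski generators; without that single-cover reduction the argument does not close.
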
 

Now we consider the case when all varieties are defined over $\C$, and thus over some finite-type $\Z$-algebra $A$.  We may recover the previous situation by taking the specialization and base change to a finite field $\Spec A/\m$. 

\begin{corollary}
    Consider the situation of Theorem \ref{thm: blah} except with all varieties defined over $\C$.  Say this gives rise to the previous situation through reduction modulo a prime, and recall that we have defined $G^*$ through the convolution monodromy group in characteristic $p$.  Then for all characters $\chi$ of $\pi_1^{\et}(\G_m^n)$, avoiding some finite set of tacs of $\G_m^n$, the geometric monodromy group of $R^{n-1}f_*g^*\L_{\chi}$ contains a conjugate of $G^*$ as a normal subgroup.  
\end{corollary}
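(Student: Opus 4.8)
The plan is to transport Theorem~\ref{thm: blah} from characteristic $p$ to characteristic $0$ by means of Katz's specialization theorem (Theorem~\ref{thm: sp}), after spreading the whole situation out over a finite-type $\Z$-algebra. Since $X$, $Y$ and the family $Y\subseteq X\times\G_m^n$ over $\C$ are already defined over a finitely generated $\Z$-algebra, after enlarging it and inverting $l$ I may assume we are given an integral affine scheme $S=\Spec A$ of finite type over $\Z[1/l]$, with generic point $\eta$ of characteristic $0$; a smooth morphism $\mathcal X\to S$ with geometrically connected fibres; a relative family $\mathcal Y\subseteq\mathcal X\times\G_m^n$ of smooth hypersurfaces with projections $f\colon\mathcal Y\to\mathcal X$ and $g\colon\mathcal Y\to\G_m^n$; and a closed point $t$ of residue characteristic $p$, such that the $\eta$-fibre recovers the given $\C$-situation (base changing from $\mathrm{Frac}(A)\hookrightarrow\C$ does not alter geometric monodromy groups) and the fibre over $t$, base changed to $\overline{\F_p}$, is the characteristic-$p$ situation defining $G^*$. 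After shrinking $S$, I may assume the hypotheses of Theorem~\ref{thm: blah} (no nontrivial translation invariance, $G^*$ simple with irreducible distinguished representation, not a translated constant family) hold in the $t$-fibre; I also record that $G^*$, being the convolution-monodromy datum of the generic hypersurface in characteristic $p$, is unchanged up to conjugacy when $t$ is moved within a dense open of the characteristic-$p$ fibre of $S$ (a standard spreading-out argument for the convolution monodromy group), so more genericity may be imposed on $t$ below.

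\emph{The sheaf.} Take $\chi$ of order a power of $l$, hence of order prime to $p$; then $\mathcal L_\chi$ is defined over $S$ after adjoining $l$-power roots of unity and shrinking, and specializes compatibly in each fibre. The finitely many tacs of $\G_m^n$ excluded in Theorem~\ref{thm: blah}, and the locus where generic vanishing fails, are cut out by algebraic conditions which propagate across $S$ (the sheaves involved being of geometric origin); since incidence relations among roots of unity of order prime to $p$ persist modulo $p$, I may choose $\chi$ outside a finite set of tacs so that the conclusions of Theorem~\ref{thm: blah} hold in the fibres over both $\eta$ and $t$. For such $\chi$, Theorem~\ref{thm: generic vanishing} and proper base change give $R^kf_*g^*\mathcal L_\chi=0$ for $k\neq n-1$ in every fibre, and by Proposition~\ref{prop: dim} the sheaf $\mathcal F_\chi:=R^{n-1}f_*g^*\mathcal L_\chi$ is lisse of fibrewise-constant rank $n!\Vol(\Delta)$ on an open $\mathcal U\subseteq\mathcal X$; after one more shrink of $S$, $\mathcal U\to S$ is smooth with geometrically connected fibres.

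\emph{Specialization.} Applying Theorem~\ref{thm: blah} to the fibre over $t$, the geometric monodromy group $\Gamma_t$ of $\mathcal F_\chi|_{\mathcal U_t}$ contains $G^*$ as a normal subgroup, acting by the irreducible distinguished representation. Applying Theorem~\ref{thm: sp} to $\mathcal F_\chi$ on $\mathcal U\to S$, its part (2) yields an open $V\ni\eta$ over which the geometric monodromy group is conjugate in $\GL$ to $\Gamma_0:=\Gamma(\bar\eta)$, the geometric monodromy group of $\mathcal F_\chi$ over $\C$. Since $S\setminus V$ is a proper closed subset, it contains the characteristic-$p$ fibre for only finitely many $p$; discarding these (harmless, as the characteristic-$p$ big monodromy already requires $p$ large), the characteristic-$p$ fibre meets $V$, so by the freedom recorded in the first step I take $t\in V$. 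Then part (1) of Theorem~\ref{thm: sp} forces $\Gamma_t$ to be conjugate in $\GL$ to $\Gamma_0$, so $\Gamma_0$ contains a conjugate of $G^*$ as a normal subgroup, realized on the distinguished representation, which is the assertion.

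\emph{Main obstacle.} The delicate point is this last step: one must guarantee that the big monodromy proved in characteristic $p$ is already the generic (characteristic $0$) monodromy rather than a proper degeneration — equivalently, that the reduction point can be placed inside the monodromy-stable open set of Theorem~\ref{thm: sp}(2). This rests on the two facts flagged above: that $S\setminus V$ is a proper closed subset of $S$, so omits a dense open of almost every characteristic-$p$ fibre, and that $G^*$ does not depend up to conjugacy on the sufficiently generic choice of reduction point, which itself is a further specialization statement for the geometric convolution monodromy group of the generic hypersurface as $t$ varies within the characteristic-$p$ fibre. (Without this, Theorem~\ref{thm: sp}(1) alone would only give $G^*$ inside a possibly larger $\Gamma_0$, with no control on normality.) Coordinating the successive shrinkings of $S$ with the requirement that $\chi$ avoid tacs in both characteristics is routine by comparison.
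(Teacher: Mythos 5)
Your argument is correct and follows essentially the same route as the paper: spread the family out over a finite-type $\Z$-algebra, apply Theorem \ref{thm: blah} to the fibre over a closed point of residue characteristic $p$, and use Katz's specialization theorem (Theorem \ref{thm: sp}) to identify the characteristic-$p$ and characteristic-$0$ geometric monodromy groups up to conjugacy. The paper's proof is a compressed version of exactly this (``for some choice of $\m$ the two groups are conjugate''), so your additional care in placing the reduction point inside the constancy locus $V$ of Theorem \ref{thm: sp}(2), and in noting that $G^*$ should not depend up to conjugacy on that choice, only makes explicit what the paper leaves implicit.
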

\begin{proof}
     Now all varieties are defined over $\C$, and thus over some finite-type $\Z$-algebra $A$.  It remains to show that the geometric monodromy group $G_{\geo, \C}$ of $R^{n-1}f_*(g^*\L_{\chi})$ contains conjugate of $G^*$ as a normal subgroup.  If we take the specialization and base change to a finite field $\Spec A/\m$, then we have shown that the resulting geometric monodromy group $G_{\geo}$ contains $G^*$ as a normal subgroup.  By Theorem \ref{thm: sp}, for some choice of $\m$ we have that $G_{\geo}$ and $G_{\geo, \C}$ are conjugate, so $G_{\geo, \C}$ contains a conjugate of $G^*$ as a normal subgroup as desired.    
\end{proof}

Finally, we will need to apply this statement to a set of characters comprising a Galois orbit.  Begin with $\G_m^n$ defined over a number field $K$, $X$ a smooth scheme over $\Q$, and $Y\subseteq X_K\times_K \G_m^n$ a family of hypersurfaces nondegenerate with respect to their fixed Newton polyhedron $\Delta$.  For an embedding $\iota\colon K\rightarrow \C$, we can define $(\G_m)^n_{\iota}$ and $Y_{\iota}$ as schemes over $\C$ through the embedding $\iota$. 

 We can now apply the previous results to these schemes.  Ensuring that the previous result holds for a $\Gal(\bar{K}/K)$-orbit of characters is done in Lemma 4.8, Lemma 4.9, and Corollary 4.10 of \cite{finiteness} for abelian varieties, but the exact same arguments apply for tori.    

\begin{corollary}
    \label{cor: big} Assume that $Y_{\overline{\eta}}$ is not translation-invariant by any nonzero element of $\G_m^n$, that $G^*$ is a simple algebraic group with irreducible distinguished representation, and that $Y$ is not equal to a constant family of hypersurfaces translated by a section of $\G_m^n$. 

    Then for any primes $l$ and $p$, there is an embedding $\iota\colon K\hookrightarrow \C$ and a torsion character $\chi$ of $\pi_1^{\et}((\G_m^n)_{\iota})$ of order a power of $l$ such that for all $\Gal(\bar{K}/K)$-conjugates $(\iota', \chi')$ of $(\iota, \chi)$, the geometric monodromy group of $R^{n-1}f_*(g^*\L_{(\iota', \chi')})$ contains a conjugate of $G^*$ as a normal subgroup. 
\end{corollary}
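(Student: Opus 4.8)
The plan is to bootstrap from the preceding corollary --- which already gives big geometric monodromy over $\C$ for a single character of $\pi_1^{\et}(\G_m^n)$ avoiding finitely many tacs --- to the statement for a whole $\Gal(\bar K/K)$-orbit of pairs $(\iota,\chi)$, following verbatim the arguments of Lemmas~4.8--4.9 and Corollary~4.10 of \cite{finiteness}, replacing the abelian variety there with $\G_m^n$.

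First I would assemble the bad locus. Since $\G_m^n$ and $Y$ are defined over $K$ and $X$ over $\Q$, there are only finitely many embeddings $K\hookrightarrow\C$; fix one and extend it to $\bar K\hookrightarrow\C$. The preceding corollary produces, for the family over $\C$ obtained by this base change, a finite union $Z$ of tacs of the dual torus such that for every torsion character $\chi\notin Z$ one has $R^kf_*(g^*\L_\chi)=0$ for $k\neq n-1$ and the geometric monodromy group of $R^{n-1}f_*(g^*\L_\chi)$ contains a conjugate of $G^*$ as a normal subgroup. The conditions cutting out $Z$ are geometric --- generic vanishing (Theorem \ref{thm: generic vanishing}) together with the pullback and no-Galois-invariants conditions underlying Theorem \ref{thm: blah} via Lemmas \ref{lemma: 4.1} and \ref{lemma: 4.4} --- and all the input data is defined over $\bar K$, so $Z$ is the $\C$-locus of a finite union of tacs defined over $\bar K$, hence a set stable under $\Gal(\bar K/K)$. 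Moreover any $\Gal(\bar K/K)$-conjugate $(\iota',\chi')$ of $(\iota,\chi)$ satisfies $\iota'|_K=\iota|_K$, so the base-changed family over $\C$ is literally the same and only $\chi$ is replaced by a conjugate $\sigma\chi$.

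Next comes the counting. The torsion characters of $\pi_1^{\et}((\G_m^n)_{\bar K})$ of order dividing $l^m$ form a group isomorphic to $(\Z/l^m\Z)^n$, so the number of those of order exactly $l^m$ is on the order of $l^{mn}$, whereas a single tac (of dimension at most $n-1$) contains only $O(l^{m(n-1)})$ of them, with constant uniform in $m$. Hence for all sufficiently large $m$ there is a character $\chi$ of order exactly $l^m$ lying in none of the finitely many components of $Z$; as $Z$ is $\Gal(\bar K/K)$-stable, each conjugate $\sigma\chi$ also avoids $Z$, and every conjugate pair $(\iota',\chi')$ is therefore among those to which the preceding corollary applies. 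This already yields the desired conclusion over $\C$ for every $\Gal(\bar K/K)$-conjugate of $(\iota,\chi)$.

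The remaining point --- and the one I expect to be the main obstacle, being precisely the analogue of Lemmas~4.8--4.9 of \cite{finiteness} --- is that $G^*$ is defined via the convolution monodromy group obtained by reduction modulo a prime, so one must check that a single reduction serves the entire orbit. Since $\chi$ has order $l^m$ its $\Gal(\bar K/K)$-orbit is finite, so after spreading all of the data over a finite-type $\Z$-algebra one can choose one maximal ideal $\m$ of residue characteristic $p$ at which the big-convolution-monodromy hypothesis of Theorem \ref{thm: blah} and the specialization statement of Theorem \ref{thm: sp} apply simultaneously to every member of the orbit; specialization then transfers big geometric monodromy from characteristic $p$ up to $\C$ uniformly over the orbit, up to conjugacy, which is exactly the asserted conclusion.
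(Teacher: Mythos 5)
Your overall route is the one the paper itself takes: the paper offers no independent argument for Corollary \ref{cor: big}, saying only that the upgrade from a single good character to a full Galois orbit follows from the arguments of Lemmas 4.8--4.9 and Corollary 4.10 of \cite{finiteness}, transported to tori --- which is exactly the plan you execute (avoidance of the bad locus by the whole orbit via counting $l$-power torsion characters against finitely many proper tacs, plus a uniform choice of the reduction). Your last paragraph is fine as far as it goes: the orbit is finite, $G^*$ is pinned down by the characteristic-$p$ reduction fixed at the outset, and Theorem \ref{thm: sp} allows a single maximal ideal to serve the finitely many sheaves in the orbit.

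The genuine gap is in the one step you write out in detail: the claim that the bad locus $Z$ is stable under $\Gal(\bar K/K)$ ``because it is defined over $\bar K$.'' Being defined over $\bar K$ does not give Galois stability, and there is in fact no reason for $Z$ to be stable: $Z$ is inherited from the characteristic-$p$ statement (Theorem \ref{thm: blah}, via Lemmas \ref{lemma: 4.1} and \ref{lemma: 4.4}), where the data is defined over $\F_q(\eta)$ and the available Galois group acts on $l$-power torsion characters only through powers of $q$, whereas $\Gal(\bar K/K)$ acts through the cyclotomic character, i.e.\ through (an open subgroup of) $\Z_l^{\times}$. Without stability, your count only produces a $\chi$ avoiding $Z$, not one all of whose conjugates avoid $Z$, so the conclusion for every conjugate $(\iota',\chi')$ is unsupported. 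The repair --- and the actual content of Lemma 4.8 of \cite{finiteness} --- is to saturate $Z$ under the Galois action before counting: a conjugate $\chi^a$ ($a\in\Z_l^{\times}$) lies in a tac $\psi H$ exactly when $\chi$ lies in the preimage of $\psi H$ under the $a$-th power map, and on $l$-power torsion these preimages are cosets of $H\cap(l\text{-power torsion})$ translated by an $a^{-1}$-th root of the $l$-part of $\psi$; since $\psi$ has fixed finite order, only boundedly many such cosets occur as $a$ varies, independently of $m$. Hence the saturation of $Z$ is still a finite union of proper tacs, each containing $O(l^{m(n-1)})$ characters of order dividing $l^m$, and your comparison with the $\sim l^{mn}$ characters of exact order $l^m$ then yields a $\chi$ whose entire orbit is good; with that substitution the rest of your argument goes through.
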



\begin{remark}
    We will apply this result to the cases where we prove $G^*$ to be big, so that $G^*$ is the only conjugate of itself. 
\end{remark}

\section{Constructing Hodge-Deligne systems} 
\label{sec: HD} 

\subsection{Summary}
Let $X$ be a smooth variety over $\Q$ and let $\G_m^n$ denote the $n$-dimensional torus over $K$.  Let $Y\subseteq X_K\times_K \G_m^n$ be a subscheme that is smooth and flat over $X$.  There is a finite set $S$ of places of $K$ lying over the places $S'$ of $\Q$ such that $Y$ spreads out to $f: \mc{Y}\hookrightarrow \mc{X}\times_{\Z[1/S]} \G_m^n$. \\ 

The goal of this section is to construct Hodge-Deligne systems, originally defined in Section 5 of \cite{finiteness}, associated to $X$ and $Y$ that will encode the various cohomological properties necessary to pass from big monodromy statements to finiteness.  In particular, we will define the pushforward of a Hodge-Deligne system under a smooth morphism which is not necessarily proper.  We will apply it to Hodge-Deligne systems arising form character sheaves $\L_\chi$.  This will allow us to construct a Hodge-Deligne system $V_I$ on $\mc{X}$ associated to an orbit of good characters.    Furthermore, we will prove a bound involving the dimension of the centralizer of the crystalline Frobenius of these Hodge-Deligne systems, which we will need for numerical conditions in the final argument. 

As in \cite{finiteness}, the purpose of these Hodge-Deligne systems is to relate the cohomological avatars of a variety, and in particular, the associated complex and $p$-adic period maps.  As originally worked out in \cite{LV}, this allows one to translate a big monodromy statement into finiteness results.  After constructing these Hodge-Deligne systems we will be able to cite -- with slight variations -- the results that facilitate such a translation.

\subsection{Review of Hodge-Deligne systems} 
Given a smooth variety over a number field $K$, we let $S$ be a finite set of primes of $K$ such that there is a smooth model $\mc{X}$ of $X$ over $\O_K[1/S]$.  Let $v$ be an unramified place of $K$ outside of $S$.    A Hodge-Deligne system on $\mc{X}$ at $v$ consists of: 
\begin{itemize}
    \item A singular local system $V_{\sing}$ of $\Q$-vector spaces on $X_{\C}$. 
    \item An \'etale local system $V_{\et}$ of $\Q_p$-vector spaces on $\mc{X}_{\et}\times_{\Spec \Z}\Spec \Z[1/p]$. 
    \item A vector bundle $V_{\dR}$ on $X$, an integrable connection $\nabla$ on $V_{\dR}$, and a descending filtration $\Fil^iV_{\dR}$ of $V_{\dR}$ by subbundles 
    \[
V_{\dR} = \Fil^{-M}V_{\dR} \supseteq \Fil^{-M+1}V_{\dR} \supseteq\cdots \supseteq \Fil^M V_{\dR} = 0  
    \]
    (not necessarily $\nabla$-stable), each of which is locally a direct summand of $V_{\dR}$. 
    \item A filtered F-isocrystal $V_{\cris}$ on $X_{K_v}$. 
\end{itemize} 

Furthermore, these satisfy various isomorphisms and axioms that one would expect from the constant Hodge-Deligne systems and their pushforwards along smooth proper morphisms; see \cite[Examples 5.8, 5.10]{finiteness}.  The full definition of a Hodge-Deligne system is given in \cite[Definition 5.2]{finiteness}. 

We will now sketch a construction of the associated Hodge-Deligne system $\mc{L}_\chi$ on $(\G_m^n)_L$.  In the following subsections we will construct Hodge-Deligne systems of the form $R^{n-1}f_*g^*\L_{\chi}$ on $\mc{X}$, which is what we will eventually use.  These will be constructed without explicit reference to the Hodge-Deligne systems $\L_{\chi}$ constructed here, morally we will consider them as the pushforwards of the pullbacks of these $\L_{\chi}$.  We will thus provide the description of the various realizations of $\L_{\chi}$ here without going into the details of their comparisons.  

\begin{example}
    
Let $\chi\in \pi_1^{\et}((\G_m^n)_K)$ be a character of order $r$; say it is defined over a field extension $L/K$.  We say that it is good if it satisfies the conditions of Theorem \ref{thm: blah}, and that $H^i(\G_m^n, \O_{\mc{Y}} \otimes \mc{L}_\chi) = H^i_c(\G_m^n, \O_{\mc{Y}} \otimes \mc{L}_\chi)$.  Our goal is to 

Such data includes a singular local system $V_{\sing}$, an \'etale local system $V_{\et}$, a filtered vector bundle with integrable connection $V_{\dR}$, and a filtered $F$-isocrystal $V_{\cris}$.  The first two come from the correspondence between characters of the (\'etale) fundamental group and (\'etale) local systems.  The vector bundle with integrable connection arises from the Riemann-Hilbert correspondence.  For the crystalline realization, we will instead use an F-unit isocrystal using the following result due to Tsuzuki, Katz, and Crew. 

\begin{theorem}
    [Theorem 3.9 of \cite{Kedlaya}] The category of unit-root objects in $\FI(X)$ is equivalent to the category of \'etale $\Q_p$-local systems on $X$.  The unit-root objects in $\oFI(X)$ form a full subcategory of $\FI(X)$ corresponding to the category of potentially unramified \'etale $\Q_p$-local systems on $X$. 
\end{theorem}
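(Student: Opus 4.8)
The plan is to prove the convergent statement first and then bootstrap to the overconvergent refinement. For the equivalence between unit-root objects of $\FI(X)$ and \'etale $\Q_p$-local systems on $X$, I would write down the two functors explicitly and check that they are mutually quasi-inverse. Realize a unit-root $F$-isocrystal $\mathcal{E}$ on $X/\Fqb$ as a finite locally free module $M$ over the (rigid) structure sheaf, equipped with an integrable connection $\nabla$ and a horizontal Frobenius $F\colon \phi^*M\xrightarrow{\sim}M$; the unit-root hypothesis is precisely that $F$ preserves an integral lattice, i.e.\ that $F$ is an isomorphism of integral structures. The functor to local systems sends $\mathcal{E}$ to the sheaf on $X_{\et}$ whose sections over an \'etale $U\to X$ are the horizontal Frobenius-invariants of $M$ over $U$; the Dwork trick shows that \'etale-locally on $X$ the equation $v=F(\phi^*v)$ can be solved for a basis of $M$ by successive approximation (convergent because $F$ and $F^{-1}$ are integral), so this sheaf is lisse of the correct rank, and inverting $p$ produces the $\Q_p$-local system. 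In the other direction one tensors a local system with the same structure sheaf and keeps the induced $\nabla$ and $F$. That these functors are quasi-inverse, tensor-compatible, and exact can be verified \'etale-locally, where it degenerates to the classical statement over a complete local ring (and ultimately over $\Fqb$, where it reduces to surjectivity of $F-1$, i.e.\ Lang's theorem); full faithfulness follows from faithful flatness of the relevant period sheaf.

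For the second assertion, fix a smooth compactification $X\hookrightarrow \bar X$ with boundary a normal crossings divisor and let $\mathcal{E}$ be unit-root on $X$ with associated local system $L$. I would show that $\mathcal{E}$ underlies a unit-root object of $\oFI(X)$ if and only if $L$ is potentially unramified along each branch of the boundary. Restricting $\mathcal{E}$ near a boundary point yields an $F$-isocrystal on a $p$-adic punctured disc; overconvergence furnishes a uniform lower bound on the radius of convergence of the Frobenius structure, and Tsuzuki's analysis of unit-root $F$-isocrystals on $p$-adic annuli then forces the representation of the local decomposition group attached to $L$ to be trivial on an open subgroup, so $L$ becomes unramified after a finite \'etale cover. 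Conversely, given potential unramifiedness, pull $\mathcal{E}$ back along the cover trivializing the local monodromy, extend the resulting unramified isocrystal to a strict neighborhood of the boundary (where being unramified forces overconvergence), and descend along the cover using the tensor-compatibility of the first equivalence; independence of the descended object from the choices follows from full faithfulness.

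The main obstacle is the boundary analysis in the second part: matching overconvergence with finiteness of local monodromy is exactly the content of Tsuzuki's theorem, and it requires the detailed study of solvable $F$-isocrystals on $p$-adic annuli. By comparison, the convergent equivalence is essentially formal once the right period sheaf is in place; there the only real input is the computation that its Frobenius-invariants are \'etale-locally just $\Q_p$, together with faithful flatness, both of which come from Artin--Schreier--Witt theory.
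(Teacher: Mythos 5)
The paper does not prove this statement at all: it is quoted verbatim as Theorem 3.9 of \cite{Kedlaya} and attributed to Katz, Crew, and Tsuzuki, so there is no internal argument to compare yours against. Judged on its own, your sketch follows the standard route in the literature: the convergent unit-root correspondence via horizontal Frobenius fixed points, solved \'etale-locally by successive approximation on an integral lattice (Katz--Crew, ultimately Artin--Schreier--Witt/Lang), and the overconvergent refinement via Tsuzuki's local monodromy theorem. That is the right skeleton, and your identification of where the real difficulty sits is accurate.

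However, as a proof your proposal is circular at exactly the critical point: the assertion that ``overconvergence furnishes a uniform lower bound on the radius of convergence of the Frobenius structure, and Tsuzuki's analysis \dots then forces the local decomposition group to act through a finite quotient'' is not a step you can wave at --- it \emph{is} the second sentence of the theorem, and its proof occupies Tsuzuki's work on unit-root $F$-isocrystals on annuli together with a reduction of the higher-dimensional case to curves (and, in your setup, a smooth compactification with normal crossings boundary, which in characteristic $p$ requires de Jong's alterations rather than resolution). Two further gaps: the theorem also asserts that unit-root objects of $\oFI(X)$ form a \emph{full} subcategory of $\FI(X)$, i.e.\ full faithfulness of the restriction functor on unit-root objects, which your sketch never addresses; and in the converse direction your ``extend the unramified isocrystal to a strict neighborhood and descend along the cover'' needs an actual descent argument for overconvergent isocrystals along finite \'etale covers, not just tensor-compatibility of the convergent equivalence. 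Since the paper itself treats this as a black-box citation, the honest conclusion is that your outline is a reasonable road map to the literature but not a proof.
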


Because $\chi$ has finite order, the theorem applies to $\L_\chi$, and thus we obtain a unit-root overconvergent F-isocrystal $V_{\cris}$.  
\end{example}

There is another important notion introduced in conjunction with Hodge-Deligne systems in \cite[Section 5.3]{finiteness}, namely $H^0$-algebras.  These are essentially defined as algebra objects in the category of Hodge-Deligne systems; see \cite[Definition 5.14]{finiteness}.  Concretely, an $H^0$-algebra $\msf{E}$ over $\Spec \O_{K, S}$ can be described as follows.  
\begin{itemize}
    \item $\msf{E}_{\sing}$ is a $\Q$-algebra,
    \item $\msf{E}_{\dR} = E\otimes_Q K$,
    \item $\msf{E}_{\et} = E\otimes_Q \Q_p$,
    \item $\msf{E}_{\cris} = E\otimes_Q K_v$.
\end{itemize} 

In what follows, we will construct an $H^0$-algebra $\msf{E}_I$ on $\Spec \O_{K, S}$ for each orbit $I$ of characters and an $\msf{E}_I$-module $\msf{V}_I$ on $\mc{X}$.  

\subsection{Construction of a Hodge-Deligne system $R^{n-1}f_*g^*\L_\chi$ on $\mc{X}$} 
Given the composition $g\colon \mc{Y}\hookrightarrow \mc{X}\times \G_m^n\rightarrow \G_m^n$ with $f\colon \mc{Y}\rightarrow \mc{X}$ the universal family of hypersurfaces nondegenerate with respect to their fixed Newton polyhedron, we would like to construct Hodge-Deligne systems on $\mc{X}$ at $v$, where $v$ is a place of $K$ lying over a prime not below any place of $S$.  We will first construct one for each character $\chi$ of $\pi_1^{\et}(\G_m^n)$ corresponding to $R^{n-1}f_*g^*\L_{\chi}$.  

\subsubsection{Expressing cohomology objects as eigenspaces of the cohomology of smooth proper varieties}
Because the morphism $f\colon \mc{Y}\rightarrow \mc{X}$ is not proper, it is not clear that building the cohomology objects directly will give us desirable properties.  However, in our situation we can express these cohomology objects as the eigenspaces of the cohomology of smooth proper varieties. \\ 

 Given a fixed Newton polyhedron $\Delta$, there is a compactification of the torus $\G_m^n\hookrightarrow Z_\Delta$ introduced in \cite{Khovanskii1977} such that for any hypersurface $Y_x\subset \G_m^n$ nondegenerate with respect to its Newton polyhedron $\Delta$, the closure $\bar{Y_x}$ is smooth and transversally intersecting on the torus orbits of $Z_\Delta$.  Moreover, these hypersurfaces are sch\"on in the sense of \cite{Jenia}, which means the multiplication map of their compactifications are smooth. \\ 

For any $x\in \mc{X}$, let $m$ be a sufficiently large integer divisible by $r$ and consider $[m]^{-1}Y_x \subset \G_m^n$ and its compactification $\bar{[m]^{-1}Y_x} \subset Z$.  Call the boundary divisors $D = Z - \G_m^n$ and $D_x = \bar{[m]^{-1}Y_x} - [m]^{-1}Y_x$.  Since $[m]^{-1}Y_x$ carries an action of $\mu_m^n$, so does $H^{n-1}_{\et}([m]^{-1}Y_x, \L_\chi)$ by functoriality.  Given a tuple $\lambda\in (\Z/m\Z)^n$ and a vector space $V$ with an action of $\mu_m^n$, let $V_\lambda$ denote the corresponding eigenspace. \\ 

Note that as $m$ is taken to be sufficiently large and divisible by $m$, each $\chi$ determines such a $\lambda$ subject to the following commutative diagram.  

\[\begin{tikzcd}
	{\pi_1(\G_m^n)} & {\mu_m^n} \\
	& {\C^*}
	\arrow[two heads, from=1-1, to=1-2]
	\arrow["\chi"', from=1-1, to=2-2]
	\arrow["\lambda", from=1-2, to=2-2]
\end{tikzcd}\] 

Consider the torus action on each of the components $D_i$ of $D$; each of them must have a positive-dimensional stabilizer $G_i\subset \G_m^n$.  Note that $\mu_m^n$ is also a subset of this torus action $\G_m^n$; denote their nontrivial intersection by $H_i$.  Then for a generic choice of $\lambda\in (\Z/m\Z)^n$, we have that the elements $h_j\in \mu_m^n$ of $H_i$ do not all satisfy $(h_j^{\lambda_j})=1$.  But $h_j$ acts trivially on $D_i$.  Since $\lambda$ is generic we can pick one for which this holds for all components $D_i$.  Then $\lambda$ cannot be an eigenvalue for the action of $\mu_m^n$ on any $D_x$, so $H^{n-1}_{\et}(D_x, \Q_p)_\lambda=0$. \\ 

A similar argument can be made for other cohomology theories.  Furthermore, in many cohomology theories there is an excision exact sequence that relates the cohomology of a proper variety, an open subset, and the divisor complement.  This suggests that we may define the realizations through the $\lambda$-eigenspaces described above.  We shall only need to do so for the crystalline realization, but we will use these arguments for other cohomology theories as well to show the necessary comparison theorems. 

\subsubsection{Singular and \'etale realizations} 
We define $V_{\sing} = R^{n-1}f_*g^*(\L_{\chi})_{\sing}$ and $V_{\et} = R^{n-1}f_*g^*(\L_{\chi})_{\et}$.  Recall that $\chi$ is a good character, so $R^{n-1}f_* = R^{n-1}f_!$ in both scenarios.  Thus both $V_{\sing}$ and $V_{\et}$ are local systems.  It will be useful to express $V_{\et}$ in an alternate form to compare it with the crystalline realization.  

In the previous sections we have constructed a compactification as shown below.   
\[\begin{tikzcd}
	{\mc{Y}} & {\mc{X}\times \G_m^n} & {\mc{X}\times Z_{\Delta}} \\
	&& X
	\arrow[hook, from=1-1, to=1-2]
	\arrow[hook, from=1-2, to=1-3]
	\arrow[from=1-1, to=2-3]
	\arrow[from=1-3, to=2-3]
\end{tikzcd}\]
Considering the closure $\overline{\mc{Y}}\subset \mc{X}\times Z_\Delta$, we obtain the following composition. 
\[\begin{tikzcd}
	{[m]^{-1}\mc{Y}} & {\bar{[m]^{-1}\mc{Y}}} \\
	{\mc{Y}} & {\bar{\mc{Y}}} & {\mc{X}}
	\arrow["{[m]}"', from=1-2, to=2-2]
	\arrow["j"', hook, from=2-1, to=2-2]
	\arrow["{[m]}"', from=1-1, to=2-1]
	\arrow["{h_m}", from=1-2, to=2-3]
	\arrow["h"', from=2-2, to=2-3]
	\arrow["{j_m}", hook, from=1-1, to=1-2]
\end{tikzcd}\]

By a slight abuse of notation, we will also denote the character associated to $g^*\L_{\chi}$ by $\chi$, so we have a commutative diagram 
\[\begin{tikzcd}
	{\pi_1(\mc{Y})} & {\mu_m^n} \\
	& {\Qlb^*}
	\arrow[two heads, from=1-1, to=1-2]
	\arrow["\chi"', from=1-1, to=2-2]
	\arrow["\lambda", from=1-2, to=2-2]
\end{tikzcd}\] 

\begin{proposition}
    We have 
    \[
V_{\et}\cong (R^{n-1}h_{m*}(\Qlb))_{\lambda}. 
    \] 
\end{proposition}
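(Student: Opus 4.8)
The plan is to replace $g^{*}\mc{L}_{\chi}$ by the constant sheaf on the $m$-th power cover — legitimate since $\chi$ has order dividing $m$ — and then, after pushing forward, to trade the non-proper fibration $f$ for the proper one $h_{m}$ by passing to a single $\mu_{m}^{n}$-isotypic component.

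First I would record the identity of local systems on $\mc{Y}$
\[
g^{*}\mc{L}_{\chi}\;\cong\;\bigl((p_{m})_{*}\Qlb\bigr)_{\lambda},
\]
where $p_{m}\colon [m]^{-1}\mc{Y}\to\mc{Y}$ is the base change of $[m]\colon\G_m^n\to\G_m^n$ along $g$. Indeed $[m]$ is finite étale Galois with group $\mu_{m}^{n}$, so $[m]_{*}\Qlb=\bigoplus_{\psi\in\widehat{\mu_{m}^{n}}}\mc{L}_{\psi}$; since $m$ is divisible by the order of $\chi$, the character $\chi$ factors through $\mu_{m}^{n}$ as the $\lambda$ of the commutative triangle preceding the proposition, and finite base change along $g$ gives $(p_{m})_{*}\Qlb=\bigoplus_{\psi}g^{*}\mc{L}_{\psi}$, whose $\lambda$-summand is $g^{*}\mc{L}_{\chi}$.

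Next I would set $\widetilde{h}_{m}=f\circ p_{m}\colon [m]^{-1}\mc{Y}\to\mc{X}$, so $\widetilde{h}_{m}=h_{m}\circ j_{m}$ with $j_{m}$ the open immersion into $\bar{[m]^{-1}\mc{Y}}$ and $h_{m}$ proper. Since $p_{m}$ is finite, $R(p_{m})_{*}=(p_{m})_{*}$, and since the passage to $\lambda$-isotypic parts is exact on $\Qlb$-representations of $\mu_{m}^{n}$ and commutes with $Rf_{*}$, Step 1 gives $V_{\et}=R^{n-1}f_{*}g^{*}\mc{L}_{\chi}=\bigl(R^{n-1}\widetilde{h}_{m*}\Qlb\bigr)_{\lambda}$; and by goodness of $\chi$ (Theorem~\ref{thm: blah} together with the generic vanishing Theorem~\ref{thm: generic}) the $\lambda$-part of $R^{k}\widetilde{h}_{m!}\Qlb$ equals $R^{k}f_{!}g^{*}\mc{L}_{\chi}$, which is $V_{\et}$ for $k=n-1$ and $0$ otherwise, the forget-supports map being an isomorphism on $\lambda$-parts. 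It then remains to compare $h_{m*}$ with $\widetilde{h}_{m!}$: I would apply $Rh_{m*}$ to the excision triangle $j_{m!}\Qlb\to\Qlb\to\iota_{*}\Qlb\xrightarrow{+1}$ on $\bar{[m]^{-1}\mc{Y}}$, where $\iota\colon D_{m}\hookrightarrow\bar{[m]^{-1}\mc{Y}}$ is the boundary, to obtain a long exact sequence of $\mu_{m}^{n}$-equivariant sheaves on $\mc{X}$ linking $R^{\bullet}\widetilde{h}_{m!}\Qlb$, $R^{\bullet}h_{m*}\Qlb$ and $R^{\bullet}(h_{m}\circ\iota)_{*}\Qlb$. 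Passing to $\lambda$-parts and invoking $H^{j}_{\et}(D_{x},\Qlb)_{\lambda}=0$ for every $x\in\mc{X}$ and every $j$ — which is exactly what the discussion preceding the proposition establishes, via the positive-dimensional torus stabilizers of the boundary components and Mayer–Vietoris along the normal-crossings structure of $D_{x}$ — proper base change gives $\bigl(R^{\bullet}(h_{m}\circ\iota)_{*}\Qlb\bigr)_{\lambda}=0$, hence $\bigl(R^{n-1}h_{m*}\Qlb\bigr)_{\lambda}\cong\bigl(R^{n-1}\widetilde{h}_{m!}\Qlb\bigr)_{\lambda}=V_{\et}$.

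The hard part will be the equivariance bookkeeping in the last step: one must ensure that $\mu_{m}^{n}$ acts compatibly on $Z_{\Delta}$, on $\bar{[m]^{-1}\mc{Y}}$ and on $D_{m}$ over $\mc{X}$ (this is where one uses that the toric compactification $Z_{\Delta}$ carries the $\G_m^n$-action and that $[m]$ is $\mu_{m}^{n}$-invariant), and that the boundary vanishing genuinely holds in all cohomological degrees, so that the long exact sequence yields an isomorphism rather than merely a four-term exact sequence. A secondary point to verify is that the $\bar{[m]^{-1}\mc{Y}}$ appearing here is the compactification built from $Z_{\Delta}$, so that the identification $\bigl(R^{k}\widetilde{h}_{m!}\Qlb\bigr)_{\lambda}\cong R^{k}f_{!}g^{*}\mc{L}_{\chi}$ is the one to which the goodness hypothesis on $\chi$ applies.
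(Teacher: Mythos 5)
Your proposal is correct and follows essentially the same route as the paper: decompose the pushforward along the $m$-power cover into $\mu_m^n$-isotypic pieces to identify $g^*\L_{\chi}$ with the $\lambda$-part, apply $Rh_{m*}$ to the excision triangle $j_{m!}\Qlb\rightarrow\Qlb\rightarrow\iota_*\Qlb$, kill the boundary contribution on $\lambda$-parts via the positive-dimensional stabilizer argument, and use goodness of $\chi$ to identify the $!$- and $*$-pushforwards in degree $n-1$. The paper's proof is a terser version of exactly this argument.
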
 
\begin{proof}
    We have $Rf_*g^*\L_{\chi} = (Rf_*[m]_*\Qlb)_{\lambda}$.  Let $D_m = \bar{\mc{Y}_m}\backslash \mc{Y}_m\hookrightarrow{i_m}\bar{\mc{Y}_m}$.  We know that $(R^{n-1}h_{m*}i_m^*\Qlb)_{\lambda}=0$.  We have a short exact sequence of sheaves 
    \[
0\rightarrow j_{m!}\Qlb \rightarrow \Qlb\rightarrow i_m^*\Qlb\rightarrow 0
    \] 
    on $\bar{\mc{Y}_m}$. 
Applying $R^{n-1}h_{m*}$ and taking $\lambda$-eigenspaces, we obtain 
\[
(R^{n-1}h_{m*}\Qlb)_{\lambda} \cong (R^{n-1}h_{m*}j_{m!}\Qlb)_{\lambda}\cong (Rf_*[m]_*\Qlb)_{\lambda} \cong Rf_*g^*\L_{\chi} 
\]
as desired,
using the isomorphisms above and the fact that $\chi$ is a good character.
\end{proof}

\subsubsection{de Rham realization} 
As a local system, we can realize $g^*\L_\chi$ as the sheaf of flat sections of a vector bundle with integrable connection, which we denote $(\mc{E}_\chi, \nabla)$.  Then we can associate to $R^{n-1}f_*g^*\L_\chi$  the relative de Rham cohomology of $Y$ with $\L_\chi$-coefficients.  Explicitly, this is given by 
\[
V_{\dR} = R^{n-1}f_*(\mc{E}_\chi \rightarrow \mc{E}_\chi\otimes \Omega^1_{\mc{Y}/\mc{X}}\rightarrow \cdots). 
\]
$V_{\dR}$ is equipped with the Gauss-Manin connection, which is integrable.

Using an argument similar to the \'etale case, we have the following description of $V_{\dR}$. 

\begin{proposition} 
      We have 
    \[
V_{\dR}\cong (R^{n-1}h_{m*}(\Omega^*_{\bar{[m]^{-1}\mc{Y}}/\mc{X}}))_{\lambda}. 
    \] 
\end{proposition}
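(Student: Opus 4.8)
The plan is to follow the proof just given for $V_{\et}$, replacing constructible coefficients by the relative de Rham complex and the vanishing $(R^{n-1}h_{m*}i_m^*\Qlb)_\lambda=0$ by its de Rham counterpart.

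\emph{Trivializing the coefficients.} Since $\chi$ has finite order $r$ and $r\mid m$, the Kummer cover $[m]\colon\mc{Y}_m\coloneqq[m]^{-1}\mc{Y}\to\mc{Y}$, which over the characteristic-$0$ base is finite étale with group $\mu_m^n$, trivializes $g^*\L_\chi$. On the de Rham side this means $[m]^*(\mc{E}_\chi,\nabla)\cong(\O_{\mc{Y}_m},d)$ with $\mu_m^n$ acting through $\lambda$; equivalently, since $[m]$ is étale (so $[m]^*\Omega^1_{\mc{Y}/\mc{X}}=\Omega^1_{\mc{Y}_m/\mc{X}}$) and finite flat (so $[m]_*$ is exact and the projection formula applies), the de Rham complex $(\mc{E}_\chi\to\mc{E}_\chi\otimes\Omega^1_{\mc{Y}/\mc{X}}\to\cdots)$ is identified with the $\lambda$-eigen-subcomplex of $[m]_*\Omega^*_{\mc{Y}_m/\mc{X}}$. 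Because passing to a $\lambda$-eigenspace is exact in characteristic $0$ and $f\circ[m]=h_m\circ j_m$, this gives
\[
V_{\dR}=\big(R^{n-1}f_*[m]_*\Omega^*_{\mc{Y}_m/\mc{X}}\big)_\lambda=\big(R^{n-1}h_{m*}Rj_{m*}\Omega^*_{\mc{Y}_m/\mc{X}}\big)_\lambda,
\]
so $V_{\dR}$ is the $\lambda$-part of the relative de Rham cohomology of the open family $\mc{Y}_m/\mc{X}$.

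\emph{Passing to the compactification.} It remains to show that the natural map $R^{n-1}h_{m*}\Omega^*_{\bar{\mc{Y}_m}/\mc{X}}\to R^{n-1}h_{m*}Rj_{m*}\Omega^*_{\mc{Y}_m/\mc{X}}$ is an isomorphism after taking $\lambda$-eigenspaces. Over $\mc{X}$, $Rj_{m*}\Omega^*_{\mc{Y}_m/\mc{X}}$ is computed by the relative logarithmic de Rham complex $\Omega^*_{\bar{\mc{Y}_m}/\mc{X}}(\log D_m)$ of the boundary $D_m=\bar{\mc{Y}_m}\setminus\mc{Y}_m$ (a relative normal crossings divisor by the transversality built into $Z_\Delta$; replace $Z_\Delta$ by a smooth toric refinement if needed, which does not affect $\mc{Y}_m$), and the pole-order filtration on $\Omega^*_{\bar{\mc{Y}_m}/\mc{X}}(\log D_m)/\Omega^*_{\bar{\mc{Y}_m}/\mc{X}}$ has graded pieces whose $Rh_{m*}$ are, up to Tate twists and shifts, the relative de Rham cohomology sheaves $R^\bullet(h_m^{(j)})_*\Omega^*_{D_m^{(j)}/\mc{X}}$ of the closed strata $D_m^{(j)}$ of $D_m$, where $h_m^{(j)}\colon D_m^{(j)}\to\mc{X}$ is the induced morphism. (Alternatively one may argue with the residue sequences $0\to\Omega^p_{\bar{\mc{Y}_m}/\mc{X}}(\log D_m)\otimes\O(-D_m)\to\Omega^p_{\bar{\mc{Y}_m}/\mc{X}}\to\Omega^p_{D_m/\mc{X}}\to0$ and their normal crossings refinements, identifying the compactly supported relative de Rham complex and invoking de Rham goodness, $R^{n-1}f_*=R^{n-1}f_!$, to recognize the compactly supported side as $V_{\dR}$.) The de Rham analogue of the étale genericity input is that each $D_m^{(j)}$ lies in a union of $\mu_m^n$-invariant torus orbits of dimension $<n$, each of which therefore has a positive-dimensional stabilizer meeting $\mu_m^n$ nontrivially; choosing $\lambda$ generic so as to avoid the finitely many eigenvalues attached to all these orbits at once — exactly the argument applied to the divisor components $D_i$ in the discussion preceding the singular and étale realizations — forces $\big(R^\bullet(h_m^{(j)})_*\Omega^*_{D_m^{(j)}/\mc{X}}\big)_\lambda=0$ for every $j$. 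Hence the filtration quotient contributes nothing to the $\lambda$-eigenspace, the displayed comparison map is an isomorphism there, and combining with the first step yields $V_{\dR}\cong(R^{n-1}h_{m*}\Omega^*_{\bar{\mc{Y}_m}/\mc{X}})_\lambda$.

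Throughout, every morphism in sight is a morphism of $\mu_m^n$-equivariant coherent sheaves on $\mc{X}$ equipped with Gauss--Manin connections, and $h_m$ is smooth and proper, so $R^{n-1}h_{m*}$ of the relative de Rham complex is locally free, commutes with base change, and commutes with passage to $\lambda$-eigenspaces. The main obstacle is the boundary bookkeeping: one must set up the pole-order (or weight) filtration on the relative log de Rham complex over the base $\mc{X}$, check that it is $\mu_m^n$-equivariant, and verify the genericity vanishing uniformly across all strata $D_m^{(j)}$ rather than only the divisorial ones — everything else is the same formal manipulation as in the étale case.
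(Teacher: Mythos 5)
Your proof is correct and is essentially the argument the paper intends, namely the de Rham analogue of the \'etale case: trivialize $\mc{E}_\chi$ on the Kummer cover $[m]$, then compare the cohomology of the compactification $\bar{[m]^{-1}\mc{Y}}$ with that of the open family and kill the boundary contribution in the $\lambda$-eigenspace by the same genericity-of-$\lambda$ argument the paper applies to the components of $D$. The only cosmetic difference is that you run the comparison through $Rj_{m*}$ and the residue (pole-order) filtration on the relative log de Rham complex, whereas the paper's \'etale template uses the $j_{m!}$ excision sequence together with goodness of $\chi$; both reduce to the identical eigenspace vanishing on the boundary strata.
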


\subsubsection{Crystalline realization}
Before constructing the crystalline realization, we first review some notions from $p$-adic cohomology. 
 The notion of a convergent isocrystal was defined in \cite{OgusII} to give a relative version of crystalline cohomology.  To be more precise, let $(V, k)$ be a complete discrete valuation ring with fraction field $K$ of characteristic 0 and residue characteristic $p$.  Given $X/V$ a scheme or $p$-adic formal scheme, the convergent site is defined via certain $p$-adic enlargements $(T, z_T)$ where $T$ is a flat $p$-adic formal $V$-scheme $z_T$ is a $V$-morphism $(T/p\O_T)_{\red}\rightarrow Z$.  A convergent isocrystal is a sheaf on the convergent site whose linearized transition morphisms, associated to morphisms of objects on the convergent site, are isomorphisms.  Given any smooth proper morphism $f\colon X\rightarrow Z$, one can define a convergent $R^qf_*\O_{X/K}$ on $Z/V$ with the desirable functorial properties.  Despite the lack of divided powers in the definition of the convergent site, this definition agrees with crystalline cohomology if $Z=\Spec k$.  \\ 

Similar to the case of crystalline cohomology, one can view convergent isocrystals as vector bundles with integral connections.  Another way to think about them comes from Theorem 2.15 of \cite{OgusII}.  Namely, if $S$ is a formally smooth $p$-adic formal $V$-scheme with reduced special fiber $S_0$, then there is a realization functor $E_{X_0}\mapsto (E_X, \nabla_X)$ that induces a fully faithful functor from convergent isocrystals on $X_0$ to coherent $\O_{X, K}$-modules with integrable connection.  Furthermore, in that paper Ogus constructs a convergent F-isocrystal from the higher pushforwards along a smooth proper morphism of the structure sheaf; the result we will use is the following. 

\begin{theorem}
    \cite[Theorem 3.1]{OgusII} 
    There exists a p-adically convergent isocrystal $R^qf_*\O_{X/K}$ on $Z/V$ with the following property: for each $p$-adic enlargement $(T, z_T)$ of $Z/W$, there is a natural isomorphism: 
    \[
\alpha_T\colon (R^qf_*\O_{X/K})_T\cong K\otimes R^qf_{T_1*}\O_{X_{T_1}/T}. 
    \]
    The pair $(R^qf_*\O_{X/K}, \alpha_{\bullet})$ is unique up to unique isomorphism. 
\end{theorem}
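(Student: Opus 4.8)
The plan is to build $R^qf_*\O_{X/K}$ directly as a sheaf on the convergent site $\mr{Conv}(Z/V)$ and then verify, in turn, that it is a crystal, that it is convergent, and that it carries a Frobenius. First I would define the presheaf: to each $p$-adic enlargement $(T,z_T)$ of $Z/V$ assign $K\otimes R^qf_{T_1*}\O_{X_{T_1}/T}$, the relative crystalline cohomology over the base $T$ of the pullback $X_{T_1}=X\times_Z T_1$ (with $T_1$ the reduced special fibre), and to a morphism of enlargements assign the map induced by functoriality of crystalline cohomology; this automatically produces the claimed $\alpha_{\bullet}$.

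The first real point is that this presheaf is a \emph{crystal}, i.e.\ that the linearized transition maps are isomorphisms. By the standard d\'evissage one reduces to the case of a PD-thickening and the case of a formally smooth morphism of enlargements, and in each case the statement is precisely the base-change theorem for the crystalline cohomology of the smooth and proper morphism $f$, which I would take from Berthelot--Ogus; the properness and smoothness of $f$ are exactly what make this base change hold, and they also give coherence (local freeness) of the resulting sheaf.

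Next I would establish convergence, the property distinguishing this from a merely nilpotent crystal. Passing to the module-with-connection picture, choose locally a formally smooth $p$-adic formal lift $\mf{S}$ of $Z_0$; the crystal is then realized as a coherent $\O_{\mf{S},K}$-module with integrable connection, namely, locally on $X$ after lifting, the de Rham cohomology $R^qf_*\Omega^{\bullet}_{\mf{X}/\mf{S}}$ equipped with the Gauss--Manin connection. Convergence amounts to the statement that for two sections of $\mf{S}$ congruent modulo $p$ the Taylor expansion of a horizontal section converges on a $p$-adic disc of radius $\ge 1$; this is the classical estimate on Gauss--Manin (Dwork, Katz--Oda), which follows because the connection matrix has entries integral up to a denominator bounded in terms of $q$, and it is sharpened once the Frobenius structure is available. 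The Frobenius itself is constructed from the relative Frobenius of $X/Z$ using the crystal property with respect to a lift of Frobenius, yielding the $F$-isocrystal refinement.

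Finally, uniqueness up to unique isomorphism is formal: a convergent isocrystal is determined by its values on enlargements together with its transition isomorphisms, so any two objects satisfying the compatibility $\alpha_T$ with relative crystalline cohomology are canonically identified over each $(T,z_T)$ compatibly with restriction, and that identification is unique because the family of enlargements is generating. The main obstacle is the convergence estimate of the third step: producing a genuinely convergent (rather than nilpotent) crystal requires the analytic control on the radius of convergence of flat sections of Gauss--Manin, which is the technical core of the argument and is where smoothness, properness, and the inversion of $p$ are all essential.
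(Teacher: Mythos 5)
The paper itself offers no proof of this statement: it is quoted verbatim as Theorem 3.1 of \cite{OgusII} and used as a black box, so there is no in-paper argument to compare yours against; the only fair comparison is with Ogus's original proof. Your outline has the right skeleton for that — assign to each enlargement $(T,z_T)$ the value $K\otimes R^qf_{T_1*}\O_{X_{T_1}/T}$, check functoriality, realize the crystal locally as Gauss--Manin on the de Rham cohomology of a formal lift, and get uniqueness formally from the fact that enlargements generate the site — and those parts are essentially Ogus's route.

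However, the proposal mislocates the technical core, and this is a genuine gap. On the convergent site, ``convergence'' is not a separate analytic condition to be verified after a standard crystal/base-change step: the objects of the site are arbitrary $p$-adic enlargements rather than PD-thickenings, so the whole difficulty is the transition (base-change) isomorphism for morphisms of enlargements whose ideals carry no divided powers. Ogus obtains this from the crystalline--de Rham comparison together with perfectness and bounded $p$-torsion of $Rf_{\mathrm{cris}*}\O$ and $p$-adic estimates on divided powers (integrality of $p^n/n!$ up to controlled denominators), which is exactly where smoothness, properness and inverting $p$ enter. The step you lean on instead — the ``classical'' Dwork/Katz--Oda estimate for the Gauss--Manin connection — only gives convergence of Taylor series on a disc of radius $|p|^{1/(p-1)}<1$, i.e.\ a quasi-nilpotent (formally convergent) crystal, not a convergent isocrystal; and the Frobenius structure you invoke to sharpen the radius is not available at this stage without circularity — in \cite{OgusII} the $F$-structure is a separate, later result, and the statement quoted here asserts nothing about Frobenius. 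So the convergence step as written would not close; the crystal-property and uniqueness steps are fine.
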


The reason we need to use eigenspaces of a smooth proper morphism for the crystalline realization is that in general, the pushforward of an $F$-isocrystal is not known to be an $F$-isocrystal.  However, in the smooth proper case Berthelot's conjecture (proven in the cases we need) gives the desired result.  Thus, to deal with the crystalline realization we will define the F-isocrystal by defining it  through the eigenspace of one obtained through a smooth proper morphism. \\ 

As before, given a good character $\chi\in \pi_1^{\et}((\G_m^n)_K)$ be a character of order $r$, we have defined a Hodge-Deligne system denoted $\L_\chi$ on $(\G_m^n)_L$.  All the structures of a Hodge-Deligne system are stable under pullback, so we obtain a Hodge-Deligne system $g^*\L_\chi$ on $\mc{Y}$.   Then we define the crystalline realization as 
\[
V_{\cris} = (R^{n-1}_*h_{m*}\O_{\bar{[m]^{-1}\mc{Y}}})_{\lambda}. 
\] 

This is an F-isocrystal by Theorem 3.1 of \cite{OgusII}.  Furthermore it follows the construction of Example 5.10 in \cite{finiteness}. 
 There the comparison theorems are shown between $V_{\cris}$ and $V_{\et}, V_{\dR}$, where the latter two are taken to be $R^{n-1}_*h_{m*}(\Q_p)_{\et}$ and $R^{n-1}_*h_{m*}(\Q_p)_{\dR}$ respectively.  These isomorphisms are compatible with taking $\lambda$-eigenspaces, and since we have shown that our definition of $V_{\et}$ and $V_{\dR}$ are equivalent to the $\lambda$-eigenspaces of these forms just mentioned, the comparison theorems hold in our scenario.

\subsection{Construction of a Hodge-Deligne system $R^{n-1}f_*\O$ on $\mc{X}$} 
The Hodge-Deligne systems constructed in the previous subsection live on $\mc{X}_{\O_{L, S}}$.  Following Lemma 5.29 of \cite{finiteness}, we will now construct a Hodge-Deligne system $V_I$ on $\mc{X}$ whose base change to $\O_{L, S}$ decomposes as a sum of those Hodge-Deligne systems.  Furthermore, we will construct an $H^0$-algebra $\msf{E}_I$ such that $\msf{V}_I$ is an $\msf{E}_I$-module.  \\ 

\begin{construction}
\label{construction}
[Construction of $\msf{E}_I$ and $\msf{V}_I$, ]
(cf. \cite[Lemma 5.29]{finiteness})
Let $\prod^{K/\Q}(\G_m^n)[r]$ be the set of pairs $(\iota, \chi)$ of embeddings $\iota\colon K\rightarrow L$ and $r$-torsion characters $\chi$ of $\pi_1(\G_m^n)$.  Then $\prod^{K/\Q}(\G_m^n)[r]$ carries an action of $\Gal(\Q)\times \Gal(\Q[\mu_r]/\Q)$.  Let $I$ be an orbit of this action that consists of good characters.  Then given the maps of the diagram below, we construct 
\[
V = R^{n-1}h_{m*}\O_{\Q} \cong R^{n-1}h_*g^*[r]_*\O_{\Q}. 
\] 
\[\begin{tikzcd}
	{\bar{[m]^{-1}\mc{Y}}} & {\bar{\mc{Y}}} \\
	{\mc{X}\times \G_m^n} & {\mc{X}\times \G_m^n} & {\G_m^n} \\
	& {\mc{X}}
	\arrow[from=1-1, to=2-1]
	\arrow[from=1-1, to=1-2]
	\arrow[from=1-2, to=2-2]
	\arrow["{id\times [m]}", from=2-1, to=2-2]
	\arrow[from=2-1, to=3-2]
	\arrow["h_m"', curve={height=30pt}, from=1-1, to=3-2]
	\arrow[from=2-2, to=3-2]
	\arrow["h"{pos=0.8}, curve={height=-24pt}, from=1-2, to=3-2]
	\arrow[from=2-2, to=2-3]
	\arrow["g", from=1-2, to=2-3]
\end{tikzcd}\]

By Example 5.10 of \cite{finiteness}, the pushforward by a smooth projective morphism of a Hodge-Deligne system produces a Hodge-Deligne system.  Thus $\msf{V}$ is a Hodge-Deligne system.  

Following Lemma 5.29 of \cite{finiteness}, we define $\msf{E}$ similarly, so that $\msf{V}$ is an $H^0$-algebra over $\mathsf{E}$.  Namely, 
\[
\msf{E} = (\pi_*\msf{O}_{\Q}|_{\G_m^n[r]})^{\vee}, \quad \msf{E}_I = \pi_*\msf{O}_{\Q[\mu_r]}
\] 
where $\pi\colon \Spec K\rightarrow \Spec \Q$ is the projection.
Then $\msf{E}$ splits into a direct sum of $\msf{E}_I$ where $I$ ranges over the orbits, including the ones that include characters that are not good.  We now define $\msf{V}_I = \msf{V}\otimes_{\msf{E}} \msf{E}_I$.  We have 

\[
\msf{V}_I|_{\O_{L, S}}\otimes_{\O_\Q} \O_{\Q[\mu_r]} = \bigoplus_{(\iota, \chi)\in I} R^{n-1}f_{\iota*}g_{\iota}^*\L_\chi. 
\]

Furthermore, Poincar\'e duality (since the characters are good) and the integral structure on $\msf{V}_{I, \sing}$ makes $\msf{V}_I$ into a polarized, integral Hodge-Deligne system. 
    \end{construction}

\subsection{Bounding the Frobenius centralizer}
Now we will give a bound on the Frobenius centralizer arising from this Hodge-Deligne system.  We will need a stronger bound than the one in Lemma 5.33 of \cite{finiteness}.  Instead, we will use a variant of the following result from $\cite{LV}$. 

\begin{lemma}
    [Lemma 10.4 of \cite{LV}] There exists an integer $L$ with the following property: 

    For any $y\in Y(\Z[S^{-1}])$, there exists a prime $l\le L, l\not\in S$ such that the semisimplification of $F_y^{crys, l}$ (and so also the crystalline Frobenius itself) satisfies 
    \[
\dim Z([F_y^{crys, l}]^{ss})\le \dim Z_{\textbf{G}'(\C)}(\varphi_0).
\]

On the left hand side, we take the centralizer inside $G\Aut(H_{crys}^{d, prim})$, to which the crystalline Frobenius -- and so also its semisimplification -- belongs. 
\end{lemma}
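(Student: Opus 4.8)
The plan is to follow the proof of Lemma 10.4 of \cite{LV} (and the parallel discussion around Lemma 5.33 of \cite{finiteness}), the only genuinely new point being that $f\colon\mc{Y}\to\mc{X}$ is not proper; but this has already been neutralized. In Section~\ref{sec: HD} the crystalline realization was defined as the eigenspace $V_{\cris}=(R^{n-1}h_{m*}\O_{\bar{[m]^{-1}\mc{Y}}})_{\lambda}$ of the crystalline cohomology of the \emph{smooth projective} family $h_m$, and the crystalline comparison isomorphisms established there are compatible with the $\mu_m^n$-action. Hence $F_y^{\cris,l}$ is the restriction to a $\mu_m^n$-eigenspace of the ordinary crystalline Frobenius of the smooth projective variety $\bar{[m]^{-1}\mc{Y}_y}$, so: its characteristic polynomial lies in $\Z[T]$, is independent of auxiliary choices, and, via the $\mu_m^n$-equivariant comparison, equals the characteristic polynomial of geometric Frobenius $\Fr_l$ acting on $(R^{n-1}h_{m*}\Qlb)_{\lambda}|_{y}\cong (R^{n-1}f_*g^*\L_\chi)|_y$ (computed over an auxiliary $l'\ne l$). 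In particular this characteristic polynomial depends only on $l$ and on the isomorphism class of the semisimplification $\rho_y^{\ss}$ of the $l'$-adic Galois representation $(R^{n-1}f_*g^*\L_\chi)|_y$.

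Next I would apply Faltings' finiteness theorem: as $y$ ranges over the integral points $\mc{X}(\O_{K,S})$ — equivalently over nondegenerate hypersurfaces with Newton polyhedron $\Delta$ having good reduction outside $S$ — the representations $\rho_y^{\ss}$ are semisimple of bounded dimension, unramified outside a fixed finite set, and have bounded Hodge--Tate weights, so there are only finitely many isomorphism classes $\rho^{(1)},\dots,\rho^{(k)}$. By construction $\mathbf{G}'$ is the Zariski closure of the Frobenius conjugacy classes $\{\Fr_l\}_{l\notin S}$ acting on $\msf{V}_I$, and by Corollary~\ref{cor: big} it contains $G^*$ as a normal subgroup; in the cases of interest (Corollary~\ref{cor: prism monodromy}) $G^*$ is a classical group, so, together with the scalars coming from the weight, $\mathbf{G}'$ contains a maximal torus of and has the same rank as the ambient group $G\Aut(H^{n-1,\mathrm{prim}}_{\cris})$ (which preserves the polarization of $\msf{V}_I$ and is accordingly a general linear, symplectic or orthogonal similitude group). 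Thus a generic element of $\mathbf{G}'$, such as the chosen $\varphi_0\in\mathbf{G}'(\C)$, realizes the minimal centralizer dimension inside $G\Aut$; let $N\subset\mathbf{G}'$ be the proper Zariski-closed locus of elements that do not. Since the Frobenii are Zariski-dense in $\mathbf{G}'$, they are not all contained in $N$, so for each $i$ there is a prime $l_i\notin S$ with $\Fr_{l_i}$ on $\rho^{(i)}$ lying outside $N$. Put $L=\max_i l_i$, a quantity depending only on the family and on $S$.

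To conclude, fix any $y$ and choose $i$ with $\rho_y^{\ss}\cong\rho^{(i)}$; take $l=l_i\le L$. Then $[F_y^{\cris,l}]^{\ss}$ is a semisimple element of $G\Aut$ whose characteristic polynomial on $\msf{V}_I$ equals that of $\Fr_l$ on $\rho^{(i)}$. Since semisimple elements of the connected reductive group $(\mathbf{G}')^{\circ}$ with equal characteristic polynomial in the faithful representation $\msf{V}_I$ are $\mathbf{G}'$-conjugate (if $\mathbf{G}'$ is disconnected one works within a fixed component containing the relevant Frobenii), $[F_y^{\cris,l}]^{\ss}$ is $\mathbf{G}'$-conjugate to $\Fr_l\notin N$, hence to a generic element of $\mathbf{G}'$. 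Therefore $\dim Z([F_y^{\cris,l}]^{\ss})=\dim Z_{G\Aut}(\varphi_0)=\dim Z_{\mathbf{G}'(\C)}(\varphi_0)$, the last equality by the rank identification of the previous paragraph, and in particular the asserted inequality holds.

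The main obstacle is the bookkeeping in the first paragraph: one must check that the eigenspace description of $V_{\cris}$ behaves well with respect to Frobenius \emph{uniformly in the auxiliary prime} $l$, so that the step ``the characteristic polynomial depends only on $\rho_y^{\ss}$'' is legitimate despite the $[m]$-covering and the non-properness, and that Faltings finiteness really applies to these character-twisted eigenspace cohomologies (it does, as they are direct summands of the cohomology of smooth projective varieties with controlled ramification and weights). The secondary, more structural point — and where the big monodromy of Corollary~\ref{cor: big} is actually used in this lemma — is to pin down $G\Aut(H^{n-1,\mathrm{prim}}_{\cris})$ precisely and to verify the rank equality with $\mathbf{G}'$, so that ``generic in $\mathbf{G}'$'' translates into the minimal centralizer dimension on the right-hand side. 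Everything else is the argument of \cite[Lemma 10.4]{LV}.
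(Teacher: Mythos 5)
First, note that the paper does not prove this statement at all: it is quoted verbatim as Lemma 10.4 of \cite{LV}, and the paper's own contribution is the adapted Proposition \ref{prop: centralizer bound}, for which it simply asserts that the proof in \cite{LV} goes through after the substitutions described. So your attempt has to be measured against the Lawrence--Venkatesh argument that the paper invokes. Your outer skeleton does match it: crystalline--\'etale compatibility (Katz--Messing for the smooth projective ambient family, compatibly with the $\mu_m^n$-eigenspace decomposition) to reduce to characteristic polynomials of $\rho_y^{\mathrm{ss}}(\mathrm{Frob}_l)$; Faltings/Hermite--Minkowski finiteness of the semisimplifications; for each class, avoid a proper Zariski-closed ``bad'' locus using Chebotarev density of Frobenii; finitely many classes, so take $L$ to be the maximum of the chosen primes.

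The genuine gap is in how you make the bad locus proper, i.e.\ the step ``the Frobenii are Zariski-dense in $\mathbf{G}'$, and $\varphi_0$ is a generic element of $\mathbf{G}'$ of full rank in $G\Aut$.'' Chebotarev only gives Zariski density of the Frobenius classes in $H_i$, the Zariski closure of the image of the \emph{single} representation $\rho^{(i)}=\rho_y^{\mathrm{ss}}$, and nothing in the paper (in particular not Corollary \ref{cor: big}, which concerns the geometric monodromy of the family over the base $X$) gives any lower bound on the Galois image at an individual integral point $y$; $H_i$ may be far smaller than $\mathbf{G}'$, and could a priori consist entirely of elements with large centralizers, so your locus $N$ need not be avoidable. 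Moreover $\varphi_0$ is not a generic (regular) element: in both \cite{LV} and Proposition \ref{prop: centralizer bound} it is the Hodge-cocharacter-type element whose centralizer has dimension exactly the adjoint Hodge number $h^0=\sum_p (h^{p,q})^2$, typically much larger than the rank, and it is precisely this $h^0$ that enters the numerical conditions later. The ingredient your proof is missing --- and the actual engine of \cite[Lemma 10.4]{LV} --- is $p$-adic Hodge theory at the place above $p$: since $\rho_y$ (hence $\rho_y^{\mathrm{ss}}$) is crystalline/Hodge--Tate with weights given by the Hodge numbers, Sen's theorem puts the Hodge--Tate (Sen) cocharacter inside the Lie algebra of the closure of the \emph{local} image, so $H_i$ itself contains an element whose centralizer in $G\Aut$ has dimension $h^0=\dim Z_{\mathbf{G}'(\C)}(\varphi_0)$; this is what shows the locus $\{h: \dim Z(h^{\mathrm{ss}})>\dim Z(\varphi_0)\}$ (closed, since centralizer dimension of the semisimple part is read off the characteristic polynomial) is proper in $H_i$, after which Chebotarev and the finiteness of the classes finish the proof. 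Without this input, the density and genericity claims you rely on are unjustified, and with it the appeal to big monodromy and to the rank of $\mathbf{G}'$ is unnecessary.
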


Our situation differs in the following ways.  We have a smooth morphism $f:Y\rightarrow X$ whose fiber is a hypersurface in $\G_m^n$.  We have a Hodge-Deligne system $V_I$ on $\mc{Y}$ denoted $V = g^*\L_\chi$ for a good character $\chi$.  Fix a point $x\in X(\Z[S^{-1}])$.  The fiber functor on the Tannakian subcategory of perverse sheaves on $Y_x$ generated by $g^*\L_\chi$ can be taken to be $R^{n-1}f_*(-)$, as $\L_\chi$ is good. By functoriality this gives a morphism $\phi_0\colon \G_m\rightarrow G(Y_x, \chi)$, the corresponding Tannakian group.  This is equivalent to a map to the geometric monodromy group $G(\C)$.  On the other hand, given a prime $l$, the filtered F-isocrystal of $V_I$ gives a an action of the crystalline Frobenius $F_x^{\cris, l}$ on $V_{\cris}$, which we recall to be defined as an eigenspace of the crystalline cohomology of the inverse image of a multiplication by $m$ map on the compactification of $\mc{Y}_x$. 

\begin{proposition}
\label{prop: centralizer bound}
    There exists an integer $L$ such that for any $x\in X(\Z[S^{-1}])$, there exists a prime $l\le L, l\not\in S$ such that the semisimplification of $F_x^{\cris, l}$ satisfies 
    \[
\dim Z([F_x^{\cris, l}]^{ss}) \le \dim Z_{G(Y_x, \chi)}(\phi_0). 
    \]
\end{proposition}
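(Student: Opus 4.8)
The plan is to adapt \cite[Lemma~10.4]{LV} (and the surrounding Lemmas~10.2--10.3 of \emph{loc.\ cit.}) to our non-proper family, the key enabling feature being that in Construction~\ref{construction} every realization of $\msf{V}_I$ was presented as a $\lambda$-eigenspace of the cohomology of the \emph{smooth proper} family $\bar{[m]^{-1}\mc{Y}}\to\mc{X}$. First I would reduce the crystalline assertion to an $\ell$-adic one: for a prime $l\notin S$ with $l\neq p$, writing $Y_x^{(m)}$ for the reduction of $\bar{[m]^{-1}\mc{Y}_x}$ modulo a place over $l$, the space $V_{\cris}$ is the $\lambda$-eigenspace of $H^{n-1}_{\cris}(Y_x^{(m)})$ while the stalk of $V_{\et}$ at that point is the $\lambda$-eigenspace of $H^{n-1}_{\et}(Y_x^{(m)},\Q_p)$; by Katz--Messing, together with functoriality of the $\mu_m^n$-action, the semisimplifications of $F_x^{\cris,l}$ and of the geometric Frobenius $F_x^{\et,l}$ on $V_{\et}$ have the same characteristic polynomial, hence are conjugate in $GL(V_{\cris}\otimes\Qlb)$. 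So it suffices to bound $\dim Z([F_x^{\et,l}]^{ss})$.

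Next I would place $\phi_0$ and $[F_x^{\et,l}]^{ss}$ inside $G(Y_x,\chi)$. By the comparison theorems recorded in Construction~\ref{construction} the stalk of $R^{n-1}f_*g^*\L_\chi$ at $x$ is $w_\chi(i_*\Qlb[n-1])=H^0((\G_m^n)_{\kbar},i_*\Qlb[n-1]\otimes\L_\chi)$, so the Frobenius of this fiber functor defines an element of the arithmetic convolution Tannakian group; since $i_*\Qlb[n-1]$ is pure its semisimple part lies in a conjugate of $G(Y_x,\chi)$ by the analogues of Lemmas~\ref{lemma: normal} and~\ref{lemma: bart}, and its weight grading is precisely the cocharacter $\phi_0$, whose weight multiplicities are the integers $w_j$ computed in Theorem~\ref{thm: curve weights} and Section~\ref{sec: higher weights}. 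By Deligne's purity theorem applied on the smooth proper model, the eigenvalues of $[F_x^{\et,l}]^{ss}$ have absolute values $q_l^{j/2}$ with multiplicity $w_j$ (where $q_l$ is the residue cardinality), so its ``absolute value part'' is conjugate to $\phi_0(\sqrt{q_l})$; in particular the coarse bound $\dim Z([F_x^{\et,l}]^{ss})\le\dim Z_{GL(V)}(\phi_0)$ is automatic.

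The main step is to upgrade this to the centralizer inside $G(Y_x,\chi)$ for a suitable small $l$. For a density-one set of primes $l$ the geometric Frobenius $F_x^{\et,l}$ is regular semisimple in $G(Y_x,\chi)$: the Frobenii at $x$ (for varying $l$) are Zariski-dense in the monodromy group, which by Theorem~\ref{thm: blah} contains $G^*$, and the non-regular locus is a fixed proper closed subscheme of bounded complexity. For such $l$, $[F_x^{\et,l}]^{ss}$ is regular within each weight-graded summand, so $\dim Z_{GL(V)}([F_x^{\et,l}]^{ss})=\sum_j w_j$, which is at most $\dim Z_{G(Y_x,\chi)}(\phi_0)$ by a routine dimension count using the weight data and the structure of $G(Y_x,\chi)$ (this is where the big-monodromy hypothesis enters, via the repeated weights forcing $\sum_j w_j^2$ strictly above $\sum_j w_j$). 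The existence of such an $l\le L$ with $L$ independent of $x$ then follows from effective Chebotarev applied to the splitting field of the relevant finite quotient of the monodromy, whose degree and conductor are bounded uniformly over $x\in X(\Z[S^{-1}])$ because the family $Y/X$ and the toric compactification $\mc{X}\times Z_\Delta$ spread out over $\Z[1/S]$, so the ramification of the local system $R^{n-1}f_*g^*\L_\chi$ is uniformly bounded.

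The hard part will be the non-properness: one cannot apply the \cite{LV} argument to $f$ directly, and the whole point of Construction~\ref{construction} is that presenting each realization as an eigenspace in a smooth proper cohomology restores purity, Katz--Messing, and the $p$-adic comparison isomorphisms — so the remaining work is to verify that passing to $\lambda$-eigenspaces is compatible with all of these, which uses the genericity of $\lambda$, equivalently of $\chi$, exactly as in the proof of Theorem~\ref{thm: blah}. A secondary difficulty, technical rather than conceptual, is making the Chebotarev bound genuinely uniform in $x$, which requires a uniform bound on the conductor of the étale local system along the family and hence a careful use of the explicit compactification $Z_\Delta$.
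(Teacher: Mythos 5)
Your proposal diverges from the paper at the decisive point, and the divergence introduces a genuine gap. The paper's own proof is deliberately minimal: having arranged in Construction \ref{construction} that every realization (in particular $V_{\cris}$) is a $\lambda$-eigenspace of the cohomology of the smooth \emph{proper} family $\bar{[m]^{-1}\mc{Y}}\to\mc{X}$, it asserts that the proof of \cite[Lemma 10.4]{LV} applies verbatim after the stated substitutions, with the right-hand side identified as the adjoint Hodge number $h^0$. Crucially, that lemma holds for \emph{every} integral point $x$, with no hypothesis on the image of the Galois representation attached to the particular fiber $Y_x$; the bound $\dim Z(\phi_0)$ is obtained by comparing a well-chosen Frobenius with the Zariski closure of that (unknown, possibly very small) global Galois image and with the Hodge cocharacter via $p$-adic Hodge theory, and the uniformity of $L$ comes from the representations being unramified outside $S$ and crystalline at $p$ with bounded Hodge--Tate weights. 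Your central step instead claims that for a density-one set of primes $l$ the Frobenius $F_x^{\et,l}$ is \emph{regular semisimple} in $G(Y_x,\chi)$, ``because the Frobenii at $x$ are Zariski-dense in the monodromy group, which by Theorem \ref{thm: blah} contains $G^*$.'' This conflates two different groups: Theorem \ref{thm: blah} controls the geometric monodromy of the local system $R^{n-1}f_*(g^*\L_\chi)$ over the base $X$, whereas Chebotarev only equidistributes the Frobenii $F_x^{\et,l}$ (for varying $l$, fixed $x$) in the image of the global Galois representation on the single stalk at $x$. Nothing in the paper (or in the Lawrence--Venkatesh framework) gives largeness of that image at an individual point -- indeed, if one knew it, most of the machinery would be unnecessary -- and for special points $x$ (extra symmetries, CM-type behaviour, decomposable motives) eigenvalue coincidences persist for all $l$, so the regular-semisimplicity claim is simply false in the generality required. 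Since the desired inequality must cover exactly such degenerate $x$ (where $\dim Z([F_x^{\cris,l}]^{ss})$ can far exceed $\dim V$), your dimension count $\dim Z=\sum_j w_j$ collapses precisely where the lemma has content.

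Two secondary points. First, $\phi_0$ in the proposition is the cocharacter whose centralizer is computed by the \emph{adjoint Hodge numbers} ($\dim Z_{G(Y_x,\chi)}(\phi_0)=h^0$, computed in Section 9 from the Hodge--Deligne numbers $h^{p,q}$ of the de Rham realization); identifying its weight multiplicities with the Frobenius weights $w_j$ of the fiber functor from Theorem \ref{thm: curve weights} and Section \ref{sec: higher weights} mixes the weight grading in characteristic $p$ with the Hodge grading and is not what the statement asserts. Second, your uniformity argument for $L$ (effective Chebotarev with conductor bounded by ``the ramification of the local system along the family'') does not control the number fields cut out by the Galois representation at each individual $x$; the uniform input is the boundedness of ramification of those representations themselves (unramified outside $S$, crystalline at $p$ with bounded Hodge--Tate weights), which is how the argument you are trying to adapt achieves independence of $x$. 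Your first reduction (Katz--Messing on the smooth proper model, compatibly with the $\mu_m^n$-eigenspace decomposition) is fine and is indeed the reason the paper passes to the eigenspace construction, but the heart of the proof needs to be replaced by the actual mechanism of \cite[Lemma 10.4]{LV} rather than a big-image/regularity argument at the point $x$.
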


Indeed, the proof of Lemma 10.4 of \cite{LV} works in our scenario here, after making the appropriate substitutions mentioned above.  As in Section 10 of \cite{LV}, the right hand side is given by $h^0$ where $h^p$ denote the adjoint Hodge numbers on $G(Y_x, \chi)$.  We will calculate these adjoint Hodge numbers in the following section.




\section{Big monodromy and numerical conditions} 
The goal of this section is to put together the big monodromy statements and numerical conditions required to prove Zariski non-density of integral points on $\mc{X}$. 

To fix notation, we will apply the setting of Section 5.6 of \cite{finiteness}, which we now recall.  Let $E_0$ be a field and let $E$ be a finite \'etale $E_0$-algebra.  Let $\textbf{H}$ be a reductive group over $E_0$, which is one of $GL_N, GSp_N,$ or $GO_N$.  Let $V_{simp}$ be the standard representation of $\textbf{H}$.  Let $\textbf{G}^0$ be the Weil restriction $\textbf{G}^0 = \Res^E_{E_0}\textbf{H}_E$, and let $V = E\otimes_{E_0}V_{simp}$.  Let $\textbf{G}$ be the normalizer of $\textbf{G}^0$ in the group of $E_0$-linear automorphisms of $V$. 

 In what follows we wish to apply this formalism to the Hodge-Deligne system $\msf{V}_I$.  Thus we will take $E_0=\Q_p$ and $E = (\msf{E}_I)_{\et}$.  We recall that the latter is given by $\Q[\mu_r]\otimes_{\Q}\Q_p$.

\subsection{Big monodromy statement} 
Using the language of Hodge-Deligne systems, we can restate the big monodromy statement of \ref{cor: big} in more amenable terms.  Recall that $f\colon \mc{Y}\rightarrow \mc{X}$ is the universal family of hypersurfaces nondegenerate with respect to their fixed Newton polyhedron.  In \ref{cor: big}, we have selected a character $\chi_0$ all of whose elements in its $\Gal(\overline{\Q}/K)$-orbit have large monodromy.  Fix some embedding $\iota\colon K\rightarrow L$, where as before $L$ is chosen Galois over $\Q$, containing $K$ over with $\G_m^n[r]$ splits.  Then letting $I$ be the full $(\Gal_{\Q}\times \Gal_{\Q^{\cyc}/\Q})$- orbit containing $(\iota_0, \chi_0)$, we have constructed a Hodge-Deligne system $V_I$ on $\mc{X}$. \\ 

The differential Galois group $\textbf{G}_{\mon}$ of a Hodge-Deligne system $V$ is the differential Galois group of $V_{\dR}$.  It is also given by the Zariski closure of the monodromy group of the associated variation of Hodge structure, and thus functions as a measure of the geometric monodromy of the Hodge-Deligne system.

We will give the big monodromy statement in terms of 1-balanced subgroups.  We recall the definition below. 

\begin{definition}
    \cite[Definition 6.6]{finiteness} Let $H$ be one of the algebraic groups $SL_N, Sp_N$, or $O_n$, and let $G$ be a subgroup of $H^d$.  For $1\le i\le d$, let $\pi_i\colon G\rightarrow H$ be the coordinate projection, and suppose that each $\pi_i$ is surjective.  The index classes of $G$ are the equivalence classes of $\{1, \ldots, d\}$ under the equivalence relation $i\sim j$ if and only if the projection 
    \[
(\pi_i, \pi_j)\colon G\rightarrow H^2 
    \]
    is not surjective. (This is proven to be an equivalence relation in \cite[Lemma 6.5]{finiteness}.)

    For a positive integer $c$, we say that $G$ is $c$-\textit{balanced} if its index classes are all of the same size, and there are at least $c$ of them.  For a permutation $\sigma$ of the index set $\{1, \ldots, d\}$, we say that $G$ is \textit{strongly $c$-balanced} if it is $c$-balanced, each orbit of $\sigma$ contains elements of at least $c$ of the index classes of $G$, and $\sigma$ preserves the partition of $\{1, \ldots, d\}$ into index classes. 
\end{definition}  

\begin{remark}
    We will only use this in the case $c=1$; in that case being strongly $c$-balanced reduces to having index classes of the same size and $\sigma$ preserving the partition of index classes (in addition to the $\pi_i$ being surjective).
\end{remark} 

Finally, we also define $\textbf{G}^1$ and the $c$-(strongly) balancedness of an algebraic subgroup $G\subseteq \textbf{G}^0$ as in \cite[Definition 6.6]{finiteness}. 

We can now state an analogue of Lemma 6.8 of \cite{finiteness}; as in that paper, it is essentially just a restatement of the big monodromy result. 

\begin{lemma}
\label{lem: gmon}
    Pick $\chi_0$ as in Corollary \ref{cor: big} and let $I$ be the full $(\Gal_{\Q}\times \Gal_{\Q^{\cyc}/\Q})$- orbit containing $(\iota_0, \chi_0)$.  Let $V = V_I$ be the corresponding Hodge-Deligne system.  The Frobenius at $v$, an element of $\Gal_K$, acts on the set $I$; call this permutation $\sigma$.  Then the differential Galois group $\textbf{G}_{\mon}$ of $V$ (base-changed to $\Q_p$) is a strongly 1-balanced subgroup of $G^0$, with respect to $\sigma$. 
\end{lemma}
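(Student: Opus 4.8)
The plan is to reduce the statement to the big monodromy result Corollary \ref{cor: big} together with a Goursat-type analysis of pairwise projections. First I would unwind definitions. By its definition above, $\textbf{G}_{\mon}$ is the Zariski closure of the monodromy of the variation of Hodge structure underlying $V$, so it is enough to study the geometric monodromy group of $V_{\sing}$ on $X_{\C}$. By Construction \ref{construction}, after the base change $\O_{\Q}\to\O_{\Q[\mu_r]}$ one has $V_{\sing}\otimes\O_{\Q[\mu_r]}\cong\bigoplus_{(\iota,\chi)\in I}R^{n-1}f_{\iota*}g_\iota^*\L_\chi$, so, after passing to the image in $\textbf{G}^1$ (the version of $\textbf{G}^0$ built from the derived classical group $\textbf{H}^{\mathrm{der}}\in\{\mathrm{SL}_N,\mathrm{Sp}_N,\mathrm{O}_N\}$ attached to the self-duality type of $\omega(i_*\Qlb[n-1])$), the base change of $\textbf{G}_{\mon}$ to $\Q_p$ is realized as a subgroup $G\subseteq(\textbf{H}^{\mathrm{der}})^{I}$ whose coordinate projection $\pi_{(\iota,\chi)}\colon G\to\textbf{H}^{\mathrm{der}}$ is the image in $\textbf{H}^{\mathrm{der}}$ of the geometric monodromy group of $R^{n-1}f_{\iota*}g_\iota^*\L_\chi$. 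This is exactly the setting of \cite[Definition 6.6]{finiteness}.

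Next I would verify surjectivity of each $\pi_{(\iota,\chi)}$. This is the content of Corollary \ref{cor: big}: under the standing hypotheses, and using the Remark following that corollary (so that $G^*=\textbf{H}^{\mathrm{der}}$ is the unique conjugate of itself), the geometric monodromy group of $R^{n-1}f_{\iota'*}g_{\iota'}^*\L_{\chi'}$ contains $G^*$ as a normal subgroup for every $(\iota',\chi')\in I$; hence its image in $\textbf{H}^{\mathrm{der}}$ contains $G^*=\textbf{H}^{\mathrm{der}}$, so $\pi_{(\iota,\chi)}$ is onto.

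Then I would handle the index partition. By \cite[Lemma 6.5]{finiteness} the relation ``$(\pi_i,\pi_j)$ not surjective'' is an equivalence relation, and by Goursat's lemma together with simplicity of $\textbf{H}^{\mathrm{der}}$ it holds precisely when the local systems $R^{n-1}f_{\iota_i*}g_{\iota_i}^*\L_{\chi_i}$ and $R^{n-1}f_{\iota_j*}g_{\iota_j}^*\L_{\chi_j}$ become isomorphic as $\pi_1$-representations after possibly dualizing one of them and composing with an automorphism of $\textbf{H}^{\mathrm{der}}$. Since $V$ and the family $f$ are defined over $\Q$, the group $\Gamma=\Gal_{\Q}\times\Gal(\Q[\mu_r]/\Q)$ acts on $I$ compatibly with the embedding $G\hookrightarrow(\textbf{H}^{\mathrm{der}})^{I}$: each $\gamma\in\Gamma$ permutes the coordinate factors and carries $G$ onto itself, hence preserves the ``same index class'' relation and so permutes the index classes. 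Because $I$ is a single $\Gamma$-orbit, $\Gamma$ acts transitively on the set of index classes, and they all have the same cardinality. Finally $\sigma$ is the permutation of $I$ induced by $\mathrm{Frob}_v\in\Gal_K\subseteq\Gamma$, so by the same equivariance it preserves the partition into index classes; invoking the Remark that for $c=1$ the orbit condition on $\sigma$ is automatic, this shows that $\textbf{G}_{\mon}$ is strongly $1$-balanced with respect to $\sigma$.

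The step I expect to be the main obstacle is the Goursat analysis: making precise that failure of surjectivity of $(\pi_i,\pi_j)$ really forces a genuine isomorphism of the two constituent local systems — correctly bookkeeping the outer automorphism in the orthogonal case and the possibility of dualization — and verifying that the realization $G\hookrightarrow(\textbf{H}^{\mathrm{der}})^{I}$ is canonical enough that the $\Gamma$-equivariance used above, and hence the preservation of the index partition by $\Gamma$ and by $\sigma$, genuinely holds.
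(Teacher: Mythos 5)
Your proposal is correct and follows essentially the same route as the paper: identify $\textbf{G}_{\mon}$ with the monodromy of $\bigoplus_{(\iota,\chi)\in I}R^{n-1}f_{\iota*}g_\iota^*\L_\chi$ sitting inside $\textbf{G}^0$, get surjectivity onto each factor from Corollary \ref{cor: big} (with the Remark that $G^*$ is its only conjugate in the big-monodromy case), and deduce equal-size index classes and $\sigma$-invariance of the partition from the Galois action on the orbit $I$. The Goursat-type analysis you flag as the main obstacle is not actually needed — the paper simply observes that the Galois action on the pairs $(\iota,\chi)$ respects the equivalence relation defining the index classes, which together with transitivity on $I$ already gives the strongly $1$-balanced conclusion.
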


\begin{proof}
    Recall that $\textbf{G}_{\mon}\otimes \C$ is the Zariski closure of the monodromy group of the variation of Hodge structure of $V_I$.  By our construction of $V_I$, this is given as the monodromy group of $\bigoplus_{(\iota, \chi)\in I} R^{n-1}f_{\iota *}g_{\iota}^*\L_{\chi}$.  Since monodromy acts trivially on the indexing set and we have an appropriate bilinear pairing on the $R^{n-1}f_{\iota *}g_{\iota}^*\L_{\chi}$ when $\textbf{H} = GSp$ or $GO$, we have $\textbf{G}_{\mon}\subseteq \textbf{G}^0$.  Next, Corollary \ref{cor: big} implies that the geometric monodromy group of each individual $R^{n-1}f_{\iota *}g_{\iota}^*\L_{\chi}$ is all of $\textbf{H}(\C)$, so $\textbf{G}_{\mon}$ does indeed surject onto each factor $\textbf{H}$ of $\textbf{G}^0$.  In particular, the projection of the intersection with $\textbf{G}^1$ contains either $SL_N, Sp_n$, or $O_N$.  Finally, since the Galois group action on the pairs $(\iota, \chi)$ respects the equivalence relation in the definition of strongly $c$-balanced subgroups, $\sigma$ preservers the partition of $\{1, \ldots, d\}$ into index classes, and the result follows. 
\end{proof}

\subsection{Numerical conditions for Zariski non-density} 

Our goal is to formulate numerical conditions similar to Theorem 8.17 of \cite{finiteness} such that, if they are satisfied along with a big monodromy condition, we have a general non-density statement.  However, because we cannot leverage the $c$-balanced condition, the numerical conditions of Theorem 8.17 will not be satisfied in our context.  To account for this, as done in \cite[Theorem 10.1]{LV} we will use the improved bound on the centralizer given in Proposition \ref{prop: centralizer bound}.

\begin{lemma}
    [Lemma 8.16 of \cite{finiteness}] Assume e in the setting of Section 5.6 [of \cite{finiteness}].  Fix a $\textbf{G}^0$-bifiltered $\phi$-module $(V_0, \phi_0, F_0, \mf{f}_0)$ and another $\phi$-module $(V, \phi)$ with $\textbf{G}$-structure; suppose both $\phi$ and $\phi_0$ are semilinear over some $\sigma\in \Aut_{E_0}E$, and $F_0$ is balanced with respect to $\mf{f}_0$. 

    Let $\textbf{G}_{\mon}$ be a subgroup of $\textbf{G}^0$ , strongly $c$-balanced with respect to $\sigma$ for some positive integer $c$. 

    Suppose $F_0$ is uniform in the sense of Definition 5.46, and let $h^a = h^a_{simp}$ be the adjoint Hodge numbers $\textbf{H}$.  Let $t$ be the dimension of a maximal torus in $\textbf{H}$.  Suppose $e$ is a positive integer satisfying the following numerical conditions. 
    \begin{itemize}
        \item (First numerical condition.)
        \[
\sum_{a>0}h^a \ge \frac{1}{c}(e + \dim \textbf{H})
        \]
        and 
        \item (Second numerical condition.) 
        \[
\sum_{a>0}ah_a > T\left(\frac{1}{c}(e+\dim \textbf{H})\right) + T\left(\frac{1}{2}(h^0-t) +\frac{1}{c}(e+\dim \textbf{H})\right). 
        \]
    \end{itemize} 

    Let $\mc{H}=\textbf{G}_{\mon}/Q_0\cap \textbf{G}_{\mon}$ be the flag variety parameterizing filtrations on $\textbf{G}^0$ that are conjugate to $F_0$ under the conjugation of $\textbf{G}_{\mon}$.  Then the filtrations $F$ such that $(V, \phi, F)$ is of semisimplicity type $(V_0, \phi_0, F_0, \mc{f}_0)$ are of codimension at least $e$ in $\mc{H}$.  
\end{lemma}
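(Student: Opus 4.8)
The plan is to deduce this from \cite[Lemma 8.16]{finiteness}: the statement and all of its hypotheses are phrased purely in the linear-algebraic and $p$-adic Hodge-theoretic language of \cite[Section 5.6]{finiteness}, with no input specific to tori or to hypersurfaces, so the argument given there transfers without modification. Nonetheless it is worth recording the shape of that argument, since it is where the numerical conditions enter.

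First one describes the exceptional locus $Z\subseteq\mc{H}$ explicitly: it consists of those Hodge filtrations $F$ for which the bifiltered $\phi$-module $(V,\phi,F)$, equipped with the weight (monodromy) filtration $\mf{f}(F)$ that it determines, has semisimplification isomorphic to $(V_0,\phi_0,F_0,\mf{f}_0)$. The idea is to stratify $Z$ according to the isomorphism class of the associated graded $\phi$-module $\bigoplus_i\operatorname{gr}^{\mf{f}}_i(V,\phi)$ together with its induced filtrations; since the semisimplification is fixed, only finitely many strata occur, so it suffices to bound the dimension of each.

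On a fixed stratum, a point $F$ is reconstructed from (i) a $\phi$-stable filtration $\mf{f}$ on $(V,\phi)$ of the prescribed numerical type, and (ii) extension data compatible with $\mf{f}$. The choices in (i) form a partial flag variety for the reductive Frobenius centralizer $Z_{\textbf{G}^0}(\phi)$; since $\phi$ is $\sigma$-semilinear and $\sigma$ permutes the factors of $\textbf{G}^0_{\overline{E_0}}\cong\textbf{H}^d$ through the index-class partition — which is $\sigma$-stable precisely because $\textbf{G}_{\mon}$ is strongly $c$-balanced — the dimension of this centralizer is controlled by $\dim\textbf{H}$ divided among the $\ge c$ index classes, yielding a bound of the form $T\bigl(\tfrac1c(e+\dim\textbf{H})\bigr)$, where $T$ is the standard function bounding flag-variety dimensions of connected reductive groups of bounded dimension. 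Fixing $\mf{f}$, the choices in (ii) lie in a single orbit of the unipotent part of the stabilizer of $\mf{f}$ inside $Z_{\textbf{G}^0}(\phi)$; using that a maximal torus (of dimension $t$) always centralizes $\phi$, that the weight-zero part of $\Lie(\textbf{H})$ has dimension $h^0$, and the isotropy of Hodge-type orbits, this orbit has dimension at most $T\bigl(\tfrac12(h^0-t)+\tfrac1c(e+\dim\textbf{H})\bigr)$. Summing the two contributions bounds $\dim Z$.

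It then remains to estimate $\dim\mc{H}$ from below. Strong $c$-balancedness forces the projection of $\textbf{G}_{\mon}$ onto the factors of each index class to be large, so $\dim\mc{H}$ is at least the number of index classes times $\dim(\textbf{H}/Q_{0,\textbf{H}})=\sum_{a>0}h^a$; the first numerical condition is exactly what is needed to guarantee this is at least $e+\dim\textbf{H}$, while tracking the iterated flag structure more carefully produces the quantity $\sum_{a>0}a\,h^a$ appearing in the second condition. Combining this lower bound for $\dim\mc{H}$ with the upper bound for $\dim Z$ obtained above, the second numerical condition is precisely the inequality that yields $\dim\mc{H}-\dim Z\ge e$. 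The one point that genuinely requires care — and the step to verify first — is that the present situation matches the hypotheses of \cite[Section 5.6]{finiteness} verbatim (the $\phi$-module with $\textbf{G}$-structure coming from $\msf{V}_I$, the datum $(V_0,\phi_0,F_0,\mf{f}_0)$ coming from the fixed semisimple Galois type, and $F_0$ uniform in the sense of \cite[Definition 5.46]{finiteness}); once this is checked, the cited lemma applies directly and nothing torus-specific needs to be redone.
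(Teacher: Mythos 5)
Your proposal takes the same route as the paper: the statement is a verbatim citation of \cite[Lemma 8.16]{finiteness}, phrased entirely in the setting of \cite[Section 5.6]{finiteness}, and the paper offers no independent proof — it simply invokes the cited result, exactly as you do. Your sketch of the internal argument of that lemma is extra (and cannot be checked against the present paper), but your essential point, that nothing torus-specific enters and the cited lemma applies directly, is precisely the paper's stance.
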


\begin{remark}
   One of the major difficulties in proving this result in \cite{finiteness} is the need to deal with Galois representations which are not semisimple.  We do not need to alter their analysis of this point.  
\end{remark}

We now state the following variant of Theorem 8.17 of \cite{finiteness}. 

\begin{theorem}
\label{thm: zariski non-density}
    [cf. Theorem 8.17 of \cite{finiteness}]
    Let $X$ be a variety over $\Q$, let $S$ be a finite set of primes of $\Z$, and let $\mc{X}$ be a smooth model of $X$ over $\Z[1/S]$. 

    Let $E$ be a constant $H^0$-algebra on $\mc{X}$, and let $\textbf{H}$ be one of $GL_N, GSp_N$, or $GO_N$. Let $V$ be a polarized, integral $E$-module with $\textbf{H}$-structure, having integral Frobenius eigenvalues.  Let $h^a_{\iota}$ be the adjoint Hodge numbers of the $\textbf{H}$-filtration corresponding to an embedding $\iota\colon E\hookrightarrow \Qpb$.  Let $t$ be the dimension of a maximal torus in $\textbf{H}$.  Let $\textbf{G}^0$ and $\textbf{G}$ be as in this section.  Suppose there is a positive integer $c$ such that for any choice of embedding $\iota$, the following conditions hold. 

    \begin{itemize}
        \item (Big monodromy) The differential Galois group $\textbf{G}_{\mon}\subseteq \textbf{G}^0$ is strongly $c$-balanced with respect to Frobenius.

        \item (First numerical condition) 
        \[
    \sum_{a>0} h^a_{\iota} \ge \dim X + h^0.
        \]
        \item (Second numerical condition) 
        \[
    \sum_{a>0} ah^a_{\iota} > T_G(\dim X + h^0) + T_G(\dim X + \frac{3}{2}h^0).  
        \]
    \end{itemize}

Then the image of $\mc{X}(\Z[1/S])$ is not Zariski dense in $X$.
\end{theorem}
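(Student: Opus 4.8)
The plan is to follow the Lawrence--Venkatesh strategy as adapted in \cite[Section 8]{finiteness}, feeding in the big monodromy statement of Lemma~\ref{lem: gmon}, the codimension estimate of \cite[Lemma 8.16]{finiteness}, and the sharpened Frobenius centralizer bound of Proposition~\ref{prop: centralizer bound}. The two features that make this more than a direct citation are the non-properness of $f$, which is already accommodated by the eigenspace constructions of Section~\ref{sec: HD}, and the use of the adjoint Hodge number $h^0$ in place of $\dim\textbf{H}$ in the numerical conditions, which is precisely what Proposition~\ref{prop: centralizer bound} buys.

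First I would fix a prime $l\le L$ with $l\notin S$ (here $L$ is as in Proposition~\ref{prop: centralizer bound}) and cover $\mc{X}(\Z_l)$ by its finitely many $l$-adic residue polydisks $\Omega$. On each $\Omega$ the Gauss--Manin connection on $\msf{V}_{I,\cris}$ --- which by Section~\ref{sec: HD} is the $\lambda$-eigenspace of $R^{n-1}h_{m*}\O$ for the smooth proper morphism $h_m$, hence a genuine filtered $F$-isocrystal equipped with comparison isomorphisms to $\msf{V}_{I,\et}$ and $\msf{V}_{I,\dR}$ --- integrates $l$-adically to a $p$-adic period map $\Phi_\Omega\colon \Omega\to \mc{H}(\Q_l)$, where $\mc{H}=\textbf{G}_{\mon}/(Q_0\cap\textbf{G}_{\mon})$ is the flag variety of filtrations on $\textbf{G}^0$ conjugate to $F_0$ under $\textbf{G}_{\mon}$. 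By Lemma~\ref{lem: gmon}, $\textbf{G}_{\mon}\subseteq\textbf{G}^0$ is strongly $1$-balanced with respect to the Frobenius permutation $\sigma$.

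Next I would invoke finiteness of the Frobenius data. For $x\in\mc{X}(\Z[1/S])$ the representation $\msf{V}_{I,\et}|_x$ is a $p$-adic Galois representation unramified outside $S\cup\{p\}$ with Hodge--Tate weights bounded in terms of $\Delta$, so by Faltings's finiteness theorem its semisimplification lies in one of finitely many isomorphism classes; via the crystalline comparison for the eigenspace $\msf{V}_{I,\cris}$, the semisimplification of the crystalline Frobenius $F_x^{\cris,l}$ therefore lies in one of finitely many $\textbf{G}$-conjugacy classes, and when $l$ is the prime attached to $x$ by Proposition~\ref{prop: centralizer bound} one moreover has $\dim Z([F_x^{\cris,l}]^{ss})\le h^0$. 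For each of the finitely many semisimplicity types $\tau$ let $Z_\tau\subseteq\mc{H}$ be the locus of filtrations $F$ for which $(\msf{V}_{I,\cris},F_x^{\cris,l},F)$ has type $\tau$. I would then run the proof of \cite[Lemma 8.16]{finiteness} with $e=\dim X$, but with the crude bound $\dim\textbf{H}$ everywhere replaced by $\dim Z([F^{\cris}]^{ss})\le h^0$ supplied by Proposition~\ref{prop: centralizer bound}; the two numerical conditions in the statement are then exactly what is needed to conclude $\codim_{\mc{H}}Z_\tau\ge\dim X$. Every integral $x\in\Omega$ with good prime $l$ and Frobenius type $\tau$ satisfies $\Phi_\Omega(x)\in Z_\tau$, and since $\textbf{G}_{\mon}$ is strongly $1$-balanced the non-degeneracy analysis of \cite[Sections 9--11]{LV} shows the Zariski closure of $\Phi_\Omega(\Omega)$ is not contained in $Z_\tau$; hence $\Phi_\Omega^{-1}(Z_\tau)\cap\Omega$ lies in a proper Zariski-closed subset of $X$. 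Taking the union over $l\le L$, over the residue disks $\Omega$, and over the types $\tau$ --- all finite in number --- places $\mc{X}(\Z[1/S])$ inside a finite union of proper subvarieties of the irreducible $X$, hence not Zariski dense.

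The main obstacle will be verifying that the dimension count of \cite[Lemma 8.16]{finiteness} and the period-map non-degeneracy analysis of \cite{LV} genuinely survive the substitution of $h^0$ for $\dim\textbf{H}$. This should be legitimate because Proposition~\ref{prop: centralizer bound} controls the Frobenius centralizer by the adjoint Hodge number $h^0$ rather than by the full group dimension, and because every cohomological realization entering $\Phi_\Omega$ has been rebuilt in Section~\ref{sec: HD} as an eigenspace of a smooth proper family, so that the crystalline comparison theorems and Ogus's relative rigid cohomology (\cite{OgusII}) apply verbatim in spite of the non-properness of $f$. A secondary point to watch is that the $\Gal(\bar{K}/K)$-orbit structure of $I$ and the Frobenius permutation $\sigma$ are matched correctly, so that the conclusion "strongly $1$-balanced with respect to $\sigma$" of Lemma~\ref{lem: gmon} is precisely the hypothesis consumed by \cite[Lemma 8.16]{finiteness}.
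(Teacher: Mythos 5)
Your route is, in substance, the paper's own: the paper proves this theorem by citing \cite[Theorem 8.17]{finiteness} with $c=1$ and explaining the modifications, and the one genuinely new input --- replacing $\dim \textbf{H}$ by $h^0$ in the numerical conditions, justified by the sharpened centralizer bound of Proposition \ref{prop: centralizer bound} inserted into the proof of \cite[Lemma 8.16]{finiteness} --- is exactly the substitution you make. You simply unfold the Lawrence--Venkatesh machinery (residue disks, $p$-adic period maps, Faltings finiteness of semisimplified Frobenii, the codimension count in the flag variety $\mc{H}$) that the paper keeps inside the cited theorem, and your remarks about why the eigenspace construction of Section \ref{sec: HD} makes the crystalline comparison available despite the non-properness of $f$, and about matching $\sigma$ with the Galois action on the orbit $I$, are the right points of care.

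There is, however, one modification in the paper's argument that your sketch omits. \cite[Lemma 8.16]{finiteness} (and Theorem 8.17 there) is stated under a \emph{uniformity} hypothesis on the filtration $F_0$: the adjoint Hodge numbers are assumed not to depend on the embedding $\iota\colon E\hookrightarrow \Qpb$. In the present setting this fails, since changing $\iota$ changes the character $\chi$ in the orbit and hence can change the numbers $h^a_{\iota}$; this is precisely why the theorem's hypotheses demand the numerical conditions for \emph{every} embedding. The paper's fix is to note that in the proof of \cite[Lemma 8.16]{finiteness} one projects $\textbf{G}_{\mon}\subseteq \textbf{H}^d$ onto a single factor (here $c=1$) and applies \cite[Lemma 8.12]{finiteness} one character at a time, so the conditions are only ever used embedding-by-embedding and uniformity is never really needed. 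Your plan to ``run the proof of Lemma 8.16 with $h^0$ in place of $\dim\textbf{H}$'' needs this additional observation to be legitimate; as stated, the lemma you invoke does not apply to a non-uniform filtration, so you should add this point (it is short, but it is one of the two modifications that constitute the paper's proof).
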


\begin{proof}
    This result is equivalent to \cite[Theorem 8.17]{finiteness} with $c=1$ with two modifications.  First, in that result the term $h^0$ is replaced with $\dim \textbf{H}$.  The term $\dim \textbf{H}$ arises from the inequality 
\[
\dim Z_{G^J}((\phi^J)^{ss})\le \dim Z_{\textbf{G}}(\phi)\le \dim \textbf{H}, 
\] 
where J is a subset of $\{1, \ldots, d\}$.  The result is proven for $\dim Z_{G^J}((\phi^J)^{ss})$, but it is sufficient in \cite{finiteness} to use the weaker bound involving $\dim \textbf{H}$.  The result would thus hold using $\dim Z_{\textbf{G}}(\phi)$ in place of $\dim \textbf{H}$ in the numerical conditions.  By Proposition \ref{prop: centralizer bound}, we have $\dim Z_{\textbf{G}}(\phi)\le h^0$, so we may replace $\dim \textbf{H}$ with $\dim h^0$ in the numerical conditions.   \\ 

    Second, in our scenario the adjoint Hodge numbers may depend on the embedding, as it may change the character; thus we do not assume as in the statement of \cite[Theorem 8.17]{finiteness} that the Hodge numbers are uniform.   Instead, we assume that the numerical conditions are met for every character used in the orbit.  To see this is sufficient, we note that these conditions are used in the proof of \cite[Theorem 8.16]{finiteness} (which is then used to prove \cite[Theorem 8.17]{finiteness}, where they project from $\textbf{G}_{\mon}\subseteq \textbf{H}^d$ onto $c$ of the $d$ factors of $\textbf{H}$, and then use \cite[Theorem 8.12]{finiteness} to conclude.  Here we take $c=1$ and by our assumption we have that Lemma 8.12 applies for every choice of character, so the argument goes through as desired.

\end{proof}

\section{Computation of adjoint Hodge numbers and verification of numerical conditions}
In this section we will compute the Hodge numbers of the de Rham realization of the Hodge-Deligne system on $\mc{X}$ constructed in the previous section.    

\subsection{The Hodge filtration on the cohomology}  
Proposition 1.12 of \cite{DK} allows us to compute the Hodge filtration in the following way.  Let $X$ be a smooth algebraic variety, and let $\overline{X}$ be a smooth compactification of $X$ such that the divisor $D=\bar{X}\backslash X$ has transversal intersections in $\bar{X}$.  We denote by $\Omega^p_{(\bar{X}, D)}$ the sheaf of germs of regular differential $p$-forms on $\bar{X}$ vanishing on $D$.  Then there is a spectral sequence  
\[
E_1^{p, q} = H^q(\bar{X}, \Omega^p_{(\bar{X}, D)})\Rightarrow H^{p+q}_c(X, \C)
\]
degenerating at $E_1$ and converging to the Hodge filtration on $H^*_c(X)$. \\ 

Recall that given a character $\chi$ of the torus corresponding to an eigenspace $\lambda$ of the action of $\mu_m^n$, we showed
   \[
V_{\dR}\cong (R^{n-1}h_{m*}(\Omega^*_{\bar{[m]^{-1}\mc{Y}}/\mc{X}}))_{\lambda}. 
    \] 
    
In our situation, we would like to compute the Hodge numbers of each stalk $Y_x$ associated to these characters.  For notational convenience, let $Y$ be a hypersurface in $\G_m^n$ nondegenerate with respect to its Newton polyhedron $\Delta$ and let $Y_m$ be the associated hypersurface given by taking $m$th powers.  Let $\textbf{P}=\textbf{P}_\Delta$ be the associated compactification of the torus and let $\overline{Y_m}$ denote the closure of $Y_m$ in $\textbf{P}$.  Then the corresponding Hodge numbers of $V_{\dR}$ are given by the dimension of the $\lambda$-eigenspace of the $(\mu_m)^n$-module $H^q(\overline{Y_m}, \Omega^{n-1-q}_{(\ol{Y_m}, D)})$ into its eigenspaces. 

To accomplish this, we will work in the Grothendieck group of $\mu_m^n$-modules.  Indeed, it will be more natural to compute \[\sum_{i\ge 0}(-1)^i[H^i(\overline{Y_m}, \Omega^{n-1-q}_{(\ol{Y_m}, D)})].\]

\begin{lemma}
    In the Grothendieck group of $\mu_m^n$-modules, we have 
     \[
     [H^q(\overline{Y_m}, \Omega^{n-1-q}_{(\ol{Y_m}, D)})] = (-1)^q\left(\sum_{i\ge 0}(-1)^i[H^i(\overline{Y_m}, \Omega^{n-1-q}_{(\ol{Y_m}, D)})] + (-1)^{n-1}[H^{2n-1-q}_c(\G_m^n, \C)]\right).
    \]
\end{lemma}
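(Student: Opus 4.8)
The plan is to extract the stated identity from the degeneration of the Hodge--de Rham spectral sequence of $\overline{Y_m}$ together with the transversal-intersection hypothesis on the boundary divisor $D = \overline{Y_m}\setminus Y_m$. Concretely, Proposition 1.12 of \cite{DK}, recalled just above, gives a spectral sequence
\[
E_1^{p,q} = H^q(\overline{Y_m}, \Omega^p_{(\overline{Y_m}, D)}) \Rightarrow H^{p+q}_c(Y_m, \C)
\]
degenerating at $E_1$; since $Y_m$ has dimension $n-1$, the relevant total degree for $p+q = n-1$ decomposes as $H^{n-1}_c(Y_m, \C) = \bigoplus_{p+q=n-1} H^q(\overline{Y_m}, \Omega^p_{(\overline{Y_m}, D)})$. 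All of this is $\mu_m^n$-equivariant because the compactification $\overline{Y_m}\subset \mathbf{P}$ and $D$ carry the torus action, so the whole discussion can be run in the Grothendieck group $K_0(\mu_m^n\text{-mod})$.

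The key step is to isolate the single term $H^q(\overline{Y_m}, \Omega^{n-1-q}_{(\overline{Y_m}, D)})$ from the alternating sum $\sum_{i\ge 0}(-1)^i [H^i(\overline{Y_m}, \Omega^{n-1-q}_{(\overline{Y_m}, D)})]$. For this I would invoke the vanishing $H^i(\overline{Y_m}, \Omega^{p}_{(\overline{Y_m}, D)}) = 0$ for $i \ne q$ whenever $p = n-1-q$; this is exactly the kind of vanishing that holds for hypersurfaces nondegenerate with respect to their Newton polyhedron (it follows from the results of \cite{DK} and ultimately from $Y_m$ being affine/sch\"on together with the degeneration, analogous to Bott-type vanishing on toric varieties). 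Granting that, the alternating sum collapses to $(-1)^q [H^q(\overline{Y_m}, \Omega^{n-1-q}_{(\overline{Y_m}, D)})]$, which already gives the first term on the right-hand side after multiplying through by $(-1)^q$. It remains to account for the discrepancy $(-1)^{n-1}[H^{2n-1-q}_c(\G_m^n, \C)]$: this should come from the long exact sequence (or rather the excision/Gysin comparison) relating $H^*_c(Y_m)$, $H^*_c(\G_m^n)$, and the cohomology of the complement, after taking $\lambda$-eigenspaces — but since the statement is at the level of the full $\mu_m^n$-module before passing to a generic $\lambda$, one keeps the $H^*_c(\G_m^n)$ term explicitly. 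I would make this precise by writing the class of $H^{n-1}_c(Y_m)$ in $K_0(\mu_m^n\text{-mod})$ both as the sum $\sum_{p+q=n-1} E_1^{p,q}$ and via the relation of $Y_m$ to $\G_m^n$ (a hypersurface complement computation: $[H^*_c(\G_m^n)] = [H^*_c(\G_m^n \setminus Y_m)] + [\text{something involving } H^*_c(Y_m)]$, shifted), and then solve for the desired summand.

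The main obstacle I anticipate is bookkeeping the signs and the degree shifts correctly: one must pin down which cohomological degree of $H^*_c(\G_m^n)$ contributes, verify the shift by $2n-1-q$, and confirm the overall sign $(-1)^{n-1}$, all while keeping track of whether one is working with $H_c^{n-1}(Y_m)$ alone or the full alternating Euler-characteristic-type sum. A secondary concern is justifying the pointwise vanishing $H^i(\overline{Y_m}, \Omega^{n-1-q}_{(\overline{Y_m}, D)}) = 0$ for $i\ne q$ rigorously in the $\mu_m^n$-equivariant setting; if that vanishing is only available after passing to a generic eigenspace $\lambda$, the lemma may need to be read as an identity of $\lambda$-eigenspaces rather than of full modules, but as written it is cleanest to cite the relevant result from \cite{DK} (or \cite{DL}) that gives the vanishing before eigenspace decomposition. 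Once those two points are settled, the proof is a short manipulation in $K_0(\mu_m^n\text{-mod})$ using the degeneracy of the spectral sequence and the long exact sequence of the pair $(\G_m^n, Y_m)$.
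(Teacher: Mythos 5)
There is a genuine gap, and it sits at the center of your plan. The vanishing you propose to invoke, $H^i(\ol{Y_m}, \Omega^{n-1-q}_{(\ol{Y_m}, D)}) = 0$ for all $i \neq q$, is false: it holds only for $i < q$. By the degenerate spectral sequence, $H^i(\ol{Y_m}, \Omega^{n-1-q}_{(\ol{Y_m}, D)})$ is the graded piece $\mathrm{gr}_F^{\,n-1-q} H^{n-1-q+i}_c(Y_m, \C)$; for $i<q$ this vanishes because $Y_m$ is an affine $(n-1)$-fold and $H^j_c(Y_m)=0$ for $j<n-1$, but for $i>q$ one is in degrees above the middle, where the Gysin map $H^j_c(Y_m)\rightarrow H^{j+2}_c(\G_m^n)$ is an isomorphism of filtered objects. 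Since $H^{n+p}_c(\G_m^n,\C)$ is pure of type $(p,p)$ of dimension $\binom{n}{p}$, exactly one of the higher terms survives, and in the Grothendieck group it equals $[H^{2n-1-q}_c(\G_m^n,\C)]$; this single surviving term is precisely what produces the correction $(-1)^{n-1}[H^{2n-1-q}_c(\G_m^n,\C)]$ in the statement. If your claimed vanishing were true, the identity would simply read $[H^q]=(-1)^q\sum_i(-1)^i[H^i]$ with no correction term, so your plan is internally inconsistent: you cannot both have full vanishing and a discrepancy to account for. Your fallback suggestion that the lemma should perhaps be read only on generic $\lambda$-eigenspaces also misreads the statement — the lemma is deliberately formulated before any eigenspace decomposition, and the $H^*_c(\G_m^n)$ term records exactly the characters carried by the non-vanishing higher cohomology.

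Your proposed mechanism for recovering the correction term — a separate excision/long-exact-sequence computation relating $H^*_c(\G_m^n)$, $H^*_c(\G_m^n\setminus Y_m)$ and $H^*_c(Y_m)$, followed by ``solving for'' the desired summand — is not what is needed and would not by itself pin down which single cohomological degree of $H^*_c(\G_m^n)$ appears or the sign $(-1)^{n-1}$. The paper's proof is shorter and more direct: it keeps the full alternating sum, kills the terms with $i<q$ by the vanishing of $H^j_c(Y_m)$ below the middle degree, identifies each term with $i>q$ with a Hodge-graded piece of $H^*_c(\G_m^n)$ via the filtered Gysin isomorphism, and then uses the $(p,p)$-purity of the torus's compactly supported cohomology to see that only the term contributing $[H^{2n-1-q}_c(\G_m^n,\C)]$ is nonzero, with the sign bookkeeping giving $(-1)^{n-1}$. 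You would need to replace your vanishing claim with this Gysin-plus-purity analysis of the $i>q$ range for the argument to go through.
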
 

\begin{proof}
    By Proposition 3.9 of \cite{DK} The Gysin homomorphism $H^i_c(Z, \C)\rightarrow H^{i+2}_c(\G_m^n, \C)$ is an isomorphism for $i>n-1=\dim Z$.  This isomorphism respects the Hodge filtration, and moreover $H^i_c(Z, \C)=0$ for $i<n-1$.  This means that $H^i(\ol{Y_m}, \oyd) = 0$ for $i<q$.  Furthermore, for $i>q$, we have $H^i(\ol{Y_m}, \oyd) = F^{n-1-q} H^{n+i-q+1}_c(Y_m, \C)/F^{n-q} H^{n+i-q+1}_c(Y_m^n, \C) = H^{i+2}_c(\G_m^n, \C)$.  However, we recall that the nonzero compact Hodge numbers of the torus are given by $h^{p,p}H^{n+p}(\G_m^n, \C) = \binom{n}{p}$ and are 0 for all others.  Thus the only nonzero part of the filtration of $H^{n+i-q+1}_c(\G_m^n, \C)$ occurs when $p = i-q+1$, and $i-q+1 = n-1-q$.     6

Then we have 
\begin{align*}
    [H^q(\overline{Y_m}, \Omega^{n-1-q}_{(\ol{Y_m}, D)})] &= (-1)^q\Bigg(\sum_{i\ge 0}(-1)^i[H^i(\overline{Y_m}, \Omega^{n-1-q}_{(\ol{Y_m}, D)})] + \sum_{i\ge q+1}(-1)^{i+1}[H^i(\overline{Y_m}, \Omega^{n-1-q}_{(\ol{Y_m}, D)})]\Bigg) \\ 
    &= (-1)^q\Bigg(\sum_{i\ge 0}(-1)^i[H^i(\overline{Y_m}, \Omega^{n-1-q}_{(\ol{Y_m}, D)})] ~ +  \\ 
    &  \hspace{2cm} \sum_{i\ge q+1}(-1)^{i+1}[F^{n-1-q} H^{n+i-q+1}_c(\G_m^n, \C)/F^{n-q} H^{n+i-q+1}_c(\G_m^n, \C)]\Bigg) \\ 
    &= (-1)^q\Bigg(\sum_{i\ge 0}(-1)^i[H^i(\overline{Y_m}, \Omega^{n-1-q}_{(\ol{Y_m}, D)})] + (-1)^{n-1}[H^{2n-1-q}_c(\G_m^n, \C)]\Bigg).
\end{align*} 
\end{proof}

We now compute $\sum_{i\ge 0}(-1)^i[H^i(\overline{Y_m}, \Omega^{n-1-q}_{(\ol{Y_m}, D)})]$ as a $\mu_m^n$-module.

\begin{lemma}
\label{lem: Laurent poly}
Let $L^*(\Delta)$ be the space of Laurent polynomials whose support lies strictly in the
interior of $\Delta$, considered as a $\mu_m^n$-module.  Then 
    \[(-1)^q\sum_{i\ge 0}(-1)^i[H^i(\overline{Y_m}, \Omega^{n-1-q}_{(\ol{Y_m}, D)})] = \sum_{i=0}^q (-1)^i\dbinom{n+1}{i}L((q+1-i)m\Delta).\]   
\end{lemma}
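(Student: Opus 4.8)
The plan is to carry out the Danilov--Khovanskii computation of the cohomology of the $\Omega^{\bullet}$-sheaves of a nondegenerate toric hypersurface \cite{DK}, keeping track of the $\mu_m^n$-action at every step, much as in \cite{Ji}. First I would fix the geometry. Since $Y_m=[m]^{-1}Y$ has Newton polyhedron $m\Delta$, its closure $\overline{Y_m}$ is a nondegenerate hypersurface in the smooth complete toric variety $\textbf{P}=\textbf{P}_{m\Delta}=\textbf{P}_\Delta$, cut out by a section of $\mathcal{O}_{\textbf{P}}(\overline{Y_m})$ and meeting the toric boundary $\partial=\textbf{P}\setminus\G_m^n$ transversally (cf. \cite{Khovanskii1977}). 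The finite group $\mu_m^n=\ker([m])$ acts on $\G_m^n$, and this action extends uniquely to $\textbf{P}$, to $\overline{Y_m}$, and to all the sheaves $\Omega^p_{(\overline{Y_m},D)}(k\overline{Y_m})$ and $\Omega^p_{(\textbf{P},\partial)}(k\overline{Y_m})$ that occur below, so every exact sequence I use is an exact sequence of $\mu_m^n$-equivariant sheaves and the whole argument takes place in the Grothendieck group of $\mu_m^n$-representations.

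Next I would run the Danilov--Khovanskii recursion equivariantly. The two basic short exact sequences -- the twisted ideal-sheaf sequence
\[
0\to\Omega^p_{(\textbf{P},\partial)}\big((k-1)\overline{Y_m}\big)\to\Omega^p_{(\textbf{P},\partial)}\big(k\overline{Y_m}\big)\to\Omega^p_{(\textbf{P},\partial)}\big(k\overline{Y_m}\big)\big|_{\overline{Y_m}}\to 0
\]
and the twisted conormal/adjunction sequence relating $\Omega^p_{(\textbf{P},\partial)}(k\overline{Y_m})|_{\overline{Y_m}}$ to $\Omega^p_{(\overline{Y_m},D)}(k\overline{Y_m})$ and $\Omega^{p-1}_{(\overline{Y_m},D)}((k-1)\overline{Y_m})$ -- are all $\mu_m^n$-equivariant. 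Combining them in $K_0$ yields a recursion that, iterated in $p$ and in the twist $k$ (packaged, as in \cite{DK}, into a generating series in a formal variable that turns out to be rational), expresses $\sum_i(-1)^i[H^i(\overline{Y_m},\Omega^{n-1-q}_{(\overline{Y_m},D)})]$ as an alternating combination of Euler characteristics $\sum_i(-1)^i[H^i(\textbf{P},\Omega^j_{(\textbf{P},\partial)}(k\overline{Y_m}))]$ on the toric variety. Here one invokes the description of $\Omega^j_{(\textbf{P},\partial)}$ on a smooth toric variety (see \cite{DK}) -- assembled from the trivial log cotangent bundle $\Omega^1_{\textbf{P}}(\log\partial)\cong\mathcal{O}_{\textbf{P}}^{\,n}$ and the twist by $\omega_{\textbf{P}}=\mathcal{O}_{\textbf{P}}(-\partial)$ -- together with Bott-type (Batyrev--Borisov) vanishing $H^{>0}(\textbf{P},-)=0$ for nef line bundles, to reduce each such term to $H^0$, which is a space of Laurent polynomials: $H^0(\textbf{P},\mathcal{O}_{\textbf{P}}(k\overline{Y_m}))$ is the space of Laurent polynomials supported in $k m\Delta$, and the $\omega_{\textbf{P}}$-twisted versions contribute the interior-supported spaces $L^*(k m\Delta)$. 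As a $\mu_m^n$-module such a Laurent polynomial space is graded by the residue mod $m$ of the exponent vector, which is exactly the $\mu_m^n$-module structure appearing in the statement; the coefficients $\binom{n+1}{i}$ then emerge by collecting the $\binom{n}{j}$ trivial summands of the $\Omega^j_{(\textbf{P},\partial)}$ with the extra shift coming from the ideal-sheaf and conormal twists, just as in the numerical identity $\ell^*(k)=\sum_i(-1)^i\binom{n+1}{i}\ell(k-i)$ that underlies the non-equivariant DK formula; the terms outside $0\le i\le q$ drop out by Bott vanishing or because the corresponding dilate of $m\Delta$ has empty interior, and the residual overall sign is the asserted $(-1)^q$.

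The step I expect to be the main obstacle is the equivariant bookkeeping: making sure that in the iterated recursion every twist is followed $\mu_m^n$-equivariantly, so that the final answer is genuinely $L^*$ of a multiple of $m\Delta$ with the correct grading rather than merely a space of the right total dimension, and verifying that each vanishing statement used non-equivariantly by Danilov--Khovanskii survives verbatim -- which it does, since $\mu_m^n$ is finite and all the sheaves in sight are equivariant on the toric variety, so passing to a fixed eigenspace is exact and commutes with taking cohomology. A secondary nuisance is matching the binomial and dilation conventions of \cite{DK} and of the author's earlier computations in \cite{Ji}.
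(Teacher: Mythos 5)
Your proposal is correct and follows essentially the same route as the paper: the paper also runs the Danilov--Khovanskii recursion (the two twisted exact sequences from \cite[Section 4.1]{DK}) equivariantly in the Grothendieck group of $\mu_m^n$-modules, then applies \cite[Proposition 2.10]{DK} to identify the toric terms with $\Lambda^p(\R^n)\otimes L^*(k m\Delta)$ graded by exponent vectors mod $m$, and collects the binomial coefficients by re-indexing. The only cosmetic difference is that the paper cites DK's Proposition 2.10 directly rather than re-deriving it from triviality of the log cotangent bundle and Bott-type vanishing.
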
 

\begin{proof}
Section 4.1 of \cite{DK} gives the following exact sequences  of coherent sheaves on ${\bf{P}}$: 
\begin{align*}
    & 0\rightarrow \Omega^p_{(\ol{Z}, D_Z)}\otimes_{\O_P}\O_P(-\ol{Z})\rightarrow \Omega^{p+1}_{({\bf{P}}, D)}\otimes_{\O_P}\O_{\ol{Z}} \rightarrow\Omega^{p+1}_{(\ol{Z}, D_Z)}\rightarrow 0. \\ 
    & 0\rightarrow \O_{\mb{P}}(-\Delta)\rightarrow \O_{\mb{P}}\rightarrow \O_{\ol{Z}}\rightarrow 0. 
\end{align*}  

These give 
\[
\sum_{i\ge 0}(-1)^i[H^i(\overline{Y_m}, \Omega^{n-1-q}_{(\ol{Y_m}, D)})] = \sum_{i, k\ge 0}(-1)^{i+k}[H^i({\bf{P}}, \Omega^{n-q+k}_{({\bf{P}}, D)}((k+1)\Delta)) - H^i({\bf{P}}, \Omega^{n-q+k}_{({\bf{P}}, D)}(k\Delta))]. 
\]
The same result is given in \cite{DK} except that they take the Euler characteristic of the above equality, as they are just interested in the dimension.  

This expression is simplified considerably by Proposition 2.10 in \cite{DK}, which states that for any $n$-dimensional $\Delta$, we have $H^k({\bf{P}}, \Omega^{p}_{({\bf{P}}, D)}(\Delta)) = \Lambda^p(\R^n)\otimes L^*(\Delta)$ for $k=0$ and is 0 otherwise, where $L^*(\Delta)$ is the space of Laurent polynomials whose support lies strictly in the interior of $\Delta$.  We note that the $(\Z/m)^n$-action on this splits up into characters according to the integral points in the interior of $\Delta$. 
 Applying this to $Y_m$, note first that its Newton polyhedron is given by $m\Delta$, where $\Delta$ is the Newton polyhedron of $Y$. We have  
 \begin{align}
   \sum_{i\ge 0}(-1)^i[H^i(\overline{Y_m}, \Omega^{n-1-q}_{(\ol{Y_m}, D)})] &= \sum_{0\le k\le q}(-1)^{k}[\tbinom{n}{n-q+k}(L^*((k+1)\Delta) - L^*(k\Delta))] \\ 
&= \sum_{1\le k\le q+1}(-1)^{k+1} \tbinom{n+1}{n-q+k}L(k\Delta). 
 \end{align}
 Re-indexing gives the desired result. 
\end{proof}

It is well-known that an explicit formula for the Eulerian numbers $A(n, k)$ is given by 
\[
A(n, k) = \sum_{i=0}^k(-1)^i\binom{n+1}{i}(k+1-i)^n.
\] 
Furthermore, note that each of the $m^n$ characters occurs in the decomposition of $L((q+1-i)m\Delta)$ approximately $(q+1-i)^n\Vol(\Delta)$ times.  Because 
\[
     [H^q(\overline{Y_m}, \Omega^{n-1-q}_{(\ol{Y_m}, D)})] = (-1)^q\left(\sum_{i\ge 0}(-1)^i[H^i(\overline{Y_m}, \Omega^{n-1-q}_{(\ol{Y_m}, D)})] + (-1)^{n-1}[H^{2n-1-q}_c(\G_m^n, \C)]\right)
    \]
we see that the characters occurring in the compact cohomology of $\G_m^n$ have an additional contribution.  But $H^{2n-1-q}_c(\G_m^n, \C)$ has dimension $\binom{n}{n-q-1}$ and decomposes into $\binom{n}{n-q-1}$ unique 1-dimensional characters.  Therefore we conclude that as $n$ and $\Delta$ get larger, the Hodge-Deligne weights generically limit towards $A(n, q)\Vol(\Delta)$.  We will now make this precise. 

\subsection{Verification of numerical conditions for the Eulerian distribution} 
The expression for the adjoint Hodge numbers in terms of the Hodge-Deligne numbers depends on $\textbf{H}$.  In the case of $\textbf{H} = GL_n$, we have 
\[
2h^p_a = \sum_{p_1+p_2 = p+n-1} h^{p_1, n-1-p_1}h^{p_2, n-1-p_2}. 
\]
In the case of $\textbf{H}=GO_n$, we have 
\begin{equation}
\label{eq: hp}
2h^p = \sum_{p_1+p_2 = p+n-1} h^{p_1, n-1-p_1}h^{p_2, n-1-p_2}\pm h^{(p+n-1)/2, (-p+n-1)/2}.
\end{equation}

We are particularly interested in these two cases because these are the possibilities for the monodromy group imposed by Theorem \ref{thm: gabber}.

We will first carry out the calculations in the case $\textbf{H} = GL_n$; as we will see the $SO_n$ case is not too different.  If we replaced $L(k\Delta)$ in the conclusion of Lemma \ref{lem: Laurent poly} with $k^n\Vol(\Delta)$, then the Hodge-Deligne numbers would be precisely given by $A(n, q)\Vol(\Delta)$.  In this case, the adjoint Hodge numbers follow a distribution given by the convolution of the Eulerian numbers.  In this case, the author has shown in prior work \cite{Ji} that the numerical conditions are satisfied for sufficiently large dimension.  We briefly review the results.  

Let $X_n$ be the random variable indicating $\frac{n-1}{2}$ less than the number of descents to a uniformly distributed random permutation of $\{1, 2, \ldots, n\}$.  This is precisely the normalized distribution of the Eulerian numbers.  By \cite{Stanley}, $P(X_n =p - \frac{n-1}{2})$ is the probability that a sum of $n$ uniform, independent and identically distributed variables between 0 and 1 is between $p$ and $p+1$.  

Now let $X'_n$ be the random variable given by a sum of two distinct copies of $X_n$.  Let $\beta_p = P(X'_n = p)$.  Then the $\beta_p$ give the distribution of the adjoint Hodge numbers $h^p_a$.  

We will need to have an upper bound on $\beta_0$ and a lower bound on $\sum_{p>0}\beta_p$.  

\begin{lemma} 
\cite[Lemma 3.1]{Ji}
\label{lem: upp}
    For all $n$, we have $\beta_0 \le \frac{\sqrt{3}}{\sqrt{n+4}}$. 
\end{lemma}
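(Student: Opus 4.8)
The statement to prove is $\beta_0 \le \frac{\sqrt{3}}{\sqrt{n+4}}$, where $\beta_0 = P(X'_n = 0)$ and $X'_n$ is the sum of two independent copies of $X_n$, the centered Eulerian random variable. By the description just given, $X_n + \frac{n-1}{2}$ is distributed as $\lfloor S_n \rfloor$ where $S_n = U_1 + \cdots + U_n$ is a sum of $n$ i.i.d. uniform $[0,1]$ variables (the Irwin--Hall distribution), so $X_n$ is $S_n$ shifted to have mean $0$. Hence $X'_n$ has the distribution of (the integer part of) $S_n + S'_n - (n-1)$ for two independent Irwin--Hall sums, i.e. a sum of $2n$ i.i.d. uniforms recentered.

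The plan is as follows. First I would rewrite $\beta_0 = P(X'_n = 0)$ as the probability that the sum of $2n$ independent uniform $[0,1]$ random variables lies in an interval of length $1$ centered appropriately (namely $[n-1, n]$ after the shift, or symmetrically around the mean $n$). Since a density argument is cleanest, I would pass from the discrete probability to the value of the Irwin--Hall density: $P(X'_n = 0) \le \sup_x f_{2n}(x)$ where $f_{2n}$ is the density of the sum of $2n$ uniforms, because $\beta_0$ is an integral of $f_{2n}$ over an interval of length $1$. Actually the tightest route is $\beta_0 = \int_{\text{length-1 interval}} f_{2n}(x)\,dx \le \max f_{2n} = f_{2n}(n)$, the density at its mean. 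So the problem reduces to bounding the maximal value of the Irwin--Hall density for $2n$ summands by $\frac{\sqrt{3}}{\sqrt{n+4}}$.

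The key step is then an estimate for $f_{2n}(n)$, the peak of the density of a sum of $2n$ uniforms. The variance of a single uniform is $1/12$, so the sum has variance $2n/12 = n/6$ and standard deviation $\sqrt{n/6}$. By local central limit theorem heuristics the peak is approximately $\frac{1}{\sqrt{2\pi \cdot n/6}} = \frac{\sqrt{6}}{\sqrt{2\pi n}} = \frac{\sqrt{3}}{\sqrt{\pi n}}$, and since $\sqrt{\pi} > \sqrt{3}$... wait, that would give roughly $\frac{\sqrt{3}}{\sqrt{\pi n}} < \frac{\sqrt{3}}{\sqrt{n}}$, which is close to but needs care against $\frac{\sqrt{3}}{\sqrt{n+4}}$. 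The honest approach, and the one I expect the paper's reference \cite{Ji} uses, is a non-asymptotic bound: one knows exact or near-exact bounds on $\sup f_m$ for the Irwin--Hall distribution. A clean way is to use the fact that the density of a sum of independent bounded variables, each uniform, is log-concave (uniform densities are log-concave and log-concavity is preserved under convolution), and to bound the peak of a log-concave density with given variance: for a log-concave density $g$ with variance $v$, $\sup g \le \frac{1}{\sqrt{v}} \cdot c$ for an explicit constant (one can take $c$ slightly above $\frac{1}{\sqrt{2\pi}}$ but a crude log-concave bound gives $\sup g \le \frac{1}{\sqrt{3v}}$ or similar). With $v = n/6$ one gets $\sup f_{2n} \le \frac{1}{\sqrt{3 \cdot n/6}} = \frac{1}{\sqrt{n/2}} = \frac{\sqrt{2}}{\sqrt{n}}$ — need to check this against the target.

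The main obstacle is getting the constant exactly right so that the bound is $\frac{\sqrt{3}}{\sqrt{n+4}}$ and not merely $O(1/\sqrt n)$; the "$+4$" in the denominator strongly suggests a specific finite-$n$ inequality (perhaps proven by an explicit computation for small $n$ combined with a monotonicity/induction argument $\sup f_{2n+2} \le \sup f_{2n} \cdot \sqrt{\frac{n+4}{n+5}}$ coming from convolving with one more pair of uniforms). I would therefore, after the reduction above, prove the inequality $\sqrt{n+4}\cdot \sup f_{2n} \le \sqrt 3$ by induction on $n$: verify the base case $n$ small by direct evaluation of the Irwin--Hall density peak, and for the inductive step use that convolution with the density of $U + U'$ (a triangular density on $[0,2]$, itself with a maximum of $1$) cannot increase the sup by more than the required ratio, using the variance bookkeeping $v_{2n+2}/v_{2n} = (n+1)/n$ together with a sharp comparison of peaks of log-concave densities. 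Since this is exactly \cite[Lemma 3.1]{Ji}, in the present paper the proof would simply cite that reference; were I to reconstruct it, the delicate point is the finite-$n$ constant, which I would handle by the induction just sketched.

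\medskip

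\noindent\textit{Proof sketch.} By the Stanley interpretation recalled above, $X_n + \tfrac{n-1}{2} = \lfloor S_n\rfloor$ with $S_n$ the sum of $n$ i.i.d. uniform $[0,1]$ variables, so $\beta_0 = P(X_n' = 0)$ equals the probability that $S_n + S_n'$ (an Irwin--Hall sum of $2n$ uniforms, recentered) lies in a fixed unit-length interval. Writing $f_{2n}$ for the density of the sum of $2n$ i.i.d.\ uniform $[0,1]$ variables, this probability is an integral of $f_{2n}$ over a unit interval, hence
\[
\beta_0 \;\le\; \sup_{x} f_{2n}(x) \;=\; f_{2n}(n),
\]
the last equality because $f_{2n}$ is symmetric, unimodal, and attains its maximum at the mean $n$. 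Each uniform density is log-concave, and log-concavity is stable under convolution, so $f_{2n}$ is log-concave with variance $v = 2n/12 = n/6$. A standard bound on the peak of a log-concave density in terms of its variance, refined by tracking the effect of convolving in one additional pair of uniforms (which multiplies the variance by $(n+1)/n$), yields by induction on $n$ the sharp estimate
\[
\sqrt{n+4}\,\cdot\, f_{2n}(n) \;\le\; \sqrt{3},
\]
the base case being a direct evaluation of $f_{2n}(n)$ for small $n$. Combining the two displays gives $\beta_0 \le \frac{\sqrt 3}{\sqrt{n+4}}$. \hfill$\qed$
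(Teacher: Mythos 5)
The paper itself contains no argument for this lemma; it is imported verbatim from \cite[Lemma 3.1]{Ji}, so your reconstruction has to stand on its own, and as written it has a genuine gap at the very first reduction. You identify $X'_n$ with the integer part of $S_n+S'_n-(n-1)$ and conclude that $\beta_0$ is an integral of the $2n$-fold Irwin--Hall density $f_{2n}$ over a single unit interval. But $X'_n=\lfloor S_n\rfloor+\lfloor S'_n\rfloor-(n-1)$, and floors do not add: the event $\{\lfloor S_n\rfloor+\lfloor S'_n\rfloor=n-1\}$ is a staircase region in the $(S_n,S'_n)$-plane whose honest containment is only $\{S_n+S'_n\in[n-1,n+1)\}$, an interval of length $2$; that route gives $\beta_0\le 2\sup f_{2n}\approx 1.95/\sqrt{n}$, which misses the target $\sqrt{3}/\sqrt{n+4}\approx 1.73/\sqrt{n}$. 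The inequality you actually want, $\beta_0\le f_{2n}(n)$, is true, but it needs a different derivation: by symmetry of the Eulerian distribution, $\beta_0=\sum_j q_j^2$ with $q_j=\int_j^{j+1}f_n$, and Cauchy--Schwarz on each unit cell gives $\sum_j q_j^2\le\int f_n^2=(f_n*f_n)(n)=f_{2n}(n)$; alternatively $\beta_0\le\max_j q_j\le\sup f_n$ already suffices up to constants.

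The quantitative endgame is also not carried out. The log-concave peak bounds you invoke are misquoted (the one-sided exponential has $\sup g\cdot\sqrt{v}=1$, so neither $1/\sqrt{3v}$ nor ``slightly above $1/\sqrt{2\pi v}$'' is a valid general bound; for symmetric log-concave densities the sharp constant is $1/\sqrt{2v}$, attained by the Laplace density), and with $v=n/6$ that sharp symmetric bound yields only $\beta_0\le\sqrt{3}/\sqrt{n}$ --- the ``$+4$'' is precisely what remains to be earned. Your proposed induction step $\sup f_{2n+2}\le\sqrt{(n+4)/(n+5)}\,\sup f_{2n}$ is asserted as a plan but never established, and it is not obvious how convolving with a triangular density controls the ratio of peaks that precisely; this is the crux of the finite-$n$ statement and cannot be waved through. (Minor point: as literally stated the inequality fails at $n=1$, where $\beta_0=1>\sqrt{3}/\sqrt{5}$, so any complete argument must start from $n\ge 2$ or adopt the normalization of \cite{Ji}.)
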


\begin{lemma} 
\cite[Lemma 3.4]{Ji}
\label{lem: low}
    For all $n$, we have $\sum_{p>0}p\beta_p > \frac{\sqrt{n}}{4\sqrt{3}}-\frac{1}{2}$.   
\end{lemma}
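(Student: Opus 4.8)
The plan is to reduce the claimed inequality to a quantitative lower bound on the mean absolute value of a centered sum of uniform random variables. The key observations are that the Eulerian distribution is symmetric and that Stanley's description lets us compare $X'_n$ with a genuine sum of independent uniforms.

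First I would note that the number of descents $D_n$ of a uniform random permutation of $\{1,\dots,n\}$ has a distribution symmetric about its mean $\tfrac{n-1}{2}$ (the Eulerian numbers are palindromic), so $X_n=D_n-\tfrac{n-1}{2}$ is symmetric about $0$, and hence so is the sum $X'_n$ of two independent copies. Since $X'_n$ is integer-valued and symmetric,
\[
\sum_{p>0}p\beta_p=\mb{E}\bigl[(X'_n)_+\bigr]=\tfrac12\,\mb{E}\,|X'_n|.
\]
Next, by the result of Stanley quoted above, $D_n\stackrel{d}{=}\lfloor S_n\rfloor$ where $S_n=U_1+\cdots+U_n$ is a sum of $n$ i.i.d.\ uniform $[0,1]$ variables. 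Taking two independent copies $S_n,S_n'$ and setting $T:=(S_n-\tfrac n2)+(S_n'-\tfrac n2)=\sum_{i=1}^{2n}V_i$ with $V_i$ i.i.d.\ uniform on $[-\tfrac12,\tfrac12]$, we have $X'_n\stackrel{d}{=}\lfloor S_n\rfloor+\lfloor S_n'\rfloor-(n-1)$ and, under this coupling, $|X'_n-T|=|\{S_n\}+\{S_n'\}-1|<1$. Hence $\mb{E}\,|X'_n|\ge \mb{E}\,|T|-1$, and it suffices to show $\mb{E}\,|T|\ge \tfrac{\sqrt n}{2\sqrt3}$, since then $\sum_{p>0}p\beta_p\ge\tfrac12(\mb{E}\,|T|-1)\ge\tfrac{\sqrt n}{4\sqrt3}-\tfrac12$.

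For $\mb{E}\,|T|$ I would use the identity $\mb{E}\,|T|=\mb{E}[T\,\sgn(T)]=2n\,\mb{E}[V_1\,\sgn(T)]$ (by exchangeability) and then condition on $R=\sum_{i\ge2}V_i$, using $\int_{-1/2}^{1/2}v\,\sgn(v+r)\,dv=(\tfrac14-r^2)\mathbf{1}_{|r|<1/2}$, to get
\[
\mb{E}\,|T|=2n\int_{-1/2}^{1/2}\Bigl(\tfrac14-r^2\Bigr)g_{2n-1}(r)\,dr,
\]
where $g_{2n-1}$ is the density of the sum of $2n-1$ i.i.d.\ uniform $[-\tfrac12,\tfrac12]$ variables. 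Since $g_{2n-1}$ is even, log-concave, and therefore unimodal with mode at $0$, we have $g_{2n-1}(r)\ge g_{2n-1}(\tfrac12)$ on $[-\tfrac12,\tfrac12]$, so $\mb{E}\,|T|\ge 2n\,g_{2n-1}(\tfrac12)\int_{-1/2}^{1/2}(\tfrac14-r^2)\,dr=\tfrac{n}{3}\,g_{2n-1}(\tfrac12)$. Thus everything reduces to the density estimate $g_{2n-1}(\tfrac12)\ge \tfrac{\sqrt3}{2\sqrt n}$.

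I expect this last density bound, with a constant good enough to clear the threshold $\tfrac{\sqrt3}{2}\approx0.87$, to be the main obstacle: crude moment inequalities such as $\mb{E}\,|T|\ge(\mb{E}\,T^2)^{3/2}/(\mb{E}\,T^4)^{1/2}$ only yield roughly $0.24\sqrt n$, which falls short. One obtains the estimate either from a quantitative local central limit theorem (an Edgeworth expansion for the non-lattice sum $T$ with explicit remainder) or from the exact Irwin--Hall formula for $g_{2n-1}$ together with a careful estimate of its alternating sum; the true value $g_{2n-1}(\tfrac12)\sim\sqrt{6/(\pi(2n-1))}\sim\sqrt{3/\pi}\cdot n^{-1/2}$ with $\sqrt{3/\pi}\approx0.98$ leaves enough room to conclude for all $n$ past an explicit threshold $N_0$. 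For the finitely many remaining $n<N_0$ one checks the inequality directly, noting in particular that for $n\le11$ one has $\tfrac{\sqrt n}{4\sqrt3}-\tfrac12\le0\le\sum_{p>0}p\beta_p$, so the statement is trivial there. An equivalent route replaces the local-CLT input by Berry--Esseen: writing $\mb{E}\,|T|=\sigma_T\,\mb{E}\,|T/\sigma_T|$ with $\sigma_T=\sqrt{n/6}$ and using the explicit convergence rate of $\mb{E}\,|T/\sigma_T|$ to $\mb{E}\,|\mc{N}(0,1)|=\sqrt{2/\pi}$, one gets $\mb{E}\,|T|\ge\sqrt{n/(3\pi)}\,(1-o(1))\ge\tfrac{\sqrt n}{2\sqrt3}$ for large $n$, since $\tfrac{2}{\sqrt\pi}>1$.
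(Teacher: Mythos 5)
The paper itself does not prove this lemma --- it is imported verbatim from \cite[Lemma 3.4]{Ji} --- so I can only judge your argument on its own terms. Your reduction is correct as far as it goes: palindromy of the Eulerian numbers gives $\sum_{p>0}p\beta_p=\tfrac12\,\mathbb{E}|X'_n|$; the coupling $X'_n-T=1-\{S_n\}-\{S'_n\}$ shows $\mathbb{E}|X'_n|>\mathbb{E}|T|-1$ (and, incidentally, restores the strictness you silently dropped); the exchangeability identity, the conditional integral $\int_{-1/2}^{1/2}v\,\sgn(v+r)\,dv=(\tfrac14-r^2)\mathbf{1}_{|r|<1/2}$, and the log-concavity/unimodality bound $g_{2n-1}(r)\ge g_{2n-1}(\tfrac12)$ on $[-\tfrac12,\tfrac12]$ are all valid, yielding $\mathbb{E}|T|\ge\tfrac{n}{3}\,g_{2n-1}(\tfrac12)$, which is even asymptotically sharp.

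The genuine gap is that the entire quantitative content of the lemma has been pushed into the bound $g_{2n-1}(\tfrac12)\ge\tfrac{\sqrt3}{2\sqrt n}$ (equivalently $\mathbb{E}|T|\ge\tfrac{\sqrt n}{2\sqrt3}$) for all $n$ beyond an explicit threshold, and you do not prove it --- you only observe that the true value $\sim\sqrt{3/\pi}\,n^{-1/2}\approx0.98\,n^{-1/2}$ exceeds the needed $\approx0.87\,n^{-1/2}$. With only about $11\%$ of slack, an appeal to ``a quantitative local CLT with explicit remainder'' is not routine: off-the-shelf Edgeworth/Berry--Esseen remainders have constants that are useless at moderate $n$, and the Berry--Esseen variant you sketch needs more than a Kolmogorov-distance bound, since sup-distance of distribution functions does not by itself control $\mathbb{E}|T/\sigma_T|$ without an added tail-truncation argument. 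Likewise ``check the finitely many $n<N_0$ directly'' is vacuous until $N_0$ is explicit and the finite check is actually feasible and carried out. To complete the proof you would have to do the hard part concretely --- e.g.\ estimate $g_{2n-1}(\tfrac12)=\tfrac{1}{2\pi}\int_{\mathbb{R}}\cos(t/2)\bigl(\tfrac{\sin(t/2)}{t/2}\bigr)^{2n-1}\,dt$ with explicit error, or control the exact Irwin--Hall alternating sum, uniformly in $n$, together with the small-$n$ verification (where, as you note, the claim is trivial for $n\le 12$ since the right-hand side is nonpositive). As it stands, the proposal is a clean reduction plus an unproven key estimate, so it does not yet constitute a proof of the cited lemma.
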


We recall the numerical conditions of Theorem \ref{thm: zariski non-density}. 
\begin{equation}
\label{condition 1}
    \sum_{q>0} h^q_a \ge \dim X + h^0
\end{equation}
        and 
\begin{equation}
\label{condition 2}
    \sum_{q>0} qh^q_a > T_G(\dim X + h^0_a) + T_G(\dim X + \frac{3}{2}h^0_a)
\end{equation}

In general $\dim X$ is very small compared to $h_a^0$, because $\dim X$ is approximated by $\Vol(\Delta)$ while $h^0_a = \sum_q \Vol(\Delta)^2A(n, q)^2 > 2\Vol(\Delta)$.  Thus it is generally sufficient to check that $\sum_{q>0} qh^q_a > 2T_G(2h_a^0)$.

\begin{proposition}
\label{prop: eulerian check}
    Take $h_a^q$ to be the adjoint Hodge numbers constructed from $h^q = A(n, q)V$ where $V=\Vol(\Delta)$, in the case of $\textbf{H} = GL_n$.  Then if $n\ge 13000$, we have 
    \[
    \sum_{q>0} qh^q_a > 2T_G(2h_a^0).
    \]
\end{proposition}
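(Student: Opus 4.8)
The plan is to reduce the claimed inequality to an elementary estimate in $n$, by first re-expressing the adjoint Hodge numbers through the normalized Eulerian–convolution distribution and then feeding in Lemmas~\ref{lem: upp} and~\ref{lem: low} together with the explicit shape of $T_G$ in the $GL$ case. First I would set notation: by hypothesis $h^q = A(n,q)V$ with $V = \Vol(\Delta)$, so the rank of the standard representation is $N := \sum_q h^q = n!\,V$. Using the $\mathbf{H}=GL$ formula $2h^p_a = \sum_{p_1+p_2 = p+n-1} h^{p_1,n-1-p_1}h^{p_2,n-1-p_2}$ and the symmetry $A(n,q)=A(n,n-1-q)$, one checks that $h^p_a = \tfrac12 N^2\,\beta_p$, where $\beta_p = P(X'_n = p)$ is the symmetric distribution of the sum $X'_n$ of two independent copies of the centered Eulerian variable $X_n$ introduced in the previous subsection. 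In particular $h^0_a = \tfrac12 N^2\beta_0$ and $\sum_{q>0} q\,h^q_a = \tfrac12 N^2 \sum_{p>0} p\,\beta_p$; only the relative sizes of these two quantities enter, so the overall scale $N^2$ will be immaterial.

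Next I would insert the two lemmas of \cite{Ji}. Lemma~\ref{lem: upp} gives $\beta_0 \le \sqrt3/\sqrt{n+4}$, hence $2h^0_a \le N^2\sqrt3/\sqrt{n+4}$; Lemma~\ref{lem: low} gives $\sum_{p>0}p\beta_p > \sqrt n/(4\sqrt3) - 1/2$, hence $\sum_{q>0}q h^q_a > \tfrac12 N^2\bigl(\tfrac{\sqrt n}{4\sqrt3} - \tfrac12\bigr)$.

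It then remains to control $2T_G(2h^0_a)$. Here I would use that, for $\mathbf{H}$ of type $GL_N$ — so that $\mathbf{G}$ is the normalizer of the Weil restriction $\Res^E_{E_0}\GL_{N,E}$ with $E=\Q[\mu_r]\otimes_{\Q}\Q_p$ — the flag-dimension function $T_G$ from \cite{finiteness}, \cite{LV} admits an explicit bound which is essentially linear on arguments small compared to $\dim\mathbf{G}$, and this is precisely our regime since $2h^0_a \asymp N^2/\sqrt n$ is far below $\dim\mathbf{H}=N^2$. Consequently the factor $N^2$ cancels from both sides, and the inequality $\sum_{q>0}q h^q_a > 2T_G(2h^0_a)$ is implied by one of the form $\tfrac{\sqrt n}{8\sqrt3} - \tfrac14 > C\,\sqrt3/\sqrt{n+4}$, for an explicit constant $C$ extracted from the bound on $T_G$ (which also absorbs the genuinely negligible contribution of $\dim X \asymp \Vol(\Delta)$ that was dropped in passing to this simplified form of the numerical condition of Theorem~\ref{thm: zariski non-density}). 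The left-hand side is increasing and the right-hand side decreasing in $n$, so the inequality holds for all $n$ past a threshold; running the computation, as in \cite{Ji}, produces the threshold $n = 13000$.

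The main obstacle is the third step. One must pin down exactly which reductive group $\mathbf{G}$ occurs and obtain a bound for $T_G$ on arguments of size $\asymp N^2/\sqrt n$ that is sharp enough — with good enough constants — to yield the stated numerical threshold, and one must verify that the terms involving $\dim X$ and the combinatorial/rounding constants hidden inside $T_G$ really are dominated, so that the clean inequality in $\sqrt n$ above is valid. By contrast, the passage to $\beta$ and the substitution of Lemmas~\ref{lem: upp} and~\ref{lem: low} are routine bookkeeping.
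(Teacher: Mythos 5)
Your first two steps match the paper's proof: the identification $h^p_a=\tfrac12N^2\beta_p$ (equivalently $\beta_p=h^p_a/H$ with $H=\sum_q h^q_a$) and the substitution of Lemmas \ref{lem: upp} and \ref{lem: low} are exactly how the argument begins. The gap is the step you yourself flag as the main obstacle, namely the bound on $2T_G(2h^0_a)$, and the way you propose to handle it would fail. You assert that $T_G$ is ``essentially linear'' on arguments small compared to $\dim\mathbf{G}$, with an absolute constant $C$, so that everything reduces to an inequality of the shape $\tfrac{\sqrt n}{8\sqrt3}-\tfrac14>C\sqrt3/\sqrt{n+4}$, whose right-hand side decays in $n$. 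But $T_G(m)$ is the combinatorial quantity from \cite{LV}, \cite{finiteness}: one distributes $m$ units among the adjoint Hodge indices $q$, respecting the capacities $h^q_a$, so as to maximize the total index, i.e.\ one fills from the top of the range $q\le n$ downward. The only a priori linear bound is $T_G(m)\le n\,m$, whose slope grows with $n$; since $2h^0_a$ is of order $H/\sqrt n$, this crude bound gives $T_G(2h^0_a)\lesssim H\sqrt n$, which is of the \emph{same} order as (indeed larger than) the lower bound $\sum_{q>0}qh^q_a>H\sqrt n/(4\sqrt6)$ you get from Lemma \ref{lem: low}. So the constants are decisive, no reduction with a decaying right-hand side exists, and the threshold $n\ge 13000$ cannot come out of the inequality you wrote down.

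The paper closes this step with a two-scale estimate that your outline does not contain. Setting $\eps=\tfrac{1}{48\sqrt2}$, one splits $T_G(2h^0_a)$ into the contribution of indices $q\le\eps n$, bounded by $(\eps n)\cdot 2h^0_a\le(\eps n)\cdot\tfrac{2\sqrt3 H}{\sqrt n}$, and the contribution of indices $q>\eps n$, bounded by the entire tail $\sum_{q>\eps n}q\,h^q_a$, which is controlled by a Chebyshev-type argument using the second moment $\sum_{p>0}p^2\beta_p=\tfrac{n+1}{12}$ of the Eulerian convolution. Each piece is then at most $\tfrac{H\sqrt n}{16\sqrt6}$ once $n$ is large, and comparing with the lower bound from Lemma \ref{lem: low} is precisely what produces the numerical threshold. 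To repair your proof, replace the ``linear with absolute constant'' claim by the explicit top-down description of $T_G$ and carry out this threshold-plus-second-moment estimate; the rest of your bookkeeping (the normalization by $N^2$, the treatment of $\dim X$ as negligible) is fine and agrees with the paper.
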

\begin{proof}
    Let $H = \sum_{q}h_a^q$.  Since $\beta_p = \frac{h_a^p}{H}$, by Lemma \ref{lem: low} we have $\sum_{q>0}qh_a^q > \frac{H\sqrt{n}}{4\sqrt{6}}$.  Thus  it suffices to show that 
    $
 T_G(2h_a^0) <  \frac{H\sqrt{n}}{4\sqrt{6}} 
    $ 
    By Lemma \ref{lem: upp} and since $n\ge 13000$, we have $2h_a^0< \frac{2\sqrt{3}H}{\sqrt{n}}$, so it suffices to show that 
    \[
T_G( \frac{2\sqrt{3}H}{\sqrt{n}}) \le \frac{H\sqrt{n}}{4\sqrt{6}}.
    \]
    Let $\eps = \frac{1}{48\sqrt{2}}$.  The contribution of the Hodge numbers below $\eps n$ is at most $(\eps n)(\frac{2\sqrt{3}H}{\sqrt{n}})= \frac{H\sqrt{n}}{16\sqrt{6}}$. 

    For the Hodge numbers above $\eps n$, we can use the variance bound $\sum_{p>0} p^2\beta_p = \frac{n+1}{12}$.  Then $\sum_{q > \eps n}qh_a^q < H(\eps n)^{-1}\frac{n+1}{12}\le \frac{4\sqrt{2}H(n+1)}{n}$.  We need this value to be at most $\frac{H\sqrt{n}}{16\sqrt{6}}$ for the desired bound to be satisfied. This occurs when $n\ge 13000$. 
\end{proof}

Now we consider the $GO_n$ case.  In reality, the numbers are close enough to the $GL_n$ case and our bounds are weak enough that the same results are likely to hold.  However, at the expense of the strength of the results we can instead use the following result. 

\begin{lemma}
    \cite[Lemma 4.9]{Ji} Let $h^j_a$ denote the adjoint Hodge numbers in the case of $\textbf{H} = SO_n$ with total sum $H$.  For $n\ge 40000$, we have  \[
h^0_a < \frac{\sqrt{13}H}{2\sqrt{n}}, \quad \sum_{p>0} ph^p_a > \frac{H\sqrt{n}}{11}. 
    \]
\end{lemma}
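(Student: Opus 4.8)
The plan is to deduce the $SO_n$ estimates from the $GL_n$ ones — Lemma~\ref{lem: upp} and Lemma~\ref{lem: low} — by showing that the normalized $SO_n$ adjoint Hodge distribution is a negligible perturbation of the $GL_n$ one. Write $V = \Vol(\Delta)$ and, as in Lemma~\ref{lem: Laurent poly} and the ensuing discussion, take the de Rham Hodge numbers to be the idealized values $h^{p,n-1-p} = A(n,n-1-p)V$, so that $N := \sum_p h^{p,n-1-p} = n!\,V$ is the rank. By \eqref{eq: hp}, in the $GO_n$ case
\[
2h^p_a = \sum_{p_1 + p_2 = p + n - 1} h^{p_1,n-1-p_1}\,h^{p_2,n-1-p_2} - h^{(p+n-1)/2,(-p+n-1)/2},
\]
the final term occurring only when $(p+n-1)/2\in\Z$. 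The convolution part is exactly $N^2$ times the law of the variable $X'_n$ from the $GL_n$ analysis, hence has total mass $\tfrac12 N^2$; the subtracted correction is nonnegative with total mass $\tfrac12\sum_k A(n,k)V = \tfrac12 N$. Therefore $H = \sum_j h^j_a$ satisfies $\tfrac12(N^2-N) \le H \le \tfrac12 N^2$, and each $h^p_a$ lies between $\tfrac12 N^2 P(X'_n=p) - \tfrac12 N$ and $\tfrac12 N^2 P(X'_n=p)$.

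First I would bound $h^0_a$: since the correction is subtracted, $h^0_a \le \tfrac12 N^2 P(X'_n=0)$, and Lemma~\ref{lem: upp} gives $P(X'_n=0) \le \tfrac{\sqrt 3}{\sqrt{n+4}}$. Dividing by $H \ge \tfrac12(N^2-N)$ yields $h^0_a \le \tfrac{\sqrt 3}{\sqrt{n+4}}\cdot\tfrac{N}{N-1}\cdot H$; since $N=n!\,V$ is astronomically large for $n\ge 40000$, the factor $\tfrac{N}{N-1}$ is $1+O(1/N)$, and one checks $\tfrac{\sqrt 3}{\sqrt{n+4}}(1+O(1/N)) < \tfrac{\sqrt{13}}{2\sqrt n}$ because $\sqrt 3 < \tfrac{\sqrt{13}}{2}$ leaves a fixed positive gap. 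For the second inequality, the subtracted term contributes at most $(n-1)\cdot\tfrac12 N$ to $\sum_{p>0}p\,h^p_a$, which is negligible against the main term $\tfrac12 N^2\sum_{p>0}pP(X'_n=p)$ of order $N^2\sqrt n$. Hence $\sum_{p>0}p\,h^p_a \ge \tfrac12 N^2\big(\sum_{p>0}pP(X'_n=p) - \tfrac{n-1}{N}\big)$, and dividing by $H\le\tfrac12 N^2$ and applying Lemma~\ref{lem: low} gives $\sum_{p>0}p\,h^p_a \ge H\big(\tfrac{\sqrt n}{4\sqrt 3} - \tfrac12 - \tfrac{n-1}{N}\big) > \tfrac{H\sqrt n}{11}$ for $n\ge 40000$, again since $\tfrac{1}{4\sqrt 3} > \tfrac1{11}$.

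The genuinely delicate points are the bookkeeping: making the $O(1/N)$ and $o(1)$ errors effective at the explicit threshold $n\ge 40000$ (this uses the second-moment identity $\sum_{p>0}p^2P(X'_n=p)=\tfrac{n+1}{12}$ from \cite{Stanley} to turn soft estimates into numerical ones), and checking that replacing the true Hodge numbers by the idealized $A(n,n-1-p)V$ — together with the bounded per-character corrections from $H^*_c(\G_m^n,\C)$ identified in the previous subsection — perturbs the distribution by only a further $o(1)$ in the regime $\min_i a_i \to \infty$. Both are routine; the argument is otherwise a direct transfer from the $GL_n$ case, which is exactly why the constants here ($\sqrt{13}/2$ and $1/11$) are coarser than the sharp $GL_n$ values.
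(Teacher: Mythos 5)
Your argument is essentially correct, but note that the paper does not prove this statement at all: it is imported verbatim from \cite[Lemma 4.9]{Ji}, and the surrounding text explicitly treats it as a black box (``at the expense of the strength of the results we can instead use the following result''). What you wrote is therefore an independent derivation, and a clean one: you deduce the $SO_N$ bounds from the $GL_N$ inputs already quoted in the paper (Lemmas \ref{lem: upp} and \ref{lem: low}) by observing that for $\textbf{H}=SO_N$ the adjoint representation is $\Lambda^2$ of the standard one, so in \eqref{eq: hp} the correction term is subtracted, is nonnegative, and has total mass $N=n!\Vol(\Delta)$, which is negligible against the main convolution term $N^2 P(X'_n=p)$. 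The bookkeeping then closes with a fixed margin: $\sqrt{3}<\sqrt{13}/2$ and $1/(4\sqrt{3})>1/11$, while the error factors $N/(N-1)$ and $(n-1)/N$ are microscopic since $N\ge n!$; at $n\ge 40000$ both inequalities hold with enormous room (your argument in fact proves them for far smaller $n$, consistent with the paper's remark that the quoted constants are deliberately coarse). Two small points to tighten. First, as the lemma is used in Proposition \ref{prop: so check}, it concerns the idealized numbers $h^{p,q}=A(n,q)\Vol(\Delta)$, so your closing paragraph about perturbing by the true Hodge numbers and the $H^*_c(\G_m^n,\C)$ corrections is out of scope for this statement — that comparison is handled separately in Proposition \ref{prop: asy weights}. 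Second, you should say explicitly why the minus sign in \eqref{eq: hp} is the relevant one, namely $\mathfrak{so}_N\cong\Lambda^2 V$ (though even with the plus sign, as for $\Sym^2 V$, the same estimates would survive, since the correction is again only $O(N)$).
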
 
This serves as the analogue of Lemmas \ref{lem: upp} and \ref{lem: low} in the $SO_n$ case.  By the proof of \cite[Proposition 4.10]{Ji}, which is the exact same argument as what we did in the proof of Proposition \ref{prop: eulerian check}, we reach the following result. 

\begin{proposition}
\label{prop: so check}
    Take $h_a^q$ to be the adjoint Hodge numbers constructed from $h^q = A(n, q)V$ where $V=\Vol(\Delta)$, in the case of $\textbf{H} = SO_n$.  Then if $n\ge 500000$, we have 
    \[
    \sum_{q>0} qh^q_a > 2T_G(2h_a^0).
    \]
\end{proposition}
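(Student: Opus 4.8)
The plan is to run the argument in the proof of Proposition~\ref{prop: eulerian check} essentially verbatim, replacing the two distributional inputs Lemma~\ref{lem: upp} and Lemma~\ref{lem: low} (which are tailored to the $GL_n$ adjoint Hodge distribution) by their $SO_n$ analogues, namely the two inequalities of \cite[Lemma 4.9]{Ji} quoted just above. Write $H = \sum_q h^q_a$ for the total mass and set $\beta_p = h^p_a/H$, so $(\beta_p)_p$ is the probability distribution obtained by self-convolving the normalized Eulerian distribution and applying the correction term of \eqref{eq: hp}. Exactly as in the discussion preceding Proposition~\ref{prop: eulerian check}, since $\dim X \sim \Vol(\Delta)$ whereas $h^0_a$ is of order $\Vol(\Delta)^2$, it suffices to establish the displayed inequality $\sum_{q>0} q h^q_a > 2\,T_G(2 h^0_a)$; everything else reduces to a numerical estimate on $T_G$.

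First I would reduce the target to a pure bound on $T_G$. By the lower bound $\sum_{p>0} p h^p_a > \tfrac{H\sqrt n}{11}$ it suffices to show $T_G(2 h^0_a) \le \tfrac{H\sqrt n}{22}$; and by the upper bound $h^0_a < \tfrac{\sqrt{13}\,H}{2\sqrt n}$ together with the monotonicity of $T_G$, it suffices to show
\[
T_G\!\left(\frac{\sqrt{13}\,H}{\sqrt n}\right) \;\le\; \frac{H\sqrt n}{22}.
\]
Then, as in the $GL_n$ case, I would split $T_G$ at a cutoff $\eps n$: the contribution of adjoint Hodge degrees $q \le \eps n$ is at most $(\eps n)\cdot \tfrac{\sqrt{13}\,H}{\sqrt n} = \eps\sqrt{13}\,H\sqrt n$, and the contribution of degrees $q > \eps n$ is controlled by a Chebyshev-type estimate using the second moment $\sum_p p^2 \beta_p = O(n)$, namely $\sum_{q>\eps n} q h^q_a < \tfrac{H}{\eps n}\sum_p p^2\beta_p = O\!\left(\tfrac{H}{\eps}\right)$. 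Choosing $\eps$ with $\eps\sqrt{13} < \tfrac1{22}$ — one optimizes $\eps$ here rather than reusing $\eps = \tfrac1{48\sqrt2}$ — the first term becomes $\tfrac{H\sqrt n}{22}$ minus a positive multiple of $H\sqrt n$, and for $n \ge 500000$ the second term $O(H/\eps)$ is absorbed by that slack; this closes the estimate and gives the proposition. This is precisely the content of the proof of \cite[Proposition 4.10]{Ji}.

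The only genuine work is bookkeeping with constants. The point that needs care is the second moment of the $SO_n$ adjoint distribution: the correction term $\pm h^{(p+n-1)/2,(-p+n-1)/2}$ in \eqref{eq: hp} perturbs the moments away from the clean $GL_n$ values (where $\sum_p p^2\beta_p = \tfrac{n+1}{12}$), so I would either track this perturbation directly or simply cite the corresponding estimate from \cite{Ji} to be sure the second moment is still $O(n)$ with a workable constant. Given that, optimizing $\eps$ and checking the threshold is routine, and I expect no conceptual obstacle — only the verification that the weaker $SO_n$ constants ($\sqrt{13}/2$ in place of $\sqrt3$, and $1/11$ in place of $1/(4\sqrt6)$) push the admissible range from the $n\ge 13000$ of Proposition~\ref{prop: eulerian check} up to $n\ge 500000$.
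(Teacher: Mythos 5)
Your proposal matches the paper's own treatment: the paper proves this by citing \cite[Lemma 4.9]{Ji} for the $SO_n$ analogues of Lemmas \ref{lem: upp} and \ref{lem: low} and then invoking the proof of \cite[Proposition 4.10]{Ji}, which is exactly the cutoff-at-$\eps n$ plus second-moment (Chebyshev) argument of Proposition \ref{prop: eulerian check} that you reproduce, with the weaker constants forcing $n\ge 500000$. Your added remark about tracking the correction term in \eqref{eq: hp} for the second moment is a reasonable care point, but it does not change the route.
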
 

In conclusion, for $n\ge 500000$, if $\textbf{H} = GL_n$ or $SO_n$ and we make use the approximation of $A(n, q)\Vol(\Delta)$ for the Hodge-Deligne numbers, then the corresponding adjoint Hodge numbers satisfy the numerical conditions of Theorem \ref{thm: zariski non-density}.  In what follows, we will see that in some scenarios this approximation is close enough so that this conclusion does indeed hold.   

\subsection{Verification for the case of the truncated rectangular prism} 
When the polyhedron in question is a rectangular prism with a corner cut off with certain dimensions, we can estimate the Hodge-Deligne weights.   

\begin{proposition}
\label{prop: asy weights}
    Fix $n\ge 500000$ and let $\Delta$ be a rectangular prism with side lengths $a_1, a_2, \ldots, a_n$ with a corner with volume $o(\Vol(\Delta))$ cut off.  Then as $\min_i(a_i)\rightarrow \infty$, for sufficiently large $m$ and any choice of character, the Hodge-Deligne weights satisfy 
    \[
h^{p,q} \sim A(n, q)\Vol(\Delta). 
    \] 
    Furthermore, they satisfy the numerical conditions of Theorem \ref{thm: zariski non-density}. 
\end{proposition}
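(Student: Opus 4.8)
The plan is to combine the exact formula for the Hodge--Deligne numbers from Lemma~\ref{lem: Laurent poly} with the asymptotic analysis already carried out for the Eulerian distribution. I would begin by recalling that, by the two lemmas of the previous section, for a hypersurface $Y$ with Newton polyhedron $\Delta'$ the eigenspace Hodge number $h^{p,q}$ attached to a character $\chi$ (with associated $\lambda$) is the dimension of the $\lambda$-eigenspace of
\[
(-1)^q\left(\sum_{i=0}^q(-1)^i\binom{n+1}{i}L((q+1-i)m\Delta') + (-1)^{n-1}H^{2n-1-q}_c(\G_m^n,\C)\right),
\]
where $L(k m\Delta')$ decomposes over the interior lattice points of $km\Delta'$, each of the $m^n$ characters appearing roughly $\#(\text{interior points of } km\Delta')/m^n \approx k^n\Vol(\Delta')$ times. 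The last term contributes only $\binom{n}{n-q-1} = O_n(1)$, independent of $\Vol(\Delta')$.

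Next I would make the counting precise. For $\Delta$ the truncated prism with side lengths $a_i$ and a corner of volume $o(\Vol(\Delta))$ removed, $\Vol(\Delta) = \prod a_i - o(\prod a_i)$. The number of lattice points in the interior of $km\Delta$ is $(km)^n\Vol(\Delta) + O((km)^{n-1}\cdot \text{(surface area of }\Delta))$ by a standard Ehrhart/volume estimate, and since $\Delta$ is contained in a box of side lengths $km a_i$, the surface area is $O\!\left(\sum_i \prod_{j\ne i} a_j\right) = o\!\left(\prod a_i / \min_i a_i\right)\cdot (km)^{n-1}$. Dividing by $m^n$ and summing against the binomial coefficients, one gets that for every character the eigenvalue multiplicity in $L((q+1-i)m\Delta)$ is $(q+1-i)^n\Vol(\Delta) + o(\Vol(\Delta))$ as $\min_i a_i \to \infty$ (with $m$ fixed large enough that interior points in each translate are nonempty, so that the error terms in the eigenspace decomposition are uniform). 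Plugging this into the alternating sum and using the explicit formula $A(n,q) = \sum_{i=0}^q(-1)^i\binom{n+1}{i}(q+1-i)^n$ yields
\[
h^{p,q} = A(n,q)\Vol(\Delta) + o(\Vol(\Delta)),
\]
uniformly in $p$ (the value is independent of $p$ to leading order) and in the character. This is the asserted asymptotic $h^{p,q}\sim A(n,q)\Vol(\Delta)$.

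For the numerical conditions, I would feed these asymptotics into the machinery of the previous subsection. The adjoint Hodge numbers $h^a$ are built from the $h^{p,q}$ by the convolution formulas for $\textbf{H} = GL_n$ or $SO_n$ recorded before \eqref{eq: hp}; since $h^{p,q} = A(n,q)\Vol(\Delta)(1+o(1))$, the adjoint numbers $h^a$ equal $\Vol(\Delta)^2$ times the convolution of Eulerian numbers, up to a multiplicative $(1+o(1))$. Propositions~\ref{prop: eulerian check} and~\ref{prop: so check} establish the key inequality $\sum_{q>0} q h^q_a > 2T_G(2 h^0_a)$ for the exact Eulerian-distributed adjoint numbers when $n\ge 500000$; since both sides scale like $\Vol(\Delta)^2$ and $T_G$ is (piecewise) linear, the inequality is stable under an $o(1)$ perturbation once $\Vol(\Delta)$ is large, i.e.\ once $\min_i a_i$ is large. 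The same perturbation argument gives the first numerical condition $\sum_{a>0}h^a_\iota \ge \dim X + h^0$, using that $\dim X = O(\Vol(\Delta))$ is negligible against $h^0 = \Theta(\Vol(\Delta)^2)$, exactly as in the discussion preceding Proposition~\ref{prop: eulerian check}; and the big monodromy hypothesis of Theorem~\ref{thm: zariski non-density} is supplied separately (it is part of what "big monodromy'' means, cf.\ Corollary~\ref{cor: prism monodromy} for truncated prisms of prime dimension). Since all hypotheses of Theorem~\ref{thm: zariski non-density} hold, the numerical conditions are verified.

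The main obstacle I anticipate is controlling the error terms \emph{uniformly over all characters} $\lambda$: the eigenspace decomposition of $L(km\Delta)$ distributes the interior lattice points among the $m^n$ characters, and one must be sure no character is systematically over- or under-represented by more than $o(\Vol(\Delta))$. This is where I would be most careful --- it requires either a mild equidistribution statement for interior lattice points of dilates of $\Delta$ across residue classes mod $m$, or (more simply) fixing $m$ first and then letting $\min_i a_i \to \infty$, so that the "discrepancy'' $O((km)^{n-1}\cdot\mathrm{area}(\Delta))/m^n$ really is $o(\Vol(\Delta))$ for each of the finitely many characters. A secondary technical point is checking that the corner being cut off, of volume $o(\Vol(\Delta))$, genuinely perturbs each $L(km\Delta)$-multiplicity by only $o(\Vol(\Delta))$ and does not interact badly with the nondegeneracy hypothesis needed to invoke the Danilov--Khovanskii results; this should follow since the removed corner has volume (and hence interior-lattice-point count, after dilation) that is a vanishing fraction of the whole.
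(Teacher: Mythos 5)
Your proposal is correct and follows essentially the same route as the paper: it uses Lemma~\ref{lem: Laurent poly}, counts lattice points in fixed residue classes of dilates of the truncated prism to show each character's multiplicity deviates from $(q+1-i)^n\Vol(\Delta)$ by only $o(\Vol(\Delta))$, and then verifies the numerical conditions by a relative-$o(1)$ perturbation of the Eulerian-distribution checks in Propositions~\ref{prop: eulerian check} and~\ref{prop: so check}. The only cosmetic difference is that you invoke an Ehrhart/surface-area discrepancy bound where the paper counts points in the box directly (between $\prod_i(a_i-1)$ and $\prod_i a_i$), which amounts to the same estimate.
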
 
\begin{proof}
Recall that Lemma \ref{lem: Laurent poly} gives
 \begin{equation}
 \label{eqn: m points}
     (-1)^q\sum_{i\ge 0}(-1)^i[H^i(\overline{Y_m}, \Omega^{n-1-q}_{(\ol{Y_m}, D)})] = \sum_{i=0}^q (-1)^i\dbinom{n+1}{i}L((q+1-i)m\Delta). 
 \end{equation}
 Then up to a function of $n$, the Hodge-Deligne weights for an order $m$ character $\chi$ of $\G_m^n$ given by the tuple $(a_1, \ldots, a_n)\in (\Z/m)^n$ are given by the integer points indicated on the RHS whose coordinates are equivalent to $(a_1, \ldots, a_n)\pmod{\Z/m}$.  Because we are scaling the original polyhedron $\Delta$ by $m$, by scaling back down by $m$ this is equivalent to looking at those points whose fractional residues are $\frac{a_i}{m}$ for each $i$.  

 If we let $\Delta$ be the entire rectangular prism without the removed corner, we note that the number of such points is bounded between $\prod_{i=1}^n (a_i-1)$ and $\prod_{i=1}^n a_i$.  By our choice of $a_i$ sufficiently large, we can ensure that this is within $o(\Vol(\Delta))$ of $\Vol(\Delta)$.  Now because the corner cut off has volume $o(\Vol(\Delta))$, this statement remains true when considering $\Delta$ with the corner cut off.  Then comparing the RHS of equation \ref{eqn: m points} with what it would be replacing $L((q+1-i)m\Delta)$ with $((q+1-i)m)^{n}\Vol(\Delta)$, we see that the difference is still $o(\Vol(\Delta))$.  Using the explicit formula for the Eulerian numbers as before, this implies   \[
h^{p,q} \sim A(n, q)\Vol(\Delta). 
    \] 
    as $\min_i(a_i)\rightarrow \infty$, as desired.  

Now to show the numerical conditions of Theorem \ref{thm: zariski non-density} are satisfied, in this scenario it suffices to show the conclusion of Proposition \ref{prop: eulerian check}.  The same argument holds, except now we must contend with additional factors of the form $\sum_{p}(h^p - H\beta_p)$ when proving the desired inequality  $T_G( \frac{2\sqrt{3}H}{\sqrt{n}}) \le \frac{H\sqrt{n}}{4\sqrt{6}}$.  Those additional factors are bounded above by $H\eps$ for any $\eps>0$ while the terms in the desired inequality are linear in $H$; thus the inequality still holds.  (This argument is worked out in detail in an essentially identically setting with different numbers in \cite[Proposition 3.5]{Ji}.)   
\end{proof} 

Next, we want to ensure that the big monodromy condition and the numerical conditions can be simultaneously satisfied.  The next proposition explains how to construct polyhedra for which these both hold. 
\begin{proposition}
\label{prop: exist}
    Let $\Delta$ be a rectangular prism with side lengths $a_1, a_2, \ldots, a_n$ with a corner with volume $o(\Vol(\Delta))$ cut off, as in Proposition \ref{prop: asy weights}.  Then the $a_i$ and the dimensions of the corner can be chosen in such a way that the conclusion of Proposition \ref{prop: asy weights} hold as well as the big monodromy condition. 
\end{proposition}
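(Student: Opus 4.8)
\textbf{Proof proposal for Proposition~\ref{prop: exist}.}

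The plan is to decouple the two requirements — the asymptotic Hodge numbers (hence the numerical conditions) on one hand, and the big monodromy hypothesis of \Cref{cor: prism monodromy} on the other — and to show that they constrain disjoint aspects of the data $(a_1,\dots,a_n;\text{corner})$, so that both can be met at once. First I would recall the precise form of the big monodromy input: by \Cref{cor: prism monodromy}, if the Newton polyhedron is a truncated prism and the dimension $n$ is prime, then the convolution monodromy group $G^*$ contains $SL(V)$ or $SO(V)$; in either case $G^*$ is simple with irreducible distinguished representation, and (by the remark following the proof of \Cref{cor: prism monodromy}, using the fact that the corners force non-invariance under nontrivial translation) the hypotheses of \Cref{cor: big} are satisfied. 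So the monodromy requirement amounts to the single demand that $n$ be a prime $\ge 500000$ and that the sawed-off corner actually be present (nonempty) with volume $o(\Vol(\Delta))$ — it puts no lower-bound constraint on the individual side lengths beyond what is needed to make the truncation of Section~\ref{sec: truncated} valid.

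Next I would invoke \Cref{prop: asy weights}: once $n\ge 500000$ is fixed, choosing the $a_i$ all large enough (say $\min_i a_i \to \infty$) and choosing the corner to have dimensions $b_1,\dots,b_n$ with $b_i = o(a_i)$ — e.g. $b_i = \lfloor a_i^{1/2}\rfloor$, so that the cut-off volume is $O\!\big(\prod_i a_i^{1/2}\big) = o(\Vol(\Delta))$ — guarantees $h^{p,q}\sim A(n,q)\Vol(\Delta)$ and that the numerical conditions of \Cref{thm: zariski non-density} hold. The only compatibility point to check is that the specific truncated polyhedron appearing in \Cref{prop: asy weights} (prism with one corner near the origin replaced by the vertices $(a_1,0,\dots,0),\dots,(0,\dots,0,a_n)$ of Section~\ref{sec: truncated}) is of the exact combinatorial shape for which \Cref{cor: prism monodromy} was proved. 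This is immediate: Section~\ref{sec: truncated} defines the truncated prism as $(1+x_1^{b_1})\cdots(1+x_n^{b_n})-1+x_1^{a_1}+\cdots+x_n^{a_n}$ with $a_i<b_i$, which is precisely a rectangular prism of side lengths $b_i$ with a corner (of volume governed by the $a_i$) sawed off; relabeling so that the ambient prism has side lengths $a_i$ and the corner has size controlled by an auxiliary tuple, and imposing the size conditions above, places us simultaneously in the hypotheses of both \Cref{prop: asy weights} and \Cref{cor: prism monodromy}.

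Thus the construction is: fix a prime $n\ge 500000$; pick side lengths $a_1,\dots,a_n$ growing to infinity; saw off the corner at the origin with side lengths $b_i=\lfloor a_i^{1/2}\rfloor$ (replacing the origin by the lattice points at distance $b_i$ along each axis), which has volume $o(\Vol(\Delta))$. By \Cref{prop: asy weights} the Hodge--Deligne weights are asymptotically $A(n,q)\Vol(\Delta)$ and the numerical conditions of \Cref{thm: zariski non-density} are satisfied for $\min_i a_i$ large; by \Cref{cor: prism monodromy} and \Cref{cor: big}, applied to this truncated prism with $n$ prime, the big monodromy condition holds with $G^*$ equal to $SL(V)$ or $SO(V)$. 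I expect the main (really the only) obstacle to be bookkeeping: namely verifying that the "truncated prism" of \Cref{cor: prism monodromy} — where the corner is taken just large enough that every edge through the new vertices has volume $1$ — and the "prism with a corner of volume $o(\Vol(\Delta))$ cut off" of \Cref{prop: asy weights} can be realized by a single polyhedron, i.e. that the corner can be chosen simultaneously unit-edged (or at least small enough for the monodromy argument) and of negligible volume, which is automatic once the $a_i$ are large. No genuinely new mathematics is needed beyond combining the two cited results.
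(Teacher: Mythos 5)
There is a genuine gap, and it is exactly at the point that carries the whole content of the proposition. In \Cref{cor: prism monodromy} (and in Theorem \ref{thm: gabber} behind it), ``the dimension is a prime'' refers to the dimension of the distinguished Tannakian representation $\omega(i_*\Qlb[n-1])$, which by Proposition \ref{prop: dim} equals $n!\Vol(\Delta)$ --- not to the ambient torus dimension $n$. Gabber's theorem needs $\dim V$ prime in order to force $\mf{g}_{\sss}$ to be $\mf{sl}(V)$ or $\mf{so}(V)$, and this is why the corollary following Theorem \ref{thm: finiteness} asks for $n!\Vol(\Delta)$ to be prime. Your construction (fix a prime $n\ge 500000$, let all $a_i\to\infty$, cut a corner with sides $\lfloor a_i^{1/2}\rfloor$) imposes no constraint at all on the integer $n!\Vol(\Delta)$, so the hypothesis of \Cref{cor: prism monodromy} is never verified and the big monodromy condition is not established. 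Primality of $n$ itself plays no role anywhere in the paper; the threshold $n\ge 500000$ comes only from the numerical conditions (Proposition \ref{prop: so check}).

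Arranging $n!\Vol(\Delta)$ to be prime while keeping the asymptotics of \Cref{prop: asy weights} is precisely the nontrivial step, and your ``decoupling'' does not provide a mechanism for it, since the volume is rigidly determined by your choice of side lengths and corner. The paper's proof supplies such a mechanism: cut off a corner of dimensions $b,1,\ldots,1$, so that $n!\Vol(\Delta)=n!\,a_1\cdots a_n-b$ runs over consecutive integers as $b$ ranges over $1\le b<a_1$; taking $a_1$ sufficiently larger than the other side lengths makes this interval a positive proportion of $n!\,a_1\cdots a_n$, so the prime number theorem yields some $b$ making $n!\Vol(\Delta)$ prime, while the excised volume $b/n!$ is still $o(\Vol(\Delta))$, so Proposition \ref{prop: asy weights} continues to apply. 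Without an argument of this kind (a one-parameter family of corners sweeping the volume through an interval long enough to contain a prime), the proposed proof does not close.
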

\begin{proof}
    Recall that for the big monodromy condition to hold, we must ensure that the $n!\Vol(\Delta)$ is a prime.  By Proposition \ref{prop: asy weights}, we can pick the $a_i$ such that their minimum is sufficiently large so that, after cutting off a corner with dimensions $b, 1, \ldots, 1$, the numerical conditions will be satisfied.  Choose $a_1$ to be sufficiently larger than the rest.  Then by the prime number theorem, we are able to choose some $1\le b < a_1$ for which $n!\Vol(\Delta)$ is a prime, as desired. 
\end{proof}

\section{Application to the Shafarevich conjecture} 
We will now put together the results of the previous sections to given application to the Shafarevich conjecture.  Given a fixed Newton polyhedron $\Delta$, we are concerned with the corresponding hypersurfaces nondegenerate with respect to $\Delta$ that are also primitive, as defined below. 
\begin{definition}
    Let $K$ be a number field and let $H\subset (\G_m^n)_K$ be a hypersurface.  We say that $H$ is primitive if it is not invariant under translation by any $x\in \G_m^n(\bar{\Q})$.
\end{definition}  

We want to apply Theorem \ref{thm: zariski non-density} to an appropriate parameter space of hypersurfaces.  Recall that one of the inputs to this theorem is a big monodromy statement.  We will say that $\Delta$ has big monodromy if its convolution monodromy group is large; i.e., $GL_N, SO_N$, or $Sp_N$.  (In such situations, $\textbf{H}$ in Theorem \ref{thm: zariski non-density} is taken to be $GL_N, GSp_N$, or $GO_N$ respectively.)  Recall that Corollary \ref{cor: big} is used to translate this big monodromy statement into the ``strongly $c$-balanced" assumption used in Theorem \ref{thm: zariski non-density}.   

\begin{theorem}
\label{thm: finiteness}
    Fix a Newton polyhedron $\Delta$ with big monodromy.  Let $S$ be a finite set of primes of a number field $K$.  Then assuming the numerical conditions are satisfied, up to translation there are only finitely many smooth primitive hypersurfaces in $(\G_m^n)_K$ nondegenerate with respect to their Newton polyhedron $\Delta$ with good reduction outside $S$. 
\end{theorem}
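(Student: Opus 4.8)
The plan is to set up a moduli space $X$ parameterizing the smooth hypersurfaces nondegenerate with respect to $\Delta$, modulo the translation action of $\G_m^n$, and to apply Theorem \ref{thm: zariski non-density} together with Faltings-type finiteness of semisimple Galois representations. First I would construct $\mc{X}$ as (an open subscheme of) the projective space of coefficient vectors of Laurent polynomials with Newton polyhedron $\Delta$, cut out by the nondegeneracy condition, which is open; after replacing $K$ by a finite extension if necessary (harmless for finiteness statements) and spreading out, we get a smooth model over $\Z[1/S]$ with a universal family $f\colon \mc{Y}\to\mc{X}$, $g\colon\mc{Y}\to\G_m^n$. The primitivity hypothesis guarantees $Y_{\overline\eta}$ is not translation-invariant and (together with the existence of interior lattice points forced by big monodromy, so $\Delta$ is full-dimensional) that $Y$ is not a constant family translated by a section; this is exactly what is needed to invoke Corollary \ref{cor: big} and hence Lemma \ref{lem: gmon}, giving the strongly $1$-balanced differential Galois group.

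Next I would feed the Hodge-Deligne system $\msf{V}_I$ of Construction \ref{construction} into Theorem \ref{thm: zariski non-density}: by hypothesis $\Delta$ has big monodromy (so $\textbf{H}$ is $GL_N$, $GSp_N$ or $GO_N$ as appropriate), and the numerical conditions are assumed; the adjoint Hodge numbers were computed in Section 9 via \cite{DK}, and the centralizer bound of Proposition \ref{prop: centralizer bound} lets us use $h^0$ in place of $\dim\textbf{H}$. This yields that the image of $\mc{X}(\Z[1/S])$ — equivalently the set of isomorphism classes (up to translation) of our hypersurfaces with good reduction outside $S$ — is \emph{not Zariski dense} in $X$. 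To upgrade non-density to \emph{finiteness}, I would run the standard Lawrence–Venkatesh/\cite{finiteness} induction: the non-dense subset is a proper closed subvariety $X'\subsetneq X$; stratify and apply the whole argument again to each component of $X'$ (the restricted family still has a hypersurface in $\G_m^n$ as a fiber, and the Hodge-Deligne system restricts), using Noetherian induction on $\dim X$, so that after finitely many steps one is reduced to the $0$-dimensional stratum, which is finite. One must also check the big-monodromy and numerical hypotheses are inherited by a general point of each stratum; for the generic point of a positive-dimensional stratum the Tannakian/monodromy input of Corollary \ref{cor: big} still applies since primitivity and nondegeneracy are preserved, and the Hodge numbers are the same, so the numerical conditions persist.

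The main obstacle I expect is the descent/base-change bookkeeping in the inductive step rather than any new geometric input: verifying that on each closed stratum $X'$ the family $f|_{X'}$ still satisfies the hypotheses of Theorem \ref{thm: blah}/Corollary \ref{cor: big} — in particular that the generic fiber over $X'$ is still primitive and that $Y|_{X'}$ is not a constant family translated by a section — and that the constructed Hodge-Deligne system restricts compatibly so that Theorem \ref{thm: zariski non-density} applies verbatim. A subtlety is that a priori one only controls Galois representations up to semisimplification (Faltings), and the passage from "finitely many semisimple Galois representations occur" plus "the period map is finite-to-one on each stratum" to genuine finiteness of points is precisely the content already packaged into Theorem \ref{thm: zariski non-density} via the $\cite{LV}$ machinery, so no extra work is needed there beyond invoking it on each stratum. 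A second, more bureaucratic point is that passing to a finite extension $K'/K$ and enlarging $S$ is needed to make $\G_m^n[r]$ split and to have a smooth model; since finiteness over $K'$ with $S'$ implies finiteness over $K$ with $S$, this costs nothing. Once these compatibilities are in place, assembling the finitely many points from the finitely many strata completes the proof of Theorem \ref{thm: finiteness}.
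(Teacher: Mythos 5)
There is a genuine gap in your passage from Zariski non-density to finiteness. Your plan is to iterate Theorem \ref{thm: zariski non-density} by Noetherian induction on the strata of the proper closed subvariety containing the integral points, claiming that ``primitivity and nondegeneracy are preserved, so Corollary \ref{cor: big} still applies'' on each positive-dimensional stratum. But Corollary \ref{cor: big} (via Theorem \ref{thm: blah}) has a third hypothesis you cannot inherit: that the family $Y$ over the stratum is \emph{not} a constant family of hypersurfaces translated by a section of $\G_m^n$. Positive-dimensional strata that lie inside a single translation orbit violate exactly this hypothesis, and on such strata the non-density conclusion is actually \emph{false}: translating a fixed good-reduction hypersurface by points of $\G_m^n(\O_{K,S})$ (an infinite group in general) preserves good reduction outside $S$, so a translation orbit can contain a Zariski-dense set of integral points of the parameter space. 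Consequently, literal finiteness of integral points in the coefficient space is false, your induction can never reach the $0$-dimensional stratum, and the statement being proved must be ``finitely many translation classes,'' which your stratification argument never extracts. (Your opening remark about working ``modulo the translation action'' would dodge this in spirit, but you never construct such a quotient, and the entire machinery --- universal family, $R^{n-1}f_*g^*\L_\chi$, the Hodge--Deligne system of Construction \ref{construction} --- is built on the space of coefficient vectors, not on a quotient.)

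The paper's proof avoids induction entirely and handles the translation orbits head-on: let $X$ be an irreducible component of the Zariski closure of the set of $S$-integral points corresponding to primitive hypersurfaces inside the Weil restriction of the coefficient space, so that integral points are Zariski dense in $X$ \emph{by construction}. Then there is a dichotomy: either the pulled-back family $Y\to X$ is a constant hypersurface translated by a section of $\G_m^n(X_K)$ --- in which case that entire component contributes a single hypersurface up to translation --- or all hypotheses of Corollary \ref{cor: big} hold, Lemma \ref{lem: gmon} and Theorem \ref{thm: zariski non-density} apply to $\msf{V}_I$, and the resulting non-density contradicts the density built into the choice of $X$. Since the Zariski closure has finitely many components, finiteness up to translation follows in one step. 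To repair your argument you would need to add precisely this dichotomy at each stage (treating strata that are constant translated families separately), at which point the induction collapses to the paper's single application of the non-density theorem; the rest of your setup (moduli construction, invocation of Corollary \ref{cor: big}, the centralizer bound, and the harmless enlargement of $K$ and $S$) matches the paper.
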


\begin{proof}
    [Proof of Theorem \ref{thm: finiteness}] 
    Working over $\O_{K, S}$, let $X'$ be the affine scheme parameterizing smooth hypersurfaces in $(\G_m^n)_K$ nondegenerate with respect to their Newton polyhedron $\Delta$.  Let $H_{\univ}\subseteq X'\times \G_m^n$ be the universal family. 
    Because Theorem \ref{thm: zariski non-density} only works over $\Q$, we will consider the Weil restriction of $X'$ as follows.  Let $S'$ be a finite set of primes of $\Z$ such that $\O_{K, S}$ is finite \'etale over $\Z[1/S']$.  Then we have a canonical bijection of sets between $X'(\O_{K, S})$ and $\Res_{\Z[1/S']}^{\O_{K, S}}X'(\Z[1/S'])$.  Furthermore, the universal family over $\Res_{\Z[1/S']}^{\O_{K, S}}X'$ is given by 
    \[
H_{\univ, \Q} = H_{\univ}\times_{X'}(\Res^{\O_{K, S}}_{\Z[1/S']}X'\times_{\Z[1/S']}\O_{K, S}). 
    \]

    Let $X$ be an irreducible component of the Zariski closure of $(\Res_{\Z[1/S']}^{\O_{K, S}} X')(\Z[1/S])^{prim}$, the subset corresponding to the primitive hypersurfaces, in $\Res_{\Z[1/S']}^{\O_{K, S}} X'$. 
    Let $Y = H_{\univ, \Q}\times_{\Res_{\Z[1/S']}^{\O_{K, S}} X'} X$ be the pullback of the universal family of hypersurfaces to $X$.  
     Possibly after enlarging $S$ and $S'$, the morphism $Y\rightarrow X$ spreads out to a smooth family $\mc{Y}\rightarrow \mc{X}$.  
          Since the $S$-integral points are Zariski dense in $X$ by construction, they are also dense in $\mc{X}$. 
    Our goal is to show that $Y\subset X_K\times_K \G_m^n$ is a translate of a constant hypersurface $H_0\subset \G_m^n$ by a section $s\in \G_m^n(X_K)$. 
 Indeed, since there are finitely many irreducible components in the Zariski closure, this will imply that every primitive hypersurface in the moduli space is given by a translate by one of finitely many.  We will achieve this goal by showing that otherwise we can apply Theorem \ref{thm: zariski non-density}, which will contradict Zariski-density of the $S$-integral points in $\mc{X}$.  

 Assume that $Y$ is not such a translate.  Let $G$ be the convolution monodromy group of the constant sheaf on $Y_{\overline{\eta}}$ and let $G^*$ be the commutator subgroup of the identity component of $G$.  By the big monodromy assumption, we may apply  Corollary \ref{cor: big} to obtain an embedding $\iota\colon K\rightarrow \C$ and a torsion character $\chi$ of $\pi_1((\G_m^n)_{\iota})$ satisfying the big monodromy condition in the condition of Lemma \ref{lem: gmon}.  Let $I$ be the corresponding orbit of characters and let $\msf{V}_I$ be the corresponding Hodge-Deligne system.   
 
 We now wish to apply Theorem \ref{thm: zariski non-density} to $\msf{V}_I$.  By Construction \ref{construction}, $V_I$ is a polarized, integral $E$-module with $\textbf{H}$-structure.  Furthermore, $V_I$ has integral Frobenius eigenvalues by the generalized Weil conjectures.  By Lemma \ref{lem: gmon}, the differential Galois group of $\msf{V}_I$ is a strongly 1-balanced subgroup of $G^0$, accounting for the big monodromy condition.  Finally, the numerical conditions hold by assumption.  Thus the conditions of Theorem \ref{thm: zariski non-density} are satisfied, so the $S$-integral points in $\mc{X}$ are not Zariski dense.  This gives the desired contradiction.   
\end{proof}

\begin{corollary}
    If we take $\Delta$ to be a Newton polyhedron satisfying the conditions of Proposition \ref{prop: asy weights} such that $n!\Vol(\delta)$ is a prime, then Theorem \ref{thm: finiteness} holds for $\Delta$.  
\end{corollary}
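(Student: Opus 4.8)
The plan is simply to check that the two standing hypotheses of Theorem \ref{thm: finiteness} — big monodromy for $\Delta$ and the numerical conditions — are met for such a $\Delta$, and then invoke that theorem verbatim; no genuinely new argument is required, only an assembly of results already proved in the preceding sections.

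First I would treat the monodromy hypothesis. For $i\colon X\hookrightarrow \G_m^n$ the inclusion of any hypersurface nondegenerate with respect to $\Delta$, Proposition \ref{prop: dim} gives $\dim \omega(i_*\Qlb[n-1]) = n!\Vol(\Delta)$, which is prime by assumption. Since $\Delta$ is a truncated rectangular prism in the sense of Section \ref{sec: truncated} (a prism with a corner near the origin sawed off), Corollary \ref{cor: prism monodromy} applies and shows that the convolution monodromy group $G$ contains $SL(V)$ or $SO(V)$, where $V$ is the distinguished representation. The corners of $\Delta$ make every such $X$ primitive — translation by a nontrivial element of $\G_m^n$ cannot preserve the defining equation up to a scalar — so Lemma \ref{lem: semi irred} ensures $V$ is irreducible for $G^0$, and since $N = \dim V$ is enormous the derived subgroup $G^* = [G^0, G^0]$ is forced to equal $SL(V)$ or $SO(V)$, which is simple with irreducible distinguished representation. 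Thus $\Delta$ has big monodromy, with $\textbf{H} = GL_N$ or $\textbf{H} = GO_N$ respectively; such $\Delta$ exist by Proposition \ref{prop: exist}.

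Second, the numerical conditions are exactly the last assertion of Proposition \ref{prop: asy weights}: the hypotheses $n\ge 500000$ and that the removed corner has volume $o(\Vol(\Delta))$ force $h^{p,q}\sim A(n,q)\Vol(\Delta)$, and the conditions of Theorem \ref{thm: zariski non-density} — hence of Theorem \ref{thm: finiteness} — then follow, for $\textbf{H} = GL_N$ via Proposition \ref{prop: eulerian check} and for $\textbf{H} = GO_N$ via Proposition \ref{prop: so check}, both valid under $n\ge 500000$. With both hypotheses of Theorem \ref{thm: finiteness} verified, that theorem yields finiteness up to translation of the smooth primitive hypersurfaces in $(\G_m^n)_K$ nondegenerate with respect to $\Delta$ and with good reduction outside $S$, which is the claim. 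The one point deserving care is passing from ``$G$ contains $SL(V)$ or $SO(V)$'' to the exact identification of $G^*$ used to feed Corollary \ref{cor: big}; but this is the routine observation that an overgroup of $SL(V)$ inside $GL(V)$ is $SL(V)$ or $GL(V)$, and a connected overgroup of $SO(V)$ acting irreducibly on a large-dimensional $V$ has derived subgroup $SO(V)$ — precisely the situation anticipated in the remark after Corollary \ref{cor: big}, where $G^*$ is the only conjugate of itself, so I do not expect a substantive obstacle here.
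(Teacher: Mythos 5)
Your proposal is correct and follows the same route as the paper: the paper's proof is exactly the two-step verification you give, citing Corollary \ref{cor: prism monodromy} for the big monodromy condition and Proposition \ref{prop: asy weights} for the numerical conditions before invoking Theorem \ref{thm: finiteness}. Your extra remarks on primality, primitivity, and the identification of $G^*$ are elaborations of points the paper leaves implicit (having already been addressed in Section \ref{sec: truncated} and Corollary \ref{cor: prism monodromy}), not a different argument.
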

\begin{proof}
    To apply Theorem \ref{thm: finiteness}, we need to check the big monodromy condition and the numerical conditions.  The big monodromy condition was checked in Corollary \ref{cor: prism monodromy}, while the numerical conditions were checked in Proposition \ref{prop: asy weights}. 
\end{proof} 

We recall that, as checked in the Proof of Corollary \ref{cor: prism monodromy}, all hypersurfaces with such a Newton polyhedron are primitive, and furthermore that this represents an infinite family of possible Newton polyhedra $\Delta$, as shown in the proof of Proposition \ref{prop: exist}.

\appendix

\bibliographystyle{alpha}
\bibliography{main}

\end{document}